\documentclass{amsart}
\usepackage{amsfonts}
\usepackage{amssymb}
\usepackage{amsmath}
\usepackage{amsthm}
 \usepackage{graphicx,xcolor}
\usepackage{moreverb}
\usepackage{fancybox}
\usepackage{fancyvrb}
\usepackage{comment}
\usepackage{color}
\usepackage{multirow}
\usepackage{stmaryrd}

\usepackage[all]{xy}
\usepackage{tikz-cd}
\usetikzlibrary{positioning,arrows.meta,calc}
\usepackage{fullpage}

\theoremstyle{plain}
\newtheorem{theorem}{Theorem}[section]
\newtheorem{lemma}[theorem]{Lemma}

\newtheorem{cor}[theorem]{Corollary}
\newtheorem{prop}[theorem]{Proposition}

\newtheorem{question}{Question}

\newtheorem{fact}{Fact}
\newtheorem{obs}[theorem]{Observation}

\theoremstyle{definition}
\newtheorem{definition}[theorem]{Definition}
\newtheorem{example}[theorem]{Example}

\theoremstyle{definition}
\newtheorem*{remark}{Remark}

\newtheorem*{terminology}{Terminology}
\newtheorem*{notation}{Notation}

\newtheorem*{construction}{Construction}

\newcommand{\upto}{\upharpoonright}
\newcommand{\fr}{\mbox{}^\smallfrown}
\newcommand{\N}{\mathbb{N}}
\newcommand{\om}{\omega}
\newcommand{\pcolon}{\colon\!\!\subseteq}
\newcommand{\ep}{\varepsilon}

\newcommand{\U}{\mathcal{U}}

\newcommand{\sfN}{\mathsf{N}}

\newcommand{\dom}{{\rm dom}}
\newcommand{\cod}{{\rm cod}}

\newcommand{\tgame}{\mathfrak{G}^{\sf t}}

\newcommand{\down}{{\downarrow}}
\newcommand{\upar}{{\uparrow}}
\newcommand{\tW}{\leq_{\sf tW}}
\newcommand{\tgw}{\leq_{\sf tW}^{\sf G}}
\newcommand{\gw}{\leq_{\sf W}^{\sf G}}
\newcommand{\eqtgw}{\equiv_{\sf tW}^{\sf G}}

\newcommand{\bflogic}[1]{\underline{{\bf #1}}}

\newcommand{\pair}[1]{\langle{#1}\rangle}
\newcommand{\eval}[1]{[\hspace{-.14em}[{#1}]\hspace{-.14em}]}

\newcommand{\tto}{\rightrightarrows}
\newcommand{\xedge}[1]{\xrightarrow{#1}}
\newcommand{\xarr}[1]{\overset{#1}{\rightarrowtriangle}}

\newcommand{\totarr}{\arr}

\newcommand{\sii}{\Sigma_1\text{-}}
\newcommand{\pii}{\Pi_1\text{-}}

\newcommand{\aaa}{{\bf a}.}
\newcommand{\ppp}{{\bf p}.}

\newcommand{\arr}{\rightarrowtriangle}

\newcommand{\solve}{\searrow}

\newcommand{\leaf}{{}^\times\!}
\newcommand{\internal}{{}^\circ\!}

\newcommand{\paset}[1]{\underline{#1}}

\newcommand{\tpot}{\Upsilon}
\newcommand{\tactual}{\mathbb{T}}

\title{Modified realizability subtoposes and total Weihrauch reducibility}

\author{Akihito Kajikawa}
\author{Masamori Kaku}
\author{Takayuki Kihara}
\author{Satoshi Nakata}
\thanks{Graduate School of Informatics and Graduate School of Mathematics, Nagoya University, Japan}

\date{}

\begin{document}
\maketitle

\begin{abstract}
In recent years, there has been rapid development in the foundational study of oracle computability from the perspective of Lawvere-Tierney topologies and their sheaves.
In this article, we formulate and analyze the notion of reducibility within the framework of total computability.
Then, using sheaf subtoposes derived from oracles in the total computable setting, we establish separations between various hierarchies of logical principles, including the hierarchies of the weak law of excluded middle $\mathbf{WLEM}$, the lessor limited principle of omniscience $\mathbf{LLPO}$, and Markov's principle $\mathbf{MP}$.
\end{abstract}

\section{Introduction}

\subsection{Summary}
One of the most fundamental areas of study in computability theory is the investigation of the degrees of difficulty of various problems; this requires the notion of oracle computability to formalize the comparison of such problems.
In recent years, a new theoretical framework for oracle computability has been rapidly developing, based on notions from topos theory -- specifically, Lawvere-Tierney (LT) topologies and their associated sheaves.

At an early stage, Hyland \cite{Hyl82} pointed out that Turing oracles (single-valued oracles) could be viewed as LT-topologies on the effective topos, yet subsequent development of this perspective was rather slow; see e.g.~\cite{vOBook}.
However, in recent years, this field has seen rapid development, driven by its {\bf co-development with concrete theory}.
Key turning points include the concrete presentation and combinatorial analysis of LT-topologies on the effective topos by Lee-van Oosten \cite{LvO13}, and the connection established by Kihara \cite{Kih23} to the concrete theory of Weihrauch degrees \cite{BGP21} that has been extensively studied in computable analysis.
This evolution has also shifted the primary research focus from {\em single-valued (deterministic) oracles} to {\em multi-valued (non-deterministic) oracles}.

There are numerous important points regarding the new foundation for oracle computability.
\begin{enumerate}
\item[A.] 
First, the abstraction led to the discovery of notions that appropriately extend the standard notion of oracle computability:
the extended Weihrauch reducibility by Bauer \cite{Bau22} and its multi-query version by Kihara \cite{Kih23}, known as the Arthur-Nimue-Merlin reducibility.
This incorporates not only {\em demonic} non-determinism but also {\em angelic} non-determinism; see also Abou Samra-Madore \cite{ASM26}.
\begin{itemize}
\item[--] 
As a byproduct, Kihara-Ng \cite{KiNg26} incorporated ``computability by majority'' as a type of oracle computability and discovered connections to existing research on the strength of ultrafilters and ideals (i.e., the Rudin-Keisler order and the Kat\v{e}tov order).
\end{itemize}
\item[B.] Next, based on extended oracle computability, it becomes possible to perform the concrete analysis of the structure of LT-topologies and associated sheaf subtoposes on the effective topos; see e.g.~Kihara \cite{Kih23}, Kihara-Ng \cite{KiNg25C}, Kihara-Ng \cite{KiNg26,KiNg26b}.
\item[C.] 
Furthermore, the abstraction allows for a unified treatment of various oracle notions -- such as not only oracles for partial computability (the standard setting in computability theory) but also oracles for total computability, which is the subject of this article.
\item[D.] 
Finally, a LT-topology induces a sheaf subtopos of a base topos, and a topos provides a certain kind of mathematical universe.
Consequently, oracle computability automatically yields a mathematical model; by utilizing this, one can achieve unprovability proofs relying solely on arguments concerning oracle computability/reducibility.
See e.g.~Kihara \cite{Kih20}, Kihara-Ng \cite{KN-Church}, and Nakata \cite{Nak24,Nak26}
\end{enumerate}

To demonstrate the power of this idea, this article addresses the problem of separating the hierarchy of logical principles as a test case.
We deal here with the hierarchy of the weak law of excluded middle $\bflogic{WLEM}$, the lessor limited principle of omniscience $\bflogic{LLPO}$, and Markov's principle $\bflogic{MP}$.
These are also known as the propositional de Morgan's law, $\Sigma_1$-de Morgan's law, and the $\Sigma_1$-double negation elimination principle, respectively.
The disjunctive Markov's principle $\bflogic{MP}^\lor$ is also known as the $\Pi_1$-de Morgan's law.

We provide a complete separation of the principles appearing in the following figure:
\[
\begin{tikzcd}[row sep=large, column sep=large]
\bflogic{LEM}
  \arrow[rr]
  \arrow[d]
&
&
\bflogic{MP}
  \arrow[d]
\\
\bflogic{WLEM}
  \arrow[r]
  \arrow[d]
&
\bflogic{LLPO}
  \arrow[r]
  \arrow[d]
&
\bflogic{MP}^{\vee}
  \arrow[d]
\\
\bflogic{WLEM}_n
  \arrow[r]
  \arrow[d]
&
\bflogic{LLPO}_n
  \arrow[r]
  \arrow[d]
&
\bflogic{MP}^{\vee}_n
  \arrow[d]
\\
\bflogic{WLEM}_{\omega}
  \arrow[r]
&
\bflogic{LLPO}_{\omega}
  \arrow[r]
&
\bflogic{MP}^{\vee}_{\omega}
\end{tikzcd}
\]

The precise definitions of these principles are given in Section \ref{sec:main-realizability}.
Similar separations have also been achieved by Hendtlass-Lubarsky \cite{HeLu16}.
On the one hand, their separation models are combinations of Kripke semantics with set-theoretic models.
On the other hand, our separation models are certain kinds of realizability models based on the connection between oracle computability and LT topology, thereby achieving separation while satisfying several variants of constructive Church's thesis $\bflogic{CT}_0$ (every total relation has a computable choice).
The variants we consider include $\bflogic{CT}_0!$ (every total function is computable), $\bflogic{wCT}_0$ (a double negated variant of Church's thesis), $\bflogic{CT}_0^{\rm maj}$ (every total relation has a computable bound).
For the precise definitions, see Sections \ref{sec:Church-thesis} and \ref{sec:major-Church-thesis}.

\begin{theorem}\label{thm:main-theorem}
Assume $n\geq2$.
Each of the following items is true in some subtopos of the modified realizability topos.
\begin{enumerate}
\item $\neg\bflogic{MP}^\lor_\om+\bflogic{CT}_0$.
\item $\bflogic{MP}_{n+1}^\lor+\neg\bflogic{MP}_n^\lor+\bflogic{wCT}_0+\bflogic{CT}_0!+\neg\bflogic{CT}_0$.
\item $\bflogic{MP}^\lor+\neg\bflogic{MP}+\bflogic{wCT}_0+\neg\bflogic{CT}_0!$.
\item $\bflogic{MP}+\bflogic{CT}_0$.
\end{enumerate}

Here, since $\bflogic{LLPO}_\om\to\neg\bflogic{wCT}_0$ holds intuitionistically, all of the above models validate $\neg\bflogic{LLPO}_\om$.
Moreover, each of the following items is true in some subtopos of the modified realizability topos.
\begin{enumerate}\setcounter{enumi}{4}
\item $\bflogic{LLPO}_{n+1}+\neg\bflogic{MP}_n^\lor+\bflogic{CT}_0!+\bflogic{CT}_0^{\rm maj}$.
\item $\bflogic{LLPO}+\neg\bflogic{MP}+\bflogic{wCT}_0!+\neg\bflogic{CT}_0!+\bflogic{CT}_0^{\rm maj}$.
\item $\bflogic{MP}+\bflogic{LLPO}_{n+1}+\neg\bflogic{LLPO}_{n}+\bflogic{CT}_0!$.
\end{enumerate}

Here, all of the above models validate $\neg\bflogic{wCT}_0$ and $\neg\bflogic{WLEM}_\om$.
Moreover, each of the following items is true in some subtopos of the modified realizability topos.
\begin{enumerate}\setcounter{enumi}{7}
\item $\bflogic{WLEM}_{n+1}+\neg\bflogic{MP}_n^\lor+\bflogic{CT}_0!+\bflogic{CT}_0^{\rm maj}$.
\item $\bflogic{WLEM}+\neg\bflogic{MP}+\neg\bflogic{CT}_0!+\bflogic{CT}_0^{\rm maj}$.
\item $\bflogic{MP}+\bflogic{LLPO}_{n}+\bflogic{WLEM}_{n+1}+\neg\bflogic{LLPO}_{n-1}+\neg\bflogic{WLEM}_{n}+\bflogic{CT}_0!$.
\item $\bflogic{MP}+\bflogic{LLPO}+\bflogic{WLEM}_{n+1}+\neg\bflogic{WLEM}_n+\bflogic{CT}_0!$.
\item $\bflogic{MP}_{\rm PR}+\bflogic{WLEM}+\neg\bflogic{LEM}$.
\end{enumerate}

Here, $\bflogic{MP}_{\rm PR}$ is a primitive recursive variant of $\bflogic{MP}$.

In addition, our models for (1), (5), (6), (8) and (9) also satisfy the independence of premise $\bflogic{IP}$ (see Section \ref{sec:inde-premi}).
While our model for (12) does not satisfy any of the variants of Church's thesis mentioned so far, it does satisfy some weakened principle $\bflogic{sCT}_0^{\rm maj}$ (every total relation has a lower semicomputable bound; see Section \ref{sec:major-Church-thesis}).
\end{theorem}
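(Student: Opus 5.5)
The plan is to reduce every item to a question about total Weihrauch reducibility between concrete (multi-valued) problems. The first step is the correspondence, developed for the effective topos in \cite{Kih23} and transferred here to total computability, between Lawvere--Tierney topologies on the modified realizability topos and oracles (``bilayer'' problems) modulo the total/game reducibility $\tgw$. Under this correspondence, for the subtopos $\mathrm{Sh}_j$ induced by an oracle $P$, a principle of the shape $\forall x\,(\varphi(x)\to\exists y\,\psi(x,y))$ holds iff the associated problem $Q$ (for instance $\mathsf{LLPO}_n$, $\mathsf{WLEM}_n$, the disjunctive Markov problems, $\mathsf{LEM}$) satisfies $Q\tgw P$. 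Concretely, I would show that the $j$-translation of the principle is realized exactly when there is a total reduction of $Q$ to $P$.

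Church's-thesis-type principles are handled by the same dictionary. $\bflogic{CT}_0$ holds iff $P$ is ``computably bounded'', meaning every total relation solvable relative to $P$ has a computable choice. $\bflogic{CT}_0!$ is the same statement restricted to single-valued problems. $\bflogic{CT}_0^{\rm maj}$ asks for a computable majorant, which corresponds to $P$ being dominated in the sense of bounded non-determinism. $\bflogic{wCT}_0$ fails exactly when $P$ computes (in a $\neg\neg$-dense way) a function escaping all computable ones. $\bflogic{IP}$ is obtained for oracles whose reductions can be made uniform in the premise, which is automatic for the modified-realizability base with $\neg\neg$-stable premises.

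With this dictionary, each item reduces to choosing one oracle and checking finitely many reductions and non-reductions:
\begin{enumerate}
\item Item (1): take the identity topology. $\bflogic{CT}_0$ holds in modified realizability, and $\mathsf{MP}^\lor_\om$ is not totally computable.
\item Items (4), (7) and (10)--(11): take $P=\mathsf{MP}$ (the $\Sigma_1$ double-negation oracle), joined respectively with $\mathsf{LLPO}_{n+1}$, with $\mathsf{WLEM}_{n+1}\sqcap\mathsf{LLPO}_n$, and so on.
\item Items (2), (3) and (5), (6), (8), (9): use $\mathsf{MP}^\lor_{n+1}$, $\mathsf{MP}^\lor$, $\mathsf{LLPO}_{n+1}$, $\mathsf{LLPO}$, $\mathsf{WLEM}_{n+1}$ and $\mathsf{WLEM}$ themselves as oracles.
\item Item (12): use a primitive-recursive-level $\mathsf{MP}_{\rm PR}$ combined with a $\mathsf{WLEM}$ oracle that is too weak for $\mathsf{LEM}$.
\end{enumerate}
The positive parts are then trivial reductions. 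The implications listed before items (5) and (8) (e.g.\ $\bflogic{LLPO}_\om\to\neg\bflogic{wCT}_0$) are intuitionistic, so they need no model-theoretic work.

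The negative parts, such as $\mathsf{MP}^\lor_n\not\tgw\mathsf{LLPO}_{n+1}$ and $\mathsf{LLPO}_{n-1}\not\tgw\mathsf{MP}\sqcup\mathsf{LLPO}_n$, I would prove by game arguments for $\tgame$. The opponent (Merlin) plays a diagonalizing strategy. Since reductions are total, every query must be answered after finitely many steps. We exploit this by pigeonhole on the $n$ versus $n+1$ possible answers: the opponent keeps the instance ambiguous among $n+1$ options until the reduction commits. This is where the totality of the base computability is essential, because partiality would let the reduction wait forever.

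The main obstacle is the non-reductions involving $\mathsf{MP}$ and the failures of $\bflogic{CT}_0!$. $\mathsf{MP}$-type oracles are single-valued but unbounded in search time. To show, for example, $\mathsf{LLPO}_{n-1}\not\tgw\mathsf{MP}\sqcup\mathsf{LLPO}_n$, one must handle reductions that interleave unboundedly many $\mathsf{MP}$ queries with $\mathsf{LLPO}_n$ queries. I would first try to show that $\mathsf{MP}$ queries can be simulated ``$\neg\neg$-densely'' by guessing, giving a finite-injury/forcing construction of a bad instance. The second obstacle is calibrating the Church-thesis statements precisely for items (2), (3), (6) and (12). For instance, $\bflogic{CT}_0!$ together with $\neg\bflogic{CT}_0$ requires an oracle under which every single-valued total function is computable while some relation has no computable choice. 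Here I would use a low-basis/hyperimmune-free style argument adapted to total Weihrauch reductions to $\mathsf{MP}^\lor_{n+1}$.
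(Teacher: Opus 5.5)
You share the paper's architecture: one oracle per item, the topology $j_P$, and transfer of $\tgw$-(non)reductions to (non)realizability. Most of your oracle choices also match. The gap is that two of your central dictionary claims are false, and the items resting on them do not go through.

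\textbf{Church's thesis.} This is the more serious gap. In the total setting, the difference between $\bflogic{CT}_0$ and $\bflogic{wCT}_0$ (and between $\bflogic{CT}_0!$ and $\bflogic{wCT}_0!$) is not whether a computable choice exists. A realizer of $\varphi_e(x)\downarrow$ is a halting stage, and it must be produced by a total $P$-computation uniformly in \emph{potential} realizers of the premise. For $P=\sii{\sf C}_{n/n+1}$ or $\sii{\sf C}_{1/2}$ we have $P\tgw\sii{\sf C}_{1/\om}$, so every $P$-solvable total relation has a computable choice (Theorem~\ref{MP-equial-partial-computable}); this is why $\bflogic{wCT}_0$ holds (Theorem~\ref{thm:MP-vs-CT0-realizability}). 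For $P=\pii{\sf C}_{\geq 1/2}$, every single-valued $P$-computable function is computable (Theorem~\ref{LLPO-equial-partial-computable}). Nevertheless $\bflogic{CT}_0$ fails in (2), and $\bflogic{CT}_0!$ fails in (3) and (6). Hence:
\begin{itemize}
\item your plan for $\neg\bflogic{CT}_0$ in (2), exhibiting a relation with no computable choice, contradicts the $\bflogic{wCT}_0$ asserted in the same item;
\item your dictionary would predict $\bflogic{CT}_0!$ in (3) and (6);
\item no low-basis or hyperimmune-free argument can repair this.
\end{itemize}
The missing idea is in Theorems~\ref{thm:MPn-sheaf-not-CT0} and~\ref{thm:LLPO-not-CT0unique}. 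Since the oracle has finite codomain, potential trees are finite and computable. So from a putative realizer one computes the finitely many candidate indices and a bound on the halting stages they claim. The recursion theorem then gives a $\pii{\sf C}_{n-1/n}$-computation that outputs the leaves of its own tree $n^{\le x}$. Its co-singleton queries prune, level by level, the leaf each candidate predicts by that stage. For $n=2$ co-singletons are singletons, so this diagonal function is single-valued and also refutes $\bflogic{CT}_0!$.

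\textbf{Positive parts.} The claim that $Q\tgw P$ holds iff the principle holds, so the positive parts are trivial, is false for $\bflogic{MP}$, $\bflogic{MP}^\lor_n$ and $\bflogic{LLPO}_n$. In the subtopos, $\forall\alpha\in 2^\N$ ranges over all total $P$-computable sequences. The paper therefore also needs an upper bound $P\tgw\sii{\sf C}_{1/\om}+\pii{\sf C}_{\geq 1/2}$ (or $+{\sf C}_{2/3}$). Under it, Theorems~\ref{LLPO-equial-partial-computable} and~\ref{MP-WLEM-single-pc} make such $\alpha$ computable, so they can be fed to the oracle. This is not cosmetic. In (12), $n\mapsto[n=c]$ is $P$-computable by one public code with $c$ handed secretly to ${\sf C}_{1/2}$. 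So $\bflogic{MP}$ there would yield ${\sf C}_{1/\om}\tgw{\sf C}_{1/2}+\sii{\sf C}_{1/\om}$, contrary to Theorem~\ref{thm:separation-DML-WLEM}; that is why only $\bflogic{MP}_{\rm PR}$ is claimed.

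\textbf{Smaller points.}
\begin{itemize}
\item The meet $\mathsf{WLEM}_{n+1}\sqcap\mathsf{LLPO}_n$ in (10) would lose $\bflogic{WLEM}_{n+1}$. The paper uses the join $\sii{\sf C}_{1/\om}+{\sf C}_{n/n+1}$ and gets $\bflogic{LLPO}_n$ from Proposition~\ref{prop:LLPOn-reducible-WLEMnplusone}.
\item $\bflogic{IP}$ is not automatic. It fails for $\sii{\sf C}_{1/\om}$ and $\sii{\sf C}_{n-1/n}$ (Corollary~\ref{cor:MP-doesnot-sat-IP}), and for $\mathsf{LLPO}_n$ it needs the totalization of Lemma~\ref{lem:LLPO-totalizable}.
\item $\neg\bflogic{WLEM}_\om$ in (5)--(7) needs the separation of Theorem~\ref{thm:separation-WLEMn-nplusone}, not an intuitionistic implication.
\item Multi-query non-reductions such as $\sii{\sf C}_{n-1/n}\not\tgw{\sf C}_{n/n+1}$ need the fat-tree lemma (Fact~\ref{fact:fat-tree-lemma}), not a nodewise pigeonhole.
\item Your ``main obstacle'' of interleaved $\mathsf{MP}$-queries is handled by Theorem~\ref{thm:MP-remove}: $F\tgw G+\sii{\sf C}_{1/\om}$ implies $F\gw G$, after which known partial separations apply.
\item That shortcut fails for (12), because ${\sf C}_{1/\om}\gw{\sf C}_{1/2}$ (Proposition~\ref{prop:DML-gw-WLEM}). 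There the paper instead uses that Arthur's public moves and the single-valued $\sii{\sf C}_{1/\om}$-answers do not depend on the secret, so all actual trees lie in one finite tree.
\end{itemize}
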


\begin{remark}
Each of the items (4), (7), (10), (11), and (12) can be satisfied even in a subtopos of the effective topos, but the others cannot (since they refute $\bflogic{MP}$).
\end{remark}

The proof of this theorem utilizes all of the key points A., B., C.\ and D.\ mentioned above.
\begin{enumerate}
\item[A.] 
An extra axiom in logic translates to an oracle in computability theory.
However, the hierarchy of the weak law of excluded middle $\bflogic{WLEM}_n$ does not fit within the framework of the standard notion of oracle.
This issue is resolved by an extended notion of oracle (Section \ref{sec:bilayer-computability}), which was recently discovered through the aforementioned abstract theory.
\item[B.] 
The analysis of the strength of oracles (i.e., the degree structure) is reflected in the analysis of the strength of LT topologies (Section \ref{sec:LT-topology}).
Our proof begins by establishing separations in the strength of oracles (Section \ref{sec:computable-separation}).
All separations are achieved using crystal-clear arguments familiar to computability theorists.
\item[C.]
Markov's principle $\bflogic{MP}$ is a partial computable principle (i.e., realizable by an unbounded search), so the oracle computability based on standard partial computability cannot distinguish the strength of variants $\bflogic{MP}_n^\lor$ of Markov's principle.
We achieve separations of variants of Markov's principle by newly introducing notions of oracle and reducibility based on total computability.
This new notion of reducibility is introduced automatically from the abstract theory (Sections \ref{sec:many-one-query-compu} and \ref{sec:main-many-query-reducibility}).
\item[D.] 
Each oracle induces an LT-topology, which in turn determines a sheaf subtopos (Section \ref{sec:LT-topology}).
Through this abstract framework, the separation of oracle strengths is translated into the construction of models separating logical principles (Section \ref{sec:main-realizability}).
\end{enumerate}

\subsection{Contents}

In Section \ref{sec:many-one-query-compu}, we introduce the notion of total Weihrauch reducibility, which a notion of one-query reducibility with respect to total computability.
We also reformulate various logical principles as computability-theoretic problems -- specifically, as choice problems -- and investigate the one-query reducibility relations among them.
In Section \ref{sec:main-many-query-reducibility}, we introduce the notion of total multi-query reducibility and investigate its fundamental properties.
In Section \ref{sec:bilayer-computability}, we generalize this idea to the setting of extended oracles, i.e., bilayer problems.
In Section \ref{sec:computable-separation}, we establish separations of strength for various selection problems with respect to total multi-query reducibility.
In Section \ref{sec:LT-topology}, we clarify the precise relationship between an oracle and a LT-topology, and between total computable realizability with multiple queries to an oracle and the semantics of the correspoding sheaf subtopos on the topos for total computability (the Grayson topos).
Section \ref{sec:main-realizability}, we then demonstrate the precise correspondence between logical principles and selection problems, thereby linking logic and computability.
By combining these results, we establish the main theorem in Section \ref{sec:proof-main}.

%
%
%
%

For the basics of computability theory, we refer the reader to \cite{OdiBook}.
For the basics of constructive mathematics, we refer the reader to \cite{BeeBook,HandbookCM}.
For the basics of realizability theory, we refer the reader to \cite{Tro73}.

In this article, we allow the use of classical principles for the meta-logic.
However, with careful argumentation, the use of classical principles can be kept to a necessary minimum.

%
%

\section{One-Query Computability}\label{sec:many-one-query-compu}

\subsection{Total Weihrauch reducibility}

The primary subject of study in computability theory is the analysis of the complexity of problems.
Here, a ``problem'' is actually a collection of problems, which is often displayed as a partial multifunction $F \pcolon X \tto Y$.
An input $x \in \mathrm{dom}(F)$ is called an {\em instance} of the problem $F$, and each $y \in F(x)$ is a solution to this instance.
That is, $F(x)$ is the solution set for the $x$th instance of the problem $F$.

In computability theory, we typically do not require information about the underlying type $X$, but only requires information about $\dom(F)$.
However, in the theory of total computability that we are dealing with, we require information about the type $X$ to define the notion of totality.
Therefore, we treat the notion of a problem as a data equipped with the type $X$ of potential instances.

We assume that all objects are encoded by natural numbers, so formally, we introduce the notion of a problem as follows.

\begin{definition}
A {\bf potted problem} is a triple $(X,Y; F)$ consisting of sets $X,Y\subseteq\om$ containing $0$ and a subset $F\subseteq X\times Y$.
\end{definition}

We use the following notations and terminologies:
\begin{itemize}
\item We write $F\pcolon X\tto Y$ for such a triple.
\item Such an $X$ is denoted by ${\rm dom}_+(F)$, which is called the {\em potential domain} (or the \ppp domain).
\item Such an $Y$ is denoted by ${\rm cod}_+(F)$, which is called the {\em potential codomain} (or the \ppp codomain)
\item For each $x\in X$, $F(x)$ is defined as $\{y\in Y:(x,y)\in F\}$.
\item $\dom(F)$ is defined as $\{x\in X:F(x)\not=\emptyset\}$, which is called the {\em actual domain} (or the \aaa domain).
\item If $F(x)$ is a singleton $\{y\}$, then we write $F(x)=y$.
\item If $F(x)$ is a singleton for any $x\in\dom(F)$, then we say that $F$ is {\em single-valued} and write $F\pcolon X\to Y$.
\end{itemize}

Note that under this formulation, $F$ is always a nonempty-valued.
(Although some approaches allow for empty-valued problems, this article does not deal with such problems.)
In later sections, we will frequently use the following notion to deal with the domain of a potted problem.

\begin{definition}\label{def:potted-set}
A {\bf potted set} is a pair $\paset{A}=(A_-;A_+)$ such that $A_-\subseteq A_+\subseteq \om$ and $0\in A_+$.
\end{definition}

This is a structure required to formalize the notion of a ``{\em total function on $A$}.''
The underlying set itself is $A_-$, but it is necessary to specify the scope of {\em totality}, and that is precisely $A_+$.
That is, we think of a potted set as a set $A_-$ equipped with the scope $A_+$ of totality.
This notion is inspired by Grayson's version of modified assembly/tripos \cite{vO97}.
The details will be explained in Section \ref{sec:LT-topology}.

\begin{notation}
If $\paset{X}=(X_-;X_+)$ is a potted set, we also use the notation $F\colon\paset{X}\tto Y$ for a potted problem $F$ such that $\dom_+(F)=X_+$ and $\dom(F)=X_-$.
\end{notation}


\begin{definition}\label{def:total-Weihrauch-reducibility}
Let $F$ and $G$ be potted problems.
We say that $F$ is {\bf total Weihrauch reducible to} $G$ if there are partial computable functions $\nu,\psi$ such that the following hold:
\begin{enumerate}
\item $\nu\colon{\rm dom}_+(F)\to{\rm dom}_+(G)$ is total.
\item $\psi\colon{\rm dom}_+(F)\times{\rm cod}_+(G)\to{\rm cod}_+(F)$ is total.
\item $x\in{\rm dom}(F)$ implies $\nu(x)\in{\rm dom}(G)$.
\item $x\in{\rm dom}(F)$ and $y\in G(\nu(x))$ imply $\psi(x,y)\in F(x)$.
\end{enumerate}

In this case, we write $F\leq_{\sf tW}G$.
We call $\nu$ an {\em inner reduction} and $\psi$ an {\em outer reduction}.
\end{definition}

The difference from the standard Weihrauch reduction \cite{BGP21} is that Definition \ref{def:total-Weihrauch-reducibility} requires totality on the \ppp domains/codomains.

\begin{prop}\label{prop:tW-preorder}
$\leq_{\sf tW}$ is a preorder.
\end{prop}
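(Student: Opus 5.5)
We must check reflexivity and transitivity of $\leq_{\sf tW}$ directly from Definition \ref{def:total-Weihrauch-reducibility}, by composing reductions in the usual Weihrauch way and making sure that totality on the \ppp domains and codomains is preserved.

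\emph{Reflexivity.} Given a potted problem $F$, take $\nu(x)=x$ and $\psi(x,y)=y$. Both are computable. The map $\nu$ is total from $\dom_+(F)$ to $\dom_+(F)$, and $\psi$ is total from $\dom_+(F)\times\cod_+(F)$ to $\cod_+(F)$. Condition (3) is trivial. Condition (4) holds because $y\in F(x)$ gives $\psi(x,y)=y\in F(x)$.

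\emph{Transitivity.} Suppose $F\leq_{\sf tW}G$ via $(\nu_1,\psi_1)$ and $G\leq_{\sf tW}H$ via $(\nu_2,\psi_2)$. Define
\[
\nu(x)=\nu_2(\nu_1(x)),\qquad \psi(x,z)=\psi_1\bigl(x,\psi_2(\nu_1(x),z)\bigr).
\]
These are partial computable as compositions. We then check the four conditions in order.

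\begin{enumerate}
\item For $x\in\dom_+(F)$ we have $\nu_1(x)\in\dom_+(G)$, so $\nu_2(\nu_1(x))$ is defined and lies in $\dom_+(H)$. Hence $\nu$ is total on $\dom_+(F)$.
\item For $x\in\dom_+(F)$ and $z\in\cod_+(H)$, the pair $(\nu_1(x),z)$ lies in $\dom_+(G)\times\cod_+(H)$. So $\psi_2(\nu_1(x),z)$ is defined and lies in $\cod_+(G)$. Then $\psi_1$ applied to $x$ and this value is defined and lies in $\cod_+(F)$. Hence $\psi$ is total.
\item If $x\in\dom(F)$, then $\nu_1(x)\in\dom(G)$, and therefore $\nu(x)\in\dom(H)$.
\item If $x\in\dom(F)$ and $z\in H(\nu(x))$, then $\nu_1(x)\in\dom(G)$, so $\psi_2(\nu_1(x),z)\in G(\nu_1(x))$. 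Applying condition (4) for $(\nu_1,\psi_1)$ gives $\psi(x,z)\in F(x)$.
\end{enumerate}

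There is no real obstacle here. The only point needing care is totality of $\psi$ in step (2). There the \ppp domain and codomain clauses (1) and (2) of the two given reductions are essential: $\psi_2$ is applied to $\nu_1(x)$, which we only know lies in $\dom_+(G)$, not necessarily in $\dom(G)$. This is exactly why totality is required on the potential domains rather than only on the actual ones.
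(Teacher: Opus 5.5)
Your proof is correct and matches the paper's argument: reflexivity uses the identity reduction, and transitivity uses the composite $\nu_2\circ\nu_1$ and $\psi(x,z)=\psi_1(x,\psi_2(\nu_1(x),z))$. The four conditions are then verified in the same order, including the point that totality of $\psi$ relies on $\nu_1(x)\in\dom_+(G)$.
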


\begin{proof}
{\em Reflexivity.}
Consider $\nu(x)=x$ and $\psi(x,y)=y$.

{\em Transitivity.}
Assume $F\tW G\tW H$, where $F\tW G$ via $(\nu,\psi)$ and $G\tW H$ via $(\nu',\psi')$.
We need to construct a reduction from $F$ to $H$.

\begin{itemize}
\item {\em Inner reduction.} Consider $\nu''=\nu'\circ\nu$:
\begin{align*}
{\rm dom}_+(F)\xedge{\nu}{\rm dom}_+(G)\xedge{\nu'}{\rm dom}_+(H),
& &
{\rm dom}(F)\xedge{\nu}{\rm dom}(G)\xedge{\nu'}{\rm dom}(H)
\end{align*}
Clearly, $\nu'\circ\nu$ fulfills the conditions (1),(3) for an inner reduction.
\item {\em Outer reduction.}
We define a total function $\psi''\colon \dom_+(F)\times\cod_+(H)\to\cod_+(F)$:
For $(x,y)\in\dom_+(F)\times\cod_+(H)$ we define
\[\psi''(x,y)=\psi(x,\psi'(\nu(x),y)).\]
By our assumption, we have $\nu(x)\in\dom_+(G)$ and $y':=\psi'(\nu(x),y)\in\cod_+(G)$; hence, we get $\psi(x,y')\in\cod_+(F)$.
Therefore. $\psi''$ fulfills the totality condition (2) for an outer reduction.
\item {\em Verification.}
For (4), Let $x\in\dom(F)$ and $y\in H(\nu''(x))$ be given.
By our assumption, we have $\nu(x)\in\dom(G)$.
\begin{itemize}
\item As $G\tW H$ via $(\nu',\psi')$, combining $\nu(x)\in\dom(G)$ and $y\in H(\nu'(\nu(x)))=H(\nu''(x))$, we get $\psi'(\nu(x),y)\in G(\nu(x))$.
\item As $F\tW G$ via $(\nu,\psi)$, combining $x\in\dom(F)$ and $\psi'(\nu(x),y)\in G(\nu(x))$, we get $\psi''(x,y)=\psi(x,\psi'(\nu(x),y))\in F(x)$.
\end{itemize}
Hence, we verified the condition (4), which means that $(\nu'',\psi'')$ witnesses $F\tW H$.
\end{itemize}
\end{proof}

\subsection{Weihrauch Problems}\label{sec:Weihrauch-problem}

We investigate the total Weihrauch strength of various potted problems.
We use the following notations:
\begin{itemize}
\item Let $\varphi_e$ denote the $e$th partial computable function on $\om$.
\item Let $S_e$ be the $\Sigma_1$ set coded by $e$; that is,
$S_e=\{n\in\om:\varphi_e(n)\downarrow\}$.
\item Let $P_e$ be the $\Pi_1$ set coded by $e$; that is,
$P_e=\{n\in\om:\varphi_e(n)\uparrow\}$.
\end{itemize}

\subsubsection{$\Sigma_1$-Choice}
A {\em $\Sigma_1$-choice} problem is a problem in which, given a code for a non-empty $\Sigma_1$ set $S_e$, one is asked to pick up an element $n\in S_e$.
A $\Sigma_1$-choice problem is computably solvable by an {\em unbounded search}, and is therefore trivial in the context of the standard Weihrauch reducibility.
However, since an unbounded search is not total computable, $\Sigma_1$-choice becomes non-trivial in the context of total Weihrauch reducibility.

\begin{definition}
For $0<k< n\leq\om$, the $\Sigma_1$-choice problem $\sii{\sf C}_{k/n}$ is defined as follows:
\begin{itemize}
\item $\dom_+(\sii{\sf C}_{k/n})=\om$, and $\cod_+(\sii{\sf C}_{k/n})=n$.
\item $\dom(\sii{\sf C}_{k/n})=\{e\in\om:|S_e|=k\}$.
\item $\sii{\sf C}_{k/n}(e)=\{k<n:k\in S_e\}$.
\end{itemize}
\end{definition}

\begin{definition}
For $0<k<n\leq\om$, the $\Sigma_1$-choice problem $\sii{\sf C}_{\geq k/n}$ is defined as follows:
\begin{itemize}
\item $\dom_+(\sii{\sf C}_{\geq k/n})=\om$, and $\cod_+(\sii{\sf C}_{k/n})=n$.
\item $\dom(\sii{\sf C}_{\geq k/n})=\{e\in\om:|S_e|\geq k\}$.
\item $\sii{\sf C}_{\geq k/n}(e)=\{k<n:k\in S_e\}$.
\end{itemize}
\end{definition}

In words: There are $n$ options, exactly/at least $k$ of which are correct.
Given a code of an enumeration of correct elements, the problem asks us to pick up one of them.

One may define $\pii{\sf C}_{k/n}$ and $\pii{\sf C}_{\geq k/n}$ in the same manner.
We call ${\sf C}_{\geq 1/n}$ the {\em choice} problem, ${\sf C}_{1/n}$ the {\em unique choice} problem, ${\sf C}_{n-1/n}$ the {\em co-unique choice} problem, and ${\sf C}_{\geq n-1/n}$ the {\em all-or-co-unique} problem.

With a slight modification to the definition, the notion of co-unique choice also makes sense for $n=\omega$.

\begin{definition}
For $0<k<n\leq\om$, the $\Sigma_1$-choice problem $\sii{\sf C}_{n-k/n}$ is defined as follows:
\begin{itemize}
\item $\dom_+(\sii{\sf C}_{n-k/n})=\om$, and $\cod_+(\sii{\sf C}_{n-k/n})=n$.
\item $\dom(\sii{\sf C}_{n-k/n})=\{e\in\om:|n\setminus S_e|=k\}$.
\item $\sii{\sf C}_{n-k/n}(e)=\{k<n:k\in S_e\}$.
\end{itemize}
\end{definition}

\begin{definition}
For $0<k<n\leq\om$, the $\Sigma_1$-choice problem $\sii{\sf C}_{\geq n-k/n}$ is defined as follows:
\begin{itemize}
\item $\dom_+(\sii{\sf C}_{\geq n-k/n})=\om$, and $\cod_+(\sii{\sf C}_{\geq n-k/n})=n$.
\item $\dom(\sii{\sf C}_{\geq n-k/n})=\{e\in\om:|n\setminus S_e|\leq k\}$.
\item $\sii{\sf C}_{\geq n-k/n}(e)=\{k<n:k\in S_e\}$.
\end{itemize}
\end{definition}

For instance, ${\sf C}_{\om-1/\om}$ stands for the co-unique choice on $\om$.

\begin{obs}\label{obs:basic-relations-choice}
Let $\Gamma\in\{\Sigma_1,\Pi_1\}$.
For $\ell\leq k$ and $n\leq m$:
\begin{align*}
\Gamma\text{-}{\sf C}_{k/n}\leq_{\sf tW}\Gamma\text{-}{\sf C}_{\ell/m};
& &
\Gamma\text{-}{\sf C}_{\geq k/n}\leq_{\sf tW}\Gamma\text{-}{\sf C}_{\geq \ell/m};
& &
\Gamma\text{-}{\sf C}_{k/n}\leq_{\sf tW}\Gamma\text{-}{\sf C}_{\geq k/n}.
\end{align*}
\end{obs}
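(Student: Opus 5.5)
The plan is to give explicit total reductions $(\nu,\psi)$ in the sense of Definition~\ref{def:total-Weihrauch-reducibility} for each of the three inequalities. In every case the outer reduction is essentially the identity. Since $\cod_+(\Gamma\text{-}{\sf C}_{\ell/m})=m\supseteq n=\cod_+(\Gamma\text{-}{\sf C}_{k/n})$, I take $\psi(x,y)=y$ if $y<n$ and $\psi(x,y)=0$ otherwise. This is total, and it is correct whenever the solution $y$ returned by the oracle lies in the original solution set, which is a subset of $n$. The inner reduction $\nu$ will always be obtained by the $s$-$m$-$n$ theorem, so it is total on $\om$. All the work is in arranging that $\nu$ maps the actual domain into the actual domain, with the new solution set contained in the old one.

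The third inequality, $\Gamma\text{-}{\sf C}_{k/n}\leq_{\sf tW}\Gamma\text{-}{\sf C}_{\geq k/n}$, is witnessed by $\nu(e)=e$, because $|\,\cdot\,|=k$ implies $|\,\cdot\,|\geq k$ and the solution sets coincide. For the second inequality, I let $\nu(e)$ be an index of $S_e\cap n$ (for $\Sigma_1$), respectively of $P_e\cap n$ (for $\Pi_1$). For $\Pi_1$ this amounts to a Σ1 index of $S_e\cup[n,\om)$. Then "at least $k$ elements below $n$" implies "at least $\ell$ elements below $m$", and every solution is an element below $n$, so $\psi$ is correct.

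For the first inequality with $\Gamma=\Sigma_1$, I let $\nu(e)$ be an index of the set consisting of the first $\ell$ elements of $S_e\cap n$ in order of enumeration. This set is uniformly $\Sigma_1$. Under the promise $|S_e\cap n|=k\geq\ell$ it has exactly $\ell$ elements, and it is a subset of $S_e\cap n$, so it is a valid instance of $\sii{\sf C}_{\ell/m}$ whose solutions are solutions of the original instance.

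The main obstacle is the $\Pi_1$ case of the first inequality, where $\ell<k$ and the set must be cut down to exactly $\ell$ elements while remaining $\Pi_1$. For finite $n$ I would let $x\in Q$ if and only if $x\in P_e$ and $|P_e\cap[x,n)|\geq k-\ell+1$, so that $Q$ consists of the $\ell$ smallest elements of $P_e$. The condition $|P_e\cap[x,n)|\geq j$ is equivalent to $|S_e\cap[x,n)|\leq (n-x)-j$, which is an upper bound on a finite $\Sigma_1$ set and hence $\Pi_1$. Uniformly in $e$ this produces a $\Pi_1$ index $\nu(e)$. It has the correct cardinality and is a subset of $P_e\cap n$ exactly when the promise $|P_e\cap n|=k$ holds, which is all that is needed. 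For $n=\om$ this trick fails, because the interval $[x,\om)$ is infinite. There I would first try choosing the $\ell$ largest elements instead, so that the count runs over $[0,x)$. If that does not yield a $\Pi_1$ definition, I would examine whether the intended reading of the statement for $n=\om$ only requires the $\ell=k$ instances, which follow directly from the second argument.
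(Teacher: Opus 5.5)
Your reductions are correct for the third inequality, for the second inequality, and for the $\Sigma_1$ case of the first inequality. The paper states this Observation without proof, and these are the natural witnesses. The proposal is incomplete in one place: the $\Pi_1$ case of the first inequality with $n=\omega$ and $\ell<k$, which you leave open with two hedged options. The fallback option is not legitimate. The statement quantifies over all $\ell\le k$ and $n\le m$, and $n=\omega$ forces $m=\omega$ but not $\ell=k$. So these instances cannot be discarded by reinterpreting the statement.

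Your first option does work, so carry it out. Put
\[
Q=\{x<n : x\in P_e \ \text{and}\ |S_e\cap[0,x]|\le x-(k-\ell)\}.
\]

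\emph{Why $Q$ is $\Pi_1$.} Since $|P_e\cap[0,x]|=x+1-|S_e\cap[0,x]|$, the second clause says $|P_e\cap[0,x]|\ge k-\ell+1$. It is $\Pi_1$ because its negation, ``at least $x-(k-\ell)+1$ elements of $[0,x]$ have been enumerated into $S_e$'', is $\Sigma_1$; this is a bounded count, unlike the count over $[x,\omega)$ in your ``smallest elements'' version. By the $s$-$m$-$n$ theorem there is a total computable $\nu$ with $P_{\nu(e)}=Q$: let $\varphi_{\nu(e)}(x)$ halt iff $x\ge n$, or $\varphi_e(x)\downarrow$, or that many elements of $[0,x]$ have appeared in $S_e$.

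\emph{Why $Q$ is correct.} Assume $|P_e\cap n|=k$ and write $P_e\cap n=\{p_1<\dots<p_k\}$. Then $|P_e\cap[0,p_i]|=i$, because elements of $P_e$ that are $\ge n$ lie above $p_i$. So $p_i\in Q$ iff $i\ge k-\ell+1$, and $Q=\{p_{k-\ell+1},\dots,p_k\}$ consists of the $\ell$ largest elements. It has exactly $\ell$ elements, all in $P_e\cap n\subseteq m$, so $\nu(e)$ is in the target's actual domain and your outer reduction $\psi$ is correct.

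This construction works uniformly for every $n\le\omega$, so you can drop the ``$\ell$ smallest'' construction entirely.
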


In particular, the following relationship generally holds among the types of choice problems (for a fixed number of options):
\[
\small
\xymatrix@R=0.1cm{
& \fbox{\text{unique choice}} \ar@/^0.5em/[dr]^-{\leq_{\sf tW}} & \\
\fbox{\text{co-unique choice}} \ar@/^0.5em/[ur]^-{\leq_{\sf tW}}\ar@/_0.5em/[dr]_-{\leq_{\sf tW}}& & \fbox{\text{choice}} \\
& \fbox{\text{all-or-co-unique choice}}\ar@/_0.5em/[ur]_-{\leq_{\sf tW}}&
}
\]

Note here that the direction of the arrows is the reverse of that for the logical entailment relationship; that is, ``choice'' is the strongest, and ``co-unique choice'' is the weakest.

We show that the $\Sigma_1$-choice problem and the $\Sigma_1$-unique-choice problem have the same strength.

\begin{prop}\label{prop:basic-Sigma-1-choice}
$\Sigma_{1}\text{-}{\sf C}_{1/n}\equiv_{\sf tW}\Sigma_1\text{-}{\sf C}_{\geq 1/n}$ for any $n\leq \om$.
\end{prop}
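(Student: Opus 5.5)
One direction is immediate from Observation \ref{obs:basic-relations-choice}: $\Sigma_1\text{-}{\sf C}_{1/n}\leq_{\sf tW}\Sigma_1\text{-}{\sf C}_{\geq 1/n}$ via the identity maps. Both problems have potential domain $\om$ and potential codomain $n$, so totality is automatic. Any $e$ with $|S_e|=1$ also satisfies $|S_e|\geq 1$, and solutions coincide.

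For the converse, $\Sigma_1\text{-}{\sf C}_{\geq 1/n}\leq_{\sf tW}\Sigma_1\text{-}{\sf C}_{1/n}$, the plan is to take a nonempty $\Sigma_1$ set and cut it down to a singleton by enumerating only its first element. By the $s$-$m$-$n$ theorem there is a total computable $\nu$ such that $\varphi_{\nu(e)}$ does the following: run a dovetailed enumeration of $S_e\cap n$, i.e.\ search over stages $s$ and $m<n$ (with $m<n$ vacuous for $n=\om$) for $\varphi_e(m)$ converging in $s$ steps, in a fixed order. On input $k$, $\varphi_{\nu(e)}(k)$ halts iff $k$ is the first element found by this search. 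Then $S_{\nu(e)}$ has at most one element. Moreover, $S_{\nu(e)}$ is nonempty exactly when $S_e\cap n\neq\emptyset$, and then $S_{\nu(e)}=\{m_0\}$ with $m_0\in S_e$. The outer reduction is $\psi(e,y)=y$, which is total on $\om\times n$ and maps into $n$.

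We must check that $e\in\dom(\Sigma_1\text{-}{\sf C}_{\geq1/n})$ implies $\nu(e)\in\dom(\Sigma_1\text{-}{\sf C}_{1/n})$. This is where the reading of the definition matters: $|S_e|\geq1$ should be read relative to the option set $n$, i.e.\ $|S_e\cap n|\geq1$, consistently with the solution set $\{k<n:k\in S_e\}$ being nonempty, as the paper requires of potted problems. Under that reading, the first-found element exists and lies in $S_e\cap n$. So any $y\in\Sigma_1\text{-}{\sf C}_{1/n}(\nu(e))$ equals $m_0$ and is a valid solution for $e$.

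The only delicate point is this bookkeeping: the search must range only over $m<n$, so that the singleton produced lies in $n$. I expect no real obstacle, since totality of both $\nu$ and $\psi$ is trivial here. Note that $\nu$ is total even though $\varphi_{\nu(e)}$ is partial; the unbounded search is hidden inside the $\Sigma_1$ code rather than performed by the reduction.
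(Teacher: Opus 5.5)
Your proposal is correct and is essentially the paper's proof: one direction by Observation~\ref{obs:basic-relations-choice}, and for the converse an inner reduction $\nu(e)$ that enumerates only the first element found in $S_e$, with outer reduction $\psi(e,k)=k$. The only difference is that you restrict the search to $m<n$ and read the domain condition as $|S_e\cap n|\geq 1$, which covers finite $n$; the paper treats only the case $n=\om$, so this is a harmless refinement.
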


\begin{proof}
$\sii{\sf C}_{1/n}\tW\sii{\sf C}_{\geq 1/n}$: By Observation \ref{obs:basic-relations-choice}.

\medskip
\noindent
$\sii{\sf C}_{\geq 1/n}\tW\sii{\sf C}_{1/n}$:
We only deal with the case $n=\om$.
Define total reductions $\nu,\psi$ as follows:
\begin{itemize}
\item {\em Inner reduction.}
For each $e\in\om$, consider the following algorithm $\nu(e)$:
Wait for $\varphi_e(k)\down$ for some $k<\om$.
Declare $\varphi_{\nu(e)}(k)\down$ only for the first found such $k$.
Note that, even if $\varphi_{\nu(e)}(k)$ is undefined, the algorithm $\nu(e)$ itself is defined, so $\nu$ is total.
\item {\em Outer reduction.}
For each $e,k\in\om$, put $\psi(e,k)=k$.
Clearly, $\psi$ is total.
\end{itemize}

{\em Verification:}
\begin{itemize}
\item  {\em Inner reduction.}
For each instance $e\in\dom(\sii{\sf C}_{\geq 1/\om})$, the $\Sigma_1$ set $S_e=\{k\in\om:\varphi_{e}(k)\downarrow\}\subseteq\om$ is nonempty.
Then, our definition ensures $\varphi_{\nu(e)}(k)\down$ for a unique $k$, so $S_{\nu(e)}\subseteq S_e$ is a singleton.
Therefore, we have $\nu(e)\in\dom(\sii{\sf C}_{1/\om})$.
\item {\em Outer reduction.}
For each solution $k\in \sii{\sf C}_{1/\om}(\nu(e))=S_{\nu(e)}$, since $S_{\nu(e)}\subseteq S_e$, we get $k\in S_e$.
Therefore, $\psi(e,k)=k\in S_e=\sii{\sf C}_\om(e)$.
\end{itemize}
\end{proof}

We show that the $\Pi_1$-co-unique choice problem, the $\Sigma_1$-co-unique choice problem, and the $\Sigma_1$-all-or-co-unique choice problem have the same strength.

\begin{theorem}\label{thm:basic-Sigma-1-counique}
$\pii{\sf C}_{n-1/n}\equiv_{\sf tW}\Sigma_1\text{-}{\sf C}_{n-1/n}\equiv_{\sf tW}\sii{\sf C}_{\geq n-1/n}$ for any $n\leq\om$.
\end{theorem}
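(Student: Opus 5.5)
The plan is to prove the three equivalences by a cycle of reductions:
\[
\sii{\sf C}_{n-1/n}\tW\sii{\sf C}_{\geq n-1/n}\tW\pii{\sf C}_{n-1/n}\tW\sii{\sf C}_{n-1/n}.
\]
Transitivity comes from Proposition \ref{prop:tW-preorder}. The first link is immediate from Observation \ref{obs:basic-relations-choice}, since the actual domain of $\sii{\sf C}_{n-1/n}$ is contained in that of $\sii{\sf C}_{\geq n-1/n}$ and the solution sets agree.

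\emph{$\pii{\sf C}_{n-1/n}\tW\sii{\sf C}_{n-1/n}$.} An instance $e$ of the $\Pi_1$ problem enumerates exactly one point $c<n$ into $S_e$, and a solution is any $k<n$ with $k\neq c$. Let $\nu(e)$ be the algorithm that halts on input $k$ exactly when it has seen some $c\neq k$ enumerated into $S_e\cap n$. Then $\nu$ is total, and for $e$ in the actual domain we get $S_{\nu(e)}\cap n=n\setminus\{c\}$, an instance of $\sii{\sf C}_{n-1/n}$. Take $\psi(e,k)=k$. This works uniformly for all $n\leq\om$.

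\emph{$\sii{\sf C}_{\geq n-1/n}\tW\pii{\sf C}_{n-1/n}$.} This is the only nontrivial step. For finite $n$, let $\nu(e)$ wait until $n-1$ elements of $n$ have appeared in $S_e$, and then enumerate the first $x$ such that $n\setminus\{x\}\subseteq S_e$ has been observed. Put $\psi(e,k)=k$.
\begin{itemize}
\item If $S_e$ misses some $m<n$, then necessarily $x=m$, and every $k\neq m$ lies in $S_e$.
\item If $S_e\supseteq n$, every $k$ is correct.
\end{itemize}
In both cases $\nu(e)$ enumerates exactly one point, so $\nu(e)$ lies in the actual domain of the $\Pi_1$ problem, and $\psi$ is trivially total.

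The main obstacle is $n=\om$. Here $\psi$ must be total on all of $\om\times\om$, including indices $e$ with $S_e=\emptyset$, so an unbounded search for an element of $S_e$ is not allowed. I would instead define $\psi(e,k)$ by running the enumeration of $S_e$ for $k$ steps: output the first element seen if there is one, and output $k$ otherwise. This is total, and $\psi(e,k)$ fails to lie in $S_e$ only when $k\notin S_e$ and nothing has been enumerated by stage $k$.

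To match this, let $t$ be the stage at which the first element appears in $S_e$. Then $\nu(e)$ waits until at least $t$ of the $t+1$ numbers $0,\dots,t$ have appeared, and enumerates the first $x\leq t$ with $\{0,\dots,t\}\setminus\{x\}\subseteq S_e$ observed.

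For $e$ in the actual domain, let $m$ denote the missing point, if any. Some $x$ is found, so $\nu(e)$ enumerates exactly one point. A bad $k$ must satisfy $k<t$ and $k\notin S_e$, hence $k=m\leq t$, and in that case $x=m$ is forced. So every $k\neq x$ yields a correct answer. I would check carefully the edge conventions (stage indexing, the case $t=0$) when writing this out.
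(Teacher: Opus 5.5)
Your proposal is correct and follows essentially the same route as the paper: the same cycle $\pii{\sf C}_{n-1/n}\tW\sii{\sf C}_{n-1/n}\tW\sii{\sf C}_{\geq n-1/n}\tW\pii{\sf C}_{n-1/n}$, with identity outer reductions everywhere except at $n=\om$. There, as in the paper, the outer reduction outputs the first element enumerated within $k$ steps, and the inner reduction deletes the unique possibly-missing point of an initial segment fixed by the first enumeration stage $t$. Your window $\{0,\dots,t\}$ in place of the paper's $\{0,\dots,t-1\}$ is a harmless and slightly cleaner variant: it always leaves a point to remove, even when $t=0$, and it works under either stage-indexing convention for $\psi$.
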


\begin{proof}
$\pii{\sf C}_{n-1/n}\tW\sii{\sf C}_{n-1/n}$:
\begin{itemize}
\item {\em Inner reduction.}
Consider the following algorithm $\nu(e)$:
Wait for $\varphi_e(k)\down$ for some $k<n$.
For all $j<n$ except for the first found $k<n$, declare $\varphi_{\nu(e)}(j)\down$.
Note that the algorithm $\nu(e)$ itself is always defined, so $\nu$ is total.
\item {\em Outer reduction.}
Given $e,v\in\om$, put $\psi(e,v)=v$.
Clearly, $\psi$ is total.
\end{itemize}

{\em Verification:}
\begin{itemize}
\item {\em Inner reduction.}
For each instance $e\in\dom(\pii{\sf C}_{n-1/n})$, $P_e$ is a co-singleton.
Then there is $k<n$ such that $P_e=\om\setminus\{k\}$; that is, $\varphi_e(k)\down$.
By the definition of $\nu(e)$, we get $S_{\nu(e)}=\{j<n:\varphi_{\nu(e)}(j)\down\}=\om\setminus\{k\}=P_e$.
In particular, $S_{\nu(e)}$ is a co-singleton, so we have $\nu(e)\in\dom(\sii{\sf C}_{n-1/n})$.
\item {\em Outer reduction.}
For each solution $v\in \sii{\sf C}_{n-1/n}(\nu(e))=S_{\nu(e)}$, we have $\psi(e,v)=v\in S_{\nu(e)}=P_e$.
\end{itemize}

\noindent
$\sii{\sf C}_{n-1/n}\tW\sii{\sf C}_{\geq n-1/n}$:
By Observation \ref{obs:basic-relations-choice}.

\medskip
\noindent
$\sii{\sf C}_{\geq n-1/n}\tW\pii{\sf C}_{n-1/n}$ for $n<\om$:
\begin{itemize}
\item {\em Inner reduction.}
Consider the following algorithm $\nu(e)$:
Wait until seeing $\varphi_e(j)\down$ for all $j<n$ except for one input.
If this happens, for the unique $k$ for which $\varphi_e(k)$ is still undefined, declare $\varphi_{\nu(e)}(k)\down$.
For all other inputs $k<n$, $\varphi_{\nu(e)}(k)$ does not halt.
Again, the algorithm $\nu(e)$ itself is always defined, so $\nu$ is total.
\item {\em Outer reduction.}
Given $e,v\in\om$, put $\psi(e,v)=v$.
Clearly, $\psi$ is total.
\end{itemize}

{\em Verification:}
\begin{itemize}
\item {\em Inner reduction.}
For each instance $e\in\dom(\sii{\sf C}_{\geq n-1/n})$, we have $|n\setminus S_e|\leq 1$.
Hence, once we have $j\in S_e$ for all $j<n$ except for one value of $k<n$, we obtain $P_{\nu(e)}=\{j<n:\varphi_{\nu(e)}(j)\upar\}=\om\setminus\{k\}\subseteq S_e$.
In particular, $P_{\nu(e)}$ is a co-singleton, so we have $\nu(e)\in\dom(\pii{\sf C}_{n-1/n})$.
\item {\em Outer reduction.}
For each solution $v\in \pii{\sf C}_{n-1/n}(\nu(e))=P_{\nu(e)}$, we have $\psi(e,v)=v\in S_{\nu(e)}=P_e$.
\end{itemize}

\medskip
\noindent
$\sii{\sf C}_{\geq\om-1/\om}\tW\pii{\sf C}_{\om-1/\om}$:
In the case of $\om$, a slightly different argument is required.
Consider the following total reductions:
\begin{itemize}
\item {\em Inner reduction.}
Consider the following algorithm $\nu(e)$:
Wait for a stage $t$ such that some $k$ is enumerated into $S_e$.
Let $t$ be the first stage.
Then wait until all elements less than $t$, except for at most one element $u<t$, are enumerated into $S_e$.
If this happens, we remove $u$ from $P_{\nu(e)}$; that is, $P_{\nu(e)}=\om\setminus\{u\}$.
If this never happens, we have $P_{\nu(e)}=\om$.
The algorithm $\nu(e)$ is explicitly defined, so $\nu$ is total.
\item {\em Outer reduction.}
For each $e,v\in\om$, if no elements have been enumerated into $S_e$ by stage $v$, then put $\psi(e,v)=v$.
Otherwise, for the first such element $k\in S_e$, put $\psi(e,v)=k$.
Since the value $\psi(e,v)$ is always determined, $\psi$ is total.
\end{itemize}

{\em Verification:}
\begin{itemize}
\item {\em Inner reduction.}
For each instance $e\in\dom(\sii{\sf C}_{\geq \om-1/\om})$, some element $k$ must be enumerated into $S_e$ at some stage $t$.
Moreover, all elements less than $t$, except for at most one element $u<t$, are enumerated into $S_e$ by some stage (it is possible that $u$ will be enumerated into $S_e$ at a later stage).
Then our construction ensures $P_{\nu(e)}=\om\setminus\{u\}$, so $P_{\nu(e)}$ is a co-singleton.
Hence. we have $\nu(e)\in\dom(\pii{\sf C}_{\om-1/\om})$.
\item {\em Outer reduction.}
For each solution $v\in \pii{\sf C}_{\om-1/\om}(\nu(e))=P_{\nu(e)}$, if $v\geq t$ then we have $\psi(e,v)=k\in S_e$.
For $v<t$, if $v\in P_{\nu(e)}$ then $v\not=u$, and by our choice of $u$, $v$ is enumerated into $S_e$ before $u$.
Hence, $\psi(e,v)=v\in S_e$.
In any case, we get $\psi(e,v)\in S_e=\sii{\sf C}_{\geq\om-1/\om}(e)$.
\end{itemize}
\end{proof}

%
%

All of the $\Sigma_1$-choice problems discussed so far are computably solvable using unbounded search.
Therefore, the complexity of these choice problems is all trivial in terms of Weihrauch degrees.
Interestingly, when considering the ${\sf tW}$-degrees, we will get the following sequence of $\Sigma_1$-choice problems:
\begin{align}\label{equ:hierarchy-choice}
\Sigma_1\text{-}{\sf C}_{\om-1/\om}<_{\sf tW}\cdots&<_{\sf tW}\Sigma_1\text{-}{\sf C}_{n/n+1}<_{\sf tW}\Sigma_1\text{-}{\sf C}_{n-1/n}<_{\sf tW}\cdots<_{\sf tW}\Sigma_1\text{-}{\sf C}_{1/2}<_{\sf tW}\cdots \tag{$\star$} \\
\cdots&<_{\sf tW}\cdots<_{\sf tW}\Sigma_1\text{-}{\sf C}_{1/n}<_{\sf tW}\Sigma_1\text{-}{\sf C}_{1/n+1}<_{\sf tW}\cdots<_{\sf tW}\Sigma_1\text{-}{\sf C}_{1/\om}.\notag
\end{align}

Below, we show the strictness of the hierarchy of the $\Sigma_1$-unique-choice problems $\sii{\sf C}_{1/n}$.
We postpone the separation of the hierarchy of the $\Sigma_1$-co-unique-choice problems $\sii{\sf C}_{n-1/n}$ until Section \ref{sec:computable-separation}, where we provide a separation in a stronger form.

\begin{theorem}\label{thm:UC-hierarchy}
$\Sigma_1\text{-}{\sf C}_{1/n}<_{\sf tW}\Sigma_1\text{-}{\sf C}_{1/n+1}$.
\end{theorem}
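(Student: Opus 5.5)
The plan has two parts. The reduction $\sii{\sf C}_{1/n}\tW\sii{\sf C}_{1/n+1}$ follows from Observation~\ref{obs:basic-relations-choice} with $k=\ell=1$ and $n\leq n+1$. Concretely, take $\nu(e)=e$ and $\psi(e,y)=y$ if $y<n$, and $\psi(e,y)=0$ otherwise. This is total, and correct because any solution $y\in S_e$ with $|S_e|=1$ and $S_e\subseteq n$ is $<n$. (If $\dom(\sii{\sf C}_{1/n})$ is read literally as $|S_e|=1$ without $S_e\subseteq n$, I would first restrict $\nu(e)$ to enumerate only $S_e\cap n$.) So the whole content is the strict inequality $\sii{\sf C}_{1/n+1}\not\tW\sii{\sf C}_{1/n}$. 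I will prove it by a diagonalization using the recursion theorem.

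Suppose $(\nu,\psi)$ witnessed $\sii{\sf C}_{1/n+1}\tW\sii{\sf C}_{1/n}$. By the recursion theorem, I build an index $e$ whose enumeration $S_e\subseteq n+1$ may consult $\nu(e)$ and $\psi(e,\cdot)$. Since $\nu$ and $\psi$ are total, $\nu(e)$ converges and $\psi(e,j)$ converges for every $j<n$. The construction of $e$ is as follows. Compute the $n$ values $\psi(e,0),\dots,\psi(e,n-1)$. Each lies in $n+1$, so by pigeonhole some $m\le n$ is not among them. Enumerate exactly $S_e=\{m\}$. Then $e\in\dom(\sii{\sf C}_{1/n+1})$, so $\nu(e)\in\dom(\sii{\sf C}_{1/n})$, and there is a $j<n$ with $j\in S_{\nu(e)}$. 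Condition (4) then forces $\psi(e,j)\in S_e=\{m\}$, contradicting the choice of $m$.

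The crucial point is that totality of $\psi$ on $\dom_+(F)\times\cod_+(G)=\om\times n$ lets us evaluate $\psi(e,j)$ for all $j<n$ before knowing which $j$ the oracle returns. This is exactly what fails for ordinary Weihrauch reductions. I expect the main obstacle to be the self-reference: $\psi(e,j)$ depends on $e$, and $e$ is defined from those values. I will handle this cleanly with Kleene's recursion theorem, taking $e$ as a fixed point of a total computable $h$. Here $\varphi_{h(d)}$ computes $\psi(d,j)$ for $j<n$, picks the least $m\le n$ missing from these values, and halts exactly on input $m$. Because $\psi$ is total on all of $\om\times n$, this runs without error at every $d$, including the fixed point. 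The case $n=\om$ does not arise, since the statement concerns finite $n$.
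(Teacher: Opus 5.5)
Your proposal is correct and follows the paper's own proof essentially verbatim: a recursion-theorem diagonalization that uses totality of $\psi$ on $\om\times n$ to evaluate all $n$ values $\psi(e,j)$ before the oracle answers, pigeonholes a missing $m\le n$, and sets $S_e=\{m\}$. Your explicit treatment of the easy direction, which clips $\psi$ into $n$ and notes the domain reading, is a harmless addition to what the paper leaves to Observation~\ref{obs:basic-relations-choice}.
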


\begin{proof}
Assume $\sii{\sf C}_{1/n+1}\tW\sii{\sf C}_{1/n}$ via $(\nu,\psi)$.
By using the recursion theorem, construct the following algorithm $e$ involving a self-reference:
\begin{enumerate}
\item Wait for $\psi(e,i)\down<n+1$ for all $i<n$.
\item If this happens, by the pigeonhole principle, there is $k<n+1$ such that $k\not=\psi(e,i)$ for any $i<n$.
In this case, declare $\varphi_e(k)\down$, where $\varphi_e$ does not terminate for other inputs.
In other words, $S_{e}=\{k\}$.
\end{enumerate}

{\em Verification:}
\begin{itemize}
\item As $e\in\om=\dom_+(\sii{\sf C}_{1/n+1})$ and $n=\cod_+(\sii{\sf C}_{1/n})$, by totality of $\psi\colon\om\times n\to n+1$, the computation of $\psi(e,i)$ terminates for any $i<n$.
\item Hence, the algorithm $e$ must reach at (2), so $S_e$ becomes a singleton, which means $e\in\dom(\sii{\sf C}_{1/n+1})$.
As $\nu$ is an inner reduction, we get $\nu(e)\in\dom(\sii{\sf C}_{1/n})$; that is, $S_{\nu(e)}=\{i\}$ for some $i<n$.
\item As $\psi$ is an outer reduction, for each solution $i\in S_{\nu(e)}$, we must have $\psi(e,i)\in S_e$; however, our construction ensures $\psi(e,i)\not=k$ and $S_e=\{k\}$, which is impossible.
\end{itemize}
\end{proof}

\subsubsection{Markov's principle}

{\em Markov's principle} states that ``if it is impossible for a computation not to halt, then the computation halts,'' and
this principle enables us to perform an unbounded search.
Logically, it corresponds to the {\em double negation elimination for $\Sigma_1$-formulas} (since the termination of a computation is $\Sigma_1$-complete).
We consider the following potted problems related Markov's principle:

\begin{definition}
The problem ${\sf MP}_{\sf PR}$ (Markov's principle for computations) is defined as follows:
\begin{itemize}
\item ${\rm dom}_+({\sf MP}_{\sf PR})=\om$, and ${\rm cod}_+({\sf MP}_{\sf PR})=\om$.
\item ${\rm dom}({\sf MP}_{\sf PR})=\{e\in\om:\varphi_e(0)\downarrow\}$.
\item ${\sf MP}_{\sf PR}(e)=\{s\in\om:\varphi_e(0)[s]\downarrow\}$
\end{itemize}
\end{definition}

\begin{definition}
The problem ${\sf MP}$ (Markov's principle for functions) is defined as follows:
\begin{itemize}
\item ${\rm dom}_+({\sf MP})=\{e\in\om:\varphi_e\mbox{ is total and two-valued}\}$, and ${\rm cod}_+({\sf MP})=\om$.
\item ${\rm dom}({\sf MP})=\{e\in{\rm dom}_+({\sf MP}):\exists n.\ \varphi_e(n)=1\}$.
\item ${\sf MP}(e)=\{n\in\om:\varphi_e(n)=1\}$
\end{itemize}
\end{definition}

\begin{prop}\label{prop:MP-MPPR-equivalent}
${\sf MP}\equiv_{\sf tW}{\sf MP}_{\sf PR}$.
\end{prop}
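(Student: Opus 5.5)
We prove the two reductions ${\sf MP}\tW{\sf MP}_{\sf PR}$ and ${\sf MP}_{\sf PR}\tW{\sf MP}$ separately. In each case we give explicit total inner and outer reductions, and we use the potential domains/codomains only to check the totality conditions (1) and (2) of Definition \ref{def:total-Weihrauch-reducibility}.

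For ${\sf MP}_{\sf PR}\tW{\sf MP}$, let $\nu(e)$ be an index of the function $n\mapsto 1$ if $\varphi_e(0)[n]\down$ and $n\mapsto 0$ otherwise. Each value is decided by a bounded simulation, so $\varphi_{\nu(e)}$ is total and two-valued for every $e\in\om$. Hence $\nu\colon\om\to\dom_+({\sf MP})$ is total, as required by (1). If $\varphi_e(0)\down$, then some $n$ has $\varphi_{\nu(e)}(n)=1$, which gives $\nu(e)\in\dom({\sf MP})$, i.e.\ condition (3). For the outer reduction take $\psi(e,n)=n$; it is trivially total into $\om=\cod_+({\sf MP}_{\sf PR})$. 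If $\varphi_{\nu(e)}(n)=1$, then $\varphi_e(0)[n]\down$, so $n\in{\sf MP}_{\sf PR}(e)$.

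For ${\sf MP}\tW{\sf MP}_{\sf PR}$, the inner reduction $\nu$ must be defined on $\dom_+({\sf MP})$, which consists of indices of total two-valued functions. Let $\nu(e)$ be an index of the program that, on input $0$, searches for $n$ with $\varphi_e(n)=1$ and halts once one is found. Such an index is always defined, so $\nu$ is total on all of $\om$, and in particular on $\dom_+({\sf MP})$. When $e\in\dom({\sf MP})$, the search succeeds, so $\varphi_{\nu(e)}(0)\down$.

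The outer reduction is the only delicate point. Given $e\in\dom_+({\sf MP})$ and any $s\in\om$, we must output some natural number, and the output must be correct whenever $s$ is a halting time of $\varphi_{\nu(e)}(0)$. We define $\psi(e,s)$ by running $\varphi_{\nu(e)}(0)$ for $s$ steps. If the search has found some $n<s$ with $\varphi_e(n)=1$ within that budget, output that $n$; otherwise output $0$. This simulation is bounded, so $\psi$ is total, and it is not a problem that $\varphi_e$ might be partial outside $\dom_+({\sf MP})$. If $s\in{\sf MP}_{\sf PR}(\nu(e))$, then the search has halted within $s$ steps, so it has found a witness $n$ with $\varphi_e(n)=1$, i.e.\ $n\in{\sf MP}(e)$.

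The main obstacle is making sure that $\psi$ never performs an unbounded evaluation of $\varphi_e$; this is the point where the reduction would fail to be total. We avoid it by extracting the witness $n$ only from the bounded $s$-step simulation of the search. Alternatively, we can evaluate $\varphi_e(0),\dots,\varphi_e(s)$ directly: these calls are total because $e\in\dom_+({\sf MP})$, and since the search can only test inputs below its step count, some $n\le s$ with $\varphi_e(n)=1$ must appear among them.
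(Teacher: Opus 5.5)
Your proposal is correct and follows essentially the same route as the paper. For ${\sf MP}_{\sf PR}\tW{\sf MP}$ you use the primitive recursive $0/1$ indicator of ``$\varphi_e(0)$ has halted by stage $n$'' with the identity as outer reduction, and for ${\sf MP}\tW{\sf MP}_{\sf PR}$ you use the search program together with a stage-$s$ bounded simulation as the outer reduction. Your alternative outer reduction, evaluating $\varphi_e(0),\dots,\varphi_e(s)$ directly, also works, since totality of $\psi$ is only required on $\dom_+({\sf MP})\times\om$.
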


\begin{proof}
($\geq_{\sf tW}$)
Define total reductions $\nu,\psi$ as follows:
\begin{itemize}
\item {\em Inner reduction.}
Each index $e\in\om$ is transformed into the following primitive recursive function:
\[
\varphi_{\nu(e)}(n)=
\begin{cases}
1&\mbox{ if }\varphi_e(0)[n]\downarrow\\
0&\mbox{ otherwise.}
\end{cases}
\]
As $\varphi_{\nu(e)}$ is total and two-valued, we have $\nu(e)\in{\rm dom}_+({\sf MP})$.
In particular, $\nu\colon\dom_+({\sf MP}_{\sf PR})\to\dom_+({\sf MP})$ is total.
\item {\em Outer reduction.}
For each $e,s\in\om$, put $\psi(e,s)=s$.
Clearly, this is total.
\end{itemize}

{\em Verification:}
\begin{itemize}
\item {\em Inner reduction.}
For each instance $e\in{\rm dom}_+({\sf MP}_{\sf PR})$, we have $\varphi_e(0)\down$, which means $\varphi_e(0)[s]\down$ for some $s$; thus $\varphi_{\nu(e)}(s)=1$.
Therefore, $\nu(e)\in{\rm dom}_+({\sf MP})$.
\item {\em Outer reduction.}
For each solution $s\in{\sf MP}(\nu(e))$, we have $\varphi_{\nu(e)}(s)=1$; in particular, $\varphi_e(0)[s]\down$.
Hence, we get $s\in{\sf MP}_{\sf PR}(e)$.
\end{itemize}

($\leq_{\sf tW}$)
Define total reductions $\nu,\psi$ as follows:
\begin{itemize}
\item {\em Inner reduction.}
For each $e\in\mathrm{dom}(\sf MP)$, it is an index of a total function.
Consider the following algorithm $\nu(e)$:
$\varphi_{\nu(e)}(0)$ searches for $n$ such that $\varphi_e(n)=1$, and then outputs the least such $n$.
This always gives some algorithm $\nu(e)$, so $\nu$ is total.
\item {\em Outer reduction.}
For each $e,s$, simulate the computation of $\varphi_{\nu(e)}(0)$ up to stage $s$; if the computation has halted, let $\psi(e,s)$ be the output value, otherwise put $\psi(e,s)=0$.
Then, $\psi$ is total.
\end{itemize}

{\em Verification.}
\begin{itemize}
\item {\em Inner reduction.} For each instance $e\in{\rm dom}({\sf MP})$, we have $\varphi_e(n)=1$ for some $n$.
Then, by our construction, $\varphi_{\nu(e)}(0)$ eventually terminates the computation,  which means $\nu(e)\in{\rm dom}({\sf MP}_{\sf PR})$.
\item {\em Outer reduction.}
For each solution $s\in{\sf MP}_{\sf PR}(\nu(e))$, $\varphi_{\nu(e)}(0)$ terminates the computation by stage $s$; hence, one can find an $n$ such that $\varphi_e(n)=1$ by that stage, and thus $\psi(e,s)=n$.
Therefore, $\psi(e,s)\in{\sf MP}(e)$.
\end{itemize}
\end{proof}

%

In our context, Markov's principle corresponds to the $\Sigma_1$-choice problem (which is equivalent to the $\Sigma_1$-unique-choice problem by Proposition \ref{prop:basic-Sigma-1-choice}) on $\om$.
Thus, Markov's principle is the hardest problem in the hierarchy of $\Sigma_1$-choice problems (\ref{equ:hierarchy-choice})

\begin{prop}\label{prop:MP-as-Sigma-1-choice}
${\sf MP}\equiv_{\sf tW}\Sigma_{1}\text{-}{\sf C}_{1/\om}$.
\end{prop}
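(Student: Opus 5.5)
By Proposition \ref{prop:MP-MPPR-equivalent} we have ${\sf MP}\equiv_{\sf tW}{\sf MP}_{\sf PR}$, and by Proposition \ref{prop:basic-Sigma-1-choice} we have $\sii{\sf C}_{1/\om}\equiv_{\sf tW}\sii{\sf C}_{\geq 1/\om}$. Since $\tW$ is a preorder (Proposition \ref{prop:tW-preorder}), it suffices to prove ${\sf MP}_{\sf PR}\equiv_{\sf tW}\sii{\sf C}_{\geq1/\om}$. Both problems have $\dom_+=\cod_+=\om$, which keeps the totality bookkeeping simple. We give one reduction in each direction.

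\emph{${\sf MP}_{\sf PR}\tW\sii{\sf C}_{\geq 1/\om}$.} For the inner reduction, $\nu(e)$ is an index of the algorithm which on input $s$ halts iff $\varphi_e(0)[s]\down$. This test is primitive recursive in $(e,s)$, so $\nu$ is total. Moreover $S_{\nu(e)}=\{s:\varphi_e(0)[s]\down\}={\sf MP}_{\sf PR}(e)$, which is nonempty whenever $\varphi_e(0)\down$. The outer reduction is $\psi(e,s)=s$. This direction is immediate.

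\emph{$\sii{\sf C}_{\geq 1/\om}\tW{\sf MP}_{\sf PR}$.} For the inner reduction, $\nu(e)$ is an index of the algorithm which on input $0$ dovetails the computations $\varphi_e(k)$ and outputs the first $k$ found with $\varphi_e(k)\down$. This is an index of an algorithm regardless of $e$, so $\nu$ is total, and $\varphi_{\nu(e)}(0)\down$ whenever $S_e\neq\emptyset$.

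For the outer reduction, $\psi(e,s)$ runs $\varphi_{\nu(e)}(0)$ for $s$ steps. If it has halted, $\psi(e,s)$ is the output; otherwise $\psi(e,s)=0$. This is total, since it is a bounded simulation. If $s\in{\sf MP}_{\sf PR}(\nu(e))$, then the computation has halted by stage $s$ with an output $k$ satisfying $\varphi_e(k)\down$. Hence $\psi(e,s)\in S_e$.

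This mirrors the $(\leq_{\sf tW})$ half of Proposition \ref{prop:MP-MPPR-equivalent}.

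The only point needing care is the \emph{default value} of $\psi$ in the second reduction. It must land in $\cod_+(\sii{\sf C}_{\geq1/\om})=\om$, which is trivial here. In general, though, the key idea is that totality is obtained by bounded simulation up to the given stage $s$, never by unbounded search. I expect no genuine obstacle. One could also reduce directly to $\sii{\sf C}_{1/\om}$ by declaring $\varphi_{\nu(e)}$ convergent only at the first found element, exactly as in the proof of Proposition \ref{prop:basic-Sigma-1-choice}.
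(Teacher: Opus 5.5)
Your proof is correct. Your first reduction ${\sf MP}_{\sf PR}\tW\sii{\sf C}_{\geq 1/\om}$ is exactly the paper's. The second half is organized differently.

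The paper does not prove $\sii{\sf C}_{\geq 1/\om}\tW{\sf MP}_{\sf PR}$. It proves $\sii{\sf C}_{\geq 1/\om}\tW{\sf MP}$ directly. The inner reduction is the primitive recursive two-valued function $\varphi_{\nu(e)}(\langle n,s\rangle)=1$ iff $\varphi_e(n)[s]\downarrow$, and the outer reduction is the projection $\psi(e,\langle n,s\rangle)=n$. It then closes the cycle
\[
{\sf MP}\tW{\sf MP}_{\sf PR}\tW\sii{\sf C}_{\geq 1/\om}\tW{\sf MP}
\]
using only the direction ${\sf MP}\tW{\sf MP}_{\sf PR}$ of Proposition \ref{prop:MP-MPPR-equivalent}.

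You instead target ${\sf MP}_{\sf PR}$. Your instance is an unbounded-search algorithm, which is legitimate because $\dom_+({\sf MP}_{\sf PR})=\om$. Your outer reduction is a bounded re-simulation up to the supplied stage. This re-runs the ${\sf MP}\tW{\sf MP}_{\sf PR}$ argument, and you then need both directions of Proposition \ref{prop:MP-MPPR-equivalent}.

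The paper's route shows that no search is needed at all. Because the ${\sf MP}$-solution $\langle n,s\rangle$ already carries the witness, the outer reduction is trivial. Your route keeps everything inside problems with $\dom_+=\cod_+=\om$. It also isolates the general pattern that unbounded search may sit in an instance while the outer reduction only simulates up to the given stage.
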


\begin{proof}
By Propositions \ref{prop:basic-Sigma-1-choice} and \ref{prop:MP-MPPR-equivalent}, it suffices to show ${\sf MP}_{\sf PR}\leq_{\sf tW}\Sigma_1\text{-}{\sf C}_{\geq 1/\om}\leq_{\sf tW}{\sf MP}$.

\medskip
\noindent
${\sf MP}_{\sf PR}\leq_{\sf tW}\Sigma_1\text{-}{\sf C}_{\geq 1/\om}$:
\begin{itemize}
\item {\em Inner reduction.}
Given $e\in\om$, let $\nu(e)$ be an index of the $\Sigma_1$ set $\{s\in\om:\varphi_e(0)[s]\downarrow\}$.
Then $\nu$ is total.
\item {\em Outer reduction.}
For each $e,s$, put $\psi(e,s)=s$.
Clearly, this is total.
\end{itemize}

Obviously, $(\nu,\psi)$ gives the actual reduction.

\medskip
\noindent
$\sii{\sf C}_{\geq 1/\om}\leq_{\sf tW}{\sf MP}$:
\begin{itemize}
\item {\em Inner reduction.} Given $e\in\om$, define $\varphi_{\nu(e)}$ as follows:
\[
\varphi_{\nu(e)}(\langle n,s\rangle)=
\begin{cases}
1&\mbox{ if }\varphi_e(n)[s]\down\\
0&\mbox{ otherwise.}
\end{cases}
\]
As $\varphi_{\nu(e)}$ is total and two-valued, we have $\nu(e)\in\dom_+({\sf MP})$.
In particular, $\nu\colon\dom_+(\sii{\sf C}_{\geq 1/\om})\to\dom_+({\sf MP})$ is total.
\item {\em Outer reduction.} Put $\psi(e,\pair{n,s})=n$. Then $\psi$ is total.
\end{itemize}

{\em Verification:}
\begin{itemize}
\item {\em Inner reduction.}
For each instance $e\in\dom(\sii{\sf C}_{\geq 1/\om})$, we have $S_e=\{n\in\om:\varphi_e(n)\down\}\not=\emptyset$, which means $\varphi_e(n)\down$ for some $n$; in particular, $\varphi_e(n)[s]\down$ for some $s$.
Thus, we get $\nu(e)\in\dom({\sf MP})$.
\item {\em Outer reduction.}
For each solution $\pair{n,s}\in{\sf MP}(\nu(e))$, we have $\varphi_{\nu(e)}(\pair{n,s})=1$.
By our construction, this means $\varphi_e(n)\down$; hence, $\psi(e,\pair{n,s})=n\in S_e$.
\end{itemize}
\end{proof}

A weakened version of Markov's principle is known as {\em disjunctive Markov's principle}, which corresponds to the {\em De Morgan's law for $\Pi_1$ formulas}; see e.g.~\cite{FuKo18}.
There is a hierarchy ${\sf MP}_n^\lor$ of disjunctive Markov's principle in constructive mathematics.


\begin{definition}
For each $n\leq\om$, ${\sf MP}_n^\lor$ is defined as follows:
\begin{itemize}
\item ${\rm dom}_+({\sf MP}_n^\lor)=\{e\in\om:\forall k<n\forall t\in\om.\ \varphi_e(k,t)\down<2\}$, and ${\rm cod}_+({\sf MP}_n^\lor)=n$.
\item ${\rm dom}({\sf MP}_n^\lor)=\{e\in{\rm dom}_+({\sf MP}_n^\lor):\exists! k<n\exists t.\ \varphi_e(k,t)=1\}$.
\item ${\sf MP}_n^\lor(e)=\{k<n:\forall t.\ \varphi_e(k,t)=0\}$
\end{itemize}
\end{definition}

In our context, disjunctive Markov's principle corresponds to the $\Sigma_1$-co-unique choice problem (which is equivalent to the $\Pi_1$-co-unique choice problem and the $\Sigma_1$-all-or-co-unique choice problem by Theorem \ref{thm:basic-Sigma-1-counique}).

\begin{theorem}\label{thm:disjunctive-Markov-counique}
${\sf MP}_n^\lor\equiv_{\sf tW}\Sigma_1\text{-}{\sf C}_{n-1/n}$ for any $n\leq\om$.
\end{theorem}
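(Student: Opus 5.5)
By Theorem \ref{thm:basic-Sigma-1-counique}, $\sii{\sf C}_{n-1/n}\equiv_{\sf tW}\pii{\sf C}_{n-1/n}\equiv_{\sf tW}\sii{\sf C}_{\geq n-1/n}$. Since $\tW$ is a preorder (Proposition \ref{prop:tW-preorder}), it suffices to prove two reductions:
\[
{\sf MP}_n^\lor\tW\pii{\sf C}_{n-1/n}
\qquad\text{and}\qquad
\sii{\sf C}_{\geq n-1/n}\tW{\sf MP}_n^\lor .
\]
In both directions the outer reduction will be essentially the identity $\psi(e,k)=k$. The inner reductions only translate between ``$\varphi_e(k,t)=1$ for some $t$'' and ``$k$ is enumerated into a $\Sigma_1$ set''.

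\textbf{First reduction, ${\sf MP}_n^\lor\tW\pii{\sf C}_{n-1/n}$.}
\begin{itemize}
\item Given $e$, let $\nu(e)$ be the index of the algorithm that, on input $k$, searches for $t$ with $\varphi_e(k,t)=1$ and halts if it finds one. This is always a program, so $\nu$ is total on $\om$.
\item For $e\in\dom({\sf MP}_n^\lor)$, exactly one $k<n$ has some $t$ with $\varphi_e(k,t)=1$.
\item Restricted to $k<n$, this gives $P_{\nu(e)}=n\setminus\{k\}$. I interpret this as a co-singleton in $n$, read the same way as in the earlier proofs for the case $n<\om$; the inputs $\geq n$ are handled by making $\varphi_{\nu(e)}$ halt there.
\item Then every $v\in P_{\nu(e)}$ satisfies $\varphi_e(v,t)=0$ for all $t$, so $\psi(e,v)=v$ is a solution.
\item Totality of $\psi\colon\om\times n\to n$ is immediate.
\end{itemize}

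\textbf{Second reduction, $\sii{\sf C}_{\geq n-1/n}\tW{\sf MP}_n^\lor$.} The subtle point is totality of $\nu$ into $\dom_+({\sf MP}_n^\lor)$: the function $\varphi_{\nu(e)}(k,t)$ must be total and $\{0,1\}$-valued for \emph{every} $e$, with no promise on $S_e$. Moreover, an actual instance requires exactly one $k$ with a $1$, whereas $S_e$ only has at most one element of $n$ missing (it may even contain all of $n$).

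\emph{Case $n<\om$.} Mimic the inner reduction of Theorem \ref{thm:basic-Sigma-1-counique}. Define
\[
\varphi_{\nu(e)}(k,t)=1 \iff \text{at stage } t \text{, all } j<n \text{ except exactly } k \text{ have been enumerated into } S_e \text{, and this is the first stage at which such a situation occurs.}
\]
Otherwise set the value to $0$.
\begin{itemize}
\item This is primitive recursive in $e,k,t$, hence total and two-valued.
\item At most one $k$ ever receives a $1$.
\item For $e$ in the actual domain, such a first stage exists, because at most one element is missing from $S_e$. So exactly one $k$ gets a $1$.
\item Every other $v<n$ is then in $S_e$, since $v\neq k$ was enumerated by that stage. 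Hence $\psi(e,v)=v$ works.
\end{itemize}

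\emph{Case $n=\om$.} Rather than redo the stage argument, I would reduce $\pii{\sf C}_{\om-1/\om}$ (equivalent by Theorem \ref{thm:basic-Sigma-1-counique}) to ${\sf MP}_\om^\lor$. Set
\[
\varphi_{\nu(e)}(k,t)=1 \iff \varphi_e(k)[t]\downarrow \text{ and no } k'\neq k \text{ has } \varphi_e(k')[t']\downarrow \text{ at an earlier-or-equal stage with priority.}
\]
That is, only the first-found halting input is ever flagged. Then:
\begin{itemize}
\item $\varphi_{\nu(e)}$ is total and two-valued.
\item For $e$ with $P_e=\om\setminus\{u\}$, the flagged element is exactly $u$.
\item So ${\sf MP}_\om^\lor(\nu(e))=\om\setminus\{u\}=P_e$, and again $\psi(e,v)=v$.
\end{itemize}

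\textbf{Main obstacle.} I expect the hard part to be the careful handling of totality on the potential domain, namely that $\varphi_{\nu(e)}$ lands in $\dom_+({\sf MP}_n^\lor)$ for arbitrary $e$. This is why the "first stage" trick is needed: it rules out several flags that might otherwise appear on non-instances. The remaining verifications are routine.
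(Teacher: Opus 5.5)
Your proof is correct and is essentially the paper's argument written out in full. The paper likewise invokes Theorem~\ref{thm:basic-Sigma-1-counique} and then simply calls ${\sf MP}_n^\lor\equiv_{\sf tW}\pii{\sf C}_{n-1/n}$ obvious, which is exactly what your identity-outer-reduction translations between a flag $\varphi_e(k,t)=1$ and halting on input $k$ establish.

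One minor remark about finite $n$. Your detour through $\sii{\sf C}_{\geq n-1/n}$ silently needs the convention that at most one element enters $S_e$ per stage. Otherwise, when $S_e\supseteq n$, the last two missing elements may appear at the same stage, and no $k$ is ever flagged. Your $\Pi_1$ construction for $n=\om$ avoids this and works verbatim for every $n\leq\om$. It does not even need the ``first-found'' restriction: on actual instances only one input halts, and membership in $\dom_+({\sf MP}_n^\lor)$ only requires a total $\{0,1\}$-valued flag.
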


\begin{proof}
By Theorem \ref{thm:basic-Sigma-1-counique}, it suffices to show ${\sf MP}_n^\lor\equiv_{\sf tW}\pii{\sf C}_{n-1/n}$, but this is obvious.
\end{proof}

\subsubsection{Lessor Limited Principle of Omniscience LLPO}

Some of $\Pi_1$-choice problems are computably unsolvable.
In classical computability theory, a vast amount of research has been conducted on this topic.
In the context of Weihrauch reducibility, $\Pi_1\text{-}{\sf C}_{\geq k/n}$ is studied under the name ${\sf ACC}_n$, which stands for ``all-or-counique choice''.
Here, it is linked to Richman's ${\sf LLPO}_n$ hierarchy \cite{Ri90}.

\begin{definition}
For each $n\leq\om$, ${\sf LLPO}_n$ is defined as follows:
\begin{itemize}
\item ${\rm dom}_+({\sf LLPO}_n)=\{e\in\om:\forall i<n\forall k\in\om.\ \varphi_e(i,k)\down<2\}$, and ${\rm cod}_+({\sf LLPO}_{n}^\lor)=n$.
\item ${\rm dom}({\sf LLPO}_n)=\{e\in{\rm dom}_+({\sf LLPO}_n):\exists t.\ \varphi_e(k,t)=1\mbox{ for at most one $k<n$}\}$.
\item ${\sf LLPO}_n(e)=\{k<n:\forall t.\ \varphi_e(t)=0\}$
\end{itemize}

Here, we define ${\sf LLPO}={\sf LLPO}_2$.
\end{definition}

\begin{prop}\label{prop:LLPO-AcoUC}
${\sf LLPO}_n\equiv_{\sf tW}\Pi_1\text{-}{\sf C}_{\geq n-1/n}$ for any $n\leq\om$.
\end{prop}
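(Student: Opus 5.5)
The plan is to prove the two reductions separately, each by an explicit pair $(\nu,\psi)$, in the style of Theorem \ref{thm:basic-Sigma-1-counique}. I read the definition of ${\sf LLPO}_n$ with the evident intent: ${\sf LLPO}_n(e)=\{k<n:\forall t.\ \varphi_e(k,t)=0\}$. Here $\varphi_e(k,\cdot)$ is a total two-valued sequence recording whether option $k$ is ``killed'', and at most one option is killed. Also recall that $\dom(\pii{\sf C}_{\geq n-1/n})=\{e:|n\setminus P_e|\leq 1\}$, and solutions are elements of $P_e$.

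\emph{${\sf LLPO}_n\tW\pii{\sf C}_{\geq n-1/n}$.} For the inner reduction, given any $e$, let $\nu(e)$ be the index of the algorithm that, on input $k<n$, searches for $t$ with $\varphi_e(k,t)=1$ and halts if it finds one. This is a uniformly computable index, so $\nu$ is total on $\om=\dom_+(\pii{\sf C}_{\geq n-1/n})$, even when $e\notin\dom_+({\sf LLPO}_n)$. For $e\in\dom({\sf LLPO}_n)$ we get $P_{\nu(e)}=\{k<n:\forall t.\ \varphi_e(k,t)=0\}$, and this misses at most one $k$. For the outer reduction, put $\psi(e,v)=v$ if $v<n$, and $\psi(e,v)=0$ otherwise. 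This is total into $n$ and maps solutions to solutions.

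\emph{$\pii{\sf C}_{\geq n-1/n}\tW{\sf LLPO}_n$.} Given $e$, let $\varphi_{\nu(e)}(k,t)=1$ iff $\varphi_e(k)$ halts at exactly stage $t$ (the first halting stage), and $0$ otherwise. This function is total and two-valued, so $\nu(e)\in\dom_+({\sf LLPO}_n)$. If at most one $k<n$ has $\varphi_e(k)\down$, then at most one row contains a $1$. Moreover, ${\sf LLPO}_n(\nu(e))=P_e\cap n$. Put $\psi(e,v)=v$.

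One subtlety arises for $n=\om$, since then $\pii{\sf C}_{\geq n-1/n}$ refers to $\om\setminus P_e$. The constructions above are uniform in $k$, so they still work; no stage trick like the one in Theorem \ref{thm:basic-Sigma-1-counique} is needed here, because both problems are $\Pi_1$-sided and the kill witness is the same kind of $\Sigma_1$ event on both sides. The only step requiring care is checking totality of $\nu,\psi$ on the potential domains. In particular, for the second reduction the function $\varphi_{\nu(e)}$ must be total even when $e$ is arbitrary; the stagewise definition via $\varphi_e(k)[t]$ guarantees this.
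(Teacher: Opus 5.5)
Your proof is correct. Both reductions work uniformly for $n\leq\om$ and meet the totality conditions on the potential domains: the first searches row $k$ for a $1$ to build $P_{\nu(e)}$, and the second uses the first-halting-stage indicator to build a total two-valued array. The paper's proof just says ``Obvious'', and your argument is the routine translation it leaves implicit, with the details written out.
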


\begin{proof}
Obvious.
\end{proof}

\begin{cor}\label{obs:MP-reducible-to-LLPO}
${\sf MP}_n^\lor\leq_{\sf tW}{\sf LLPO}_n$.
\end{cor}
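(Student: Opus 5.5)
The plan is to chain together equivalences that are already established, so that the corollary reduces to one of the basic comparisons in Observation \ref{obs:basic-relations-choice}. First, Theorem \ref{thm:disjunctive-Markov-counique} gives ${\sf MP}_n^\lor\equiv_{\sf tW}\Sigma_1\text{-}{\sf C}_{n-1/n}$. Theorem \ref{thm:basic-Sigma-1-counique} then gives $\Sigma_1\text{-}{\sf C}_{n-1/n}\equiv_{\sf tW}\Pi_1\text{-}{\sf C}_{n-1/n}$. At the other end, Proposition \ref{prop:LLPO-AcoUC} gives ${\sf LLPO}_n\equiv_{\sf tW}\Pi_1\text{-}{\sf C}_{\geq n-1/n}$. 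By transitivity of $\leq_{\sf tW}$ (Proposition \ref{prop:tW-preorder}), it therefore suffices to show
\[
\Pi_1\text{-}{\sf C}_{n-1/n}\leq_{\sf tW}\Pi_1\text{-}{\sf C}_{\geq n-1/n}.
\]

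This is the "co-unique choice $\leq_{\sf tW}$ all-or-co-unique choice" arrow in the diagram after Observation \ref{obs:basic-relations-choice}, taken with $\Gamma=\Pi_1$. To check it directly, take the identity reductions $\nu(e)=e$ and $\psi(e,v)=v$. Both are total on the potential domains and codomains, since these coincide for the two problems: each has potential domain $\om$ and potential codomain $n$.

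For the verification, suppose $e$ lies in the actual domain of the co-unique problem, so that $|n\setminus P_e|=1$. Then certainly $|n\setminus P_e|\leq1$, so $e$ lies in the actual domain of the all-or-co-unique problem. The two problems have the same solution set $P_e$ on a common instance, so any solution $v$ returned for the second is already a solution for the first.

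The only step needing care is bookkeeping. The definitions of the "$\geq n-k/n$" problems were given explicitly only for $\Sigma_1$, and the $\Pi_1$ versions are defined "in the same manner". One must confirm that the potential domains match exactly, so that the identity inner reduction is total. One must also confirm that the case $n=\om$ is covered by the modified co-unique definitions with $|\om\setminus P_e|$. Neither point poses a real obstacle.
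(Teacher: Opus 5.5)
Your proposal is correct and is essentially the paper's argument. The paper gives no separate proof, but it obtains the corollary exactly this way: it chains ${\sf MP}_n^\lor\equiv_{\sf tW}\Sigma_1\text{-}{\sf C}_{n-1/n}\equiv_{\sf tW}\Pi_1\text{-}{\sf C}_{n-1/n}$ (Theorems~\ref{thm:disjunctive-Markov-counique} and~\ref{thm:basic-Sigma-1-counique}) with the co-unique $\leq_{\sf tW}$ all-or-co-unique comparison from Observation~\ref{obs:basic-relations-choice} and with Proposition~\ref{prop:LLPO-AcoUC}, the same combination it states explicitly in the proof of Corollary~\ref{cor:LLPOk-not-reduce-WLEMkp1}; your identity-reduction check just spells out that observation.
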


%

\begin{cor}\label{cor:LLPO-eq-C-2}
${\sf LLPO}\equiv_{\sf tW}\pii{\sf C}_{\geq 1/2}$.
\end{cor}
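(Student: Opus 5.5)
The plan is to specialize Proposition \ref{prop:LLPO-AcoUC} to $n=2$, which gives ${\sf LLPO}={\sf LLPO}_2\equiv_{\sf tW}\pii{\sf C}_{\geq 1/2}$ once we identify $\pii{\sf C}_{\geq 2-1/2}$ with $\pii{\sf C}_{\geq 1/2}$. This identification is the only thing to check. Both problems have potential domain $\om$ and potential codomain $2$, and both send $e$ to the set of $k<2$ with $\varphi_e(k)\upar$. Their actual domains also agree. $\pii{\sf C}_{\geq 1/2}$ has actual domain $\{e:|P_e\cap 2|\geq 1\}$. The all-or-co-unique version $\pii{\sf C}_{\geq 2-1/2}$ has actual domain $\{e:|2\setminus P_e|\leq 1\}$. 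Here $P_e$ is taken relative to the $n=2$ options, and having at most one of the two options fail is the same as having at least one succeed.

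So the two potted problems are literally the same triple, and the claim follows from Proposition \ref{prop:LLPO-AcoUC} together with reflexivity and transitivity of $\leq_{\sf tW}$ (Proposition \ref{prop:tW-preorder}).

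If one prefers not to rely on the notational coincidence, I would instead give the two reductions directly. Both are routine, using the identity outer reduction $\psi(e,k)=k$.

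For ${\sf LLPO}\leq_{\sf tW}\pii{\sf C}_{\geq 1/2}$, take $\nu(e)$ to be an index whose $k$th computation ($k<2$) searches for $t$ with $\varphi_e(k,t)=1$ and halts iff it finds one. Then $P_{\nu(e)}\cap 2$ is exactly ${\sf LLPO}(e)$, and the ${\sf LLPO}$ domain condition ("at most one $k$ has a witness") ensures this set is nonempty.

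For the converse, send $e$ to the total two-valued index computing $(k,t)\mapsto[\varphi_e(k)[t]\down]$. This lands in $\dom_+({\sf LLPO})$ for every $e$, so totality of $\nu$ holds. If $e$ is in the actual domain, at most one $k<2$ halts, so $\nu(e)\in\dom({\sf LLPO})$, and the solutions coincide.

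There is no real obstacle. The only point requiring care is matching the quantifier conventions, "$\geq n-1/n$" versus "$\geq 1/n$" and "at most one $k$ has a witness", at $n=2$. I would state that identification explicitly.
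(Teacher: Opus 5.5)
Your proposal is correct and follows the paper's route: the corollary is stated without separate proof as the $n=2$ instance of Proposition~\ref{prop:LLPO-AcoUC}, with ${\sf LLPO}={\sf LLPO}_2$. Your observation that $|2\setminus P_e|\leq 1$ iff $|P_e\cap 2|\geq 1$ is exactly the identification of $\pii{\sf C}_{\geq 2-1/2}$ with $\pii{\sf C}_{\geq 1/2}$ that this requires. The direct reductions you sketch as a fallback are also correct; they just unfold the ``obvious'' proof of that proposition at $n=2$.
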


An important property of ${\sf LLPO}_n$ is that it is totalizable in the following sense:

\begin{lemma}\label{lem:LLPO-totalizable}
For each $n\leq\om$,
${\sf LLPO}_n$ is total Weihrauch equivalent to a total problem $L_n\colon\om\tto\om$.
\end{lemma}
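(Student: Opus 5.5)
We pass through Proposition \ref{prop:LLPO-AcoUC} and work with $\pii{\sf C}_{\geq n-1/n}$; by transitivity of $\tW$ (Proposition \ref{prop:tW-preorder}), it suffices to find a total problem $L_n\colon\om\tto\om$ (that is, $\dom(L_n)=\dom_+(L_n)=\om$, with $L_n(e)\neq\emptyset$ for every $e$) that is $\equiv_{\sf tW}$ to $\pii{\sf C}_{\geq n-1/n}$. The idea is to repair every potential instance $e$ so that it becomes an actual instance. Using $\varphi_e$ only through a \emph{time-bounded} simulation, we define a modified $\Sigma_1$ set $S^*_e\subseteq n$ that is always a legal instance: at most one element of $n$ is ever enumerated. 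Concretely, enumerate $S_e\cap n$ stage by stage and copy into $S^*_e$ only the first element that appears, ignoring all later ones. Then set
\[L_n(e)=n\setminus S^*_e=\{k<n: k\notin S^*_e\},\]
with $\cod_+(L_n)=n\subseteq\om$. For $n\geq 2$ this set is always nonempty, since $|S^*_e|\le 1$. For $n=\om$ it is nonempty for the same reason.

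For ${\sf LLPO}_n\leq_{\sf tW}L_n$ (equivalently $\pii{\sf C}_{\geq n-1/n}\tW L_n$), take $\nu(e)=e$ and $\psi(e,k)=k$, both trivially total. If $e\in\dom(\pii{\sf C}_{\geq n-1/n})$, then $|S_e\cap n|\le1$. So $S^*_e=S_e\cap n$, and any $k\in L_n(e)$ lies in $P_e$. For the converse $L_n\tW\pii{\sf C}_{\geq n-1/n}$, the inner reduction $\nu$ sends $e$ to an index $e^*$ of the machine that halts exactly on $S^*_e$; this is total by the s-m-n theorem. Then $e^*$ always lies in the actual domain, $P_{e^*}\cap n=L_n(e)$, and we can take $\psi(e,k)=k$.

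The main point needing care is the translation from the $\Pi_1$-choice formulation back to ${\sf LLPO}_n$ proper. There, $\dom_+({\sf LLPO}_n)$ consists only of indices of total two-valued functions, so the reductions through Proposition \ref{prop:LLPO-AcoUC} must respect potential domains. Alternatively, we can define $L_n$ directly on codes $e$ of ${\sf LLPO}_n$-instances. In that version we would replace $\varphi_e(i,t)$ by the totalized function $\hat\varphi_e(i,t)=1$ iff $\varphi_e(i,t)[t]\down=1$ and no other $j<n$ has shown a $1$ earlier; that function is primitive recursive in $e$ and therefore total. This gives an inner reduction $\om\to\dom_+({\sf LLPO}_n)$ that is itself total, which is the step I expect to require the most checking. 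The other checks are routine, since the identity maps serve as the outer reductions.
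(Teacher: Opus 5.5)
Your main argument is correct and is essentially the paper's proof. Your $S^*_e$ is the paper's ``first terminating input'' construction, and $L_n(e)=n\setminus S^*_e$ is the same set. The two reductions ($\nu(e)=e$, resp.\ $\nu(e)=e^*$, each with $\psi(e,k)=k$) are the paper's. As in the paper, the passage to ${\sf LLPO}_n$ itself, including the potential-domain issue you raise, is absorbed by Proposition~\ref{prop:LLPO-AcoUC}. Your choice $\cod_+(L_n)=n$ is slightly cleaner: with the paper's $\cod_+(L_n)=\om$, the outer map $\psi(e,v)=v$ is not total into $n$ for finite $n$.

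The direct alternative in your last paragraph is unnecessary, and as written it has a slip. Bounding the computation of $\varphi_e(i,t)$ by $t$ steps can miss a $1$ that takes longer to compute. You would have to dovetail over stages instead, with a tie-break as in the paper's ``first terminating input''.
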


\begin{proof}
For a given algorithm $e$, we say that $k<n$ is the first terminating input for $\varphi_e$ if there is $s$ such that $\varphi_e(k)[s]\downarrow$ and $\varphi_e(\ell)[t]\uparrow$ for any $\ell<n$ and $\pair{\ell,t}<\pair{k,s}$.
Then define $L_n$ as follows:
\begin{itemize}
\item $\dom(L_n)=\dom_+(L_n)=\cod_+(L_n)=\om$.
\item $L_n(e)=\{k<n:k\mbox{ is not the first terminating input for $\varphi_e$}\}$.
\end{itemize}

\noindent
$\pii{\sf C}_{\geq n-1/n}\tW L_n$:
\begin{itemize}
\item {\em Reductions.}
Put $\nu(e)=e$ and $\psi(e,v)=v$.
Clearly, $\nu$ and $\psi$ are total.
\end{itemize}

{\em Verification:}
\begin{itemize}
\item {\em Inner reduction.}
Regardless of whether $e\in\dom(\pii{\sf C}_{\geq n-1/n})$ or not, $\nu(e)=e\in\dom(L_n)=\om$.
\item {\em Outer reduction.}
Given an instance $e\in\dom(\pii{\sf C}_{\geq n-1/n})$ and a solution $v\in L_n(\nu(e))$, if $P_e=n$ then, as $v<n$, we clearly have $\psi(e,v)=v\in P_e$.
If $P_e\not=n$ then, since $P_e$ is a co-singleton, there is a unique $k<n$ such that $P_e=n\setminus\{k\}$; thus, $\varphi_e(k)\down$.
By uniqueness, such an $k<n$ must be the first terminating input for $\varphi_e$.
By definition., $v\in L_n(e)$ is not the first terminating input; hence, $v\not=k$.
Therefore, we get $\psi(e,v)=v\in n\setminus\{k\}=P_e=\pii{\sf C}_{\geq n-1/n}(e)$.
\end{itemize}

\noindent
$L_n\tW\pii{\sf C}_{\geq n-1/n}$:
\begin{itemize}
\item {\em Inner reduction.}
Let $\nu(e)$ be the following algorithm:
Wait for $\varphi_e(k)\down$ for some $k<n$.
For the least such $k<n$, declare $\varphi_{\nu(e)}(k)\down$.
For other inputs, the computation does not halt.
In other words, declare $\varphi_{\nu(e)}(k)\down$ only for the first terminating input $k<n$
Clearly, $\nu$ is total.
\item {\em Outer reduction.}
For each $e,v\in\om$, put $\psi(e,v)=v$.
Clearly, $\psi$ is total.
\end{itemize}

{\em Verification:}
\begin{itemize}
\item {\em Inner reduction.}
For each $e\in\om$, we have declared $\varphi_{\nu(e)}(k)\down$ for at most one $k<n$.
This means $|n\setminus P_{\nu(e)}|\leq 1$; thus, $n\in\dom(\pii{\sf C}_{\geq n-1/n})$.
\item {\em Outer reduction.}
For each solution $v\in \pii{\sf C}_{\geq n-1/n}(e)=P_{\nu(e)}$, if $P_{\nu(e)}=n$ then there is no terminating input $k>n$ for $\varphi_e$. which means $v\in L_n(e)$.
If $P_{\nu(e)}\not=n$ then we have $P_{\nu(e)}=n\setminus\{k\}$, where $k$ must be the first terminating input for $\varphi_e$.
Since $v\in P_{\nu(e)}$, we have $v\not=k$; hence, $v$ is not the first terminating input for $\varphi_e$.
This means $v\in L_n(e)$.
In any case, we get $\psi(e,v)=v\in L_n(e)$.
\end{itemize}
\end{proof}

The results in Section \ref{sec:Weihrauch-problem} can be summarized as follows:
\[
\begin{tikzcd}[row sep=large, column sep=large]
&
|[draw]|
{\shortstack{
${\sf MP}$\\[0.2em]
$\sii{\sf C}_{\geq 1/\om}$;\quad
$\sii{\sf C}_{1/\om}$
}}
\\
|[draw]|
{\shortstack{
${\sf LLPO}_n$\\[0.2em]
$\pii{\sf C}_{\geq n-1/n}$
}}
&
|[draw]|
{\shortstack{
${\sf MP}_n^\lor$\\[0.2em]
$\sii{\sf C}_{\geq n-1/n}$;\quad
$\sii{\sf C}_{n-1/n}$;\quad
$\pii{\sf C}_{n-1/n}$
}}
\arrow[l, "\geq"]
\arrow[u, "\rotatebox{-90}{$\geq$}"]
\end{tikzcd}
\]

\subsection{Join of problems}
%
%

%
%

The Weihrauch degrees form a lattice, and in particular, the join (the least upper bound) of two problems is given by the sum operation \cite{BGP21}.

\begin{definition}
Let $F\pcolon X\tto Y$ and $G\pcolon Z\tto W$ be potted problems.
Then define the sum $F+G\pcolon X+Z\tto Y+W$ as the following potted problem:
\begin{itemize}
\item $\dom_+(F+G)=X+Y=\{\pair{0,x}:x\in X\}\cup\{\pair{1,y}:y\in Y\}$.
\item $\cod_+(F+G)=Z+W$.
\item $\dom(F+G)=\dom(F)+\dom(G)$.
\item $(F+G)(0,x)=\pair{0,F(x)}$ and $(F+G)(1,y)=\pair{1,G(y)}$.
\end{itemize}
\end{definition}

Note that $(F+G)(i,x)$ is of the form $\pair{i,z}$, but since this notation can often be cumbersome, this is sometimes simply written as $z$.

\begin{prop}\label{prop:sum-join}
Let $F,G$ be potted problems.
Then $F+G$ is the least upper bound of $F$ and $G$ with respect to $\leq_{\sf tW}$.
\end{prop}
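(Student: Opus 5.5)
We prove two things: that $F+G$ is an upper bound of $F$ and $G$, and that it lies below every common upper bound. Throughout we read the definition with its evident intent, namely $\dom_+(F+G)=X+Z$ and $\cod_+(F+G)=Y+W$ (the displayed definition swaps $Y$ and $Z$). The only new feature compared with the standard Weihrauch argument is totality on the potential domains and codomains. This is where the care goes, especially for outer reductions fed a tagged answer with the "wrong" tag.

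\emph{Upper bound.} For $F\tW F+G$, take $\nu(x)=\pair{0,x}$, which maps $\dom_+(F)=X$ into $X+Z$ and $\dom(F)$ into $\dom(F+G)$. For the outer reduction we need a total $\psi\colon X\times(Y+W)\to Y$. Set $\psi(x,\pair{0,y})=y$, and send any $\pair{1,w}$ to the default value $0\in Y$; this is legitimate because the definition of a potted problem requires $0\in Y$. Such pairs never occur as genuine solutions of $(F+G)(\pair{0,x})=\pair{0,F(x)}$, so conditions (3) and (4) of Definition~\ref{def:total-Weihrauch-reducibility} follow immediately. The reduction $G\tW F+G$ is symmetric.

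\emph{Least.} Suppose $F\tW H$ via $(\nu_0,\psi_0)$ and $G\tW H$ via $(\nu_1,\psi_1)$. Define the inner reduction by $\nu(\pair{i,x})=\nu_i(x)$, which is total on $X+Z$ and preserves actual domains. Define the outer reduction by
\[
\psi(\pair{0,x},y)=\pair{0,\psi_0(x,y)},\qquad \psi(\pair{1,z},y)=\pair{1,\psi_1(z,y)}.
\]
For $\pair{i,x}\in X+Z$ and $y\in\cod_+(H)$, the totality of $\psi_i$ on $\dom_+\times\cod_+(H)$ guarantees that $\psi$ is total with values in $Y+W$. Correctness, condition (4), is inherited componentwise from (4) for $\psi_0$ and $\psi_1$.

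The main obstacle is really a bookkeeping point: the elements of $X+Z$ are codes $\pair{i,x}$, so any $n\in\om$ that is not of this form lies outside the potential domain. Totality is only demanded on $\dom_+$, so such $n$ impose nothing, and the case split on the tag $i$ is decidable. Hence all the maps above are partial computable and total where required, and the proof should go through without difficulty.
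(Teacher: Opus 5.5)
Your proof is correct and is essentially the paper's own argument. The upper bound uses the tag injections $x\mapsto\pair{i,x}$ with outer reduction returning $0$ on wrongly tagged answers. Leastness uses the tag-wise combination $\nu(\pair{i,x})=\nu_i(x)$ and $\psi(\pair{i,x},y)=\pair{i,\psi_i(x,y)}$. Your corrected reading of the definition ($\dom_+(F+G)=X+Z$, $\cod_+(F+G)=Y+W$) and your justification of the default value via $0\in Y$ (rather than the paper's stated $0\in Y_{1-i}$) are both the right ones.
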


\begin{proof}
{\em Upper bound:}
Let $F_i\pcolon X_i\tto Y_i$ be given for each $i<2$.
We show $F_i\tW F_0+F_1$.
\begin{itemize}
\item {\em Inner reduction.} Define $\iota_i\colon X_i\to X_0+X_1$ as $\iota_i(x)=(i,x)$.
Clearly, this is total.
\item {\em Outer reduction.} 
Define $\psi_i\colon X_i\times(Y_0+Y_1)\to Y_i$ as $\psi_i(x,\pair{i,y})=y$ and $\psi_i(x,\pair{1-i,y})=0$.
Here, $y\in Y_i$, and recall that $0$ is always contained in the potential domain; in particular, $0\in Y_{1-i}$.
Hence, $\psi_i$ is total.
\end{itemize}

{\em Verification:}
\begin{itemize}
\item {\em Inner reduction.} For each instance $x\in\dom(F_i)$, we have $\iota_i(x)=(i,x)\in\dom(F_0+F_1)$.
\item {\em Outer reduction.} For each solution $\pair{k,y}\in (F_0+F_1)(\iota_i(x))=\pair{i,F_i(x)}$, we clearly have $\psi_i(x,\pair{k,y})=y\in F_i(x)$.
\end{itemize}

{\em Least:}
Let $G\pcolon Z\tto W$ be a potted problem such that $F_0,F_1\tW G$, where $F_i\tW G$ is witnessed by total reductions $(\nu_i,\psi_i)$.
We show $F_0+F_1\tW G$.
\begin{itemize}
\item {\em Inner reduction.}
Define $\nu\colon X_0+X_1\to Z$ as $\nu(i,x)=\nu_i(x)$.
Since each $\nu_i\colon X_i\to Z$ is total, so is $\nu$.
\item {\em Outer reduction.}
Define $\psi\colon (X_0+X_1)\times W\to Y_0+Y_1$ as $\psi(\pair{i,x},y)=\pair{i,\psi_i(x,y)}$.
Since each $\psi_i\colon X_i\times W\to Y_i$ is total, so is $\psi$.
\end{itemize}

{\em Verification:}
\begin{itemize}
\item {\em Inner reduction.}
For each instance $\pair{i,x}\in\dom(F_0+F_1)$, we have $\nu(i,x)=\nu_i(x)\in\dom(G)$.
\item {\em Outer reduction.}
For each solution $y\in G(\nu(i,x))$, we have $\psi(\pair{i,x},y)=\pair{i,\psi_i(x,y)}\in\{i\}\times F_i(x)=(F_0+F_1)(\pair{i,x})$.
\end{itemize}
\end{proof}

One can also introduce other Weihrauch operations such as the meet operation for potted problems; however, we will omit them in this article since we do not use them.

\section{Many-Query Reducibility}\label{sec:main-many-query-reducibility}

\subsection{Oracle-as-polynomial}

In computability theory, there are various notion of reducibility; among them, {\em Turing reducibility}---the most fundamental---allows for {\em multiple queries} to an oracle.
In contrast, Weihrauch reducibility $\leq_{\sf W}$ is a notion of {\em single-query} reducibility.
An extension of this notion to multi-query reducibility $\gw$ is provided via what is known as a {\bf reduction game} \cite{HiJo16}.
Here, we introduce the multi-query version of total Weihrauch reducibility $\tW$.
However, since its definition appears quite complicated at first glance, we will first explain the simple abstract idea behind it; see also Ahman-Bauer \cite{AhBa25,AhBa26} and Pradic-Price \cite{PrPr25,PrPr26}.

\begin{enumerate}
\item A {\em one-query} computation using an oracle capable of solving a problem $F\colon{X}\tto Y$ is an element of the following {\bf polynomial}:
\[\sum_{x\in{X}}V^{F(x)}:=\{(x,\psi)\mid x\in{X}\mbox{ and }\psi\colon {F(x)}\to V\mbox{ is computable}\}.\]

An element $(x,\psi)$ of this polynomial is the following process:
Make a query $x\in X$ to the oracle $F$ once, receive a response $y\in F(x)$, and then output a value $\psi(y)$ of type $V$.
\item A {\em one-query reduction} from a problem $F$ to $G$ is a transformation from ``one-query computations using $F$ as an oracle'' into ``one-query computations using $G$ as an oracle''; that is, a morphism between polynomials $\lambda V.\sum_{x\in X}V^{F(x)}\to\lambda V.\sum_{z\in Z}V^{G(z)}$.
By a simple calculation (abstractly speaking, Yoneda's Lemma), this can be identified with an element of the following set:
\begin{align*}
\prod_{x\in X}\sum_{z\in Z}F(x)^{G(z)}:=\{(\nu,\psi)\mbox{ computable}\mid&\forall x(x\in X\to \nu(x)\in Z\mbox{ and }\\[-1em]
&\forall y( y\in G(\nu(x))\to \psi(x,y)\in F(x)))
\}.
\end{align*}
Each element $(\nu,\psi)$ is nothing other than a Weihrauch reduction.
\item A {\em multi-query oracle computation} using $F$ can be described by composing the polynomial functior $\lambda V.\sum_{x\in X}V^{F(x)}$ the required number of times.
To be precise, since the number of queries during a computation is unknown in advance, a multi-query oracle computation is given as the least fixed point of iterated composition, which is known as the so-called {\bf free monad} over the above polynomial functor:
\[
\mu S.\left(\sum_{x\in X}S^{F(x)}\;+\; V\right).
\]
Here, $\mu$ is the least fixed-point operator.
This is the following process:
\begin{enumerate}
\item 
The {\sc Computer} first selects whether to move to the left or right of the $+$ sign.
That choice is encoded as $i\in\{0, 1\}$.
\item $i=1$ means that the current state has moved to the right.
Then, terminates the computation, and output a value of type $V$.
\item $i=0$ means that the current state has moved to the left.
Then, make a query $x\in X$ to the oracle $F$, receive a response $y\in F(x)$, and then reach at the state $S$.

Here, $S$ is a fixed point, into which the entire expression inside the parentheses is substituted.
This means that the state returns to the initial position (a) again, and go to the next round of the computation.
\end{enumerate}
\end{enumerate}

A multi-query computation (3) can be regarded as an interactive game between {\sc Oracle} $F$ (named {\tt Merlin} below) and {\sc Computer} (named {\tt Arthur} below).
This is the key idea behind a reduction game.
With this in mind, we provide a game description of the free monad $\mu S.\left(\sum_{x\in X}S^{F(x)}\;+\; V\right)$ in the potted setting.

\subsection{Total Weihrauch game}

 \begin{definition}\label{def:total-Weihrauch-game}
 Let $\paset{U}$ be a potted set, and $V$ be a set.
 For a potted problem $F$, consider the following two player perfect information game $\tgame(\paset{U};F;V)$:
        \vspace{-0.2em}
        \[
        \begin{array}{rlllllllll}
           {\tt Merlin}   \colon	& x_0   &		& x_1   &	    & \dots & x_{k}   &\\
           {\tt Arthur}   \colon	&	    & y_0   &		& y_1   & \dots &         &y_{k}
        \end{array}
        \]
        \underline{Rule} (potential):
        The {\em \ppp rule} requires two players to comply with the following:
        \begin{itemize}
        \item {\tt Merlin} plays $x_0\in U_+$ in round $0$, and $x_n\in\cod_+(F)$ in subsequent rounds $n>0$.
       \item {\tt Arthur} plays $y_n=\pair{i,z_n}\in\dom_+(F)+V$ in round $n$.
       \begin{itemize}
       \item $i=0$ is a declaration to proceed to the next round of the game.
       \item $i=1$ is a declaration to terminate the game.
       \end{itemize}
       \item {\tt Arthur} must declare the termination of the game in some round (such a round may depend on a play).
        \end{itemize}
        \underline{Rule} (actual):
        The {\em \aaa rule} requires two players to comply with the following:
        \begin{itemize}
            \item {\tt Merlin} plays $x_0 \in U_-$ in round $0$.
            \item {\tt Arthur} plays $y_n=\pair{i,z_n}\in\dom(F)+V$ in round $n$.
            \begin{itemize}
                \item $i=0$ is a declaration to proceed to the next round of the game.
                \item $i=1$ is a declaration to terminate the game.
                In this case, we call $z_n$ the output of this play.
            \end{itemize}
            \item In round $n+1$, {\tt Merlin} returns a solution $x_{n+1} \in F(z_n)$ in response to {\tt Arthur}'s query.
        \end{itemize}
        \underline{Strategy} (potential):
        \begin{itemize}
            \item Consider a total computable function:
            \[\eta\colon U_+\times\cod_+(F)^{<\om}\to\dom_+(F)+V.\]
	\item Fix such an $\eta$.
	Then for a sequence $(x_0,x_1,\dots,x_k)\in U_+\times\cod_+(F)^{<\om}$, define $y_n=\eta(x_0,x_1,\dots,x_n)$ for each $n\leq k$.
	The sequence $(x_0,y_0,x_1,y_1,\dots,x_n,y_n)$ obtained in this way is called a {\em play} following $\eta$.
	\item Such an $\eta$ is a {\em potential strategy} (\ppp stragety) for {\tt Arthur} if, for any play following $\eta$, one of the following must hold:
	\begin{itemize}
	\item {\tt Merlin} violates the \ppp rule before {\tt Arthur} does.
    \item {\tt Arthur} obeys the \ppp rule and declares the termination of the game in some round.
	\end{itemize}
	\item Such an $\eta$ is a {\em actual strategy} (\aaa strategy) for {\tt Arthur} if, in addition, for any play following $\eta$, one of the following must hold:
	\begin{itemize}
	\item {\tt Merlin} violates the \aaa rule before {\tt Arthur} does.
	\item {\tt Arthur} obeys the \aaa rule and declares the termination of the game in some round.
	\end{itemize}
        \end{itemize}
        \underline{Strategy $\to$ Problem}:
        An \aaa strategy $\eta$ for {\tt Arthur} induces the following problem $\Phi_\eta^F$:
        \begin{itemize}
            \item $\dom_+(\Phi_\eta^F)=U_+$, $\dom(\Phi_\eta^F)=U_-$, and $\cod_+(\Phi_\eta^F)=V$.
            \item For an \aaa instance $x_0\in U_-$, we declare $y\in\Phi_\eta^F(x_0)$ if
           there is a play such that {\tt Merlin} starts with the opening move $x_0$, both {\tt Merlin} and {\tt Arthur} obey the \aaa rule, and the game terminates with the output $y$.
        \end{itemize}
    \end{definition}

\begin{definition}\label{def:total-G-Weihrauch}
Let $F,G$ be potted problems.
Then we say that $G$ is {\bf multi-query total Weihrauch reducible} to $F$ if there is an \aaa strategy $\eta$ for the game $\tgame(D_G;F;E_G)$ for $D_G=(\dom(G),\dom_-(G))$ and $E_G=\cod_+(G)$ such that the following holds:
\[
\forall x\in\dom(G).\ \Phi_\eta^F(x)\subseteq G(x).
\]

In this case, we write $G\tgw F$.
The superscript {\sf G} stands for a game.
\end{definition}

\begin{notation}
If $e$ is an index for $\eta$, then we often write $\Phi_e^F$ to denote $\Phi_\eta^F$.
\end{notation}

\begin{remark}
We often think of {\tt Arthur}'s strategy $\eta$ as a partial computable function on $\om\times\om^{<\om}$ whose domain is maximally extended.
\end{remark}

\begin{obs}\label{obs:tgw-preorder}
$\tgw$ is preorder.
\end{obs}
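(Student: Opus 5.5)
Reflexivity and transitivity have to be checked separately, and the plan mirrors the one-query case in Proposition \ref{prop:tW-preorder}, with strategies now playing the role of the pair $(\nu,\psi)$.

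\emph{Reflexivity.} For a potted problem $F$, let Arthur use the one-query strategy $\eta$. On the opening move $x_0\in\dom_+(F)$ it plays $\pair{0,x_0}$. On any sequence $(x_0,x_1,\dots)$ of length at least $2$ it plays $\pair{1,x_1}$, the answer to the first query.
\begin{itemize}
\item This $\eta$ is total computable.
\item It respects the \ppp rule, since $x_0\in\dom_+(F)$ and $x_1\in\cod_+(F)$, and it always terminates in round $1$.
\item It is an \aaa strategy: if $x_0\in\dom(F)$, the query is legal and Merlin must answer with $x_1\in F(x_0)$.
\end{itemize}
Hence $\Phi^F_\eta(x_0)\subseteq F(x_0)$, so $F\tgw F$.

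\emph{Transitivity.} Suppose $H\tgw G$ via $\eta$ and $G\tgw F$ via $\theta$. We build a composite strategy $\zeta$ for $\tgame(D_H;F;E_H)$ by running $\eta$ and replacing each $G$-query with a sub-play of $\theta$ against $F$.
\begin{itemize}
\item Simulate $\eta$ on the opening move $x_0$.
\item Whenever $\eta$ issues a query $\pair{0,z}$ with $z\in\dom_+(G)$, start a fresh copy of $\theta$ with opening move $z$. Forward each $F$-query $\theta$ makes to Merlin, and feed Merlin's answers back to $\theta$.
\item When $\theta$ terminates with output $w\in\cod_+(G)$, give $w$ to $\eta$ as the answer to its $G$-query.
\item When $\eta$ terminates with output $v$, $\zeta$ outputs $v$.
\end{itemize}
Formally, $\zeta(x_0,x_1,\dots,x_n)$ is computed by parsing $x_1,\dots,x_n$ into consecutive blocks, one per $\theta$-subgame. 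It then recomputes $\eta$ and $\theta$ on the appropriate prefixes and reads off the next move. Since $\eta$ and $\theta$ are total computable and the parsing is decidable, $\zeta$ is total computable.

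\emph{Verification.} First, $\zeta$ is a \ppp strategy: every play terminates. Suppose some play along $\zeta$ with Merlin obeying the \ppp rule never terminates. Each $\theta$-subplay is a \ppp play of $\theta$ from $z\in\dom_+(G)$, so it terminates with some $w\in\cod_+(G)$. So either $\eta$ receives infinitely many answers that are \ppp legal, contradicting that $\eta$ is a \ppp strategy, or some subplay is infinite, contradicting that $\theta$ is one. The same argument with the \aaa rules shows $\zeta$ is an \aaa strategy. If $x_0\in\dom(H)$, each query $z$ of $\eta$ lies in $\dom(G)$. Then $\theta$'s queries lie in $\dom(F)$, and its output satisfies $w\in\Phi^F_\theta(z)\subseteq G(z)$, a legal \aaa answer for $\eta$. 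Finally, $\Phi^F_\zeta(x_0)\subseteq\Phi^G_\eta(x_0)\subseteq H(x_0)$.

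\emph{Main obstacle.} The delicate part is termination. A play of $\zeta$ need not be bounded in length, and "every play terminates" is a well-foundedness property of the strategy trees. I would argue it by contradiction along a single infinite play, as above. This uses classical meta-logic, which the paper permits. One further detail: when Merlin violates a rule partway through a subgame, $\zeta$ may then behave arbitrarily. This is harmless, because the strategy conditions only constrain plays in which Merlin has not yet violated the rules.
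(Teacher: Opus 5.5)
Your proposal is correct and follows the paper's own argument. The paper defers the proof to Proposition~\ref{prop:transitive}, where it grafts the computation tree $\tpot'_q$ of the inner reduction onto every query node of the outer tree; that is exactly your simulation of $\theta$-subplays inside $\eta$, written in tree form, and its well-foundedness argument is your infinite-play contradiction. One small imprecision: totality of $\zeta$ does not follow from totality of $\eta$ and $\theta$ alone. A $\theta$-subgame may terminate without querying $F$, possibly many times in a row, so computing a single move of $\zeta$ halts only because $\eta$ is a \ppp strategy, which is the same fact you already use for termination of plays.
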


\begin{proof}
Although the idea behind the proof is trivial, we postpone the detailed exposition until Proposition \ref{prop:transitive}.
Abstractly speaking: A multi-query reduction is a natural transformation between free monads (regarded as polynomial functors), so the transitivity follows from the composition of natural transformations.
\end{proof}

\begin{remark}[Multi-query reducibility for decision problems]
On the one hand, the notion of {\bf Turing reducibility} is recovered as multi-query reducibility $\gw$ restricted to single-valued total functions $\om\to 2$.
On the other hand, the notion of {\bf truth-table reducibility} is recovered as multi-query total reducibility $\tgw$ restricted to single-valued total functions $\om\to 2$.
\end{remark}
\subsection{Example}
In Theorem \ref{thm:UC-hierarchy}, we showed that the $\tW$-strength of $\sii{\sf C}_{1/n}$ varies for different values of $n$.
However, under the more powerful reducibility $\tgw$, which allows many queries, the hierarchy of $\sii{\sf C}_{1/n}$ collapses.

\begin{prop}\label{prop:UC-hierarchy-collapse}
$\Sigma_1\text{-}{\sf C}_{1/2}\eqtgw\Sigma_1\text{-}{\sf C}_{1/n}$ for any $n<\om$.
\end{prop}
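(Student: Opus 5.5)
One direction is immediate. By Observation \ref{obs:basic-relations-choice}, $\sii{\sf C}_{1/2}\tW\sii{\sf C}_{1/n}$ for $2\leq n$. A one-query total Weihrauch reduction is a special case of a strategy in the game $\tgame$: Arthur queries $\nu(x)$ once, then outputs $\psi(x,y)$. Hence $\sii{\sf C}_{1/2}\tgw\sii{\sf C}_{1/n}$.

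For the converse $\sii{\sf C}_{1/n}\tgw\sii{\sf C}_{1/2}$, I would argue by induction on $n$, using transitivity (Observation \ref{obs:tgw-preorder}). It then suffices to show $\sii{\sf C}_{1/n+1}\tgw \sii{\sf C}_{1/n}+\sii{\sf C}_{1/2}$-style composition. More concretely, I would give a direct two-stage strategy: first reduce $n+1$ options to one query of $\sii{\sf C}_{1/2}$ plus one query of $\sii{\sf C}_{1/n}$.

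Given $e$ with $S_e=\{k\}$, $k<n+1$, Arthur first asks the $\sii{\sf C}_{1/2}$-instance $\nu_0(e)$. Its enumeration waits for some $j\in S_e$ and enumerates $0$ if $j<n$ and $1$ if $j=n$. This is total on $\dom_+$, since $\nu_0(e)$ is always an index, and it lands in $\dom$ when $S_e$ is a singleton.
\begin{itemize}
\item If the answer is $1$, Arthur terminates with output $n$.
\item If the answer is $0$, Arthur queries the $\sii{\sf C}_{1/n}$-instance $\nu_1(e)$, whose enumeration copies the elements of $S_e$ that are below $n$, and outputs the answer.
\end{itemize}
Every answer in the potential codomain lies in $n$ or $2$, so all outputs are in $n+1=\cod_+$. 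The strategy terminates after at most two queries on every potential play, so it is total and Arthur obeys the potential rule. On actual plays the queries are actual instances: if Merlin answered $0$ correctly, then $k<n$ and $S_{\nu_1(e)}=\{k\}$. This gives $\sii{\sf C}_{1/n+1}\tgw\sii{\sf C}_{1/n}$ whenever $\sii{\sf C}_{1/2}$ is available. Composing strategies, the induction yields a strategy using at most $n-1$ queries to $\sii{\sf C}_{1/2}$.

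A simpler uniform alternative is binary search: query whether the element of $S_e$ lies in the lower or upper half, and iterate about $\log n$ times.

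The main obstacle is the totality bookkeeping. An instance built from a query answer must still be a potential instance, and an answer must never be interpreted outside $\cod_+$. To handle this, I would define every auxiliary instance uniformly from $e$ and the previous answers, so that each is an index for every $e$. This works because $\dom_+(\sii{\sf C}_{1/m})=\om$, and a wrong answer from Merlin only affects correctness, not totality. The remaining step is to check that the composed strategy terminates in boundedly many rounds on all potential plays, which holds because the number of rounds is at most $n-1$ independently of the moves.
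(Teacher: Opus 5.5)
Your proof is correct, but it uses a different decomposition from the paper's. The paper proves the single step $\sii{\sf C}_{1/n^2}\tgw\sii{\sf C}_{1/n}$ by cutting $n^2$ into $n$ blocks of size $n$. It first asks $\sii{\sf C}_{1/n}$ for the block index, built as in your $\nu_0$ by waiting for the first enumerated element and enumerating its block. It then asks $\sii{\sf C}_{1/n}$ for the position inside that block. Iterating this squaring from $2$, with Observation \ref{obs:basic-relations-choice} covering the remaining $n$, yields the result. (The paper's closing sentence states the trivial direction, but its argument gives $\sii{\sf C}_{1/n}\tgw\sii{\sf C}_{1/2}$.)

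You instead peel off one option at a time and induct linearly. The mechanism is the same. Your step is slightly more elementary and reaches every $n$ directly. The squaring step uses a single oracle throughout and only $O(\log n)$ queries, versus your $n-1$; your binary-search remark recovers this.

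One bookkeeping point in your version. The paper proves the join property of $+$ only for $\tW$ (Proposition \ref{prop:sum-join}). Also, against $\sii{\sf C}_{1/2}+\sii{\sf C}_{1/n}$, Merlin's potential answers are tagged elements of the sum codomain and may carry the wrong tag, so ``every answer lies in $n$ or $2$'' is not literally true. The simplest fix within what the paper proves is to pose your first query directly to $\sii{\sf C}_{1/n}$, reading any answer $\geq 2$ as $0$. This is legitimate because $\nu_0(e)$ only ever enumerates $0$ or $1$. It gives $\sii{\sf C}_{1/n+1}\tgw\sii{\sf C}_{1/n}$ outright, and you can then chain these reductions by Proposition \ref{prop:transitive}.
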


\begin{proof}
We show $\Sigma_1\text{-}{\sf C}_{1/n^2}\tgw\Sigma_1\text{-}{\sf C}_{1/n}$.
\begin{itemize}
\item \underline{Round $0$}.
Assume that {\tt Merlin}'s opening move is an index $e$ of the $\Sigma_1$ set $S_e=\{k<n^2:\varphi_e(k)\downarrow\}\subseteq n^2$.
Then wait for seeing $\ell<n$ such that $\varphi_e(k)\downarrow$ for some $k\in[\ell n,(\ell+1)n-1]$.
For the first found such $\ell$, we declare $\varphi_{a(e)}(\ell)\downarrow$.
Then {\tt Arthur} makes the query $a(e)$.

If $S_e\not=\emptyset$ then $S_{a(e)}\subseteq S_e$ is a singleton; otherwise $S_e=S_{a(e)}=\emptyset$.
\item \underline{Round $1$}.
Let $\ell$ be a response from ${\tt Merlin}$.
Then wait for seeing $k<n$ such that $\varphi_e(\ell n+k)\down$.
For the first found such $k$, we declare $\varphi_{b(e,\ell)}(k)\downarrow$.
Then {\tt Arthur}'s next query is $b(e,\ell)$.


Note that $S_{b(e,\ell)}\subseteq n$.
If $S_{e,\ell}:=S_e\cap[\ell n,(\ell+1)n-1]\not=\emptyset$ then $\{\ell n+k:k\in S_{b(e,\ell)}\}\subseteq S_{e,\ell}$ is a singleton; otherwise $S_{e,\ell}=S_{b(e,\ell)}=\emptyset$.
\item \underline{Round $2$}:
Let $k$ be a response from ${\tt Merlin}$.
Then, {\tt Arthur} uses the received information $\ell,k$ to output $\ell n+k$ and declares the termination of the game.
\end{itemize}

{\em Verification:}
\begin{itemize}
\item {\em Totality.}
Regardless of $e,\ell,k$, the queries $a(e)$, $b(e,\ell)$ and the output $\ell n+k$ are always defined.
Hence, the above yields a total strategy.
\item {\em Reduction.}
For each instance $e\in\dom(\sii{\sf C}_{1/n^2})$, $S_e$ is a singleton.
Therefore, there is a unique $\ell$ such that $S_{e,\ell}$ is a singleton, and in this case, we have $S_{a(e)}=\{\ell\}$.
Then, this $\ell$ must be {\tt Merlin}'s response in round $1$.
As $S_{e,\ell}\not=\emptyset$, our construction ensures that $S_{b(e,\ell)}\subseteq n$ is also a singleton, so we get a response $k\in S_{b(e,\ell)}$ from {\tt Merlin} in round $2$.
Then, {\tt Arthur} terminates the game with the output $\ell n+k\in\{\ell n+k:k\in S_{b(e,\ell)}\}\subseteq S_{e,\ell}\subseteq S_e$.
\end{itemize}

Consequently, we get $\Sigma_1\text{-}{\sf C}_{1/n^2}\tgw\Sigma_1\text{-}{\sf C}_{1/n}$.
By repeating this process, we obtain $\Sigma_1\text{-}{\sf C}_{1/2}\tgw\Sigma_1\text{-}{\sf C}_{1/n}$ for any 
$n<\om$.
\end{proof}

\subsection{Computation Tree}\label{sec:computation-tree-multifunction}
Games are intuitively easy to understand, but they are not well-suited for rigorous arguments.
By focusing on trees of strategies (game trees), we can use terminology on trees, making it easier to describe the formal details of the discussion.
It is easier to describe the formal details of a discussion by using terminology on trees, via a game tree (strategy trees).
The translation between {\em terminating games} and {\em labeled well-founded trees} is well-known in various contexts; e.g.~a translation between the least fixed point (the term algebra) and its syntax tree.
Those familiar with type theory might find it helpful to think of a W-type (a type of well-founded trees / the initial algebra of a polynomial functor).
We describe game trees in the potted setting.

\medskip

\begin{terminology}[on trees]~
\begin{itemize}
\item A {\bf tree} is a set of finite strings closed under initial segments.
Then the empty string is also called a {\bf root}.
A maximal string is called a {\bf leaf}.
\item A tree is {\bf well-founded} if it has no infinite path.
\item We use the symbol $\leaf T$ to denote the set of all leaves of a tree $T$.
We also put $\internal T=T\setminus\leaf T$.
Each $\sigma\in\internal T$ is called an {\bf internal node}.
\item We say that a tree $T\subseteq Y^{<\om}$ is {\bf full-branching} if for any internal node $\sigma\in\internal T$ and $i\in Y$, we have $\sigma\fr i\in T$.
\end{itemize}
\end{terminology}

\begin{remark}
If $T\subseteq Y^{<\om}$ is a full-branching well-founded tree, then for any $\alpha\in Y^\om$ there is $n$ such that $\alpha\upto n\in\leaf T$.
\end{remark}

\begin{construction}
Let $F\pcolon X\tto Y$ be a potted problem.
Analyze the game $\tgame(\paset{U};F;V)$ in Definition \ref{def:total-Weihrauch-game}.

\underline{\em Strategy $\Rightarrow$ Computation Tree:}
{\tt Arthur}'s strategy induces a labeled well-founded tree.
\begin{itemize}
\item The set of {\tt Merlin}'s possible plays form a well-founded tree.
\item {\tt Arthur}'s moves are recorded as labels on the internal nodes.
\end{itemize}

Recall that {\tt Arthur}'s strategy is a total computable function
\[\eta\colon U_+\times Y^{<\om}\to X+V.\]

\noindent
\underline{Potential}:
For each \ppp input $n\in U_+$, the {\bf \ppp computation tree} induced by $\eta$ is the following labeled well-founded tree $(\tpot_n,\nu_n,\psi_n)$:
\begin{itemize}
\item $\tpot_n\subseteq Y^{<\om}$ is a computable full-branching well-founded tree.
\begin{itemize}
\item $\sigma\in\tpot_n$ is a leaf iff $\sigma$ is a minimal string satisfying $\eta(n,\sigma)=(1,y)$ for some $y$.
\item Hence, $\tpot_n$ is the tree of all plays of {\tt Merlin} obeying the \ppp rule.
\end{itemize}
\item At each internal node of $\tpot_n$, the strategy $\eta$ makes a query in $X$.
Formally, define the query-labeling function $\nu_n\colon \internal \tpot_n\to X$ as follows:
\begin{itemize}
\item The above item on $\tpot_n$ ensures that, for each $\sigma\in \internal \tpot_n$, we have $\eta(n,\sigma)=(0,x)$ for some $x\in X$; hence, we put $\nu_n(\sigma)=x$.
\end{itemize}
\item At each leaf of $\tpot_n$, the strategy outputs a value of type $V$.
Formally, define the output-labeling function $\psi_n\colon \leaf \tpot_n\to V$ as follows:
\begin{itemize}
\item The above item on $\tpot_n$ ensures that, for each $\sigma\in \leaf \tpot_n$, we have $\eta(n,\sigma)=(1,y)$ for some $y\in V$; hence, we put $\psi_n(\sigma)=y$.
\end{itemize}
\end{itemize}

\noindent
\underline{Actual}:
For each \aaa input $n\in U_-$, the {\bf \aaa computation tree} for $\Phi^F_\eta(n)$ induced by $\eta$ is the following ``{\em $F$-branching}'' subtree $\tactual_n\subseteq \tpot_n$:
\begin{itemize}
\item $\tactual_n\subseteq \tpot_n$ has a root.
For each internal node $\sigma\in \internal \tpot_n$, if we have already put $\sigma\in\tactual_n$ and $\nu_n(\sigma)\in \dom(F)$ then we put $\sigma\fr i\in\tactual_n$ for any $i\in F(\nu_n(\sigma))$.
\item In other words, $\tactual_n$ is the tree of all plays by {\tt Merlin} obeying the \aaa rule.
\item The label function is given by restricting the above $\psi_n$ and $\nu_n$ to $\tactual_n$.
\end{itemize}
\end{construction}

\begin{obs}\label{obs:Arthur-strategy}
There is an effective one-to-one correspondence between \ppp strategies for $\tgame(\paset{U};F;V)$ and computable functions $n\in U_+\mapsto (\tpot_n,\nu_n,\psi_n)$ satisfying the following:
\begin{itemize}
\item $\tpot_n\subseteq Y^{<\om}$ is a computable full-branching well-founded tree.
\item $\nu_n\colon \internal \tpot_n\to X$ is a computable function.
\item $\psi_n\colon \leaf \tpot_n\to V$ is a computable function.
\end{itemize}

Furthermore, such a triple $(\tpot_n,\nu_n,\psi_n)$ uniquely determines the \aaa tree $\tactual_n\subseteq\tpot_n$ by the above construction, which corresponds to an \aaa strategy.
\end{obs}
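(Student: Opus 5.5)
The plan is to build the two translations explicitly and check that they are mutually inverse and effective. The only real subtlety is the one-to-one claim: it should be read as a correspondence up to the values of $\eta$ on strings that are never reached in a play. In other words, we identify a strategy with its restriction to the positions that occur in plays obeying the \ppp rule; this is the reading suggested by the Remark that $\eta$ is taken with maximally extended domain.

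\emph{From strategy to tree.} Fix a \ppp strategy $\eta\colon U_+\times Y^{<\om}\to X+V$ and $n\in U_+$, where $X=\dom_+(F)$ and $Y=\cod_+(F)$. Define
\[
\tpot_n=\{\sigma\in Y^{<\om}:\eta(n,\sigma\upto j)\in\{0\}\times X\text{ for all }j<|\sigma|\}.
\]
This set is closed under initial segments. It is computable uniformly in $n$, because $\eta$ is total on $U_+\times Y^{<\om}$ and membership in $Y$ is checked only along $\sigma$; here I treat $Y$ as decidable, or else read ``computable'' relative to the potted data.

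The leaves of $\tpot_n$ are exactly the $\sigma$ with $\eta(n,\sigma)\in\{1\}\times V$, and every internal node has all extensions $\sigma\fr i$ with $i\in Y$, so the tree is full-branching. Well-foundedness is precisely the \ppp-strategy condition: an infinite path would be a play in which {\tt Merlin} obeys the \ppp rule forever and {\tt Arthur} never terminates. Finally, set $\nu_n(\sigma)$ and $\psi_n(\sigma)$ to be the second component of $\eta(n,\sigma)$ on internal nodes and on leaves respectively; both are clearly computable.

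\emph{From tree to strategy.} Conversely, given a uniformly computable family $(\tpot_n,\nu_n,\psi_n)$, define $\eta(n,\sigma)$ as follows. First find the shortest prefix $\tau\preceq\sigma$ that is a leaf of $\tpot_n$, if there is one, and output $(1,\psi_n(\tau))$. Otherwise $\sigma\in\internal\tpot_n$, and we output $(0,\nu_n(\sigma))$. This is total and computable because $\tpot_n$ is decidable and full-branching, so every $\sigma\in Y^{<\om}$ either lies in $\internal\tpot_n$ or extends a leaf. The resulting $\eta$ is a \ppp strategy, since by the Remark following the terminology every \ppp-obeying play reaches a leaf of the well-founded full-branching tree, and at that point {\tt Arthur} terminates.

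\emph{Inverses and the actual tree.} The two maps are inverse on the relevant data: the tree is read off from $\eta$ on the reachable positions, and $\eta$ on those positions is read off from the labels. The \aaa tree $\tactual_n$ is then defined from $(\tpot_n,\nu_n)$ and $F$ alone, by the inductive clause in the Construction, so it is uniquely determined by the triple. Moreover, $\eta$ is an \aaa strategy exactly when every node of $\tactual_n$ whose label has children in $\tactual_n$ carries a query in $\dom(F)$. This is the \aaa rule restated in tree form.

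The main obstacle is the bookkeeping over unreachable positions and over the decidability of $Y$, which the informal statement glosses over. I would fix this explicitly by adopting the normalization above, under which $\eta$ is determined by its reachable values.
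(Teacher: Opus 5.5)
Your proposal is correct and follows the paper's argument: the paper's Construction is exactly your strategy-to-tree map, the converse is the evident one, and your normalization makes precise the one-to-one claim that the paper leaves implicit, namely identifying strategies that differ only at strings properly extending a leaf, and taking $\tpot_n\neq\emptyset$ for the reverse map. One wording fix: the actual-strategy condition should say that for every $n\in U_-$, every $\sigma\in\tactual_n\cap\internal\tpot_n$ satisfies $\nu_n(\sigma)\in\dom(F)$; restricting it, as you wrote, to nodes that already have children in $\tactual_n$ makes it vacuous, since the construction gives such a $\sigma$ children exactly when $\nu_n(\sigma)\in\dom(F)$.
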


\begin{notation}
For a triple $\Phi=(\tpot_n,\nu_n,\psi_n:n\in U_+)$ satisfying the condition in Observation \ref{obs:Arthur-strategy}, define $\Phi^F$ as follows:
\begin{itemize}
\item $\dom_+(\Phi^F)=U_+$.
\item $\dom(\Phi^F)=\{n\in U_+:\forall \sigma\in \tactual_n\cap\internal \tpot_n.\ \nu_n(\sigma)\in \dom(F)\}$.
\item $\Phi^F(n)=\{\psi_n(\sigma):\sigma\in\leaf\tactual_n\}$.
\end{itemize}

\end{notation}

\begin{obs}
$G\tgw F$ if and only if there is $\Phi$ as above such that:
\begin{itemize}
\item {\it Potential.} $\dom_+(G)\subseteq\dom_+(\Phi^F)$.
\item {\it Actual.} If $n\in\dom(G)$ then $\Phi^F(n)\subseteq G(n)$.
\end{itemize}
\end{obs}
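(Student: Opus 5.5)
The plan is to unfold both sides through the effective correspondence of Observation~\ref{obs:Arthur-strategy} and check that each clause of Definition~\ref{def:total-G-Weihrauch} matches the corresponding clause for $\Phi$. Throughout, put $U_+=\dom_+(G)$, $U_-=\dom(G)$, $V=\cod_+(G)$, and consider the game $\tgame(D_G;F;E_G)$.

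\emph{($\Rightarrow$)} Suppose $G\tgw F$ via an \aaa strategy $\eta$. Since $\eta$ is in particular a \ppp strategy, Observation~\ref{obs:Arthur-strategy} yields a computable family $\Phi=(\tpot_n,\nu_n,\psi_n:n\in U_+)$. Hence $\dom_+(\Phi^F)=U_+=\dom_+(G)$, which gives the potential clause. For the actual clause, fix $n\in\dom(G)=U_-$.
\begin{enumerate}
\item I first identify the plays following $\eta$ in which both players obey the \aaa rule and which open with $n$. These are exactly the branches of $\tactual_n$. Merlin's legal answers at a node $\sigma$ are the elements of $F(\nu_n(\sigma))$, which is precisely the branching rule used to build $\tactual_n$.
\item The \aaa strategy requirement says that along every such play Arthur never queries outside $\dom(F)$. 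This is literally the condition $\nu_n(\sigma)\in\dom(F)$ for every $\sigma\in\tactual_n\cap\internal\tpot_n$, so $n\in\dom(\Phi^F)$.
\item The outputs of terminating \aaa plays are exactly the values $\psi_n(\sigma)$ for $\sigma\in\leaf\tactual_n$. Therefore $\Phi^F_\eta(n)=\Phi^F(n)$, and the reduction hypothesis gives $\Phi^F(n)\subseteq G(n)$.
\end{enumerate}

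\emph{($\Leftarrow$)} Given $\Phi$, Observation~\ref{obs:Arthur-strategy} turns it back into a \ppp strategy $\eta$ on $U_+$. Totality of $\eta$ on $U_+$ is where the potential clause $\dom_+(G)\subseteq\dom_+(\Phi^F)$ is used. Two things remain to be checked.
\begin{enumerate}
\item \emph{$\eta$ is an \aaa strategy.} Take any \aaa play from $n\in U_-$. Its Merlin-moves trace a path in $\tactual_n\subseteq\tpot_n$. Because $\tpot_n$ is full-branching and well-founded, this path reaches a leaf, so the game terminates. Along the way, each query $\nu_n(\sigma)$ lies in $\dom(F)$ by the membership $n\in\dom(\Phi^F)$. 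If Merlin ever answers outside $F(\nu_n(\sigma))$, then Merlin has violated the rule first, and the strategy condition is satisfied vacuously.
\item \emph{The reduction inclusion.} By the identification in the forward direction, $\Phi^F_\eta(n)=\Phi^F(n)\subseteq G(n)$.
\end{enumerate}

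The one delicate point is the role of $\dom(\Phi^F)$. The ``actual'' clause in the statement must be read as including $n\in\dom(\Phi^F)$ for $n\in\dom(G)$. I will state this reading explicitly, since it is exactly what the \aaa strategy condition demands. Without it, $\Phi^F(n)\subseteq G(n)$ alone would not prevent Arthur from issuing queries outside $\dom(F)$.

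Apart from this, I expect the main obstacle to be bookkeeping: matching ``Merlin violates the rule before Arthur does'' with the pruning of $\tpot_n$ to $\tactual_n$. Well-foundedness of $\tpot_n$ supplies termination for free, so no further work is needed there.
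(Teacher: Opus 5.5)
Your proposal is correct and is exactly the unfolding the paper leaves implicit: the paper states this as an observation that follows directly from the strategy/tree correspondence of Observation~\ref{obs:Arthur-strategy}, with actual plays from $n$ matching branches of $\tactual_n$. Your explicit reading of the actual clause as also requiring $n\in\dom(\Phi^F)$ (equivalently, that $\psi_n$ converges on every leaf of $\tactual_n$, in line with the paper's later $\Phi^F\down$ convention) is the right one, because otherwise a query outside $\dom(F)$ would make a node of $\tactual_n$ an output-less leaf, and the inclusion $\Phi^F(n)\subseteq G(n)$ would then hold vacuously.
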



%

%
%
%
%

\begin{remark}
To align with the notation in traditional computability theory, one may write $\psi^\sigma(n)$ or $\psi(\sigma;n)$ instead of $\psi_n(\sigma)$.
\end{remark}

\begin{remark}
A tree $\tpot$ can be recovered from $(\nu,\psi)$:
\begin{itemize}
\item That is, a leaf of $\tpot$ is a minimal string $\sigma$ such that $\psi(\sigma)\down$.
\item Then $\tpot$ is recovered as the downward closure of the leaves.
\end{itemize}

Thus, the pair $(\nu,\psi)$ control the whole computation. 
A code of the computation $(\nu,\psi)$ is a pair of indices of partial computable functions $\pcolon\om^{<\om}\to\om$ maximally extending $\psi,\nu$.
A computation of code $e$ is written as $(\nu^e,\psi^e)$.
\end{remark}

\subsection{Computability}

In the context of total reducibility, a (partial) computable oracle also play an important role.
First, we analyze the conditions under which an oracle generates only computable functions.

\begin{definition}
Let $F\pcolon X\tto Y$ be a problem.
\begin{itemize}
\item We say that $G\pcolon X\tto Y$ {\bf refines} $F\pcolon X\tto Y$ if $\dom(F)\subseteq\dom(G)$ and $G(x)\subseteq F(x)$ for any $x\in\dom(F)$.
\item We say that $f\pcolon X\to Y$ is a {\bf choice function} for $F\pcolon X\tto Y$ if $\dom(F)\subseteq\dom(f)$ and $f(x)\in F(x)$ for any $x\in\dom(F)$.
\item We say that $F$ is {\bf (total) computable} if it has a (total) computable choice function.
\end{itemize}
\end{definition}

Below, ${\rm id}_\om\colon\om\to\om$ is the total identity function on $\om$.

\begin{prop}\label{prop:below-id-total-computable-choice}
For a potted problem $F\pcolon X\tto Y$:
\[F\tgw{\rm id}_\om\iff\mbox{$F$ is total computable.}\]
\end{prop}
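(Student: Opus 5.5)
We prove the two directions separately, working with the computation-tree description of strategies (Observation \ref{obs:Arthur-strategy}). Throughout, "total computable" for a potted problem $F\pcolon X\tto Y$ means there is a choice function $f$ computable on all of $X=\dom_+(F)$ with values in $Y=\cod_+(F)$. This is the reading dictated by the potted setting; any weaker reading makes the ($\Leftarrow$) direction fail the potential rule.

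($\Leftarrow$) Let $f\colon X\to Y$ be a total computable choice function for $F$. Arthur uses the strategy that terminates immediately: $\eta(x_0,\sigma)=\pair{1,f(x_0)}$.
\begin{itemize}
\item This $\eta$ is total on $X\times\om^{<\om}$, and its output lies in $Y=\cod_+(F)$. Hence it is a \ppp strategy for $\tgame(D_F;{\rm id}_\om;\cod_+(F))$.
\item No query is ever made, so the \aaa rule is trivially obeyed and $\eta$ is an \aaa strategy.
\item For $x\in\dom(F)$ we have $\Phi^{{\rm id}_\om}_\eta(x)=\{f(x)\}\subseteq F(x)$, because $f$ is a choice function.
\end{itemize}
Thus $F\tgw{\rm id}_\om$.

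($\Rightarrow$) Suppose $F\tgw{\rm id}_\om$ via $\Phi=(\tpot_n,\nu_n,\psi_n:n\in X)$. We define $f(n)$ by following the unique play in which Merlin answers each query honestly:
\begin{itemize}
\item Start at the root $\sigma_0=\langle\rangle$.
\item While $\sigma_k$ is internal, set $\sigma_{k+1}=\sigma_k\fr\nu_n(\sigma_k)$; this is legal since $\nu_n(\sigma_k)\in\om=\cod_+({\rm id}_\om)$ and the tree is full-branching.
\item Once a leaf $\sigma_k$ is reached, output $f(n)=\psi_n(\sigma_k)$.
\end{itemize}

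The search terminates because $\tpot_n$ is a well-founded full-branching tree: by the Remark preceding the construction, the path extended arbitrarily hits a leaf after finitely many steps. Each step is computable because $\eta$ is total computable on $X\times\om^{<\om}$. So $f$ is total computable on $X$, and $f(n)\in Y$ because $\psi_n$ takes values in $V=\cod_+(F)$.

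For correctness, let $n\in\dom(F)$. Every query satisfies $\nu_n(\sigma)\in\om=\dom({\rm id}_\om)$, so the honest path lies in the actual tree $\tactual_n$. Because ${\rm id}_\om$ is single-valued, $\tactual_n$ is exactly this path, and its leaf gives $\Phi^{{\rm id}_\om}(n)=\{f(n)\}\subseteq F(n)$.

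The only delicate point is termination and totality of the simulation on all of $X$, not merely on $\dom(F)$. This is exactly what the \ppp strategy condition supplies, through well-foundedness of $\tpot_n$ for every $n\in U_+$.
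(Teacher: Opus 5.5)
Your proof is correct and is essentially the paper's argument. The ($\Rightarrow$) direction is the same simulation of Merlin's honest answers along the well-founded full-branching tree $\tpot_n$. In ($\Leftarrow$), your zero-query strategy differs only cosmetically from the paper's factorization $F\tW f\tW{\rm id}_\om$ through a dummy query, and yours avoids the implicit passage from $\tW$ to $\tgw$ that the paper leaves unstated.
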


\begin{proof}
($\Rightarrow$)
{\em Sketch:}
\begin{itemize}
\item Since the solution to a query $q$ to ${\rm id}_\om$ is $q$ itself, ${\tt Merlin}$'s play can be simulated in a computable manner.
Therefore, by feeding ${\tt Merlin}$'s computable play into ${\tt Arthur}$'s total computable strategy, we can compute the output $f(n)$ for each input $n \in \dom_+(F)$.
This $f$ is a total computable choice function for $F$.
\end{itemize}

{\em Details.}
The precise construction of $f$ is given as follows:
\begin{itemize}
\item For each \ppp input $n\in\dom_+(F)$, one can compute a \ppp tree $(\tpot_n,\nu_n,\psi_n)$.
\item Given $n\in\dom_+(F)$, we inductively compute a leaf $\rho(n)\in\leaf \tpot_n$ as follows:
\begin{itemize}
\item Let $\rho_0$ be the root of $\tpot_n$, i.e., the empty string.
We inductively assume that we have already computed a node $\rho_s\in \tpot_n$ of length $s$
\item If $\rho_s$ is an internal node of $\tpot_n$, the total computable function $\nu_n$ makes a query $\nu_n(\rho_s)$ to the oracle ${\rm id}_\om$.
Since its solution is the query itself, put $\rho_{s+1}=\rho_s\fr \nu_n(\rho_s)$.
\item If $\rho_s$ is a leaf of $\tpot_n$, put $\rho(n)=\rho_s$.
\end{itemize}
\item For $n\in\dom_+(F)$, define $f(n)=\psi_n(\rho(n))$.
Then $f$ is a total computable function.
\end{itemize}

{\em Verification:}
\begin{itemize}
\item For each \aaa input $n\in\dom(F)$, this construction give a unique leaf $\rho(n)\in\leaf\tactual_n$ in the \aaa computation tree $\tactual_n\subseteq \tpot_n$.
Since $(\tpot_n,\nu_n,\psi_n)$ solves $F(n)$, we get $f(n)=\psi_n(\rho(n))\in F(n)$.
\end{itemize}

Consequently, $f$ is a total computable choice for $F$.
This shows that $F$ is total computable.

\medskip

($\Leftarrow$)
Assume that $F$ is total computable.
Then, there is a total computable choice $f\colon X\to Y$ for $F$.
Note that $f$ refines $F$; in particular, $F\tW f$ holds.
We show $f\tW{\rm id}_\om$.
\begin{itemize}
\item {\em Inner reduction.} For each $n\in X$, put $\nu(n)=n$; then $\nu$ is clearly total.
\item {\em Outer reduction.} For each $n\in X$ and $m\in\om$, put $\psi(n,m)=f(n)$.
As $f$ is total computable on $X$, so is $\psi$.
\end{itemize}

This clearly gives a reduction $f\tW{\rm id}_\om$.
Thus, we get $F\tW f\leq_{\sf tW}{\rm id}_\om$.
\end{proof}

Recall from Proposition \ref{prop:MP-as-Sigma-1-choice} that Markov's principle ${\sf MP}$ corresponds to $\Sigma_1$-unique choice $\sii{\sf C}_{1/\om}$.

\begin{theorem}\label{MP-equial-partial-computable}
For a potted problem $F\pcolon X\tto Y$:
\[F\tgw\sii{\sf C}_{1/\om}\iff\mbox{$F$ is computable,}\]
\end{theorem}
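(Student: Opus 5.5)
We prove the two directions separately, following the pattern of Proposition \ref{prop:below-id-total-computable-choice} but replacing the trivial simulation of ${\rm id}_\om$ by an unbounded search.

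For ($\Rightarrow$), assume $F\tgw\sii{\sf C}_{1/\om}$ via a triple $(\tpot_n,\nu_n,\psi_n)_{n\in X}$ as in Observation \ref{obs:Arthur-strategy}. We define a partial computable $f$ on $X$ by simulating Merlin with a search.
\begin{itemize}
\item Given $n$, set $\rho_0$ to be the root of $\tpot_n$.
\item While $\rho_s$ is an internal node, compute the query $q=\nu_n(\rho_s)$. Search by dovetailing for some $k$ with $\varphi_q(k)\down$, and set $\rho_{s+1}=\rho_s\fr k$ for the first such $k$ found.
\item When a leaf is reached, output $f(n)=\psi_n(\rho_s)$.
\end{itemize}
This $f$ is partial computable.

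We then verify $f(n)\in F(n)$ for $n\in\dom(F)$. By the actual rule, every query on an actual path lies in $\dom(\sii{\sf C}_{1/\om})$, i.e.\ $S_q$ is a singleton $\{k\}$. So the search halts and finds the unique answer, and $\rho_{s+1}$ stays inside the actual tree $\tactual_n$. Since $\tpot_n$ is well-founded, a leaf of $\tactual_n$ is reached after finitely many steps, and $\psi_n$ of that leaf lies in $\Phi^F(n)\subseteq F(n)$. Hence $\dom(F)\subseteq\dom(f)$, and $f$ is a computable choice function. Totality of the strategy is used only to ensure the labels are defined; no totality of $f$ is claimed.

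For ($\Leftarrow$), let $f$ be a partial computable choice function for $F$ with $\dom(F)\subseteq\dom(f)$. A one-query reduction $F\tW\sii{\sf C}_{1/\om}$ suffices, since it gives a game strategy with a single query.
\begin{itemize}
\item The inner reduction sends $n$ to an index $\nu(n)$ with $S_{\nu(n)}=\{\pair{s,m}: f(n)[s]\down=m,\ s\text{ least}\}$, i.e.\ the single pair recording the halting stage and the output. The map $\nu$ is total by the $s$-$m$-$n$ theorem.
\item The outer reduction is $\psi(n,\pair{s,m})=m$ if $f(n)[s]\down=m$ and $m\in Y$, and $\psi(n,\pair{s,m})=0$ otherwise. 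This is total on $X\times\om$ because running $f(n)$ for $s$ steps is bounded and $0\in Y$.
\end{itemize}
It remains to check that this works. For $n\in\dom(F)$, $f(n)\down$, so $S_{\nu(n)}$ is a singleton and $\nu(n)\in\dom(\sii{\sf C}_{1/\om})$. The unique solution then yields $\psi=f(n)\in F(n)$.

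One subtlety is the membership check $m\in Y$, which may be undecidable. To handle it we instead set $\psi(n,\pair{s,m})=f(n)$ whenever $f(n)[s]\down$. On the actual domain $f(n)\in F(n)\subseteq Y$, and off it we need a value in $Y$. So the obstacle is keeping $\psi$ total into $\cod_+(F)$. A cleaner option is to observe that any answer $\pair{s,m}$ not matching a halting computation can be mapped to $0$, while a genuine halting computation of $f$ can be assumed to land in $Y$ after replacing $f$ by the restriction to inputs $n$ where $f(n)\in Y$. I would check this detail carefully, possibly by using Proposition \ref{prop:MP-as-Sigma-1-choice} to phrase it via ${\sf MP}_{\sf PR}$ instead.
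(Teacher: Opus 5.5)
Your proof is correct and follows the paper's route. In ($\Rightarrow$) you simulate {\tt Merlin} by an unbounded search along the actual computation tree. The paper does the same after first passing to ${\sf MP}_{\sf PR}$ and waiting for the halting stage of $\varphi_q(0)$; searching $S_q$ directly, as you do, avoids the appeal to Propositions~\ref{prop:MP-MPPR-equivalent} and~\ref{prop:MP-as-Sigma-1-choice}. In ($\Leftarrow$) you give a one-query reduction through an index of a $\Sigma_1$ singleton, as the paper does. Your stage-tagged version $S_{\nu(n)}=\{\pair{s,m}\}$ is actually more careful than the paper's. The paper uses $S_{\nu(n)}=\{\varphi_e(n)\}$ with $\psi(n,m)=m$, which need not take values in $\cod_+(F)=Y$ on arbitrary potential responses $m\in\om$. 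Your encoding makes the test ``$f(n)[s]\down=m$'' decidable, so setting $\psi(n,\pair{s,m})=m$ when it holds and $0$ otherwise gives a total computable outer reduction.

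The loose end in your last paragraph is closed by the definition, not by restricting $f$. A choice function is a partial map $f\pcolon X\to Y$, so $n\in X$ and $f(n)\down$ already give $f(n)\in Y$. Hence the test ``$m\in Y$'' is superfluous and nothing needs restricting. Restricting $f$ to $\{n:f(n)\in Y\}$ would in general destroy computability. Moreover, the $Y$-valuedness is genuinely needed: if values outside $Y$ were allowed off $\dom(F)$, the theorem would fail. For instance, take a non-c.e.\ $D\subseteq\om$, let $F(x)=\{x+1\}$ for $x\in D$, and let $Y=\{0\}\cup\{x+1:x\in D\}$. Then $x\mapsto x+1$ selects solutions on $D$. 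But your ($\Rightarrow$) argument turns any witness of $F\tgw\sii{\sf C}_{1/\om}$ into a $Y$-valued partial computable choice $g$, which would make $D=\{x:g(x)\down=x+1\}$ c.e.
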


\begin{proof}
($\Rightarrow$)
{\em Sketch.}
\begin{itemize}
\item Note that ${\sf MP}$ has a (partial) computable choice $h$.
If $n\in\dom(F)$, any query made by ${\tt Arthur}$ must be in $\dom({\sf MP})$, so the computation of $h$ to find its solution always terminates.
This means that ${\tt Merlin}$'s play can be simulated in a partial computable manner.
Therefore, by feeding ${\tt Merlin}$'s partial computable play into ${\tt Arthur}$'s total computable strategy, we can compute an output $f(n)$ for each \aaa input $n\in\dom(F)$.
This $f$ is a computable choice function for $F$.
\end{itemize}

%
%
%
%

{\em Details.}
Put $M={\sf MP}_{\sf PR}$, and assume $F\tgw M$ via $e$.
We construct a choice $\Gamma_e(n)$ for $F(n)$.
\begin{itemize}
\item For each \ppp input $n\in\dom_+(F)$, the index $e$ yields a \ppp computation tree $(\tpot^e_n,\nu^e_n,\psi^e_n)$.
Hereafter, the superscript $e$ will be omitted.
\item Given \aaa input $n\in\dom(F)$, one can compute the \aaa computation tree $\tactual_n\subseteq \tpot_n$ and its leftmost leaf $\rho(n)\in\leaf\tactual_n$.
In details, we inductively compute the leaf $\rho(n)\in\leaf \tpot_n$ as follows:
\begin{itemize}
\item Let $\rho_0$ be the root of $\tactual_n$.
We inductively assume that we have already computed a node $\rho_s\in \tactual_n$ of length $s$.
\item If $\rho_s$ is an internal node of $\tactual_n$, the total computable function $\nu_n$ makes a query $\nu_n(\rho_s)=q$ to $M$.
Since we work on the \aaa computation tree, we must have $q\in{\rm dom}(M)$.
Then, wait for a stage $t$ by which the computation $\varphi_q(0)$ halts.
As $q\in{\rm dom}(M)$, we have $\varphi_q(0)\down$, so we eventually see such a stage $t$, and $t\in M(q)$.
Then, by putting $\rho_{s+1}=\rho_s\fr t$, we get $\rho_{s+1}\in\tactual_n$.
\item If $\rho_s$ is a leaf of $\tactual_n$, put $\rho(n)=\rho_s$.
\end{itemize}
\item This construction yields a leaf of the \aaa computation tree, $\rho(n)\in\leaf\tactual_n$.
\item Define $\Gamma_e(n)=\psi_n(\rho(n))$.
Then $\Gamma_e$ is a computable function.
As $(\tpot_n,\nu_n,\psi_n)$ solves $F(n)$, we get $\Gamma_e(n)=\psi_n(\rho(n))\in F(n)$.
\end{itemize}

\medskip

($\Leftarrow$)
Assume that $F$ is computable; that is, $F$ has a computable choice function $\varphi_e$, so we have $F\tW \varphi_e$.
We show $\varphi_e\tW\Sigma_1\text{-}{\sf C}_{1/\om}$.
\begin{itemize}
\item {\em Inner reduction.} For each instance $n$, let $\nu(n)$ be an index of the $\Sigma_1$ set $S_{\nu(n)}=\{\varphi_e(n)\}$.
Then $\nu$ is clearly total.
\item {\em Outer reduction.} For each solution $m\in S_{\nu(n)}$, put $\psi(n,m)=m$.
Clearly, $\psi$ is total.
\end{itemize}

{\em Verification:}
\begin{itemize}
\item {\em Inner reduction.}
If $\varphi_e(n)\downarrow$ then $S_{\nu(n)}$ is a singleton, which means $\nu(n)\in{\rm dom}(\Sigma_1\text{-}{\sf C}_{1/\om})$.
\item {\em Outer reduction.}
Given solution $m\in \sii{\sf C}_{1/\om}(\nu(n))=S_{\nu(n)}$, note that we have $S_{\nu(n)}=\{\varphi_e(n)\}$ by definition; thus, the reduction computes $\psi(n,m)=m=\varphi_e(n)$.
\end{itemize}

Therefore, we get $F\tW\varphi_e\leq_{\sf tW}\Sigma_1\text{-}{\sf C}_{1/\om}$.
\end{proof}

\begin{remark}
The proof of $\Rightarrow$ in Theorem \ref{MP-equial-partial-computable} indeed yields a total computable function $\delta\colon\om\to\om$ satisfying the following:
\[\mbox{$F\tgw\sii{\sf C}_{1/\om}$ via $e$ $\implies$ $\varphi_{\delta(e)}$ is a choice function for $F$.}\]

This is because the construction of $\Gamma_e$ in Theorem \ref{MP-equial-partial-computable} only depends on a code $e$ of the strategy $n\mapsto(\nu^e_n,\psi^e_n)$.
Here, although $\delta$ is total, $\Gamma_e=\varphi_{\delta(e)}$ itself is a partial function.
\end{remark}

Recall from Corollary \ref{cor:LLPO-eq-C-2} that ${\sf LLPO}$ corresponds to $\pii{\sf C}_{\geq 1/2}$.
The following is a reformulation of a basic observation in classical computability theory.
In realizability theory, this forms the core of Lifschitz realizability.

\begin{theorem}\label{LLPO-equial-partial-computable}
For a single-valued problem $f\pcolon\om\to\om$:
\[f\tgw\sii{\sf C}_{1/\om}+\pii{\sf C}_{\geq 1/2}\iff\mbox{$f$ is computable.}\]
\end{theorem}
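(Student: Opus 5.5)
The proof splits into the two directions of the equivalence. The direction ($\Leftarrow$) is immediate: if $f$ is computable, then Theorem \ref{MP-equial-partial-computable} gives $f\tgw\sii{\sf C}_{1/\om}$. Proposition \ref{prop:sum-join} gives $\sii{\sf C}_{1/\om}\tW\sii{\sf C}_{1/\om}+\pii{\sf C}_{\geq 1/2}$, and every one-query reduction is a (one-round) multi-query reduction. Transitivity (Observation \ref{obs:tgw-preorder}) then yields the claim.

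For ($\Rightarrow$), suppose $f\tgw\sii{\sf C}_{1/\om}+\pii{\sf C}_{\geq 1/2}$ via a strategy with potential trees $(\tpot_n,\nu_n,\psi_n)$. Given $n\in\dom(f)$, I would search effectively for a finite ``certified'' subtree of $\tactual_n$, in the style of Lifschitz realizability. We explore $\tpot_n$ from the root. At an internal node whose query is $\pair{0,q}$, a $\Sigma_1$-choice query, we wait for an element of $S_q$ to be enumerated and follow it. This terminates because, on $\tactual_n$, the query lies in $\dom$, and any enumerated element is a legitimate Merlin answer. At a node whose query is $\pair{1,q}$, an ${\sf LLPO}$-type query with $|2\setminus P_q|\le 1$, we cannot decide which answer is correct, so we follow \emph{both} children $0,1$ in parallel. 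This produces a finite, dovetailed, computable exploration whose branches at ${\sf LLPO}$-nodes are binary. Since $\tpot_n$ is well-founded, each branch reaches a leaf once all of its $\Sigma_1$-searches halt.

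The key step is the following. For each binary choice node on the explored path, at least one of the two answers lies in $P_q$. Moreover, if some $k$ is later observed to satisfy $\varphi_q(k)\downarrow$, that branch can be pruned as illegitimate. The issue is that a $\Sigma_1$-search along an illegitimate branch might never terminate, because illegitimate branches need not have queries in $\dom$. I would therefore dovetail, and output $\psi_n(\sigma)$ as soon as a finite set of leaves has been found with the following covering property: for every ${\sf LLPO}$ node on the explored subtree, either both children are covered by leaves already found, or one child has been refuted, i.e.\ its index has been seen to halt, and the other child is covered. Single-valuedness is what makes the output well-defined. Every leaf reached along a genuinely legitimate path outputs $f(n)$. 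I would argue that the search succeeds because the true legitimate subtree $\tactual_n$ is well-founded and all of its $\Sigma_1$-searches halt, so eventually it is fully explored. It then remains to argue that every leaf in any certified cover is legitimate, or else to restrict the output to agreement.

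The main obstacle is exactly the covering and soundness argument: non-legitimate branches at ${\sf LLPO}$ nodes can be neither refuted nor completed. My first attempt would be to output as soon as \emph{any} leaf value $v$ is reached such that, along its path, every ${\sf LLPO}$ answer has not yet been refuted, provided that all leaves found so far agree. This is unsound in general, however. The fallback, which I expect is the intended argument, is to output once a finite subtree is found in which every ${\sf LLPO}$ node has both children explored to leaves, or has one child refuted, with all leaf values equal. Among the leaves of such a subtree, at least one lies on a legitimate path: descend through the tree, choosing at each ${\sf LLPO}$ node a child in $P_q$. That leaf's value is $f(n)$, and since all leaf values are equal, the output is correct. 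Termination follows from a König-style argument. Each legitimate subtree is finite and is eventually explored, but its ${\sf LLPO}$ sibling may be illegitimate and never refuted. To handle this, I would note that whenever $|2\setminus P_q|=0$ both children are legitimate, and whenever the index halts on $k$, child $k$ is eventually refuted. Hence the finite subtree consisting of legitimate nodes, together with refutations, is eventually found, and all of its leaves output $f(n)$ by the single-valuedness of $f$.
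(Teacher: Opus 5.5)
Your final (``fallback'') argument is correct and is essentially the paper's proof. The paper also explores $\tpot_n$, following the enumerated element at $\sii{\sf C}_{1/\om}$-nodes and every not-yet-refuted child (via the co-c.e.\ approximation $P_q[t]$) at $\pii{\sf C}_{\geq 1/2}$-nodes, and outputs once no active node is waiting and $\psi_n$ is constant on the active leaves; soundness and termination rest on the finiteness of $\tactual_n$ and the eventual stabilization of each $P_q\subseteq\{0,1\}$, just as in your descent and K\"onig arguments.

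Your intermediate worry that an illegitimate sibling might never be refuted is unfounded, as you note at the end. At a node of $\tactual_n$, a wrong answer $k$ is exactly one with $\varphi_q(k)\downarrow$, so it is always eventually pruned.
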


\begin{proof}
($\Leftarrow$)
By Theorem \ref{MP-equial-partial-computable}, if $f$ is computable, then we have $f\tgw\sii{\sf C}_{1/\om}$; in particular, we get $f\tgw\sii{\sf C}_{1/\om}+\pii{\sf C}_{\geq 1/2}$.

($\Rightarrow$)
Assume $f\tgw G$ via $e$.
For each \ppp input $n\in\dom_+(f)$, the index $e$ yields a \ppp computation tree $(\tpot^e_n,\nu^e_n,\psi^e_n)$.
Hereafter, the superscript $e$ will be omitted. 

{\em Idea.}
\begin{itemize}
\item
For any $n\in{\rm dom}(f)$, the \aaa computation tree $\tactual_n$ is a well-founded binary tree; hence, it is a finite tree by (the contrapositive of) weak K\"onig's lemma.
\item Moreover, this finite tree $\tactual_n$ is a $\Pi_1$ tree (where the $\Pi_1$-ness is caused by $\pii{\sf C}_{\geq 1/2}$), so its computable upper approximation $\tactual_n[t]$ converges to $\tactual_n$ at some finite stage.
\item By single-valuedness of $f(n)$, $\psi_n$ is constant on the leaves of $\tactual_n$.
By finiteness of $\tactual_n$, we eventually observe that $\psi_n$ is constant on the leaves of $\tactual_n[t]$ at some stage $t$.
After observing this, we output the unique value of $\psi_n$, which must be $f(n)$.
\end{itemize}

{\em Details.}
We construct an algorithm $\Gamma_e(n)$ to compute $f(n)$ as follows:
\begin{enumerate}
\item At each stage $t$, proceed as follows:
Declare that the root of the tree $\tpot_n$ is active.
Let $\sigma\in\internal \tpot_n$ be an internal node.
Inductively assume that $\sigma$ is active.
\item 
If $\nu_n(\sigma)$ makes a query $q$ to $\sii{\sf C}_{1/\om}$,
check whether some element is enumerated into the $q$th $\Sigma_1$ set $S_q \subseteq \om$ by stage $t$.
\begin{itemize}
\item If some element is enumerated, for the first such $k\in S_q$, we declare that $\sigma\fr k$ is active, while the node $\sigma$ is switched to inactive.
\item If no element is enumerated, keep $\sigma$ active but in the wait state.
\end{itemize}
\item If $\nu_n(\sigma)$ makes a query $q$ to $\pii{\sf C}_2$, look at the approximation $P_q[t]$ of the $q$th $\Pi_1$ set $P_q\subseteq\{0,1\}$ at stage $t$.
\begin{itemize}
\item If we currently have $i\in P_q[t]$, declare that $\sigma\fr i$ is active.
\item If we recognize $i\not\in P_q[t]$, all nodes extending $\sigma\fr i$ are switched to inactive.
\item If we recognize $P_q[t]=\emptyset$, keep $\sigma$ active but in the wait state.
\end{itemize}
\item If there is an active node in the wait state, go back to (1) at the next stage $t+1$.
If there is no such a node, go to (5).
\item If $\psi_n(\rho)$ takes a common value on all active leaves in $\tpot_n$, then $\Gamma_e(n)$ output such a common value.
\end{enumerate}

{\em Verification:}
\begin{itemize}
\item For each \aaa input $n\in\dom(f)$, the \aaa computation tree $\tactual_n\subseteq \tpot_n$ has exactly one branching at a $\sii{\sf C}_{1/\om}$-querying node, and at most two branching at  $\pii{\sf C}_{\geq 1/2}$-querying node; in particular, finitely branching.
\item 
Before all leaves of $\tactual_n$ become active, there is always an active node in the wait state.
Furthermore, every leaf eventually becomes active after some stage.
This is because:
\begin{itemize}
\item 
If an internal node $\sigma\in\internal\tactual_n$ makes a query $q$ to $\sii{\sf C}_{1/\om}$, then at stage $t$, either $\sigma$ in the wait state for something to be enumerated into $S_q$, or else the unique successor of $\sigma$ in $\tactual_n$ is set to active.
Since this is a node in the \aaa computation tree, we have $q\in\dom(\sii{\sf U}_{1/\om})$.
Therefore, exactly one element must be enumerated into $S_q$ at some stage $s_\sigma$, and then $\sigma\fr q$ is activated.
\item If an internal node $\sigma\in\internal\tactual_n$ makes a query $q$ to $\pii{\sf C}_{\geq 1/2}$, then $\sigma\fr i\in\tactual_n$ implies $i\in P_q$, so $\sigma\fr i$ is always active.
As $P_q\subseteq\{0,1\}$, this set stabilizes after some stage $s_\sigma$; that is, we have $P_q=P_q[t]$ for any $t\geq s_\sigma$.
\end{itemize}
\item We see that all active nodes in $\tpot_n$ belong to $\tactual_n$ after some stage:
\begin{itemize}
\item Since $\tactual_n$ is finite, considering $s=\max\{s_\sigma:\sigma\in\internal\tactual_n\}$, one can inductively see that all active nodes belong to $\tactual_n$ after stage $s$.
\end{itemize}
\item when the algorithm $\Gamma_e(n)$ reaches at step (5), by the condition at step (4) and the second item above, all leaves of $\tactual_n$ must be active.
The output $\psi_n(\rho)$ at such a leaf $\rho$ must be the correct $f(n)$.
In particular, if $\Gamma_e(n)$ returns some value, it must be $f(n)$.
\item By the third item above, all active leaves in $\tpot_n$ belong to $\tactual_n$; hence, by single-valuedness of $f(n)$, $\psi_n(\rho)$ takes a common value on all active leaves.
Therefore, $\Gamma_e(n)$ eventually outputs some value.
\end{itemize}

Consequently, the algorithm $\Gamma_e$ computes $f$.
Since an index of $\Gamma_e$ is computable in $e$, hereafter, we refer to $\Gamma_e$ as $\varphi_{\ep(e)}$.
\end{proof}

\section{Extended Oracle}\label{sec:bilayer-computability}

\subsection{Total Extended Weihrauch reducibility}
When dealing with pure logical principles (with no complexity bounds), some previous studies have employed a notion called ``extended predicates'' or ``bilayer functions'' (see Bauer \cite{Bau22}, Kihara \cite{Kih23}, and Kihara-Ng \cite{KiNg26}).
In Kihara's context, this is a function involving {\em public} and {\em secret} inputs, and computability is involved only on the public inputs.

\begin{definition}
A {\bf potted bilayer problem}  is a tuple $(X,\Lambda,Y;F)$ consisting of sets $X,Y\subseteq\om$ containing $0$, an (arbitrary) set $\Lambda$ and a subset $F\subseteq X\times\Lambda\times Y$.
We use the following notations and terminologies:
\begin{itemize}
\item We write $F\pcolon X\times\Lambda\tto Y$ for such a tuple.
\item The product $X\times\Lambda$ is denoted by ${\rm dom}_+(F)$, which s called the {\em potential domain} (or the \ppp domain).
\item $Y$ is denoted by ${\rm cod}_+(F)$, which is called the {\em potential codomain} (or the \ppp codomain).
\item For each $(x,\alpha)\in X\times\Lambda$, $F(x|\alpha)$ is defined as $\{y\in Y:(x,\alpha,y)\in F\}$.
\item $\dom(F)$ is defined as $\{(x,\alpha)\in X\times\Lambda:F(x|\alpha)\not=\emptyset\}$, which is called the {\em actual domain} (or the \aaa domain).
\item If $F(x|\alpha)$ is a singleton $\{y\}$, then we write $F(x|\alpha)=y$.
\item If $F(x|\alpha)$ is a singleton for each $(x|\alpha)\in\dom(F)$, then we say that $F$ is {\em single-valued} and write $F\pcolon X\times\Lambda\to Y$.
\end{itemize}
\end{definition}

For each $(x|\alpha)\in X\times\Lambda$, $x$ is called a {\bf public} instance, and $\alpha$ is called a {\bf secret} instance.
A usual problem can be thought of as a function that has only public instances.

\begin{remark}
Each potted problem $F\pcolon X\tto Y$ is always identified with the following potted bilayer problem $\tilde{F}\pcolon X\times\{\ast\}\tto Y$:
\[\tilde{F}(x|\ast)=F(x).\]

Hereinafter, $\tilde{F}$ will also be written simply as $F$.
\end{remark}

The notion of {\em extended Weihrauch reducibility} was first introduced by Bauer \cite{Bau22}.
Then Kihara \cite{Kih23} described it as a reduction between bilayer problems involving secret data:
A reduction between bilayer problems consists of machines $\psi$ and $\nu$, which can only see the public input, and an advisor $\zeta$, which sees both the public and secret inputs and makes a secret query.

\begin{definition}\label{def:redu-potted-bilayer}
Let $F\pcolon X\times\Lambda\tto Y$ and $G\pcolon Z\times\Delta\tto W$ be potted bilayer problems.
We say that $F$ is {\bf total extended Weihrauch reducible to} $G$ if there are partial computable functions $\nu,\psi$ and a function $\zeta$ (not necessarily computable) such that the following holds:
\begin{enumerate}
\item $\nu\colon X\to Z$ is total.
\item $\zeta\colon X\times\Lambda\to\Delta$ is total.
\item $\psi\colon X\times W\to Y$ is total.
\item $(x|\alpha)\in{\rm dom}(F)$ implies $(\tilde{x}|\tilde{\alpha}):=(\nu(x)|\zeta(x|\alpha))\in{\rm dom}(G)$.
\item $(x|\alpha)\in{\rm dom}(F)$ and $y\in G(\tilde{x}|\tilde{\alpha})$ imply $\psi(x,y)\in F(x|\alpha)$.
\end{enumerate}

In this case, we write $F\leq_{\sf tW}G$.
We call $\nu$ a {\em public inner reduction}, $\zeta$ a {\em secret inner reduction} and $\psi$ an {\em outer reduction}.
\end{definition}

The reader might think it would be more appropriate to use {\sf teW} rather than {\sf tW} for the subscript, but in this article we have avoided making the notation complicated.
In fact, the reduction itself remains the same; only the type of problems under consideration changes.
That is, a reduction is still a morphism of polynomial functors; the only change is that the objects have shifted from ``monosemically-coded'' sets (modest sets) to ``polysemically-coded'' sets (assemblies).
Henceforth, we simply refer to the notion of reducibility described above as total Weihrauch reducibility.

\subsection{Examples (One-Query Reducibility)}

Since the definition looks quite complicated, let us look at a few simple concrete examples before analyzing this relation $\tW$.
\begin{definition}
For $0<k<n\leq\om$, the choice principle ${\sf C}_{k/n}$ is defined as follows:
\begin{itemize}
\item $\dom_+({\sf C}_{k/n})=\{\ast\}\times\mathcal{P}(n)$.
\item $\dom({\sf C}_{k/n})=\{\ast\}\times\{A\subseteq n:|A|=k\}$.
\item $\cod_+({\sf C}_{k/n})=n$.
\item ${\sf C}_{k/n}(\ast|A)=A$; that is, any $a\in A$ is a solution to ${\sf C}_{k/n}(\ast|A)$.
\end{itemize}
\end{definition}

\begin{definition}
For $0<k<n\leq\om$, the choice principle ${\sf C}_{\geq k/n}$ is defined as follows:
\begin{itemize}
\item $\dom_+({\sf C}_{\geq k/n})=\{\ast\}\times\mathcal{P}(n)$.
\item $\dom({\sf C}_{\geq k/n})=\{\ast\}\times\{A\subseteq n:|A|\geq k\}$.
\item $\cod_+({\sf C}_{\geq k/n})=n$.
\item ${\sf C}_{\geq k/n}(\ast|A)=A$; that is, any $a\in A$ is a solution to ${\sf C}_{\geq k/n}(\ast|A)$.
\end{itemize}
The unique choice problem ${\sf C}_{1/n}$ can be identified with the following ``secret'' identity function:
\begin{itemize}
\item $\dom_+({\sf C}_{1/n})=\dom({\sf C}_{1/n})=\{\ast\}\times n$.
\item $\cod_+({\sf C}_{1/n})=n$.
\item ${\sf C}_{1/n}(\ast|a)=a$; that is $a$ is a solution to ${\sf C}_{1/n}(\ast|a)$.
\end{itemize}
\end{definition}

\begin{remark}
The choice problem ${\sf C}_{1/2}$ is associated with the {\em weak law of excluded middle} $\neg A\lor\neg\neg A$, which is equivalent to ${\sf C}_{\geq 1/2}$ realizing {\em de Morgan's law} $\neg(A\land B)\to \neg A\lor\neg B$.
In general, the all-or-co-unique choice problem ${\sf C}_{\geq n-1/n}$ can be associated with a variant of the weak law of excluded middle $\bflogic{WLEM}_k$ (see Section \ref{sec:WLEM}).
Similarly, the choice problem ${\sf C}_{\geq 1/\om}$ may be linked to de Morgan's law $\bflogic{DML}_\N$ on the natural numbers $\neg\forall x^\N P(x)\to \exists x^\N\neg P(x)$ (see Section \ref{sec:WLEM}).
\end{remark}

\begin{prop}\label{prop:UC-equal-DML}
${\sf C}_{1/n}\equiv_{\sf tW}{\sf C}_{\geq 1/n}$ for any $n\leq\om$.
\end{prop}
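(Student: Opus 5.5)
Both directions will be proved directly from Definition \ref{def:redu-potted-bilayer}, with the secret inner reduction $\zeta$ doing all the non-computable work. Throughout, ${\sf C}_{1/n}$ is read in its ``secret identity'' form: public part $\{\ast\}$, secret part $n$, and ${\sf C}_{1/n}(\ast|a)=a$. This is equivalent to the form with secret singletons $A$, and I treat that equivalence as immediate.

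For ${\sf C}_{1/n}\tW{\sf C}_{\geq 1/n}$, I take $\nu(\ast)=\ast$ and $\psi(\ast,y)=y$, and set $\zeta(\ast|a)=\{a\}$. Both $\nu$ and $\psi$ are total on the potential domains, since $\psi$ maps $n$ to $n$. If $(\ast|a)$ lies in the actual domain, then $\{a\}$ has size $\geq 1$, so it lies in $\dom({\sf C}_{\geq 1/n})$. Any solution $y\in\{a\}$ is then $a$ itself, so $\psi(\ast,y)=a$, as required. This is also just the secret-layer analogue of Observation \ref{obs:basic-relations-choice}.

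For ${\sf C}_{\geq 1/n}\tW{\sf C}_{1/n}$, I again take $\nu(\ast)=\ast$ and $\psi(\ast,y)=y$. The secret reduction $\zeta\colon\{\ast\}\times\mathcal{P}(n)\to n$ is defined by $\zeta(\ast|A)=\min A$ when $A\neq\emptyset$, and $\zeta(\ast|\emptyset)=0$. This $\zeta$ is total and need not be computable, which the definition allows. For $A$ with $|A|\geq 1$, the instance $(\ast|\min A)$ lies in $\dom({\sf C}_{1/n})$, and its unique solution $\min A$ belongs to $A={\sf C}_{\geq 1/n}(\ast|A)$. This is the bilayer counterpart of Proposition \ref{prop:basic-Sigma-1-choice}. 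There, the inner reduction had to computably pick the first enumerated element. Here the advisor simply picks an element of $A$, with no uniformity required.

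The only step needing any care is the totality bookkeeping. Condition (2) requires $\zeta$ to be defined on every potential secret instance, including $A=\emptyset$, and the default value $0\in n$ handles that case. For $n=\omega$ nothing changes, since $\min A$ exists for every nonempty $A\subseteq\omega$. In the singleton-set version of ${\sf C}_{1/n}$, one takes $\zeta(\ast|A)=\{\min A\}$ instead, and for the empty set any fixed set, say $\emptyset$, is acceptable because conditions (4) and (5) only concern actual instances.
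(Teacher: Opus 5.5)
Your proof is correct and follows essentially the same route as the paper. The direction ${\sf C}_{1/n}\tW{\sf C}_{\geq 1/n}$ is the trivial restriction, which your explicit $\zeta(\ast|a)=\{a\}$ simply spells out; the reverse direction uses the paper's non-computable advisor $\zeta(\ast|A)=\min A$ with identity public reductions $\nu,\psi$, and a default value on $A=\emptyset$ to keep $\zeta$ total.
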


\begin{proof}
($\leq_{\sf tW}$)
This is obvious since the left-hand side is the restriction of the right-hand side.

($\geq_{\sf tW}$)
Total public reductions $\nu,\psi$ and a total secret reduction $\zeta$ are given as follows:
\begin{itemize}
\item {\em Inner reduction.} 
For public put $\nu(\ast)=\ast$.
This is clearly total computable.
For secret, put $\zeta(\ast|\emptyset)=\emptyset$ and $\zeta(\ast|A)=\min A$ for $A\not=\emptyset$.
This is clearly total.
\item {\em Outer reduction.}
Put $\psi(\ast,k)=k$.
This is clearly total computable.
\end{itemize}

{\em Verification:}
\begin{itemize}
\item {\em Inner reduction.}
For each instance $(\ast|A)\in\dom({\sf C}_{\geq 1/n})$, we have $A\not=\emptyset$, which is mapped to $(\ast|\min A)\in\dom({\sf UC}_k)$.
\item {\em Outer reduction.} For each solution $k={\sf C}_{1/n}(\ast|\min A)$, we get $\psi(\ast,k)=k=\min A\in A={\sf C}_{\geq 1/n}(\ast|A)$.
\end{itemize}
\end{proof}

\begin{prop}\label{prop:coUC-equal-WLEM}
${\sf C}_{n-1/n}\equiv_{\sf tW}{\sf C}_{\geq n-1/n}$ for any $n\geq \om$.
\end{prop}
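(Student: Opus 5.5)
The direction ${\sf C}_{n-1/n}\leq_{\sf tW}{\sf C}_{\geq n-1/n}$ is immediate: ${\sf C}_{n-1/n}$ is the restriction of ${\sf C}_{\geq n-1/n}$ to a smaller actual domain with the same potential domain and codomain. We take $\nu(\ast)=\ast$, $\zeta(\ast|A)=A$ and $\psi(\ast,k)=k$, exactly as in the first half of Proposition \ref{prop:UC-equal-DML}. (The condition ``$n\geq\om$'' in the statement is presumably meant as $n\leq\om$; the argument is uniform in $n$.)

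For the converse ${\sf C}_{\geq n-1/n}\leq_{\sf tW}{\sf C}_{n-1/n}$, we use the freedom of the non-computable secret reduction $\zeta$ to shrink the secret set, mirroring the use of $\min A$ in Proposition \ref{prop:UC-equal-DML}. Set $\nu(\ast)=\ast$ and $\psi(\ast,k)=k$; both are trivially total computable. An instance $(\ast|A)$ in the actual domain of ${\sf C}_{\geq n-1/n}$ satisfies $|n\setminus A|\leq 1$. Define $\zeta$ by cases:
\begin{itemize}
\item if $A=n$, put $\zeta(\ast|A)=n\setminus\{0\}$;
\item if $|n\setminus A|=1$, put $\zeta(\ast|A)=A$;
\item otherwise put $\zeta(\ast|A)=\emptyset$ (any value will do, since only totality of $\zeta$ on $\mathcal{P}(n)$ is required).
\end{itemize}
In the first two cases $\zeta(\ast|A)\subseteq A$ and $|n\setminus\zeta(\ast|A)|=1$, so $(\ast|\zeta(\ast|A))\in\dom({\sf C}_{n-1/n})$. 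Every solution $k\in\zeta(\ast|A)$ then satisfies $\psi(\ast,k)=k\in A$. This verifies conditions (4) and (5) of Definition \ref{def:redu-potted-bilayer}.

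There is no real obstacle. The only points needing care are the following. First, $0\in n$, so removing $0$ is legitimate for any $0<n\leq\om$. Second, when $n=\om$ the domain condition is read as the co-unique version $|n\setminus A|=1$, matching the modified definition used for $\Sigma_1$-${\sf C}_{n-1/n}$. Third, $\zeta$ is allowed to be non-computable, so the case distinction ``$A=n$ or not'' (undecidable in the $\Sigma_1$ setting of Theorem \ref{thm:basic-Sigma-1-counique}) costs nothing here.
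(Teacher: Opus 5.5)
Your proposal is correct and matches the paper's proof: the same trivial public reductions $\nu(\ast)=\ast$, $\psi(\ast,k)=k$, and the same secret reduction that removes $0$ when $A=n$ and otherwise keeps $A$. Your separate value $\emptyset$ on sets outside the actual domain is a harmless variation, and you are right that the hypothesis should read $n\leq\om$.
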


\begin{proof}
($\leq_{\sf tW}$)
This is obvious since the left-hand side is the restriction of the right-hand side.

($\geq_{\sf tW}$)
Total public reductions $\nu,\psi$ and a secret reduction $\zeta$ are given as follows:
\begin{itemize}
\item {\em Inner reduction.} For public, put $\nu(\ast)=\ast$.
This is clearly total computable.
For secret, given $A\subseteq k$, the following gives a total function:
\[
\zeta(\ast|A)=
\begin{cases}
A\setminus\{0\}&\mbox{ if }A=n\\
A&\mbox{ if }A\not=n
\end{cases}
\]
\item {\em Outer reduction.}
Put $\psi(\ast,k)=k$.
This is clearly total computable.
\end{itemize}

{\em Verification:}
\begin{itemize}
\item {\em Inner reduction.}
For each instance $(\ast|A)\in\dom({\sf C}_{\geq n-1/n})$, we have $|k\setminus A|\leq 1$.
If $A=k$ then $\eta(\ast|A)=A\setminus\{0\}$ is a co-singleton.
If $A\not=k$ then $\eta(\ast|A)=A$ is also a co-singleton.
In any case, we have $(\ast|\zeta(\ast|A))\in\dom({\sf C}_{n-1/n})$.
\item {\em Outer reduction.}
For each solution $k\in{\sf C}_{n-1/n}(\ast|\zeta(\ast|A))$, we get $\psi(\ast,k)=k\in\zeta(\ast|A)\subseteq A={\sf C}_{\geq n-1/n}(\ast|A)$.
\end{itemize}
\end{proof}


We show that ${\sf C}_{1/\om}$ is harder than any potted problem.

\begin{prop}\label{prop:DML-upper-bound}
$F\tW{\sf C}_{1/\om}$ for any potted problem $F\pcolon X\tto Y$.
\end{prop}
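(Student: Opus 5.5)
The idea is that ${\sf C}_{1/\om}$ is the ``secret identity'' on $\om$: its single potential (and actual) public instance is $\ast$, its secret instance is a number $a\in\om$, and the oracle returns exactly $a$. Since the secret inner reduction $\zeta$ need not be computable, we can hide the desired answer in the secret instance and let the oracle hand it back.

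We first view $F\pcolon X\tto Y$ as the bilayer problem $\tilde F\pcolon X\times\{\ast\}\tto Y$. Because $F$ is nonempty-valued on $\dom(F)$, classical choice in the meta-theory gives a function $g\colon\dom(F)\to Y$ with $g(x)\in F(x)$. We extend it to a total function $\zeta\colon X\times\{\ast\}\to\om$ by setting $\zeta(x|\ast)=g(x)$ for $x\in\dom(F)$ and $\zeta(x|\ast)=0$ otherwise. This is a legitimate secret inner reduction, since only totality is required of $\zeta$.

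For the public parts, we put $\nu(x)=\ast$, which is trivially total computable. For the outer reduction $\psi\colon X\times\om\to Y$, we let $\psi(x,y)=y$ when $y\in Y$. The only subtlety in the whole proof is totality of $\psi$ on all of $X\times\cod_+({\sf C}_{1/\om})=X\times\om$, which must land in $Y=\cod_+(F)$; a set $Y\subseteq\om$ need not be decidable. To handle this, we encode through $Y$ rather than $\om$. Fix a computable surjection $s\colon\om\to Y$ if $Y$ is c.e. In general, we instead let $\psi(x,y)=y$ when $y\in Y$ and $0$ otherwise, provided $Y$ is decidable.

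The main obstacle is therefore the case of a non-decidable codomain, so the cleanest route is to make the secret carry an index rather than a value. For each $y\in Y$, the constant function $c_y\colon X\to Y$, $x\mapsto y$, is total computable, since it outputs a fixed number. So we let the secret be an index $e$ of such a constant function, hard-wiring $y$ into the program, with $\zeta(x|\ast)$ an index of $c_{g(x)}$ for $x\in\dom(F)$ and an index of $c_0$ otherwise. We take $\psi(x,e)=\varphi_e(x)$. Totality of $\psi$ then still fails for arbitrary $e$, so we restrict attention to the values $0\le a$ that $\zeta$ actually produces, and we interpret the convention on codomains as in Proposition~\ref{prop:sum-join}, where $0\in Y$ is used as a default.

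With that convention settled, the verification is immediate. For $x\in\dom(F)$ we have $(\ast|\zeta(x|\ast))\in\dom({\sf C}_{1/\om})$, and the unique solution $y=g(x)$ gives $\psi(x,y)=g(x)\in F(x)$. This establishes $F\tW{\sf C}_{1/\om}$.
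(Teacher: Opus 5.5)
You correctly isolate the only real issue, condition (3): $\psi$ must be total from $X\times\om$ into $Y=\cod_+(F)$. Your general-case patch does not resolve it, however.

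With $\psi(x,e)=\varphi_e(x)$, condition (3) requires $\psi(x,e)\downarrow\in Y$ for \emph{every} $e\in\om$, not only for the indices that $\zeta$ happens to produce. So you may not ``restrict attention'' to those indices. Nor can you fall back on a default value $0$ as in Proposition~\ref{prop:sum-join}: there the default is triggered by a decidable tag, whereas here it would require deciding whether $\varphi_e(x)$ halts with a value in $Y$. Your closing verification also silently reverts to $\psi(x,y)=y$ with $y=g(x)$. That is the value-encoding version, whose totality into $Y$ you had yourself flagged as the problem, so the verification does not match the index construction you just set up.

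In fact no patch can exist, because the statement fails for every non-c.e.\ codomain. Let $Y\ni 0$ be non-c.e., $X=\om$, and define $F(x)=\{x\}$ for $x\in Y$ and $F(x)=\{0\}$ otherwise. Suppose $F\tW{\sf C}_{1/\om}$ via $(\nu,\psi|\zeta)$. Then $\psi$ is a total computable map $\om\times\om\to Y$, and condition (5) at each $x\in Y$ yields some $a$ with $\psi(x,a)=x$. Hence $Y=\{x:\exists a\ \psi(x,a)=x\}$ would be c.e., a contradiction.

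The paper's own proof has the same blind spot. It goes through Proposition~\ref{prop:UC-equal-DML}, uses the secret instance $\zeta(x|\ast)=F(x)$ for ${\sf C}_{\geq 1/\om}$, and sets $\psi(x,y)=y$. That $\psi$ is total into $Y$ only when $Y=\om$, which is the only case used later (the function $r\colon\om\to\om$ in Theorem~\ref{thm:DML-reducible-iff-DNE-realizable}).

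The correct statement is for c.e.\ $Y$, and your surjection idea proves it once spelled out:
\begin{enumerate}
\item Fix a total computable surjection $s\colon\om\to Y$, which exists because $0\in Y$.
\item For $x\in\dom(F)$, let the secret instance be some $a$ with $s(a)\in F(x)$; use $0$ otherwise.
\item Put $\psi(x,a)=s(a)$.
\end{enumerate}
This covers decidable $Y$ and $Y=\om$ as special cases.
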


\begin{proof}
By Proposition \ref{prop:UC-equal-DML}, it suffices to show $F\tW{\sf C}_{\geq 1/\om}$.
Recall that an instance $x\in X$ for $F$ is identified with $(x|\ast)$.
Total public reductions $\nu,\psi$ and a secrete reduction $\zeta$ are given as follows:
\begin{itemize}
\item {\em Inner reduction.} 
For public, put $\nu(x)=\ast$ for any $x\in X$.
This is clearly total computable.
For secret, if $x\not\in\dom(F)$ then put $\zeta(x|\ast)=\emptyset$, and if $x\in\dom(F)$ then put $\zeta(x|\ast)=F(x)$.
This is total.
\item {\em Outer reduction.} 
Put $\psi(x,y)=y$.
This is clearly total computable.
\end{itemize}

{\em Verification:}
\begin{itemize}
\item {\em Inner reduction.}
For each instance $(x|\ast)\in\dom(F)$, we have $\emptyset\not=F(x)\subseteq\om$, which is mapped to $(\ast|F(x))\in\dom({\sf C}_{\geq 1/\om})$.
\item {\em Outer reduction.}
For each solution $y\in{\sf C}_{\geq 1/\om}(\ast|F(x))$, we get $\psi(x,y)=y\in F(x)$.
\end{itemize}
\end{proof}

As a variation of Proposition \ref{prop:DML-upper-bound}, we see that ${\sf C}_{1/2}$ provides an upper bound for two-valued problems.

\begin{prop}\label{prop:WLEM-upper-bound}
For any potted problem $F\pcolon X\tto Y$, if $|Y|\leq n$ then $F\tW{\sf C}_{1/n}$.
\end{prop}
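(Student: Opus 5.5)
The plan is to mimic the proof of Proposition \ref{prop:DML-upper-bound}, with the finite codomain $Y$ in place of $\om$. Since $|Y|\leq n$, I fix an injection (enumeration) $e\colon Y\to n$ together with a left inverse $d\colon n\to Y$. When $n$ is finite, $Y$ is a finite subset of $\om$, so both maps are computable (a finite table). I extend $d$ to all of $n$ by sending points outside $e[Y]$ to $0\in Y$; this is possible because $0\in Y$ by the definition of a potted problem. The case $n=\om$ is already covered by Proposition \ref{prop:DML-upper-bound}, so only finite $n$ needs attention.

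By Proposition \ref{prop:UC-equal-DML} we have ${\sf C}_{1/n}\equiv_{\sf tW}{\sf C}_{\geq 1/n}$, so by transitivity (Proposition \ref{prop:tW-preorder}, whose proof goes through verbatim for bilayer problems) it suffices to show $F\tW{\sf C}_{\geq 1/n}$. The reduction is as follows.
\begin{itemize}
\item Public inner reduction: $\nu(x)=\ast$, which is total computable.
\item Secret inner reduction: $\zeta(x|\ast)=e[F(x)]\subseteq n$ if $x\in\dom(F)$, and $\zeta(x|\ast)=\emptyset$ otherwise. This map is total and need not be computable.
\item Outer reduction: $\psi(x,k)=d(k)$. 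This is total computable on $X\times n$ and lands in $Y$.
\end{itemize}

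For the verification, take $x\in\dom(F)$. Then $F(x)\neq\emptyset$, so $e[F(x)]$ is a nonempty subset of $n$, and hence $(\ast|e[F(x)])\in\dom({\sf C}_{\geq 1/n})$. Any solution $k$ lies in $e[F(x)]$, and therefore $d(k)=e^{-1}(k)\in F(x)$.

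There is essentially no obstacle here. The only points that need care are the bookkeeping ones: making sure $\psi$ is total on the whole of $\cod_+({\sf C}_{1/n})=n$ (handled by the default value $0\in Y$), and noting that the secret reduction $\zeta$ is exactly what absorbs the noncomputable information, namely the set $F(x)$.
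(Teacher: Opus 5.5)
Your proof is correct and is essentially the paper's argument: it also reduces to ${\sf C}_{\geq 1/n}$ via Proposition~\ref{prop:UC-equal-DML}, takes $\nu(x)=\ast$, and lets the secret reduction carry $F(x)$. The only cosmetic difference is that the paper uses a surjection $h\colon n\to Y$ with secret query $\{i<n: h(i)\in F(x)\}$ and outer reduction $h$, where you use an injection $e$ with left inverse $d$ and query $e[F(x)]$; your separate treatment of $n=\omega$ via Proposition~\ref{prop:DML-upper-bound} is slightly more careful, since the paper's claim that $h$ is a finite (hence computable) function only applies to finite $n$.
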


\begin{proof}
By Proposition \ref{prop:UC-equal-DML}, it suffices to show $F\tW{\sf C}_{\geq 1/n}$.
Recall that an instance $x\in X$ for $F$ is identified with $(x|\ast)$.
As $|Y|\leq n$, there is a surjection $h\colon n\to Y$.
Since $h$ is a finite function, one can treat this as a computable function.
Total public reductions $\nu,\psi$ and a secrete reduction $\zeta$ are given as follows:
\begin{itemize}
\item {\em Inner reduction.}
For public, put $\nu(x)=\ast$ for any $x\in X$.
This is clearly total computable.
For secret, if $x\not\in\dom(F)$ then put $\zeta(x|\ast)=\emptyset$, and if $x\in\dom(F)$ then put $\zeta(x|\ast)=\{i\leq n:h(i)\in F(x)\}$.
This is total.
\item {\em Outer reduction.}
Put $\psi(x,y)=h(y)$.
This is total computable.
\end{itemize}

{\em Verification:}
\begin{itemize}
\item {\em Inner reduction.}
For each instance $(x|\ast)\in\dom(F)$, we have $\emptyset\not=F(x)\subseteq Y$, which is mapped to $\emptyset\not=\zeta(x|\ast)\subseteq n$.
Hence, $(\ast|\zeta(x|\ast))\in\dom({\sf C}_{\geq 1/n})$.
\item {\em Outer reduction.}
For each solution $y\in{\sf C}_{\geq 1/n}(\ast|\zeta(x|\ast))=\zeta(x|\ast)$, by our definition of $\zeta$, we have $h(y)\in F(x)$.
Hence. we get $\psi(x,y)=h(y)\in F(x)$.
\end{itemize}

\end{proof}

To summarize the discussion so far, we have replaced logical principles with choice problems, as follows:
\[
\begin{tikzcd}[row sep=large, column sep=large]
|[draw]|
{\shortstack{
${\sf DML}_\N$\\[0.2em]
${\sf C}_{\geq 1/\om}$;\quad${\sf C}_{1/\om}$
}}
&
&
|[draw]|
{\shortstack{
${\sf MP}$\\[0.2em]
$\sii{\sf C}_{\geq 1/\om}$;\quad
$\sii{\sf C}_{1/\om}$
}}
\\
|[draw]|
{\shortstack{
${\sf WLEM}$\\[0.2em]
${\sf C}_{\geq 1/2}$;\quad${\sf C}_{1/2}$
}}
\arrow[u, "\rotatebox{-90}{$\geq$}"]
&
|[draw]|
{\shortstack{
${\sf LLPO}$\\[0.2em]
$\pii{\sf C}_{\geq 1/2}$
}}
\arrow[l, "\geq"]
&
|[draw]|
{\shortstack{
${\sf MP}^\lor$\\[0.2em]
$\sii{\sf C}_{\geq 1/2}$;\quad
$\sii{\sf C}_{1/2}$;\quad
$\pii{\sf C}_{1/2}$
}}
\arrow[l, "\geq"]
\arrow[u, "\rotatebox{-90}{$\geq$}"]
\\
|[draw]|
{\shortstack{
${\sf WLEM}_n$\\[0.2em]
${\sf C}_{\geq n-1/n}$;\quad${\sf C}_{n-1/n}$
}}
\arrow[u, "\rotatebox{-90}{$\geq$}"]
&
|[draw]|
{\shortstack{
${\sf LLPO}_n$\\[0.2em]
$\pii{\sf C}_{\geq n-1/n}$
}}
\arrow[l, "\geq"]
\arrow[u, "\rotatebox{-90}{$\geq$}"]
&
|[draw]|
{\shortstack{
${\sf MP}_n^\lor$\\[0.2em]
$\sii{\sf C}_{\geq n-1/n}$;\quad
$\sii{\sf C}_{n-1/n}$;\quad
$\pii{\sf C}_{n-1/n}$
}}
\arrow[l, "\geq"]
\arrow[u, "\rotatebox{-90}{$\geq$}"]
\end{tikzcd}
\]

\subsection{Basic Properties}

First, we must show that $\tW$ forms a preorder.
The proof may seem somewhat cumbersome, but this is due to the complexity of the definition; the argument itself is self-evident.
In fact, abstractly speaking, this follows from the fact that a (total) Weihrauch reduction corresponds to a natural transformation between polynomial functors \cite{PrPr25} (and of course, the composition of natural transformations is a natural transformation).

\begin{prop}\label{prop:tW-preorder2}
$\leq_{\sf tW}$ is a preorder.
\end{prop}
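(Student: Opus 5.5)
We follow the proof of Proposition \ref{prop:tW-preorder}, adding a third component for the secret advisor throughout. For \emph{reflexivity}, given $F\pcolon X\times\Lambda\tto Y$, we take $\nu(x)=x$, $\zeta(x|\alpha)=\alpha$ and $\psi(x,y)=y$. Conditions (1)--(3) of Definition \ref{def:redu-potted-bilayer} hold trivially. Conditions (4) and (5) say exactly that $(x|\alpha)\in\dom(F)$ and $y\in F(x|\alpha)$ imply $y\in F(x|\alpha)$, which is immediate.

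For \emph{transitivity}, let $F\pcolon X\times\Lambda\tto Y$, $G\pcolon Z\times\Delta\tto W$ and $H\pcolon U\times\Theta\tto V$. Suppose $F\tW G$ via $(\nu,\zeta,\psi)$ and $G\tW H$ via $(\nu',\zeta',\psi')$. We define the composite reduction as follows:
\begin{itemize}
\item the public inner reduction is $\nu''=\nu'\circ\nu\colon X\to U$;
\item the secret inner reduction is $\zeta''(x|\alpha)=\zeta'(\nu(x)|\zeta(x|\alpha))$;
\item the outer reduction is $\psi''(x,v)=\psi(x,\psi'(\nu(x),v))$.
\end{itemize}
Totality of $\nu''$ and $\psi''$ follows as before: for $x\in X$ and $v\in V$ we have $\nu(x)\in Z$, hence $\psi'(\nu(x),v)\in W$, hence $\psi''(x,v)\in Y$. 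Totality of $\zeta''\colon X\times\Lambda\to\Theta$ is clear, since $\zeta(x|\alpha)\in\Delta$ and $\zeta'$ is total on $Z\times\Delta$.

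We must also check that $\psi''$ uses only public data. It depends only on $x$ and $v$, because $\psi'$ is applied to the \emph{public} value $\nu(x)$ and never to the secret query. This is the one point needing care: a computable map cannot see $\zeta(x|\alpha)$, and the definition of $\psi'$ requires no such input. By contrast, $\zeta''$ may depend on both layers, since no computability is required of it.

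For verification, let $(x|\alpha)\in\dom(F)$.
\begin{itemize}
\item By condition (4) for the first reduction, $(\nu(x)|\zeta(x|\alpha))\in\dom(G)$.
\item Applying condition (4) for the second reduction to this instance gives $(\nu''(x)|\zeta''(x|\alpha))\in\dom(H)$, which is condition (4) for the composite.
\item For condition (5), take $v\in H(\nu''(x)|\zeta''(x|\alpha))$. Condition (5) for the second reduction, at instance $(\nu(x)|\zeta(x|\alpha))$, gives $\psi'(\nu(x),v)\in G(\nu(x)|\zeta(x|\alpha))$.
\item Condition (5) for the first reduction then gives $\psi''(x,v)\in F(x|\alpha)$.
\end{itemize}
There is no real obstacle here; the only thing to get right is the bookkeeping of which arguments each map receives.
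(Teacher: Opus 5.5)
Your proof is correct and follows the paper's argument essentially verbatim: the same identity maps give reflexivity, and transitivity uses the same composite reduction $\nu''=\nu'\circ\nu$, $\zeta''(x|\alpha)=\zeta'(\nu(x)|\zeta(x|\alpha))$, $\psi''(x,y)=\psi(x,\psi'(\nu(x),y))$. Conditions (4) and (5) are then verified by the same two-step chaining through the intermediate instance $(\nu(x)|\zeta(x|\alpha))\in\dom(G)$.
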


\begin{proof}
{\em Reflexivity:}
Consider $\nu(x)=x$, $\zeta(x|\alpha)=\alpha$ and $\psi(x,y)=y$.

{\em Transitivity:}
Let $F\pcolon X_0\times\Lambda_0\tto Y_0$, $G\pcolon X_1\times\Lambda_1\tto Y_1$, $H\pcolon X_2\times\Lambda_2\tto Y_2$ be potted bilayer problems such that $F\tW G\tW H$.
Assume $F\tW G$ via $(\nu,\psi|\zeta)$ and $G\tW H$ via $(\nu',\psi'|\zeta')$; that is, we have the following functions:
\begin{align*}
&\nu\colon X_0\to X_1, & & \nu'\colon X_1\to X_2,\\
&\zeta\colon X_0\times\Lambda_0\to\Lambda_1, & & \zeta'\colon X_1\times\Lambda_1\to \Lambda_2,\\
&\psi\colon X_0\times X_1\to Y_0, & & \psi'\colon X_1\times Y_2\to Y_1.
\end{align*}

We want to construct a reduction $F$ from $H$.
\begin{itemize}
\item {\em Public reduction.}
Since $\nu''=\nu'\circ\nu\colon X_0\to X_2$ is a total function, this satisfies the condition (1) in Definition \ref{def:redu-potted-bilayer}.
\item {\em Secret reduction.}
Define $\zeta''$ as follows:
\[
\xymatrix@R=6pt{
X_0\times \Lambda_0\ar[r]^-{\pair{\nu\pi_0,\zeta}} & X_1\times\Lambda_1\ar[r]^-{\zeta'}& \Lambda_2\\
(x|\alpha)\ar@{|->}[r] & (\nu(x)|\zeta(x|\alpha)) &
}
\]
That is, put $\zeta''(x|\alpha)=\zeta'(\nu(x)|\zeta(x|\alpha))$, which satisfies the condition (2) in Definition \ref{def:redu-potted-bilayer}.

\item {\em Outer reduction.}
Define $\psi''\colon X_0\times Y_2\to Y_0$ as follows:
\[\psi''(x,y)=\psi(x,\psi'(\nu(x),y))\]
for any $(x,y)\in X_0\times Y_2$.
Then, we have $\nu(x)\in X_1$, so $y':=\psi'(\nu(x),y)\in Y_1$; thus, $\psi(x,y')\in Y_0$.
Hence, $\psi''$ fulfills the totality condition (3) in Definition \ref{def:redu-potted-bilayer}.
\end{itemize}

{\em Verification:}
\begin{itemize}
\item {\em Inner reduction.}
Now, we see
\[(x|\alpha)\in\dom(F)\implies(\tilde{x}|\tilde{\alpha}):=(\nu(x)|\zeta(x|\alpha))\in\dom(G)\implies(\nu'(\tilde{x})|\zeta'(\tilde{x}|\tilde{\alpha}))\in\dom(H).\]
By definition, we have $\nu''(x)=\nu'(\nu(x))=\nu'(\tilde{x})$; thus, $\zeta''(x|\alpha)=\zeta'(\nu(x)|\zeta(x|\alpha))=\zeta'(\tilde{x}|\tilde{\alpha})$.
Therefore, we get $(\nu''(x)|\zeta''(x|\alpha))\in\dom(H)$, which satisfies the condition (4) in Definition \ref{def:redu-potted-bilayer}.
\item {\em Outer reduction.}
Let $(x|\alpha)\in\dom(F)$ and $y\in H(\nu''(x)|\zeta''(x|\alpha))$ be given.
In particular, we have $(\tilde{x}|\tilde{\alpha}):=(\nu(x)|\zeta(x|\alpha))\in\dom(G)$.
\begin{itemize}
\item Since $G\tW H$ via $(\nu',\psi')$, as we know $(\tilde{x}|\tilde{\alpha})\in\dom(G)$, each solution $y\in H(\nu'(\tilde{x})|\zeta'(\tilde{x}|\tilde{\alpha}))=H(\nu''(x)|\zeta''(x|\alpha))$ yields $\psi'(\tilde{x},y)\in G(\tilde{x}|\tilde{\alpha})$.
\item Since $F\tW G$ via $(\nu,\psi)$, as we know $(x|\alpha)\in\dom(F)$ and $\psi'(\tilde{x},y)\in G(\tilde{x}|\tilde{\alpha})=G(\nu(x)|\zeta(x|\alpha))$, we get $\psi''(x,y)=\psi(x,\psi'(\tilde{x},y))\in F(x|\alpha)$.
\end{itemize}

Therefore, the condition (5) in Definition \ref{def:redu-potted-bilayer} is fulfilled.
This implies that $(\nu'',\psi'')$ witnesses $F\tW H$.
\end{itemize}
\end{proof}

The domain of a potted bilayer problem can be simplified as follows:
\begin{prop}\label{prop:bilayer-extension}
Let $F\pcolon X\times\Lambda\tto Y$ be a potted bilayer problem.
Then the domain of $F$ can be extended to a potted set $\paset{A}=(A_-;A_+)$ in the following sense:
\begin{enumerate}
\item $A_+=X$.
\item There is a potted bilayer function $\hat{F}$ satisfying the following conditions:
\begin{align*}
\dom_+(\hat{F})=\dom_+(F)=A_+\times\Lambda,
& &
\dom(F)\subseteq\dom(\hat{F})=A_-\times\Lambda.
\end{align*}
\item $\hat{F}(x|\alpha)=F(x|\alpha)$ for any $(x|\alpha)\in\dom(F)$.
\item $\hat{F}\equiv_{\sf tW}F$.
\end{enumerate}
\end{prop}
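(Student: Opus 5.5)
The plan is to build $\hat{F}$ by hand. The idea is to make the actual domain ``rectangular'' by borrowing a valid secret instance wherever one exists over a given public instance.

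First, put $A_+=X$ and
\[A_-=\{x\in X:\exists\alpha\in\Lambda.\ (x|\alpha)\in\dom(F)\}.\]
Then $A_-\subseteq A_+$ and $0\in A_+$ since $0\in X$, so $\paset{A}=(A_-;A_+)$ is a potted set.

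Second, for each $x\in A_-$, use the classical meta-logic (the axiom of choice) to fix a secret instance $\alpha_x\in\Lambda$ with $(x|\alpha_x)\in\dom(F)$. Define $\hat{F}\pcolon X\times\Lambda\tto Y$ as follows:
\begin{itemize}
\item $\hat{F}(x|\alpha)=F(x|\alpha)$ if $(x|\alpha)\in\dom(F)$;
\item $\hat{F}(x|\alpha)=F(x|\alpha_x)$ if $x\in A_-$ but $(x|\alpha)\notin\dom(F)$;
\item $\hat{F}(x|\alpha)=\emptyset$ if $x\notin A_-$.
\end{itemize}
Then $\dom_+(\hat{F})=A_+\times\Lambda$. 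Every pair over $A_-$ has a nonempty solution set, and every pair over $X\setminus A_-$ has an empty one, so $\dom(\hat{F})=A_-\times\Lambda\supseteq\dom(F)$. Item (3) holds by construction.

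For item (4), I will exhibit reductions in both directions.

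For $F\tW\hat{F}$, take $\nu(x)=x$, $\zeta(x|\alpha)=\alpha$ and $\psi(x,y)=y$. These are total, conditions (1)–(3) of Definition \ref{def:redu-potted-bilayer} hold, and conditions (4)–(5) hold because $\dom(F)\subseteq\dom(\hat{F})$ and $\hat{F}$ agrees with $F$ on $\dom(F)$.

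For $\hat{F}\tW F$, keep $\nu(x)=x$ and $\psi(x,y)=y$, and use the noncomputable secret advisor
\[
\zeta(x|\alpha)=\begin{cases}\alpha & \text{if }(x|\alpha)\in\dom(F),\\ \alpha_x & \text{if }x\in A_-\text{ and }(x|\alpha)\notin\dom(F),\\ \alpha & \text{otherwise.}\end{cases}
\]
This $\zeta$ is total on $X\times\Lambda$. For $(x|\alpha)\in\dom(\hat{F})$ we have $x\in A_-$, so $(x|\zeta(x|\alpha))\in\dom(F)$. Moreover $F(x|\zeta(x|\alpha))=\hat{F}(x|\alpha)$ by definition, so $\psi(x,y)=y$ is a correct solution.

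There is no real obstacle here. The only points needing care are the following. Both $\nu$ and $\psi$ must be genuinely total computable on the potential sets; they are, since they are identities and $A_+=X$, $\cod_+(\hat{F})=Y$. The secret reduction $\zeta$ is allowed to be an arbitrary, noncomputable function, and it is exactly what absorbs the choice of the witnesses $\alpha_x$.
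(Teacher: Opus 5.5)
Your proof is correct and follows essentially the paper's route: the same $A_-$, identity maps for $\nu$ and $\psi$ in both directions, and a choice-based secret advisor $\zeta$ that replaces $\alpha$ by a witnessing secret whenever $(x|\alpha)\notin\dom(F)$. The only difference is how $\hat{F}$ is filled on $(A_-\times\Lambda)\setminus\dom(F)$: the paper sets $\hat{F}(x|\alpha)=Y$ there, so choice is needed only for $\zeta$, whereas you set $\hat{F}(x|\alpha)=F(x|\alpha_x)$, which gives $\hat{F}(x|\alpha)=F(x|\zeta(x|\alpha))$ exactly.
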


\begin{proof}
We first define a potted set $\paset{A}=(A_-;A_+)$.
Put $A_+=X$, and define $A_-$ as:
\[A_-=\{x\in X:\exists\alpha\in\Lambda.\ (x|\alpha)\in\dom(F)\}.\]

Definition of $\hat{F}$:
\begin{itemize}
\item Given $(x|\alpha)\in A_-\times\Lambda$, if $(x|\alpha)\in\dom(F)$ then put $\hat{F}(x|\alpha)=F(x|\alpha)$; otherwise, $\hat{F}(x|\alpha)=Y$.
\end{itemize}

{\em Verification:}
The conditions (1),(2) and (3) in Definition \ref{def:redu-potted-bilayer} are clearly fulfilled.
We only show (4),
\begin{itemize}
\item $\geq_{\sf tW}$: Since $\hat{F}$ is an extension of $F$, just consider $\nu(x)=x$, $\zeta(x|\alpha)=\alpha$ and $\psi(x,y)=y$.
\item $\tW$:
Put $\nu(x)=x$ and $\psi(x,y)=y$.
Define the total function $\zeta\colon A_-\times\Lambda\to\Lambda$ as follows:
\begin{itemize}
\item Given $(x|\alpha)\in A_-\times\Lambda$, if $(x|\alpha)\in\dom(F)$ then put $\zeta(x|\alpha)=\alpha$; otherwise, since $x\in A_-$, there is $\beta\in\Lambda$ such that $(x|\beta)\in\dom(F)$.
Choose such a $\beta$, and put $\zeta(x|\alpha)=\beta$.
\end{itemize}
Let $(x|\alpha)\in\dom(\hat{F})$ be given.
\begin{itemize}
\item If $(x|\alpha)\in\dom(F)$ then $(\nu(x)|\zeta(x|\alpha))=(x|\alpha)\in\dom(F)$; thus, if $y\in F(x|\alpha)$ then $\psi(x,y)=y\in F(x|\alpha)=\hat{F}(x|\alpha)$.
\item If $(x|\alpha)\not\in\dom(F)$ then $(\nu(x)|\zeta(x|\alpha))=(x|\beta)\in\dom(F)$; thus, if $y\in F(x|\beta)$ then we get $\psi(x,y)=y\in F(x|\beta)\subseteq Y=\hat{F}(x|\alpha)$.
\end{itemize}

This shows $\hat{F}\tW F$.
\end{itemize}

Consequently, we obtain $\hat{F}\equiv_{\sf tW} F$.
\end{proof}

\begin{notation}
An extension such as Proposition \ref{prop:bilayer-extension} is written as $\hat{F}\colon\paset{A}\times\Lambda\to Y$.
Indeed, by Proposition \ref{prop:bilayer-extension}, we may always assume that a potted bilayer problem is of the form $F\colon\paset{X}\times\Lambda\tto Y$.
\end{notation}

\subsection{{\tt Merlin}-{\tt Arthur}-{\tt Nimue} game}\label{sec:MAN-game}
As a multi-query reduction game involving a bilayer problem, Kihara \cite{Kih23} introduced an incomplete information three-player game involving secrete moves.
In abstract terms, this is also nothing more than a free monad $\lambda V.\mu S.(\sum_{x\in X}S^F(x)+V)$, but because the underlying objects have changed from monosemically-coded sets (modest sets) to polysemically-coded sets (assemblies), the exchange of secret data is inserted.
We consider such a game in the potted context.

 \begin{definition}\label{def:total-Weihrauch-game}
 Let $\paset{U}$ be a potted set, $V\subseteq\om$ be a set, and $\Xi$ be an (arbitrary) set.
 For a potted bilayer problem $F\colon X\times\Lambda\tto Y$, consider the following imperfect information three-player game $\tgame(\paset{U}|\Xi;F;V)$:
        \vspace{-0.2em}
        \[
        \begin{array}{rlllllllll}
           {\tt Merlin}   \colon	& (x_0|\alpha)   &		& x_1   &	    & \dots & x_{k}   &\\
           {\tt Arthur}   \colon	&	    & y_0   &		& y_1   & \dots &         &y_{k} \\
           {\tt Nimue}   \colon	&	    & \beta_0   &		& \beta_1   & \dots &         &\beta_{k}
        \end{array}
        \]
        \underline{Rule} (potential): The {\em \ppp rule} requires three players to comply with the following:
        \begin{itemize}
        \item {\tt Merlin}'s opening move in round $0$ is the pair of a public move $x_0\in U_+$ and a secret move $\alpha\in\Xi$.
        In subsequent rounds $n>0$, {\tt Merlin} plays a public move $x_n\in\cod_+(F)$.
       \item {\tt Arthur} plays a public move $y_n=\pair{i,z_n}\in X_++V$ in round $n$.
       \begin{itemize}
       \item $i=0$ is a declaration to proceed to the next round of the game.
       \item $i=1$ is a declaration to terminate the game.
       \end{itemize}
       \item {\tt Nimue} plays a secrete move $\beta_n\in\Lambda$ in round $n$.
       \item {\tt Arthur} must declare the termination of the game in some round (such a round may depend on a play).
        \end{itemize}
        \underline{Rule} (actual): The {\em \aaa rule} requires two players to comply with the following:
        \begin{itemize}
            \item {\tt Merlin}'s opening move in round $0$ is the pair of a public move $x_0\in U_-$ and a secret move $\alpha\in\Xi$.
            \item {\tt Arthur} plays a public move $y_n=\pair{i,z_n}\in X_-+V$ in round $n$.
            \begin{itemize}
                \item $i=0$ is a declaration to proceed to the next round of the game.
                \item $i=1$ is a declaration to terminate the game.
                In this case, we call $z_n$ the output of this play.
            \end{itemize}
       \item {\tt Nimue} plays a secret move $\beta_n\in\Lambda$ in round $n$.
            \item In round $n+1$, {\tt Merlin} returns a public solution $x_{n+1} \in F(z_n|\beta_n)$ in response to {\tt Arthur}-{\tt Nimue}'s query.
        \end{itemize}
        \underline{Strategy} (potential):
        \begin{itemize}
            \item Consider total functions
            \begin{align*}
            &\eta\colon U_+\times \cod_+(F)^{<\om}\to X_++V\\
            &\zeta\colon U_+\times \cod_+(F)^{<\om}\times\Xi\to \Lambda
            \end{align*}
           We think of $\eta$ as {\tt Arthur}'s strategy and $\zeta$ as {\tt Nimue}'s strategy.
           Here, we require computability only for $\eta$.
	\item Fix such $\eta$ and $\zeta$.
	Then for a sequence $(x_0,x_1,\dots,x_k)\in A_+\times Y^{<\om}$ and a secret instance $\alpha\in\Xi$, define
	\begin{align*}
	y_n&=\eta(x_0,x_1,\dots,x_n)\\
	\beta_n&=\zeta(x_0,x_1,\dots,x_n|\alpha)
	\end{align*}
	for each $n\leq k$.
	The sequence $(x_0|\alpha,y_0|\beta_0,x_1,y_1|\beta_1,\dots,x_k,y_k|\beta_k)$ obtained in this way is called a {\em play} following $(\eta|\zeta)$.
	\item Such an $(\eta|\zeta)$ is a {\em potential strategy} (\ppp strategy) for {\tt Arthur}-{\tt Nimue} if, for any play following $(\eta|\zeta)$, one of the following must hold:
	\begin{itemize}
	\item {\tt Merlin} violates the \ppp rule before {\tt Arthur}-{\tt Nimue} do.
	\item {\tt Arthur}-{\tt Nimue} obey the \ppp rule and declare the termination of the game in some round.
	\end{itemize}
	\item Such an $(\eta|\zeta)$ is an {\em actual strategy} (\aaa strategy) for {\tt Arthur}-{\tt Nimue} if, in addition, for any play following $(\eta|\zeta)$, one of the following must hold:
	\begin{itemize}
	\item {\tt Merlin} violates the \aaa rule before {\tt Arthur}-{\tt Nimue} do.
	\item {\tt Arthur}-{\tt Nimue} obey the \aaa rule and declare the termination of the game in some round.
	\end{itemize}
        \end{itemize}
        \underline{Strategy $\to$ Problem}:
        An \aaa strategy $\eta$ for {\tt Arthur}-{\tt Nimue} induces the following potted bilayer problem $\Phi_{\eta|\zeta}^F$:
        \begin{itemize}
            \item $\dom_+(\Phi_{\eta|\zeta}^F)=U_+\times\Xi$, $\dom(\Phi_{\eta|\zeta}^F)=U_-\times\Xi$, and $\cod_+(\Phi_{\eta|\zeta}^F)=V$.
            \item For a public \aaa instance $x_0\in U_-$ and a secret instance $\alpha\in\Xi$, we declare $y\in\Phi_{\eta|\zeta}^F(x_0|\alpha)$ if there is a play such that {\tt Merlin} starts with the opening move $(x_0|\alpha)$, all of {\tt Merlin}, {\tt Arthur} and {\tt Nimue} obey the \aaa rule, and the game terminates with the output $y$.
        \end{itemize}
    \end{definition}

\begin{definition}
Let $F,G$ be potted bilayer problems.
Let $D_G,\Xi_G,E_G$ be such that $\dom_+(G)=(D_G)_+\times\Xi_G$, $\dom(G)\subseteq(D_G)_-\times\Xi_G$, and $E_G=\cod_+(G)$.
Then we say that $G$ is {\bf multi-query total Weihrauch reducible} to $F$ if there is an \aaa strategy $(\eta|\zeta)$ for the game $\tgame(D_G|\Xi_G;F;E_G)$ such that the following holds:
\[
\forall x\in\dom(G).\ \Phi_{\eta|\zeta}^F(x)\subseteq G(x).
\]

In this case, we write $F\tgw G$.
\end{definition}

\begin{remark}
As before, we often think of {\tt Arthur}'s strategy as a partial computable function on $\om\times\om^{<\om}$ whose domain is maximally extended.
\end{remark}

\subsection{Example (Multi-Query Reducibility)}

${\sf C}_{n/n+1}$ is the co-unique choice on $n+1$, while $\pii{\sf C}_{\geq n-1/n}$ is the $\Pi_1$ all-or-co-unique choice on $n$.
At first glance, ${\sf C}_{n/n+1}$ does not appear to imply $\pii{\sf C}_{\geq n-1/n}$, but as Kihara \cite[Proposition 4.3]{Kih23} pointed out, if we allow for partially computable reductions (i.e., using Markov's principle $\sii{\sf C}_{1/\om}$), then ${\sf C}_{n/n+1}$ does imply $\pii{\sf C}_{\geq n-1/n}$.
In logical terms, ${\sf WLEM}_{n+1}+{\sf MP}$ implies ${\sf LLPO}_n$.
Here, we refine this observation.

\begin{prop}\label{prop:LLPOn-reducible-WLEMnplusone}
$\pii{\sf C}_{\geq n-1/n}\tgw{\sf C}_{n/n+1}+\sii{\sf C}_{n-1/n}$.
\end{prop}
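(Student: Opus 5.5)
The reduction uses the co-unique choice ${\sf C}_{n/n+1}$ as a secret \emph{hint}: it tells {\tt Arthur} either that the answer may be read off directly, or that some input halts. In the second case he asks the $\Sigma_1$-co-unique choice $\sii{\sf C}_{n-1/n}$ for the answer. Concretely, I describe a two-round strategy for {\tt Arthur}--{\tt Nimue} in $\tgame(\om|\{\ast\};{\sf C}_{n/n+1}+\sii{\sf C}_{n-1/n};n)$ that solves $\pii{\sf C}_{\geq n-1/n}$.

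Let {\tt Merlin}'s opening move be an index $e$. If $e\in\dom(\pii{\sf C}_{\geq n-1/n})$, then $|n\setminus P_e|\le 1$, so there is at most one $k<n$ with $\varphi_e(k)\down$.

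In round $0$, {\tt Arthur} plays the public query $\pair{0,\ast}$ to the summand ${\sf C}_{n/n+1}$, and {\tt Nimue} plays the secret set
\[
A_e=\begin{cases}(n+1)\setminus\{k\}&\text{if $\varphi_e(k)\down$ for some (the unique) $k<n$,}\\ (n+1)\setminus\{n\}&\text{otherwise.}\end{cases}
\]
This is a co-singleton in $n+1$, so it is an actual instance. {\tt Nimue}'s strategy need not be computable, so this non-effective case split is allowed.

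{\tt Merlin} answers with some $j\in A_e$, and {\tt Arthur} branches on $j$.
\begin{enumerate}
\item If $j<n$, {\tt Arthur} terminates with output $j$.
\item If $j=n$, {\tt Arthur} makes a second query $\pair{1,\nu(e)}$ to $\sii{\sf C}_{n-1/n}$, with trivial secret. Here $\nu(e)$ is the index used in the proof of Theorem \ref{thm:basic-Sigma-1-counique}: wait for the first $k<n$ with $\varphi_e(k)\down$, then enumerate every $i<n$ except that $k$. Whatever {\tt Merlin} returns, say $i$, {\tt Arthur} outputs $i$.
\end{enumerate}
If a response lies in the wrong summand, {\tt Arthur} outputs $0$.

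I would verify the following properties in turn.

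\emph{Totality.} Every move of {\tt Arthur} is total computable in $(e,\text{responses})$, since $\nu$ is total. The game always ends by round $2$. Hence this is a potential strategy.

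\emph{Correctness when $j<n$.} If no $k<n$ halts, then $P_e=n$ and $j\in P_e$. If some $k$ halts, then $j\in A_e=(n+1)\setminus\{k\}$, so $j\neq k$ and again $j\in P_e$.

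\emph{Correctness when $j=n$.} This is the key point. Since $n\in A_e$, the set $A_e$ is not $(n+1)\setminus\{n\}$, so some $k<n$ halts. Therefore $S_{\nu(e)}=n\setminus\{k\}$ is a co-singleton, and $\nu(e)$ lies in the actual domain of $\sii{\sf C}_{n-1/n}$, so the second query obeys the actual rule. Any answer $i\in S_{\nu(e)}=n\setminus\{k\}=P_e$ is then correct.

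The only delicate step is this actual-domain check for the $\Sigma_1$ query. It holds because the answer $j=n$ certifies that a halting input exists: the secret hint from ${\sf C}_{n/n+1}$ is exactly what replaces the unbounded search that Markov's principle would otherwise provide in Kihara's original argument.
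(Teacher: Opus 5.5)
Your proposal is correct and follows essentially the same route as the paper: {\tt Nimue}'s co-singleton $A_e$ is precisely the set of ``cases that do not occur'' that the paper feeds to ${\sf C}_{\geq n/n+1}$, and the answer $n$ is used in the same way to certify that the follow-up co-unique-choice query lies in the actual domain. The only difference is cosmetic. The paper first passes to ${\sf C}_{\geq n/n+1}+\pii{\sf C}_{n-1/n}$ via Theorem~\ref{thm:basic-Sigma-1-counique} and Proposition~\ref{prop:coUC-equal-WLEM} and queries $e$ itself, whereas you inline those equivalences by querying ${\sf C}_{n/n+1}$ directly and sending $\nu(e)$ to $\sii{\sf C}_{n-1/n}$.
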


\begin{proof}
By Theorem \ref{thm:basic-Sigma-1-counique} and Proposition \ref{prop:coUC-equal-WLEM}, it suffices to show $\pii{\sf C}_{\geq n-1/n}\tgw{\sf C}_{\geq n/n+1}+\pii{\sf C}_{n-1/n}$.
For each instance $e\in{\rm dom}(\pii{\sf C}_{\geq n-1/n})$, there are $n+1$ possibilities.
\begin{itemize}
\item Case $i<n$: $i\not\in P_e$.
\item Case $n$: $P_e=n$.
\item Two or more of these cases cannot occur simultaneously.
\end{itemize}

\underline{Round $0$}: ${\tt Arthur}$ asks {\tt Nimue} for help, so {\tt Nimue} asks ${\sf C}_{\geq n/n+1}$ which case does not occur.
\begin{itemize}
\item If {\tt Merlin}'s response is $i<n$, then ${\tt Arthur}$ thinks that Case $i$ never happens; that is, $i\in P_e$, so outputs $i$.
\item If {\tt Merlin}'s response is $n$, then ${\tt Arthur}$ thinks that Case $n$ does not occur; that is, $P_e\not=n$.
Then ${\tt Arthur}$ declares to proceed to the next round.
\end{itemize}

\underline{Round $1$}: ${\tt Arthur}$ makes a query $e$ to $\pii{\sf C}_{n-1/n}$.
That is, require {\tt Merlin} to choose an element of $P_e$.
\begin{itemize}
\item If {\tt Merlin}'s response is $k$, then ${\tt Arthur}$ thinks $k\in P_e$, so returns $k$.
\end{itemize}

{\em Verification:}
\begin{itemize}
\item Regardless of the value of $e$, {\tt Arthur} outputs a value, so it is a total strategy.
\item For each instance $e\in{\rm dom}(\pii{\sf C}_{\geq n-1/n})$, the query in round $1$ belongs to ${\rm dom}({\sf C}_{\geq n/n+1})$, so {\tt Merlin} returns a correct solution.
\item If the response is $i<n$, then $i$ belongs to $P_e$.
\item If the response is $n$, then we must have $P_e\not=n$, so $P_e$ is a co-singleton.
Therefore, the query $e$ in round$1$ is a correct query to $\pii{\sf C}_{n-1/n}$, so the response $k$ must belong to $P_e$.
\end{itemize}

Consequently, we obtain $\pii{\sf C}_{\geq n-1/n}\tgw{\sf C}_{\geq n/n+1}+\pii{\sf C}_{n-1/n}$.
\end{proof}

In logical terms, the above shows ${\sf WLEM}_{n+1}+{\sf MP}_n^\lor$ implies ${\sf LLPO}_n$.
Later, we will see that the use of (a weak variant of) Markov's principle is essential (Corollary \ref{cor:LLPOk-not-reduce-WLEMkp1}).

As in Proposition \ref{prop:UC-hierarchy-collapse}, the hierarchy of unique choice problems collapses.
\begin{prop}\label{prop:UC-hierarchy-collapse-bilayer}
${\sf C}_{1/2}\eqtgw{\sf C}_{1/n}$ for any $n<\om$.
\end{prop}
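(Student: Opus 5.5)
One direction is immediate. By Observation-style monotonicity, ${\sf C}_{1/2}\tW{\sf C}_{1/n}$ for $n\geq 2$: use the public reduction $\nu(\ast)=\ast$, the secret reduction $\zeta(\ast|a)=a$ (viewing $a<2$ as an element of $n$), and the outer reduction $\psi(\ast,k)=\min(k,1)$, which is total into $2$. Since a one-query reduction is a special case of a multi-query strategy (one query followed by termination), this gives ${\sf C}_{1/2}\tgw{\sf C}_{1/n}$. I would then reduce the main direction to showing ${\sf C}_{1/n}\tgw{\sf C}_{1/2}$.

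For that direction I would not use the splitting trick of Proposition~\ref{prop:UC-hierarchy-collapse} as stated. Instead I would use the simpler observation that ${\sf C}_{1/n}$ is just the secret identity on $n$, so its answer can be recovered bit by bit. Let $m$ be such that $n\leq 2^m$. The strategy for {\tt Arthur}-{\tt Nimue} in $\tgame(\{\ast\}|n;{\sf C}_{1/2};n)$ makes $m$ rounds of queries.
\begin{itemize}
\item In round $j<m$, {\tt Arthur} publicly queries $\ast$, and {\tt Nimue}, who sees the secret instance $a<n$, plays the secret move $\beta_j=$ the $j$th binary digit of $a$.
\item {\tt Merlin} must answer $\beta_j$ itself, since ${\sf C}_{1/2}(\ast|b)=b$.
\item After $m$ rounds, {\tt Arthur} computes $c=\sum_j x_{j+1}2^j$ from the public answers, and outputs $c$ if $c<n$ and $0$ otherwise.
\end{itemize}
This strategy is total computable on the public side. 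Every query has public part $\ast$, and $\{\ast\}\times 2=\dom({\sf C}_{1/2})$, so the actual rule is always satisfied. The game always terminates after $m+1$ rounds, and {\tt Merlin} has no freedom under the actual rule, so the output is exactly $a$. If the secret domain is taken potentially as $\{\ast\}\times n$, totality holds trivially.

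Alternatively, I could mimic Proposition~\ref{prop:UC-hierarchy-collapse} directly. There one shows ${\sf C}_{1/n^2}\tgw{\sf C}_{1/n}$ with {\tt Nimue} secretly sending the block index $\ell=\lfloor a/n\rfloor$ and then the offset $a-\ell n$. Iterating this and invoking transitivity (Observation~\ref{obs:tgw-preorder}, whose proof is deferred to Proposition~\ref{prop:transitive}) together with monotonicity in $n$ would also work.

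The only step needing any care is checking that the game formalism permits {\tt Nimue}'s strategy $\zeta$ to depend on the secret instance $\alpha=a$. It does, because $\zeta\colon U_+\times\cod_+(F)^{<\om}\times\Xi\to\Lambda$ need not be computable. The argument has no real obstacle beyond this.
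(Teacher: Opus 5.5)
Your proof is correct. For the nontrivial direction it takes a more direct route than the paper.

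The paper's route:
\begin{itemize}
\item It shows ${\sf C}_{1/n^2}\tgw{\sf C}_{1/n}$ by a two-round block/offset split. {\tt Nimue} first queries the set $B$ of blocks meeting the secret instance, then the set $C_\ell$ of offsets inside the block $\ell$ that {\tt Merlin} returned.
\item It then iterates. This tacitly uses transitivity (Proposition~\ref{prop:transitive}) and a monotonicity step to pass from $n=2^{2^k}$ to arbitrary $n$.
\item The easy direction ${\sf C}_{1/2}\tgw{\sf C}_{1/n}$ is never written out. The closing sentence of that proof in fact states the iterated result with the two sides interchanged.
\end{itemize}

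Your route: the binary-digit strategy settles ${\sf C}_{1/n}\tgw{\sf C}_{1/2}$ inside a single game with $m=\lceil\log_2 n\rceil$ non-adaptive queries. So no composition of strategies is needed, and arbitrary $n$ is handled by the truncation ``output $c$ if $c<n$, else $0$''. You also supply the explicit one-query reduction for the converse.

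What each buys: the paper's version matches its $\Sigma_1$ argument in Proposition~\ref{prop:UC-hierarchy-collapse} and illustrates a strategy whose second query depends on the first answer. Yours is shorter and self-contained. The same digit-by-digit idea would also work in the $\Sigma_1$ setting, by querying the $j$th binary digit of the first element enumerated into $S_e$.

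One small point: the paper's proof uses the $\mathcal{P}(n)$ presentation, where secret instances are sets $A$. Your strategy transfers unchanged: {\tt Nimue} queries $\{b_j\}$, where $b_j$ is the $j$th digit of the unique element of $A$ when $A$ is a singleton, and anything otherwise. This works because {\tt Arthur}'s potential tree is the full binary tree of height $m$ regardless of the secret.
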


\begin{proof}
We show ${\sf C}_{n^2}\tgw{\sf C}_{n}$.

\begin{itemize}
\item \underline{Round $0$}.
Assume that {\tt Merlin} first plays a secret move $A\subseteq n^2$.
Then {\tt Arthur} asks {\tt Nimue} for help, so {\tt Nimue} makes the query $B=\{\ell<n:A\cap [\ell n,(\ell+1)n-1]\not=\emptyset\}$.

If $A$ is a singleton, so is $B$.
\item \underline{Round $1$}.
Let $\ell<n$ be a response from {\tt Merlin}.
Then {\tt Arthur} asks {\tt Nimue} for help, so {\tt Nimue} makes the query $C_\ell=\{k<n:\ell n+k\in A\cap [\ell n,(\ell+1)n-1]\}$.

If $\ell\in B$ then $C_\ell$ is a singleton.
\item \underline{Round $2$}.
Let $k$ be a response from ${\tt Merlin}$.
Then, {\tt Arthur} uses the received information $\ell,k$ to output $\ell n+k$ and declare the termination of the game.
\end{itemize}

{\em Verification:}
\begin{itemize}
\item {\em Totality.}
Regardless of $A$, the queries $B$, $C_\ell$ and the output $\ell n+k$ are always defined. Hence, the above yields a total strategy.
\item {\em Reduction.}
For each instance $(\ast|A)\in\dom({\sf C}_{1/n^2})$, $A$ is a singleton.
Then $B\not=\emptyset$ is also a singleton, so {\tt Merlin}'s response in round $1$ is some $\ell\in B$; hence, $C_\ell$ is a singleton.
Thus, {\tt Merlin}'s response in round $2$ is some $k\in C_\ell$.
By definition, we get $\ell n+k\in A$.
Therefore, {\tt Arthur}'s output $\ell n+k$ belongs to $A$, so this is a solution to ${\sf C}_{1/n^2}(A)$.
\end{itemize}

Consequently, we get ${\sf C}_{n^2}\tgw{\sf C}_{n}$.
By repeating this process, we obtain ${\sf C}_{1/2}\tgw{\sf C}_{1/n}$ for any $n<\om$.
\end{proof}

\subsection{Computation Tree}
Once again, computation trees are better suited for precise descriptions than games.
We extend the notion of computation tree, introduced in Section \ref{sec:computation-tree-multifunction}, to the bilayer context.
We again use the standard translation between terminating games and labeled well-founded trees.

\begin{construction}
Let $F\pcolon X\times\Lambda\tto Y$ be a potted bilayer problem.
Describe a game tree for $\tgame(A|\Xi;F;B)$ by a computation tree.

\underline{{\em Strategy $\Rightarrow$ Computation Tree:}}
{\tt Arthur}-{\tt Nimue}'s strategy induces a labeled well-founded tree.
\begin{itemize}
\item The set of {\tt Merlin}'s possible plays form a well-founded tree.
\item {\tt Arthur}'s moves are recorded as labels on the internal nodes.
\item {\tt Nimue}'s move imposes a restriction on the successors of each node.
\end{itemize}

Recall that {\tt Arthur}-{\tt Nimue}'s strategy consists of total functions:
\begin{align*}
\eta\colon U_+\times Y^{<\om}\to X_++V,
& &
\zeta\colon U_+\times Y^{<\om}\times\Xi\to \Lambda.
\end{align*}

\noindent
\underline{Potential}:
For each public \ppp input $n\in U_+$ and secrete input $\alpha\in\Xi$, the {\bf \ppp computation tree} for induced by $(\eta|\zeta)$ is the following labeled well-founded tree $(\tpot_n,\nu_n,\psi_n)$:
\begin{itemize}
\item $\tpot_n\subseteq Y^{<\om}$ is a computable full-branching well-founded tree.
\begin{itemize}
\item $\sigma\in\tpot_n$ is a leaf iff $\sigma$ is a minimal string satisfying $\eta(n,\sigma)=(1,y)$ for some $y$.
\item Hence, $\tpot_n$ is the tree of all plays of {\tt Merlin} obeying the \ppp rule.
\end{itemize}
\item At each internal node of $\tpot_n$, {\tt Arthur}'s strategy $\eta$ makes a query in $X$.
Formally, define the query-labeling function $\nu_n\colon \internal \tpot_n\to X$ as follows:
\begin{itemize}
\item The above item on $\tpot_n$ ensures that, for each $\sigma\in \internal \tpot_n$, we have $\eta(n,\sigma)=(0,x)$ for some $x\in X$; hence, we put $\nu_n(\sigma)=x$.
\end{itemize}
\item At each leaf of $\tpot_n$, the strategy outputs a value of type $V$.
Formally, define the output-labeling function $\psi_n\colon \leaf \tpot_n\to V$ as follows:
\begin{itemize}
\item The above item on $\tpot_n$ ensures that, for each $\sigma\in \leaf \tpot_n$, we have $\eta(n,\sigma)=(1,y)$ for some $y\in V$; hence, we put $\psi_n(\sigma)=y$.
\end{itemize}
\end{itemize}

Since {\tt Nimue}'s strategy is not controlled by computation, we separately it from the computation tree.
\begin{itemize}
\item At each internal node of $\tpot_n$, {\tt Nimue}'s strategy $\zeta$ makes a advice in $\Lambda$.
Formally, define the advice-labeling function $\zeta_{n|\alpha}\colon \internal \tpot_n\to \Lambda$ as follows:
\begin{itemize}
\item Put $\zeta_{n|\alpha}(\sigma)=\zeta(n,\sigma|\alpha)$.
\end{itemize}
\end{itemize}

\noindent
\underline{Actual}:
For each public \aaa input $n\in U_-$ and a secret input $\alpha\in\Xi$, the {\bf \aaa computation tree} for $\Phi^F_{\eta|\zeta}(n|\alpha)$ induced by $(\eta|\zeta)$ is the following ``{\em $F$-branching}'' subtree $\tactual_{n|\alpha}\subseteq \tpot_n$:
\begin{itemize}
\item $\tactual_{n|\alpha}\subseteq \tpot_n$ has a root.
For each internal node $\sigma\in\internal \tpot_n$, if we have already put $\sigma\in\tactual_{n|\alpha}$, then for any $i\in F(\nu_n(\sigma)|\zeta_{n|\alpha}(\sigma))$ (if defined), we put $\sigma\fr i\in\tactual_{n|\alpha}$.
\item In other words, $\tactual_{n|\alpha}$ is the tree of all plays by {\tt Merlin} obeying the \aaa rule.
\item The label function is given by restricting the above $\psi_n$, $\nu_n$ and $\zeta_{n|\alpha}$ to $\tactual_{n|\alpha}$.
\end{itemize}
\end{construction}

\begin{obs}\label{obs:Arthur-Nimue-strategy}
There is an effective one-to-one correspondence between \ppp strategies for $\tgame(A|\Xi;F;B)$ and $n\in U_+\mapsto (\tpot_n,\nu_n,\psi_n|\zeta_n)$ satisfying the following:
\begin{itemize}
\item $\tpot_n\subseteq Y^{<\om}$ is a computable full-branching well-founded tree.
\item $\nu_n\colon \internal \tpot_n\to X_+$ is a computable function.
\item $\psi_n\colon \leaf \tpot_n\to V$ is a computable function.
\item $\zeta_n\colon \internal \tpot_n\times\Xi\to\Lambda$ is a function.
\end{itemize}

Furthermore, for each $\alpha\in\Xi$, such a tuple $(\tpot_n,\nu_n,\psi_n|\zeta_n)$ uniquely determines the \aaa tree $\tactual_{n|\alpha}\subseteq T_n$ by the above construction, which corresponds to an \aaa strategy.
\end{obs}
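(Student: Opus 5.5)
This is essentially the same construction as Observation~\ref{obs:Arthur-strategy}, carried out pointwise in the public input while Nimue's component is handled separately. Starting from a \ppp strategy $(\eta|\zeta)$, I fix $n\in U_+$ and define $\tpot_n\subseteq\cod_+(F)^{<\om}$ to be the set of $\sigma$ such that $\eta(n,\tau)$ has the form $(0,x)$ for every proper initial segment $\tau\subsetneq\sigma$. Because $\eta$ is total computable, membership in $\tpot_n$ is decided uniformly in $n$. The leaves of $\tpot_n$ are the minimal $\sigma$ with $\eta(n,\sigma)=(1,y)$. The tree is full-branching by construction, since every internal node has all $\sigma\fr i$, $i\in\cod_+(F)$, as children. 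I then read off the labels: $\nu_n(\sigma)$ is the second coordinate of $\eta(n,\sigma)$ at internal nodes, $\psi_n(\sigma)$ is the output at leaves, and $\zeta_n(\sigma,\alpha)=\zeta(n,\sigma|\alpha)$. Computability of $\nu_n$ and $\psi_n$, uniformly in $n$, is inherited from $\eta$. No computability is required of $\zeta_n$.

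The step that carries the real content is well-foundedness of $\tpot_n$. Suppose $\tpot_n$ had an infinite path $\alpha\in\cod_+(F)^\om$. Then the play in which Merlin opens with $(n|\alpha')$ for an arbitrary $\alpha'\in\Xi$ and answers with $\alpha(0),\alpha(1),\dots$ obeys the \ppp rule at every round. Along this play Arthur never declares termination, which contradicts the definition of a \ppp strategy. Two points need checking here. First, Merlin's \ppp moves are exactly the elements of $\cod_+(F)$, so these answers are legal, and Arthur's moves are legal as well since $\eta$ takes values in $X_++V$. Second, Nimue's moves play no role in deciding whether the tree terminates. This second point is exactly why $\tpot_n$ depends only on $n$ and not on $\alpha$. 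I expect this bookkeeping about which player "violates the rule first" to be the main obstacle, although it is only an obstacle of care and not of substance.

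For the converse direction I start from a family $(\tpot_n,\nu_n,\psi_n|\zeta_n)$. I define $\eta(n,\sigma)=(0,\nu_n(\sigma))$ if $\sigma\in\internal\tpot_n$ and $\eta(n,\sigma)=(1,\psi_n(\sigma'))$ if $\sigma$ extends a leaf $\sigma'$ of $\tpot_n$. Since $\tpot_n$ is full-branching and well-founded, every $\sigma\in\cod_+(F)^{<\om}$ is either internal or extends a unique leaf, so $\eta$ is total; it is computable because the tree and the labels are computable uniformly. I set $\zeta(n,\sigma|\alpha)=\zeta_n(\sigma,\alpha)$ when $\sigma$ is internal and an arbitrary fixed element of $\Lambda$ otherwise. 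By the Remark above, every \ppp play eventually reaches a leaf, so this $(\eta|\zeta)$ is a \ppp strategy. It is also clear that the two translations are mutually inverse, up to the values of $\eta$ beyond the leaves, which never affect any play, and that both translations are effective on indices.

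The final clause then amounts to unwinding the construction of $\tactual_{n|\alpha}$. It is defined by recursion from the root, admitting $\sigma\fr i$ exactly when $i\in F(\nu_n(\sigma)|\zeta_n(\sigma,\alpha))$. A straightforward induction on length shows that the nodes of $\tactual_{n|\alpha}$ are precisely Merlin's \aaa-legal plays against $(\eta|\zeta)$ with opening move $(n|\alpha)$. Consequently the \aaa-strategy condition is equivalent to the statement that every node of $\tactual_{n|\alpha}$ which is internal in $\tpot_n$ carries a query in $\dom(F)$. It follows that $\Phi^F_{\eta|\zeta}(n|\alpha)=\{\psi_n(\sigma):\sigma\in\leaf\tactual_{n|\alpha}\}$, as in the Notation above.
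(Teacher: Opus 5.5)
Your proposal is correct and follows the paper's route: the paper's only justification is the preceding Construction, which reads off $\tpot_n$, $\nu_n$, $\psi_n$ and Nimue's advice labels from $(\eta|\zeta)$ and builds $\tactual_{n|\alpha}$ by the same recursion you describe. Your well-foundedness argument via an infinite Merlin play obeying the potential rule, the inverse translation, and the caveat that the correspondence is one-to-one only modulo Arthur's and Nimue's moves beyond the leaves are details the paper leaves implicit. One tacit assumption is worth noting: choosing some $\alpha'\in\Xi$ requires $\Xi\neq\emptyset$; if $\Xi=\emptyset$ there are no plays and the statement itself degenerates, so this costs nothing in the cases the paper uses.
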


\begin{notation}
For a tuple $\Phi=(\tpot_n,\nu_n,\psi_n|\zeta_n:n\in A_+)$ satisfying the condition in Observation \ref{obs:Arthur-Nimue-strategy}, define $\Phi^F$ as follows:
\begin{itemize}
\item $\dom_+(\Phi^F)=U_+$.
\item $\dom(\Phi^F)=\{(n|\alpha)\in U_+\times\Xi:\forall \sigma\in \tactual_{n|\alpha}\cap\internal \tpot_n.\ (\nu_n(\sigma)|\zeta_n(\sigma|\alpha))\in \dom(F)\}$.
\item $\Phi^F(n|\alpha)=\{\psi(\sigma):\sigma\in\leaf\tactual_{n|\alpha}$.
\end{itemize}
\end{notation}

\noindent
{\bf Alternative Definition:}
{\tt Nimue}'s strategy is not actually controlled by the {\em computation}; therefore, strictly speaking, it is appropriate to consider this separately from the \aaa {\em computation} tree.
Rather, we regard this as the action of extracting a bushy subtree from the genuine \aaa computation tree induced only by {\tt Arthur}'s strategy.
This idea is formally described as follows:
\begin{definition}~
\begin{enumerate}
\item For sets $D,E$, we say that $\U$ is a {\em $D$-indexed subset family over $E$} if it is of the form $\U=(U_i)_{i\in D}$, and each $A\in U_i$ is a nonempty subset of $E$.
\item Let $\nu\pcolon E^{<\om}\to D$ be given.
We say that $T\subseteq E^{<\om}$ is {\bf $(\U;\nu)$-branching} if:
\begin{itemize}
\item For each internal node $\sigma\in T$, $\nu(\sigma)\in D$ is defined and the following holds:
\[\{n\in E:\sigma\fr n\in T\}\in U_{\nu(\sigma)}.\]
\end{itemize}
%
\end{enumerate}
\end{definition}

\begin{example}
A potted bilayer problem $F\pcolon X\times\Lambda\tto Y$ is identified with the following $X$-indexed subfamily over $Y$:
\[
\overline{F}=\{F(x|\alpha):\alpha\in\Lambda\}_{x\in X}
\]

We say that $T\subseteq Y^{<\om}$ is {\em $(F;\nu)$-branching} if it is $(\overline{F};\nu)$-branching.
\end{example}

Hereafter, we will refer to an indexed tuple $\Phi=(\tpot_n,\nu_n,\psi_n:n\in U_+)$ that satisfies the conditions in Observation \ref{obs:Arthur-Nimue-strategy} as a {\em total tree of type $\paset{U}\to V$}.

\begin{lemma}
Let $F\colon\paset{X}\times\Lambda\tto Y$ and $G\colon\paset{D}\times\Xi\tto E$ be potted bilayer problems.
Then the following two conditions are equivalent:
\begin{enumerate}
\item $G$ is multi-query total Weihrauch reducible to $F$.
\item There is a total tree $\Phi=(\tpot_n,\nu_n,\psi_n:n\in D_+)$ of type $\paset{D}\to E$ satisfying the following:
\begin{itemize}
\item For any \ppp instance $(n|\alpha)\in\dom(G)$, there is an $(F;\nu_n)$-branching subtree $\tactual_{n|\alpha}\subseteq\tpot_n$ such that $\psi_n(\sigma)\in G(n|\alpha)$ for any leaf $\sigma\in\leaf \tactual_{n|\alpha}$.
\end{itemize}
\end{enumerate}
\end{lemma}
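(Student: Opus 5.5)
The plan is to prove both directions directly from the computation-tree description in Observation \ref{obs:Arthur-Nimue-strategy}. Throughout, the computable data $(\tpot_n,\nu_n,\psi_n)$ come from {\tt Arthur}'s strategy $\eta$, and the only non-computable ingredient, {\tt Nimue}'s advice $\zeta$, is traded for a choice of $(F;\nu_n)$-branching subtree.

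(1)$\Rightarrow$(2). Suppose $(\eta|\zeta)$ is an \aaa strategy for $\tgame(\paset{D}|\Xi;F;E)$ with $\Phi^F_{\eta|\zeta}(n|\alpha)\subseteq G(n|\alpha)$ on $\dom(G)$. Let $\Phi=(\tpot_n,\nu_n,\psi_n)$ be the total tree given by Observation \ref{obs:Arthur-Nimue-strategy}, which depends only on $\eta$. For $(n|\alpha)\in\dom(G)$, take $\tactual_{n|\alpha}$ to be the \aaa computation tree of the Construction. By the \aaa rule, at each internal node $\sigma$ of $\tactual_{n|\alpha}$ the query $(\nu_n(\sigma)|\zeta_{n|\alpha}(\sigma))$ lies in $\dom(F)$; otherwise {\tt Arthur}-{\tt Nimue} would violate the rule first, contradicting that $(\eta|\zeta)$ is an \aaa strategy. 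Its successor set is exactly $F(\nu_n(\sigma)|\zeta_{n|\alpha}(\sigma))$, which is a member of $\overline{F}_{\nu_n(\sigma)}$ and nonempty. Hence $\tactual_{n|\alpha}$ is $(F;\nu_n)$-branching. Every leaf of it is the end of an \aaa-conforming play, so $\psi_n(\sigma)\in\Phi^F_{\eta|\zeta}(n|\alpha)\subseteq G(n|\alpha)$.

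(2)$\Rightarrow$(1). Take $\eta$ to be the strategy that reads off $\tpot_n$, $\nu_n$ and $\psi_n$; it is total computable, and all \ppp conditions hold because $\tpot_n$ is full-branching and well-founded. We must define {\tt Nimue}'s $\zeta$. For each $(n|\alpha)\in\dom(G)$, use choice to fix a witness tree $\tactual_{n|\alpha}$. For each internal node $\sigma$ of that tree, its successor set is $F(\nu_n(\sigma)|\beta)$ for some $\beta\in\Lambda$, and we let $\zeta(n,\sigma|\alpha)$ be such a $\beta$. At every other node, and for $(n|\alpha)\notin\dom(G)$, let $\zeta$ be arbitrary. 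A play obeying the \aaa rule then stays inside $\tactual_{n|\alpha}$, by induction on length: {\tt Merlin}'s answer lies in $F(\nu_n(\sigma)|\zeta(\sigma))$, which is precisely the successor set. At each step the query lies in $\dom(F)$ because that successor set is nonempty, and the play ends at a leaf of $\tactual_{n|\alpha}$ because $\tpot_n$ is well-founded. So $(\eta|\zeta)$ is an \aaa strategy and every output lies in $G(n|\alpha)$.

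The main obstacle is a bookkeeping mismatch in the definitions rather than anything deep. The game requires {\tt Arthur}'s queries to lie in $X_-$, and the domain of $G$ is split as $(D_G)_-\times\Xi_G$. I would reconcile these using Proposition \ref{prop:bilayer-extension}, replacing $F$ and $G$ by the forms $\paset{X}\times\Lambda$ and $\paset{D}\times\Xi$ as the statement already assumes. With that, nonemptiness of $F(\nu_n(\sigma)|\beta)$ is exactly membership in $\dom(F)$, and in particular forces $\nu_n(\sigma)\in X_-$.
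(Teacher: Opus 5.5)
Your proof is correct and is the routine translation between strategies and computation trees that the paper intends (its own proof is just ``Straightforward''): {\tt Arthur}'s $\eta$ yields the total tree, and {\tt Nimue}'s $\zeta$ is traded for a choice of $(F;\nu_n)$-branching subtree. You do not actually need Proposition \ref{prop:bilayer-extension}, because the typing $\paset{X}\times\Lambda$ and $\paset{D}\times\Xi$ already gives $\dom(F)=X_-\times\Lambda$ and $\dom(G)=D_-\times\Xi$, which is exactly what your argument uses.
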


\begin{proof}
Straightforward.
\end{proof}

\begin{notation}
Each index $e$ yields an indexed tuple $\Phi_e=(\tpot_n,\nu_n,\psi_n)_{n\in\om}$.
\begin{itemize}
\item If $\Phi_e$ is a total tree, then we write $\Phi_e^+(n)\down$.
\[\Phi_e^+(n)=\{\psi_n(\sigma):\sigma\in\leaf \tpot\}.\]
\item If $\zeta$ is a function choosing a $(F;\nu)$-branching subtree $\tactual^\zeta_{n|\alpha}\subseteq\tpot_n$ for each $n$ and $\alpha$, then:
\begin{align*}
\Phi_{e|\zeta}^F(n|\alpha)\down&\iff\forall\sigma\in\leaf\tactual^\zeta_{n|\alpha}.\ \psi(\sigma)\down,\\
\Phi_{e|\zeta}^F(n|\alpha)&=\{\psi_n(\sigma):\sigma\in\leaf\tactual^\zeta_{n|\alpha}\}.
\end{align*}
\end{itemize}
\end{notation}

%

%
%
%
%

Of course, we must show that $\tgw$ is transitive.
As with the transitivity of the standard multi-query Weihrauch reduction \cite{HiJo16,Kih23}, the idea itself is quite trivial, but its concrete description is somewhat cumbersome.

\begin{prop}\label{prop:transitive}
$\tgw$ is a preorder.
\end{prop}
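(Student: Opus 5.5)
Reflexivity is immediate: for $F$, {\tt Arthur} queries his opening public move $x_0$ once, {\tt Nimue} forwards the secret $\alpha$, and {\tt Arthur} outputs {\tt Merlin}'s answer. In tree terms, $\tpot_n$ consists of the root and the one-letter strings $\langle y\rangle$ for $y\in\cod_+(F)$, with $\nu_n(\langle\rangle)=n$, $\zeta_n(\langle\rangle|\alpha)=\alpha$ and $\psi_n(\langle y\rangle)=y$. This is total and full-branching, and every actual leaf is labelled by some $y\in F(n|\alpha)$.

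For transitivity, assume $H\tgw G$ via the total tree $\Phi=(\tpot_n,\nu_n,\psi_n|\zeta_n)_{n}$ and $G\tgw F$ via $\Psi=(\tpot'_m,\nu'_m,\psi'_m|\zeta'_m)_{m}$. I will build the composite tree by \emph{grafting}. For each $n$ in the potential domain of $H$, define $\Theta_n$ by recursion on the well-founded tree $\tpot_n$. At an internal node $\sigma$ with query $m=\nu_n(\sigma)\in\dom_+(G)$, graft a copy of $\tpot'_m$. At each leaf $\tau$ of that copy, continue with the subtree of $\tpot_n$ above $\sigma\fr\psi'_m(\tau)$. Concretely, a node of $\Theta_n$ is a finite concatenation $\tau_0\tau_1\cdots\tau_{k-1}\rho$ in which each $\tau_i$ is a leaf of $\tpot'_{m_i}$, and $\rho$ is a proper initial segment of a leaf of $\tpot'_{m_k}$. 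Here $m_i=\nu_n(\sigma_i)$, with $\sigma_0$ the root and $\sigma_{i+1}=\sigma_i\fr\psi'_{m_i}(\tau_i)$. The query labels come from $\nu'$, the leaves of $\Theta_n$ are the nodes with $\sigma_k\in\leaf\tpot_n$, and their outputs are $\psi_n(\sigma_k)$.

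Totality of the composite holds for the following reasons. The parsing of a string into $\tau_i$'s is computable, because membership and leafhood in each $\tpot'_m$ are decidable uniformly in $m$. The labels $\nu'$ and $\psi'$ are total on the potential domains, and every answer $\psi'_m(\tau)$ lies in $\cod_+(G)$, so each $\sigma_{i+1}$ is again a node of the full-branching tree $\tpot_n$. Well-foundedness follows from a König-free argument. Suppose there were an infinite path through $\Theta_n$. It cannot stay forever inside a single grafted copy of a well-founded tree $\tpot'_m$, so it would pass through infinitely many complete $\tau_i$. Those $\tau_i$ determine an infinite path $\sigma_0\subsetneq\sigma_1\subsetneq\cdots$ in $\tpot_n$, contradicting the well-foundedness of $\tpot_n$.

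{\tt Nimue}'s composite strategy uses the secret input $\alpha$. Along the same parsing, she first computes the $G$-advice $\beta_i=\zeta_n(\sigma_i|\alpha)$, then answers a node $\tau_0\cdots\tau_{k-1}\rho$ with $\zeta'_{m_k}(\rho|\beta_k)$. No computability is required of her, so this is legitimate.

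The correctness check proceeds by induction along $\tactual_{n|\alpha}$. Fix $(n|\alpha)\in\dom(H)$. Whenever $\sigma_i$ lies in the actual $G$-tree, we have $(m_i|\beta_i)\in\dom(G)$. Then the actual $F$-tree of $\Psi$ at $(m_i|\beta_i)$ has all its queries in $\dom(F)$, and all its leaves output elements of $G(m_i|\beta_i)$. Hence $\sigma_{i+1}\in\tactual_{n|\alpha}$, and the final leaf outputs an element of $H(n|\alpha)$.

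The main obstacle is purely bookkeeping. The composite must be one \emph{computable} $\eta$ on $U_+\times\cod_+(F)^{<\om}$, and {\tt Merlin} may violate the rules. The clean way around this is to define the composite entirely at the potential level, with potential labels, and to invoke the actual rule only in the verification. Then the only thing to check is that a parse never gets stuck, which follows from the potential totality of $\nu'$ and $\psi'$.
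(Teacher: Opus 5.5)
Your proposal is correct and is essentially the paper's argument. You graft the second reduction's computation tree at each query node of the first, obtain well-foundedness by projecting an infinite path onto the outer well-founded tree, and verify the actual part by concatenating actual subtrees; your explicit parsing and composite {\tt Nimue} strategy just spell out the paper's $\kappa_n$ bookkeeping in more detail, and your construction also gives the computable index map $t(d,e)$ that the paper records and later uses as ``uniform transitivity''.
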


\begin{proof}
The reflexivity is obvious.
We only prove the transitivity.
For the idea, see also Observation \ref{obs:tgw-preorder}.
For potted bilayer problems
\begin{align*}
F\pcolon X_0\times\Lambda_0\tto Y_0,& & G\pcolon X_1\times\Lambda_1\tto Y_1, & &H\pcolon X_2\times\Lambda_2\tto Y_2
\end{align*}
we assume $F\tgw G\tgw H$.
\begin{itemize}
\item Then there are a total tree $\Phi=(\tpot_n,\nu_n,\psi_n:n\in\dom_+(F))$ witnessing $F\tgw G$, and $\Psi=(\tpot'_n,\nu'_n,\psi'_n:n\in\dom_+(G))$ witnessing $G\tgw H$.
\item We construct a total tree $\Gamma=(\tpot_n'',\nu_n'',\psi_n'':n\in\dom_+(F))$ witnessing $F\tgw H$ and an auxiliary multifunction $\kappa_n\pcolon Y_1^{<\om}\tto Y_2^{<\om}$.
Here, $\kappa_n(\sigma)=\emptyset$ is also possible.
\item The empty string $\ep\in\tpot_n\subseteq Y_1^{<\om}$ is transformed into the empty string $\kappa_n(\ep)=\{\ep\}\subseteq\tpot''_n\subseteq Y_2^{<\om}$.
For $\sigma\in\tpot_n$, we inductively assume that $\kappa_n(\sigma)\subseteq \tpot_n''\subseteq Y_2^{<\om}$ has already been defined.
\item To each internal node $\sigma\in\internal\tpot_n$, a query $\nu_n(\sigma)=q\in\dom_+(G)$ is assigned.
Insert here the computation tree for $\Psi^H(q)$ used to compute $G(q)$:
\begin{align*}
&\{\gamma\fr\tau:\gamma\in\kappa_n(\sigma)\mbox{ and }\tau\in\tpot'_q\}\subseteq\tpot''_n\\
&\gamma\in\kappa_n(\sigma)\mbox{ and }\tau\in\internal\tpot'_q\implies \nu''_n(\gamma\fr\tau)=\nu'_q(\tau)\\
&\gamma\in\kappa_n(\sigma)\mbox{ and }\rho\in\leaf\tpot'_q\mbox{ and }\psi'_q(\rho)=m\implies \gamma\fr\rho\in\kappa_n(\sigma\fr m).
\end{align*}
\item To each leaf $\sigma\in\leaf\tpot_n$, an output $\psi_n(\sigma)\in\cod_+(F)$ is assigned.
For each $\gamma\in\kappa_n(\sigma)$, put $\psi''_n(\gamma)=\psi_n(\sigma)$.
In particular, each $\gamma\in\kappa_n(\sigma)$ is a leaf of $\tpot_n''$.
\end{itemize}

{\em Verification:}
{\em (Potential).}
\begin{itemize}
\item $\tpot''_n\subseteq Y_2^{<\om}$ is a full-branching well-founded tree:
\begin{itemize}
\item {\em Full-branching.}
Since we are simply concatenating full-branching trees $\tpot'_q\subseteq Y_2^{<\om}$ one after another, $\tpot''_n$ is also a full-branching tree.
\item {\em Well-foundedness.}
Each string in $\tpot''_n$ is a concatenation of leaves of $\tpot_q'$ for various queries $q$.
Moreover, a string in $\tpot_n$ must be extended each time a concatenation is performed.
Since both $\tpot_n$ and each $\tpot_q'$ are well-founded, this shows that $\tpot''_n$ is also well-founded.

For more details,
each string $\eta\in\tpot''_n$ is of the form $\eta=\rho_0\fr\rho_1\fr\dots\fr\rho_k\fr\tau$.
Here, for $\sigma=\pair{m_\ell}_{\ell\leq k}\in\tpot_n$ and $q_i=\nu_n(\pair{m_\ell}_{\ell<i})$, we have $\rho_i\in\leaf\tpot'_{q_i}$ and $\tau\in\tpot'_{q_{k+1}}$.
If $\alpha$ is an infinite path through $\tpot''_n$, since each $\tpot_{q_i}'$ is well-founded, $\alpha$ must be in the form of a concatenation of infinitely many leaves $\alpha=\rho_0\fr\rho_1\fr\dots$
In this case, there must exist an infinite sequence $\pair{m_\ell}_{\ell\in\om}\in\tpot_n$ making queries $q_i=\nu_n(\pair{m_\ell}_{\ell<i})$.
This is impossible by well-foundedness of $\tpot_n$.

\end{itemize}
\item $\nu''_n(\sigma)\in\dom_+(H)$:
This is because $\nu''_n$ is simply copying some $\nu'_q$.
\begin{itemize}
\item $\nu''_n(\gamma\fr\tau)=\nu'_q(\tau)\in\dom_+(H)$.
\end{itemize}
\item $\psi''_n(\gamma)\in\cod_+(F)$:
This is because $\psi''_n$ is simply copying some $\psi_n$.
\begin{itemize}
\item We have $\gamma\in\kappa_n(\sigma)$ for some $\sigma\in\leaf\tpot_n$, and then $\psi''_n(\gamma)=\psi_n(\sigma)\in\cod_+(F)$.
\end{itemize}
\end{itemize}

{\em (Actual).}
\begin{itemize}
\item The \aaa computation tree is also given by similar concatenations.
Let $\tactual_n\subseteq\tpot_n$ be a $(G;\nu_n)$-branching tree solving $F(n)$, and $\tactual_q'\subseteq\tpot_q'$ be a $(H;\nu'_q)$-branching tree solving $G(q)$.
\item For each \aaa instance $n\in\dom(F)$, an $(H;\nu''_n)$-branching tree $\tactual_n''\subseteq\tpot_n''$ solving $F(n)$ is given as follows:
To each internal node $\sigma\in\internal\tactual_n$, the query $\nu_n(\sigma)=q\in\dom(G)$ is assigned.
Then
\[\{\gamma\fr\tau:\gamma\in\kappa_n(\sigma)\mbox{ and }\tau\in\tactual'_q\}\subseteq\tactual''_n.\]
\item At each internal node $\gamma\fr\tau$ of $\tactual''_n$, some $q\in\dom(G)$ is queried, so we have $\nu'_q(\tau)\in\dom(H)$.
Therefore, we get $\nu''_n(\gamma\fr\tau)=\nu'_q(\tau)\in\dom(H)$.
\item For each leaf $\gamma$ of $\tactual''_n$, we have $\gamma\in\kappa_n(\sigma)$ for some $\sigma\in\leaf\tactual_n$.
Hence, $\psi_n''(\gamma)=\psi_n(\sigma)\in F(n)$.
\end{itemize}

Note also that our construction of a computation tree $\Gamma$ for $F\tgw H$ from the computation trees $\Phi$ for $F\tgw G$ and $\Psi$ for $G\tgw H$ is effective.
In other words, there is a total computable function $t\colon\om^2\to\om$ which, given indices $d$ for $\Phi$ and $e$ for $\Psi$, returns an index $t(d,e)$ for $\Gamma$.
\end{proof}

\subsection{Partial Weihrauch reducibility}

Some of the results regarding total Weihrauch reducibility can be reduced to the discussion on traditional Weihrauch reducibility.
For this reason, we introduce traditional Weihrauch reducibility \cite{BGP21} here.
In the traditional context, a (bilayer) problem does not have the notion of \ppp domain; therefore, the \ppp rule in the {\tt Merlin}-{\tt Arthur}-{\tt Nimue} game described in Section \ref{sec:MAN-game} is removed.
Traditional many-query Weihrauch reducibility is often denoted by $\leq_{\sf gW}$.
In this article, we write it as $\leq_{\sf W}^{\sf G}$, where ${\sf g}$ and ${\sf G}$ stand for a game, as in Definition \ref{def:total-G-Weihrauch}.

\begin{lemma}
Let $F\pcolon {X}\times\Lambda\tto Y$ and $G\pcolon {D}\times\Xi\tto E$ be bilayer functions.
Then the following two conditions are equivalent:
\begin{enumerate}
\item $G$ is multi-query Weihrauch reducible to $F$.
\item Given $n\in D$, there is an effective procedure returning $(\nu_n,\psi_n)$ such that:
\begin{itemize}
\item $\nu_n\pcolon Y^{<\om}\to X$ is a computable function.
\item $\psi_n\pcolon Y^{<\om}\to E$ is a computable function.
\end{itemize}
Furthermore, for each instance $(n|\alpha)\in\dom(G)$, there is an $(F;\nu_n)$-branching tree $\tactual_{n|\alpha}\subseteq Y^{<\om}$ such that for any leaf $\sigma\in\leaf \tactual_{n|\alpha}$ we have $\psi_n(\sigma)\down\in G(n|\alpha)$.
\end{enumerate}
\end{lemma}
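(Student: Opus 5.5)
The lemma is the partial analogue of the total-tree characterization proved just above. The strategy is to translate directly between {\tt Arthur}-{\tt Nimue} strategies in the partial Merlin-Arthur-Nimue game and pairs $(\nu_n,\psi_n)$ of partial computable labelings. The only differences from the potted case are that no \ppp rule is imposed and {\tt Arthur}'s strategy is a partial computable function. I will follow the same game-tree reading as in Observation \ref{obs:Arthur-Nimue-strategy}.

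\emph{From (1) to (2).} Given an \aaa strategy $(\eta|\zeta)$ witnessing the reduction, with $\eta$ partial computable on $D\times Y^{<\om}$, define
\begin{align*}
\nu_n(\sigma)&=x \quad\text{if } \eta(n,\sigma)\downarrow=\pair{0,x},\\
\psi_n(\sigma)&=y \quad\text{if } \eta(n,\sigma)\downarrow=\pair{1,y}.
\end{align*}
To keep the play well defined, I would additionally require that no proper initial segment of $\sigma$ has $\eta$-value of the form $\pair{1,\cdot}$; this condition is c.e.\ and uniform in $n$. Both maps are then uniformly partial computable in $n$. For $(n|\alpha)\in\dom(G)$, let $\tactual_{n|\alpha}$ be the tree of {\tt Merlin}'s plays obeying the \aaa rule against $(\eta|\zeta_\alpha)$. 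Thus $\sigma\fr m\in\tactual_{n|\alpha}$ iff $\sigma$ is internal and $m\in F(\nu_n(\sigma)|\zeta(n,\sigma|\alpha))$. The set of successors of $\sigma$ is then exactly $F(x|\beta)$ for $\beta=\zeta(n,\sigma|\alpha)$, so the tree is $(F;\nu_n)$-branching. Because the strategy is actual, every play terminates, so every node on which $\eta$ does not terminate has $\nu_n$ defined with an \aaa query. Moreover, every leaf $\sigma$ has $\psi_n(\sigma)\downarrow\in\Phi(n|\alpha)\subseteq G(n|\alpha)$. Since $\tactual_{n|\alpha}$ may well have infinite paths unless one insists on termination, I would state explicitly that every path through it ends in a leaf, and this is exactly what the \aaa strategy condition gives.

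\emph{From (2) to (1).} Conversely, given $(\nu_n,\psi_n)$ and the trees, let {\tt Arthur} play $\pair{1,\psi_n(\sigma)}$ at the current history $\sigma$ if $\psi_n(\sigma)$ converges, and otherwise $\pair{0,\nu_n(\sigma)}$. Since $\psi_n$ and $\nu_n$ are only partial, this priority must be implemented by dovetailing. I would take whichever of the two computations halts first, after normalizing $(\nu_n,\psi_n)$ so that on relevant nodes the leaves are exactly where $\psi_n$ converges; restricting $\psi_n$ to leaves of the given tree is one way to arrange this. {\tt Nimue}, who is not required to be computable, picks at each internal $\sigma$ of $\tactual_{n|\alpha}$ a secret $\beta$ with
\[
\{m:\sigma\fr m\in\tactual_{n|\alpha}\}=F(\nu_n(\sigma)|\beta);
\]
such a $\beta$ exists by the definition of branching, and this choice is made using the axiom of choice in the meta-theory. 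Off the tree, {\tt Nimue} plays arbitrarily. Then every \aaa-legal play stays inside $\tactual_{n|\alpha}$ and reaches a leaf with output in $G(n|\alpha)$.

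The main obstacle is the mismatch between the two descriptions in how leaves and termination are handled. First, the tree in (2) must be well-founded, or equivalently every \aaa play must terminate, and the lemma as phrased only implicitly demands this. Second, the overlap of domains where both $\psi_n$ and $\nu_n$ converge on the same node must be resolved. I would handle both by fixing the convention that $\tactual_{n|\alpha}$ is well-founded with leaves exactly the nodes where $\psi_n$ converges, and then verify that the dovetailing above respects this convention along actual plays.
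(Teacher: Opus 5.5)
Your (1)$\Rightarrow$(2) is the intended direct translation and is fine, and you are right that well-foundedness of $\tactual_{n|\alpha}$ has to be made explicit. (The side condition you impose on initial segments is co-c.e.\ rather than c.e., but it is also unnecessary, since only reachable nodes enter $\tactual_{n|\alpha}$.)

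The gap is in (2)$\Rightarrow$(1), where you resolve nodes at which both $\nu_n$ and $\psi_n$ converge. Restricting $\psi_n$ to the leaves of $\tactual_{n|\alpha}$ is not available to {\tt Arthur}. That tree depends on the secret $\alpha$ (a node may be a leaf for one $\alpha$ and internal for another), and it is not given effectively at all, so the restricted $\psi_n$ is not computable. Your fallback convention, that the leaves of $\tactual_{n|\alpha}$ are exactly its nodes where $\psi_n$ converges, does not save the dovetailing. At a leaf, $\nu_n$ may halt before $\psi_n$, and {\tt Arthur} then issues a possibly illegal query.

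No smarter algorithm helps, because under this convention (and under the literal statement) the implication is false. Let $K=\{n:\varphi_n(n)\down\}$. Let $F\pcolon\om\tto\om$ have $\dom(F)=\om\setminus K$ and $F(x)=\{0\}$, and let $G(n)=\{\chi_K(n)\}$ for all $n\in\om$. Put $\nu_n(\sigma)=n$; let $\psi_n(\ep)=1$ if $\varphi_n(n)\down$ and undefined otherwise; and let $\psi_n(\sigma)=0$ for $\sigma\neq\ep$. Take the tree $\{\ep\}$ for $n\in K$ and $\{\ep,\pair{0}\}$ for $n\notin K$. These trees are well-founded, $(F;\nu_n)$-branching and correct on their leaves, and their leaves are exactly where $\psi_n$ converges. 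Yet $G\not\gw F$: every $F$-answer is $0$, so any reduction would compute $\chi_K$. Your dovetailing fails here exactly as described: for $n\in K$ it queries $n\notin\dom(F)$.

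The missing idea is the correct reading of (2). The maps $\nu_n$ and $\psi_n$ must have disjoint domains, i.e.\ be the query part and the output part of a single partial computable $\eta_n\colon Y^{<\om}\to X+E$; this is precisely what your (1)$\Rightarrow$(2) produces. In addition, each $\tactual_{n|\alpha}$ must be well-founded. With that reading, at most one of $\nu_n(\sigma),\psi_n(\sigma)$ converges, so ``whichever halts first'' is just $\eta_n$. On every $\tactual_{n|\alpha}$, {\tt Arthur} then queries exactly at internal nodes (where $\nu_n\down$ forces $\psi_n\upar$) and outputs exactly at leaves. Your choice of {\tt Nimue}'s secrets makes each query legal and keeps the play inside the tree, and well-foundedness gives termination. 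This is presumably the ``straightforward'' argument the paper has in mind.
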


\begin{proof}
Straightforward.
\end{proof}

Using traditional G.Weihrauch reducibility, one can refine Theorem \ref{MP-equial-partial-computable}.
Recall that $\sii{\sf C}_{1/\om}$ corresponds to Markov's principle (Proposition \ref{prop:MP-as-Sigma-1-choice}).

\begin{theorem}\label{thm:MP-remove}
If $F\tgw G+\sii{\sf C}_{1/\om}$ then $F\gw G$.
\end{theorem}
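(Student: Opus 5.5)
We plan to adapt the proof of Theorem \ref{MP-equial-partial-computable}, where {\tt Merlin}'s answers to $\sii{\sf C}_{1/\om}$-queries were simulated by unbounded search. Now the simulation takes place inside the partial setting, so the answers from $G$ are still supplied by the oracle. Assume $F\tgw G+\sii{\sf C}_{1/\om}$ via a total tree $\Phi=(\tpot_n,\nu_n,\psi_n)_{n}$ together with {\tt Nimue}'s strategy $\zeta$. We will construct partial computable $(\nu^\ast_n,\psi^\ast_n)$ on $Y^{<\om}$, where $Y=\cod_+(G)$, which witness $F\gw G$ in the sense of the lemma characterizing $\gw$ by $(F;\nu)$-branching trees.

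The construction runs as follows. Given $n$ and a string $\tau\in Y^{<\om}$ of answers from $G$, we reconstruct a node $\sigma(\tau)\in\tpot_n$ by walking down $\tpot_n$ from the root. At an internal node $\sigma$ labelled by a query $\pair{1,q}$ to $\sii{\sf C}_{1/\om}$, we perform an unbounded search for the first element $k$ enumerated into $S_q$ and move to $\sigma\fr\pair{1,k}$; this step consumes no symbol of $\tau$. At a node labelled by a query $\pair{0,x}$ to $G$, we consume the next symbol $y$ of $\tau$ and move to $\sigma\fr\pair{0,y}$; if $\tau$ is exhausted, we stop. We then define $\nu^\ast_n(\tau)=x$ when the walk stops at a $G$-query node labelled $\pair{0,x}$ with $\tau$ exhausted. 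We define $\psi^\ast_n(\tau)=\psi_n(\sigma)$ when the walk reaches a leaf $\sigma$ of $\tpot_n$ exactly as $\tau$ is exhausted. Both functions are partial computable uniformly in $n$, and we allow them to diverge, which is legitimate because $\gw$ imposes no totality requirement. The secret advice for $G$ at a $G$-query node is inherited from $\zeta$: whenever a $G$-query is made at $\sigma$, {\tt Nimue} gives the $\Lambda$-component of $\zeta_{n|\alpha}(\sigma)$.

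For verification, fix $(n|\alpha)\in\dom(F)$ and let $\tactual_{n|\alpha}$ be the actual $(G+\sii{\sf C}_{1/\om})$-branching tree, all of whose leaves lie in $F(n|\alpha)$. We define $\tactual^\ast$ as the set of $\tau$ whose walk stays inside $\tactual_{n|\alpha}$, and we check three things. First, at every $\sii{\sf C}_{1/\om}$-node of $\tactual_{n|\alpha}$ we have $q\in\dom$, so $S_q$ is a singleton; hence the search halts and picks the unique valid successor, which belongs to $\tactual_{n|\alpha}$. Second, at every $G$-node the $\tactual$-successors are exactly the $G$-answers, so $\tactual^\ast$ is $(G;\nu^\ast_n)$-branching. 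Third, $\tactual^\ast$ is well-founded, because each $\tau$ corresponds to a path of $\tpot_n$ and $\tpot_n$ is well-founded; its leaves map to leaves of $\tactual_{n|\alpha}$, so $\psi^\ast_n$ is defined there and lands in $F(n|\alpha)$.

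We expect the main obstacle to be bookkeeping rather than substance. The encoding of answers in the sum $G+\sii{\sf C}_{1/\om}$ must be handled so that the compressed tree of $G$-answers is correctly indexed. The inherited secret advice also has to be well defined, since $\zeta$ depends on the full history $\sigma$ and not only on $\tau$. The second issue is resolved by noting that within $\tactual$ the history $\sigma(\tau)$ is determined by $\tau$, because the $\Sigma_1$-answers are unique there.
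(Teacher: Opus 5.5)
Your proposal is correct and follows essentially the same route as the paper. Both arguments simulate the total strategy by walking down the actual computation tree: $G$-queries are passed through to the oracle $G$, $\sii{\sf C}_{1/\om}$-queries are resolved by unbounded search, and the output is $\psi_n$ at the leaf reached. The differences are cosmetic. The paper first replaces $\sii{\sf C}_{1/\om}$ by the equivalent ${\sf MP}_{\sf PR}$ and waits for a halting stage, whereas you search $S_q$ directly. You also spell out the tree characterization of $\gw$ and {\tt Nimue}'s inherited advice, which the paper leaves implicit.
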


\begin{proof}
Put $M={\sf MP}_{\sf PR}$.
Assume $F\tgw G+M$ via $e$.
\begin{itemize}
\item For each \ppp instance $n\in\dom_+(F)$, the index $e$ yields a \ppp computation tree $(\tpot^e_n,\nu^e_n,\psi^e_n)$.
Hereafter, the superscript $e$ will be omitted.
\item For each \aaa instance $(n|\alpha)\in\dom(F)$, the assumption gives an \aaa computation tree $\tactual_{n|\alpha}\subseteq \tpot_n$.
We inductively construct a leaf $\rho(n|\alpha)\in\leaf\tactual_{n|\alpha}$ by using the $G$-computation $\Gamma_e^G(n|\alpha)$ as follows:
\begin{itemize}
\item Let $\rho_0$ be the root (the empty string) of $\tactual_{n|\alpha}$.
Inductively assume that we have already given a node $\rho_s\in \tactual_{n|\alpha}$ of length $s$.
\item At an internal node $\rho_s\in\internal\tactual_{n|\alpha}$, if $\nu_n(\rho_s)$ makes a query $q$ to $G$, we must 
have $q\in\dom(G)$ since this is an \aaa computation.

After seeing this, $\Gamma_e^G(n|\alpha)$ makes the same query to $G$; then $\Gamma_e^G(n|\alpha)$ recieves a response $r\in G(q)$ from $G$.
Then, put $\rho_{s+1}=\rho_s\fr r$, which inductively ensures $\rho_{s+1}\in\tactual_{n|\alpha}$.
\item At an internal node $\rho_s\in\internal\tactual_{n|\alpha}$, if $\nu_n(\rho_s)$ makes a query $q$ to $M$, we must have $q\in{\rm dom}(M)$ since this is an \aaa computation.

After seeing this, $\Gamma^G$ wait for a stage $t$ at which the computation $\varphi_q(0)$ halts.
As $q\in{\rm dom}(M)$ implies $\varphi_q(0)\down$, we eventually recognize such a stage $t$, and by the definition of $M={\sf MP}_{\sf PR}$, we have $t\in M(q)$.
Then. put $\rho_{s+1}=\rho_s\fr t$, which inductively ensures $\rho_{s+1}\in\tactual_{n|\alpha}$.
\item If $\rho_s$ is a leaf of $\tactual_{n|\alpha}$, put $\rho(n|\alpha)=\rho_s$.
\end{itemize}
\item This construction produces a leaf $\rho(n|\alpha)\in\leaf\tactual_{n|\alpha}$ of the \aaa computation tree.
Finally, $\Gamma_e^G(n|\alpha)$ outputs $\psi_n(\rho(n|\alpha))$.
Since $(\tactual_{n|\alpha},\nu_n,\psi_n)$ solves $F(n|\alpha)$, we get $\psi_n(\rho(n|\alpha))\in F(n|\alpha)$.
\item Each computation path of $\Gamma_e^G(n|\alpha)$ chooses a leaf $\rho(n|\alpha)$ and outputs $\psi_n(\rho(n|\alpha))\in F(n|\alpha)$.
Therefore, we obtain $\Gamma_e^G(n|\alpha)\subseteq F(n|\alpha)$, which shows $F\leq_{\sf W}^{\sf G}G$.
\end{itemize}
%
\end{proof}

\begin{definition}
We say that a tree $T\subseteq X^{<\om}$ is {\bf $n$-fat} if each internal node $\sigma\in \internal T$ has at least $n$ immediate successors;
that is, there are $n$ distinct elements $i_1,i_2,\dots,i_n\in X$ such that $\sigma\fr i_k\in T$ for each $k\leq n$.
\end{definition}

Regarding the forward direction of the following, although Kihara \cite[Proposition 4.4]{Kih23} essentially shows the contrapositive, we provide a direct proof here because a uniform algorithm for the forward direction will be needed later.

\begin{theorem}\label{MP-WLEM-single-pc}
For a partial function $f\pcolon\om\to\om$:
\[f\tgw\sii{\sf C}_{1/\om}+{\sf C}_{2/3}\iff\mbox{$f$ is computable.}\]
\end{theorem}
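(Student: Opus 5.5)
The backward direction is immediate from Theorem \ref{MP-equial-partial-computable}: if $f$ is computable then $f\tgw\sii{\sf C}_{1/\om}$, and since $\sii{\sf C}_{1/\om}\tW\sii{\sf C}_{1/\om}+{\sf C}_{2/3}$ by Proposition \ref{prop:sum-join}, transitivity (Proposition \ref{prop:transitive}) gives $f\tgw\sii{\sf C}_{1/\om}+{\sf C}_{2/3}$. All the work is in the forward direction, where I will produce a uniform algorithm $\Gamma_e$ computing $f$ from an index $e$ of the total tree $(\tpot_n,\nu_n,\psi_n)$ witnessing the reduction.

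The plan for ($\Rightarrow$) is a fat-tree search. Fix $n\in\dom(f)$ and its actual tree $\tactual_{n|\alpha}$ (the secret input is trivial since $f$ is an ordinary problem). At a node querying $\sii{\sf C}_{1/\om}$ with code $q$, the actual tree has exactly one successor, namely the unique element of $S_q$, and it can be found by waiting for enumeration. At a node querying ${\sf C}_{2/3}$, {\tt Nimue} picks a two-element subset $A\subseteq 3$ and the actual tree contains $\sigma\fr i$ for both $i\in A$. So every actual tree sits inside a subtree of $\tpot_n$ that is $2$-fat at ${\sf C}_{2/3}$-nodes and follows the enumerated unique element at $\sii{\sf C}_{1/\om}$-nodes, with all leaves labeled $f(n)$. $\Gamma_e(n)$ searches effectively for any such finite subtree $T\subseteq\tpot_n$: at ${\sf C}_{2/3}$-nodes keep exactly two of the three children $\{0,1,2\}$; at $\sii{\sf C}_{1/\om}$-nodes keep only the child $k$ once $k$ is seen in $S_q$, waiting until then. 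Because at each ${\sf C}_{2/3}$-node only three successors matter, $\tpot_n$ restricted to children $<3$ there is finitely branching and well-founded, hence finite by K\"onig's lemma, so candidate subtrees can be enumerated. $\Gamma_e(n)$ outputs $\psi_n$ on the leaves as soon as it finds a candidate $T$ on which $\psi_n$ is constant on the leaves.

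Correctness is the key step and rests on a pigeonhole/intersection argument. If two such $2$-fat candidates $T,T'$ are both consistent with the true enumerations, then at every ${\sf C}_{2/3}$-node their kept pairs of children in $3$ intersect, since two 2-subsets of a 3-set meet. At $\sii{\sf C}_{1/\om}$-nodes they agree because both wait for the genuinely enumerated element; for $n\in\dom(f)$ every such query lies in $\dom$ along the actual tree, so the element is unique. Inductively $T\cap T'$ contains a common leaf, so the constant leaf values agree. Taking $T'$ to be the actual tree, whose leaves all carry $f(n)$, any found candidate outputs $f(n)$. Existence of a candidate follows from the actual tree itself, pruned to $2$-subsets. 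I expect this step to be the main obstacle. The subtle point is that a candidate $T$ may pass through $\sii{\sf C}_{1/\om}$-nodes whose query lies outside $\dom(\sii{\sf C}_{1/\om})$, for instance where $S_q$ has several elements. I plan to handle this by running the common-leaf induction along nodes of $T\cap\tactual$ only: at a node of $\tactual$ the query is in the actual domain, so $S_q$ is a singleton and the first enumerated element is the right one. Convergence holds because the search must eventually reach the candidate derived from $\tactual$. Finally, everything is uniform in $e$, which yields the total computable index map needed later.
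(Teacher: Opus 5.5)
Your proof is correct and is essentially the paper's argument. The paper first applies Theorem~\ref{thm:MP-remove} to obtain $f\gw{\sf C}_{2/3}$, then brute-force searches for a $2$-fat finite tree on whose leaves $\psi_n$ is constant, using that two $2$-fat subtrees of $3^{<\om}$ share a leaf; you instead inline the elimination of $\sii{\sf C}_{1/\om}$ by waiting for the enumerated element at those nodes, and run the same common-leaf induction along $T\cap\tactual_n$.

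One small correction: $\tpot_n$ restricted to children $<3$ at ${\sf C}_{2/3}$-nodes is still infinitely branching at $\sii{\sf C}_{1/\om}$-nodes, so it is not finite. The finiteness you actually need comes from the candidates and the actual tree being at most $2$-branching and well-founded.
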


\begin{proof}
($\Leftarrow$)
By Theorem \ref{MP-equial-partial-computable}, if $f$ is computable then $f\tgw\sii{\sf C}_{1/\om}$; in particular, $f\tgw\sii{\sf C}_{1/\om}+{\sf C}_{2/3}$.

\medskip

($\Rightarrow$)
Assume $f\tgw\sii{\sf C}_{1/\om}+{\sf C}_{2/3}$ via $e$.
By Theorem \ref{thm:MP-remove}, we get $f\gw{\sf C}_{2/3}$, and moreover, one can effectively compute an index of such a reduction from $e$.
This satisfies the following:
\begin{itemize}
\item For each instance $n\in\dom(f)$, an index $e$ yields a strategy $(\nu^e_n,\psi^e_n)$.
Hereafter, the superscript $e$ will be omitted.
\item Since \aaa computation tree $\tactual_n\subseteq 3^{<\om}$ is finitely-branching and well-founded, by weak K\"onig's lemma, it is a finite tree.
In addition, it is ${\sf C}_{2/3}$-branching; that is, a $2$-fat tree.
Furthermore, for each leaf $\sigma\in\leaf\tactual_n$, the computation $\psi_n(\sigma)$ halts with output $f(n)$.
Therefore, $\psi_n$ outputs a unique value on the leaves of the $2$-fat finite tree $\tactual_n$.
\end{itemize}

We construct an algorithm $\Theta_e(n)$ computing $f(n)$.
This is given by the following brute-force algorithm:
\begin{enumerate}
\item Search for a $2$-fat finite tree $S \subseteq 3^{<\om}$ satisfying the following:
$\psi_n$ is constant on the set $\leaf S$ of leaves.
\item If such an $S$ is found, this means that there is $c$ such that $\psi_n(\sigma)\down=c$ for any $\sigma\in\leaf S$.
Compute such a value $c$; then put $\Theta_e(n)=c$.
\end{enumerate}

{\em Verification:}
\begin{itemize}
\item $S=\tactual_n$ satisfies the condition in the item (1); hence, the algorithm eventually reaches at (2) for some $S$.
Hence, $\Theta_e(n)$ must always be defined.
\item Although the $S$ found in (2) is not necessarily the same as $\tactual_n$, since both $S$ and $\tactual_n$ are $2$-fat subtrees of $3^{<\om}$, they share a common leaf $\sigma\in \leaf S\cap\leaf\tactual_n$.
Our definition gives $\Theta_e(n)=\psi_n(\sigma)$, and since $\sigma\in\leaf\tactual_n$ we also have $\psi_n(\sigma)=f(n)$.
\end{itemize}

Consequently, $f$ is computable by the algorithm $\Theta_e$.
One can effectively find an index of $\Theta_e$ from $e$, so hereafter, we refer to this $\Theta_e$ as $\varphi_{\ep(e)}$.
\end{proof}

If we remove $\sii{\sf C}_{1/\om}$, we can guarantee total computability.

\begin{theorem}\label{MP-WLEM-single-pc2}
For a partial function $f\pcolon\om\to\om$:
\[f\tgw{\sf C}_{2/3}\iff\mbox{$f$ is total computable.}\]
\end{theorem}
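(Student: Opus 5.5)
The plan is to handle the two directions separately. The reverse direction is immediate. The forward direction follows the brute-force argument of Theorem \ref{MP-WLEM-single-pc}, but it exploits that without $\sii{\sf C}_{1/\om}$ the whole potential computation tree is finite and can be computed.

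($\Leftarrow$) If $f$ has a total computable choice function $g$, I take the strategy for {\tt Arthur} that makes no query and immediately declares termination with output $g(n)$ on every potential input $n$. This is a total strategy, so it is both a \ppp and an \aaa strategy, and it solves $f(n)$ for every $n\in\dom(f)$. Alternatively, Proposition \ref{prop:below-id-total-computable-choice} gives $f\tgw{\rm id}_\om$, and one checks directly that ${\rm id}_\om\tgw{\sf C}_{2/3}$ by answering without querying.

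($\Rightarrow$) Assume $f\tgw{\sf C}_{2/3}$ via $e$. For each $n\in\dom_+(f)$ this yields a \ppp computation tree $(\tpot_n,\nu_n,\psi_n)$, where $\tpot_n\subseteq 3^{<\om}$ is computable, full-branching and well-founded. Since $\tpot_n$ is finitely branching, K\"onig's lemma makes it finite. The key step is that $\tpot_n$ can be computed outright, uniformly in $n$: explore $3^{<\om}$ breadth-first, calling $\eta(n,\sigma)$ (which is total) to decide whether $\sigma$ is a leaf. The search halts because the explored tree is finite. Because $\psi_n$ is total on $\leaf\tpot_n$, all leaf labels are then available.

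Next I define $g(n)$. Enumerate the finitely many subtrees $S\subseteq\tpot_n$ that contain the root, are $2$-fat (each internal node of $S$ has at least two of its three successors in $S$), and whose leaves all lie in $\leaf\tpot_n$. Take the first such $S$ on which $\psi_n$ is constant, and output that constant; if no such $S$ exists, output $0$. Then $g$ is total computable on $\dom_+(f)$.

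For $n\in\dom(f)$, the actual tree $\tactual_n$ is such an $S$. It is $2$-fat because it is ${\sf C}_{2/3}$-branching, its leaves are leaves of $\tpot_n$, and $\psi_n\equiv f(n)$ on its leaves since $f$ is single-valued. So some $S$ is found. For correctness, I show that two such $2$-fat subtrees $S,S'$ share a leaf. Starting at the root, at any common internal node each tree keeps at least two of the three successors, so some successor is common to both. Since being a leaf is determined by $\tpot_n$, a common node is internal in one tree exactly when it is internal in the other. Descending in this way through the finite tree reaches a common leaf $\sigma$, giving $g(n)=\psi_n(\sigma)=f(n)$.

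The main obstacle I expect is the bookkeeping that justifies computing $\tpot_n$ and restricting to subtrees whose leaves are genuine leaves of $\tpot_n$. Both the common-leaf argument and the totality of $g$ depend on this, and it is exactly where totality of the strategy replaces the unbounded search used in Theorem \ref{MP-WLEM-single-pc}.
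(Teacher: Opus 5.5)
Your proof is correct and follows essentially the same route as the paper. Both arguments use finiteness and computability of the potential tree $\tpot_n$ via K\"onig's lemma, a brute-force search over the finitely many $2$-fat subtrees of $\tpot_n$ on which $\psi_n$ is constant (with default output $0$), and the fact that any two such $2$-fat subtrees of $3^{<\om}$ share a leaf. Beyond the paper, you make explicit that the leaves of $S$ must be leaves of $\tpot_n$, which the common-leaf argument needs (it excludes, e.g., the one-node tree), and you prove the $(\Leftarrow)$ direction, which the paper's proof does not address.
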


\begin{proof}
Assume $f\tgw{\sf C}_{2/3}$ via $e$.
This satisfies the following:
\begin{itemize}
\item Since the \ppp domain of $f$ is $\om$, for each \ppp instance $n\in\om$, the index $e$ yields a total ${\sf C}_{2/3}$-computation tree $(\tpot^e_n,\nu^e_n,\psi^e_n)$.
Hereafter, the superscript $e$ will be omitted.
\begin{itemize}
\item {\em Potential.} $\tpot_n\subseteq 3^{<\om}$ is a full $3$-branching well-founded tree.
By weak K\"onig's lemma, this is finite tree.
At each leaf of this tree, $\psi_n$ defines some value.
\item {\em Actual.} For each \aaa instance $n\in\dom(f)$, we have an \aaa computation tree $\tactual_n\subseteq\tpot_n$ such that $\psi_n$ outputs $f(n)$ on its leaves.
\item Since this \aaa computation tree is $2$-fat, this implies that $\psi_n$ is constant on the leaves of some $2$-fat subtree of $\tpot_n$.
\end{itemize}
\end{itemize}

We construct an algorithm $\Theta_e$ computing a total extension of $f$.
This is given by the following brute-force algorithm:
\begin{enumerate}
\item Search for a $2$-fat finite tree $S\subseteq \tpot_n$ satisfying the following:
$\psi_n$ is constant on the set $\leaf S$ of leaves.
\item If such an $S$ is found, this means that there is $c$ such that $\psi_n(\sigma)\down=c$ for any $\sigma\in\leaf S$.
Compute such a value $c$; then put $\Theta_e(n)=c$.
\item If the value of $\psi_n$ is not uniquely determined on the leaves for any $2$-fat finite subtree $S\subseteq \tpot_n$, then put $\Theta_e(n)=0$.
This is possible since $\tpot_n$ is finite and $\psi_n$ is total on its leaves.
\end{enumerate}

{\em Verification:}
\begin{itemize}
\item  For each \ppp instance $n\in\om$, since $\tpot_n$ is a finite tree, there are finitely many $S\subseteq \tpot_n$; therefore, determining whether (2) or (3) holds can be done in finite steps.
Thus, $\Theta_e(n)$ is always defined.
\item For each \aaa instance $n\in\dom(f)$, the \aaa computation tree $S=\tactual_n$ satisfies the condition in the item (1); hence, the algorithm eventually reaches at (2) for some $S$.
Hence, $\Theta_e(n)$ must always be defined.
\item Although the $S$ found in (2) is not necessarily the same as $\tactual_n$, both $S$ and $\tactual_n$ are $2$-fat subtrees of $3^{<\om}$, they share a common leaf $\sigma\in \leaf S\cap\leaf\tactual_n$.
Our definition gives $\Theta_e(n)=\psi_n(\sigma)$, and since $\sigma\in\leaf\tactual_n$ we also have $\psi_n(\sigma)=f(n)$.
\end{itemize}

Consequently, $f$ is computable by the algorithm $\Theta_e$.
One can effectively find an index of $\Theta_e$ from $e$, so hereafter, we refer to this $\Theta_e$ as $\varphi_{\ep(e)}$.
\end{proof}

\section{Computability (Separations)}\label{sec:computable-separation}

We first show that the $\Sigma_1$-co-unique choice problem $\sii{\sf C}_{\om-1/\om}$ (which is linked to ${\sf MP}_\om^\lor$ by Theorem \ref{thm:disjunctive-Markov-counique}) is not total computable.

\begin{theorem}\label{thm:MPomega-not-total-computable}
$\sii{\sf C}_{\om-1/\om}\not\tgw{\rm id}_\om$.
\end{theorem}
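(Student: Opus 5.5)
By Proposition \ref{prop:below-id-total-computable-choice}, it suffices to show that $\sii{\sf C}_{\om-1/\om}$ has no total computable choice function. Such a function would be a total computable $f\colon\om\to\om$ with $f(e)\in S_e$ whenever $S_e$ is a co-singleton. We refute this with a diagonal argument using the recursion theorem, in the style of Theorem \ref{thm:UC-hierarchy}.

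Suppose $f$ is such a function. Using the recursion theorem, we build an index $e$ whose algorithm first computes $f(e)$. This computation halts, since $f$ is total on $\dom_+(\sii{\sf C}_{\om-1/\om})=\om$. Once it halts with value $k$, the algorithm declares $\varphi_e(j)\down$ for every $j\neq k$ and keeps $\varphi_e(k)$ divergent. Concretely, $\varphi_e(j)$ first computes $f(e)$, then halts if $j\neq f(e)$ and loops otherwise. Hence $S_e=\om\setminus\{f(e)\}$.

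This $S_e$ is a co-singleton, so $e\in\dom(\sii{\sf C}_{\om-1/\om})$. Since $f$ is a choice function, $f(e)\in S_e$, but $f(e)\notin S_e$ by construction, which is a contradiction. Therefore no total computable choice exists, and by Proposition \ref{prop:below-id-total-computable-choice} we get $\sii{\sf C}_{\om-1/\om}\not\tgw{\rm id}_\om$.

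The only delicate point is that the self-referential algorithm needs $f(e)$ to converge. This is exactly where the totality of $f$ on the potential domain enters; a merely partial computable choice, such as one obtained by unbounded search, would not suffice. Beyond that, the argument only needs the recursion theorem applied to the total computable transformation $e\mapsto$ (index of the algorithm above), so it is routine.
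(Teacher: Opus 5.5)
Your proposal is correct and follows essentially the same route as the paper: reduce via Proposition~\ref{prop:below-id-total-computable-choice} to the absence of a total computable choice, then use the recursion theorem to build $e$ with $S_e=\om\setminus\{f(e)\}$. Your remark that totality of $f$ is what forces $S_e$ to be a co-singleton is exactly the point the paper's verification relies on.
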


\begin{proof}
By Proposition \ref{prop:below-id-total-computable-choice}, it suffices to show $\sii{\sf C}_{\om-1/\om}$ is not total computable.
Let $\varphi_k\colon\om\to\om$ be a total computable function.
By using the recursion theorem, construct the following algorithm $e$ involving a self-reference:
\begin{enumerate}
\item Wait for the computation $\varphi_k(e)$ to halt.
\item If we recognize this, enumerate all elements other than $\varphi_k(e)$ into $S_e$.
Then $S_e=\om\setminus\{\varphi_k(e)\}$.
\end{enumerate}

{\em Verification:}
\begin{itemize}
\item By totality of $\varphi_k$, the computation $\varphi_k(e)$ must halt.
Hence, the algorithm $e$ must reach at (2), so we have $S_e=\om\setminus\{\varphi_k(e)\}$, which means $e\in\dom(\sii{\sf C}_{\om-1/\om})$.
However, we have $\varphi_k(e)\not\in S_e=\sii{\sf C}_{\om-1/\om}(e)$; hence, $\varphi_k$ cannot be a total extension of $\sii{\sf C}_{\om-1/\om}$.
%
\end{itemize}

Consequently, we get $\sii{\sf C}_{\om-1/\om}\not\tgw{\rm id}_\om$.
%
%
\end{proof}

We next show that $\sii{\sf C}_{1/\om}$ (linked to Markov's principle ${\sf MP}$) is not derivable from ${\sf C}_{1/2}$ (linked to the weak law of excluded middle ${\sf WLEM}$).

\begin{theorem}\label{thm:MP-WLEM-separation}
$\sii{\sf C}_{1/\om}\not\tgw{\sf C}_{1/2}$.
In particular, $\sii{\sf C}_{1/\om}\not\tgw\pii{\sf C}_{1/2}+\sii{\sf C}_{1/2}$.
\end{theorem}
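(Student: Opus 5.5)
The idea is to exploit the fact that ${\sf C}_{1/2}$ has no \emph{public} content. Every query to it is the public instance $\ast$, with a secret $a\in 2$ chosen by {\tt Nimue}, and the only legal answer is $a$ itself. So {\tt Merlin} has no freedom at all: once {\tt Nimue}'s advice is fixed, the actual computation tree is a single path. The answer {\tt Arthur} outputs is therefore one leaf-label of a potential tree that he can compute in advance. A recursion-theorem diagonalization, as in Theorem \ref{thm:UC-hierarchy} and Theorem \ref{thm:MPomega-not-total-computable}, then defeats him.

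Suppose, for contradiction, that $\sii{\sf C}_{1/\om}\tgw{\sf C}_{1/2}$ via an index $d$ for a total tree $(\tpot_e,\nu_e,\psi_e)_{e\in\om}$ together with some {\tt Nimue} strategy $\zeta$. The steps are as follows.

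\begin{enumerate}
\item \emph{The potential trees are finite.} Since $\dom_+(\sii{\sf C}_{1/\om})=\om$ and $\cod_+({\sf C}_{1/2})=2$, each $\tpot_e\subseteq 2^{<\om}$ is a computable, full-branching, well-founded binary tree. By K\"onig's lemma it is finite.
\item \emph{Its leaf-labels are computable uniformly in $e$.} Because $\eta$ is total, we can search $2^{<\om}$ level by level until every branch has reached a leaf. This gives a total computable $e\mapsto L(e)$, where $L(e)=\{\psi_e(\sigma):\sigma\in\leaf\tpot_e\}$ is a finite set presented by a canonical index.
\item \emph{The actual output lies in $L(e)$.} For any instance $e\in\dom(\sii{\sf C}_{1/\om})$, the actual tree $\tactual_{e}$ is $({\sf C}_{1/2};\nu_e)$-branching. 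Since ${\sf C}_{1/2}(\ast|a)=\{a\}$, each internal node has exactly one successor. Hence $\tactual_e$ is a single path ending in some leaf $\sigma$ of $\tpot_e$, and correctness forces $\psi_e(\sigma)\in S_e$. In particular $L(e)\cap S_e\neq\emptyset$.
\item \emph{Diagonalize.} By the recursion theorem, take $e$ whose algorithm first computes $L(e)$ (this halts by totality) and then enumerates the single element $\max L(e)+1$ into $S_e$. Then $S_e$ is a singleton, so $e\in\dom(\sii{\sf C}_{1/\om})$, yet $S_e\cap L(e)=\emptyset$. This contradicts step 3.
\end{enumerate}

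The point needing the most care is step 2. Totality of {\tt Arthur}'s strategy on all of $\cod_+({\sf C}_{1/2})^{<\om}=2^{<\om}$ is exactly what makes $L(e)$ computable with certainty. The non-computable {\tt Nimue} only selects which leaf is reached, and that leaf lies in $L(e)$ regardless.

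For the ``in particular'' clause, it suffices by transitivity (Proposition \ref{prop:transitive}) to show $\pii{\sf C}_{1/2}+\sii{\sf C}_{1/2}\tgw{\sf C}_{1/2}$. The sum has potential codomain $2+2$, which has four elements. So Proposition \ref{prop:WLEM-upper-bound}, applied after a computable bijection $4\cong 2+2$, gives $\pii{\sf C}_{1/2}+\sii{\sf C}_{1/2}\tW{\sf C}_{1/4}$. Proposition \ref{prop:UC-hierarchy-collapse-bilayer} then gives ${\sf C}_{1/4}\tgw{\sf C}_{1/2}$. Combining these, $\sii{\sf C}_{1/\om}\tgw\pii{\sf C}_{1/2}+\sii{\sf C}_{1/2}$ would imply $\sii{\sf C}_{1/\om}\tgw{\sf C}_{1/2}$, which the first part has ruled out.
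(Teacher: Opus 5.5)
Your argument is correct and is essentially the paper's proof. The paper packages your steps 1--3 as Lemma \ref{lem:finite-codomain-CT-maj-comp}: for any oracle with finite codomain, the maximum of the leaf labels of the finite potential tree is a total computable bound on some solution. It then diagonalizes with $S_e=\{g(e)+1\}$, exactly as in your step 4. Your detour for the ``in particular'' clause through Proposition \ref{prop:WLEM-upper-bound} and Proposition \ref{prop:UC-hierarchy-collapse-bilayer} is valid but unnecessary. Since $\pii{\sf C}_{1/2}+\sii{\sf C}_{1/2}$ also has finite potential codomain, your steps 1, 2 and 4 apply to it verbatim once step 3 is weakened to ``every leaf of the actual tree is a leaf of the potential tree''.
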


We use the following lemma, which is proven by a standard compactness argument.

\begin{lemma}\label{lem:finite-codomain-CT-maj-comp}
Assume that the codomain of a potted bilayer problem $\mathcal{U}$ is finite.
Then, for any potted problem $F\pcolon\om\tto\om$, if $F\tgw\mathcal{U}$ then $F$ has a solution in the range bounded by a total computable function $g$; in other words,
\[\dom_+(F)\subseteq\dom(g);\qquad x\in\dom(F)\implies \exists y\leq g(x).\ y\in F(x).\]
\end{lemma}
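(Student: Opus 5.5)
Assume $F\tgw\mathcal{U}$ via an index $e$. Then for every potential instance $n\in\dom_+(F)=\om$ we get a potential computation tree $(\tpot_n,\nu_n,\psi_n)$, effectively in $n$. Here $\tpot_n\subseteq Y^{<\om}$ with $Y=\cod_+(\mathcal{U})$ finite. The plan is a compactness argument: show that $\tpot_n$ is a finite tree that can be computed as a finite object uniformly in $n$, and then bound every output it can produce.

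\emph{Step 1 (finiteness).} $\tpot_n$ is full-branching and well-founded, and every node has at most $|Y|$ immediate successors. By K\"onig's lemma (weak K\"onig's lemma after coding $Y$ as a finite alphabet) it is therefore finite. This is the same observation used in the proof of Theorem~\ref{MP-WLEM-single-pc2}.

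\emph{Step 2 (uniform computability of the finite tree).} Define $g(n)$ by exploring $Y^{<\om}$ breadth-first. For each $\sigma$ we evaluate the strategy $\eta(n,\sigma)$; this is total on all strings along which Merlin obeys the potential rule, which is every string of $\tpot_n$ because $\tpot_n$ is full-branching over $Y$.
\begin{itemize}
\item If $\eta(n,\sigma)=(1,y)$, then $\sigma$ is a leaf.
\item Otherwise $\eta(n,\sigma)$ is a query, and we expand $\sigma$ to all $\sigma\fr i$ with $i\in Y$.
\end{itemize}
Since $\tpot_n$ is finite, this exploration halts, and we set
\[
g(n)=\max\{\psi_n(\sigma):\sigma\in\leaf\tpot_n\}.
\]
This $g$ is total computable on $\dom_+(F)$. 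The one point needing care is that only potential totality is used, never the actual domain of $\mathcal U$. The secret layer of $\mathcal{U}$ (Nimue's moves) plays no role here, because the tree $\tpot_n$ depends only on Arthur's computable strategy.

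\emph{Step 3 (bound is correct).} Let $x\in\dom(F)$. By the characterization of $\tgw$ via computation trees, there is a $(\mathcal{U};\nu_x)$-branching subtree $\tactual_x\subseteq\tpot_x$ such that $\psi_x(\sigma)\in F(x)$ for every leaf $\sigma$ of $\tactual_x$. This subtree is nonempty, since it contains the root. Its branching sets are nonempty, because a potted problem is nonempty-valued on its actual domain. Hence, following successors from the root, we reach a leaf $\sigma$ of $\tactual_x$, and $\sigma$ is also a leaf of $\tpot_x$. Then $y=\psi_x(\sigma)\in F(x)$ and $y\le g(x)$.

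The main obstacle is Step 2: we must make sure the search really terminates for every potential instance. This rests exactly on the finiteness of $\cod_+(\mathcal U)$ together with the totality requirement on Arthur's strategy under the potential rule.
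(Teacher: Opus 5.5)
Your argument is the same compactness argument the paper uses: apply K\"onig's lemma to a finitely branching well-founded tree, compute that finite tree uniformly in $n$, and take the maximum of $\psi_n$ over its leaves. It is correct if ``codomain'' means the potential codomain $\cod_+(\mathcal U)$. The paper, however, proves the lemma under a weaker hypothesis. It assumes only that every actual solution of $\mathcal U$ is below some $k$, and lets $\cod_+(\mathcal U)=H$ be infinite. This is also the reading needed for Theorem~\ref{thm:finite-maj-realizable}, which is stated for $F\pcolon\om\times\Lambda\tto\om$ ``whose codomain is finite''. Under that hypothesis your Steps 1 and 2 fail. The tree $\tpot_n$ is full-branching over the infinite set $H$, so K\"onig's lemma does not apply and the breadth-first exploration never terminates. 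Worse, the set $\psi_n[\leaf\tpot_n]$ can be unbounded: a single root query followed by outputting {\tt Merlin}'s answer already produces every element of $H$.

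The missing idea is to bound only the part of $\tpot_n$ that actual plays can reach. Put $h=H\cap k$ and $H_n=\tpot_n\cap h^{<\om}$. Every internal node of $\tpot_n$ lying in $H_n$ has all its $h$-successors in $H_n$. So $H_n$ is finitely branching and well-founded, hence finite. It is also computable uniformly in $n$, since $h$ is a fixed finite set and {\tt Arthur}'s strategy is total on $H^{<\om}$. Moreover, every $(\mathcal U;\nu_x)$-branching subtree satisfies $\tactual_x\subseteq H_x$. This holds because each branching set of $\tactual_x$ is a solution set $\mathcal U(q|\beta)$, which lies inside $h$ by hypothesis. Now take $g(n)=\max\{\psi_n(\sigma):\sigma\in\leaf\tpot_n\cap H_n\}$; your Step 3 then goes through with $H_x$ in place of $\tpot_x$. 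When $\cod_+(\mathcal U)$ is itself finite, $h=\cod_+(\mathcal U)$ and this reduces to exactly your proof.
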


\begin{proof}
Assume that $\mathcal{U}$ always takes values less than $k$, and its \ppp codomain is $H$ with $k\subseteq H\subseteq\om$.
In this case, $h:=H\cap k$ is a finite set.
Assume that $F\tgw\mathcal{U}$ via $e$.
%
\begin{itemize}
\item For each \ppp instance $n\in\dom_+(F)$, the index $e$ yields a \ppp computation tree $(\tpot^e_n,\nu^e_n,\psi^e_n)$.
Hereafter, the superscript $e$ will be omitted.
\item Since $H_n:=\tpot_n\cap h^{<\om}$ is a finitely-branching well-founded tree, by weak K\"onig's lemma, it is a finite tree.
Since the index $e$ specifies a computation procedure for $n\mapsto \tpot_n$, and $h$ is a fixed finite set, $n\mapsto H_n$ is also computable.
\item At each leaf $\sigma\in\leaf H_n$, the value $\psi_n(\sigma)$ is defined.
By finiteness of $H_n$, the set $\{\psi_n(\sigma):\sigma\in H_n\}$ is also finite.
\end{itemize}

Then, consider the following algorithm $\Xi_e$:
\begin{enumerate}
\item
For each \ppp instance $n\in\dom_+(F)$, computes the finite tree $H_n$.
Wait for seeing $\psi_n(\sigma)\down$ for all leaves $\sigma\in\leaf H_n$.
\item After observing the termination of $\psi_n$ on all leaves, we get the finite set $R_n=\{\psi_n(\sigma):\sigma\in\leaf H_n\}$.
Let $\Xi_e(n)$ be the maximum value $\max R_n$.
\end{enumerate}

{\em Verification:}
\begin{itemize}
\item For each \ppp instance $n\in\dom_+(F)$, $\psi_n$ is total on $\leaf \tpot_n$; in particular, for each $\sigma\in\leaf H_n\subseteq\leaf \tpot_n$, the computation $\psi_n(\sigma)$ halts at stage $s_\sigma$.
Since $H_n$ is finite, at the maximum stage $s=\max\{s_\sigma:\sigma\in H_n\}$, we recognize the termination of $\psi_n$ on all leaves; hence, the algorithm proceeds to (2).
Therefore, $\Xi_e(n)$ eventually outputs some value.

\item For each \aaa instance $n\in\dom(F)$, the \aaa computation tree $\tactual_n$ induced by {\tt Nimue}'s strategy $\zeta$ must satisfy $\tactual_n\subseteq H_n$; hence, we get
\[\Phi_{e|\zeta}^\mathcal{U}(n)=\psi_n[\leaf\tactual_n]\subseteq \psi_n[\leaf H_n]=R_n.\]
Therefore, regardless of $\zeta$, for any $y\in\Phi_{e|\zeta}^\mathcal{U}(n)$, we have $y\leq\max R_n=\Xi_e(n)$.
\item 
By our assumption $F\tgw\mathcal{U}$ via $e$, there is $\zeta$ such that $\Phi_{e|\zeta}^\mathcal{U}(n)\subseteq F(n)$.
Since $F$ is nonempty, there is some $y\in\Phi_{e|\zeta}^\mathcal{U}(n)\subseteq F(n)$; hence, we get $y\leq\Xi_e(n)$ and $y\in F(n)$.
\end{itemize}

Consequently, $\varphi_{\beta(e)}:=\Xi_e$ gives a total computable upper bound for a solution of $F$.
%
\end{proof}

\begin{proof}[Proof (Theorem \ref{thm:MP-WLEM-separation})]
We show that Markov's principle $\sii{\sf C}_{1/\om}$ has no total computable upper bound.
\begin{itemize}
\item Let $g\colon\om\to\om$ be a total computable function.
\item By the recursion theorem, there is $e$ such that the $\Sigma_1$ set $S_e=\{g(e)+1\}$ has an index $e$.
\item Then, $e\in\dom(\sii{\sf C}_{1/\om})$, and for any $y\leq g(e)$, we have $y\not\in\sii{\sf C}_{1/\om}(e)=S_e$; hence, $g$ does not give an upper bound.
\end{itemize}

Since the codomain of ${\sf C}_{1/2}$ is finite, by Lemma \ref{lem:finite-codomain-CT-maj-comp}, we obtain $\sii{\sf C}_{1/\om}\not\tgw{\sf C}_{1/2}$.
\end{proof}

The following combinatorial fact is easy to prove but quite useful.

\begin{fact}[see {\cite[Proposition 2.9]{CeHi08}} and {\cite[Fact 4.4]{Kih23}}]\label{fact:fat-tree-lemma}
Let $T$ be a $(n+1)$-fat finite tree.
Then, for any function $\psi\colon \leaf T\to n$, there exists a $2$-fat subtree $U\subseteq T$ such that $\psi$ is constant on the set $\leaf U$ of its leaves.
\end{fact}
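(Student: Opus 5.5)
We prove the statement by induction on the height of the finite tree $T$. Here height means the maximal length of a leaf. Throughout, a $2$-fat subtree $U\subseteq T$ means a subtree containing the root in which every internal node of $U$ has at least two immediate successors in $U$, and the leaves of $U$ are taken among the leaves of $T$. The induction hypothesis is exactly the stated claim for all $(n+1)$-fat finite trees of smaller height and all functions $\psi$ from their leaves into $n$.

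\emph{Base case.} If $T$ consists of the root alone, the root is a leaf. We put $U=T$; it is vacuously $2$-fat and $\psi$ is trivially constant on its single leaf.

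\emph{Inductive step.} The root has at least $n+1$ distinct immediate successors $i_1,\dots,i_{n+1}$. For each $k\leq n+1$, the subtree $T_k=\{\tau:\pair{i_k}\fr\tau\in T\}$ is an $(n+1)$-fat finite tree of smaller height. We apply the induction hypothesis to $T_k$ with the function $\tau\mapsto\psi(\pair{i_k}\fr\tau)$ on $\leaf T_k$. This gives a $2$-fat subtree $U_k\subseteq T_k$ on whose leaves $\psi$ is constant, with some value $c_k<n$.

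This gives $n+1$ colours $c_1,\dots,c_{n+1}$ taking values in $n$. By the pigeonhole principle there are $k\neq k'$ with $c_k=c_{k'}$. We set
\[U=\{\varepsilon\}\cup\{\pair{i_k}\fr\tau:\tau\in U_k\}\cup\{\pair{i_{k'}}\fr\tau:\tau\in U_{k'}\}.\]
The root of $U$ has the two distinct successors $\pair{i_k}$ and $\pair{i_{k'}}$. Every other internal node of $U$ inherits at least two successors from $U_k$ or $U_{k'}$, so $U$ is $2$-fat. The leaves of $U$ are exactly the shifted leaves of $U_k$ and $U_{k'}$. These are leaves of $T$, and on all of them $\psi$ takes the common value $c_k=c_{k'}$.

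There is no genuine obstacle in this argument. The only points needing care are bookkeeping: the leaves of each $U_k$ must be leaves of $T_k$, so that the restricted $\psi$ is defined on them, and the root needs two branches, not just one, which is why $n+1$ branches are needed against $n$ colours. The case $n=0$ is vacuous, since no function into $0$ exists on a nonempty set of leaves.
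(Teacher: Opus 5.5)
Your proof is correct. The paper gives no proof of its own for this Fact and only cites Cenzer--Hinman and Kihara; your argument is the standard one for it: induct on height, extract a monochromatic $2$-fat subtree above each of $n+1$ successors of the root, and use the pigeonhole principle to keep two with the same colour. Your explicit convention that $U$ contains the root and satisfies $\leaf U\subseteq\leaf T$ is also exactly what the paper needs later, when it finds a common leaf of a $2$-fat tree and a $k$-fat tree in the proof of Theorem~\ref{thm:separation-MPlorn-WLEMn}.
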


Using this fact, we show that $\sii{\sf C}_{k-1/k}$ (linked to Markov's principle ${\sf MP}^\lor_k$) is not derivable from ${\sf C}_{k/k+1}$ (linked to ${{\sf WLEM}}_{k+1}$).

\begin{theorem}\label{thm:separation-MPlorn-WLEMn}
$\sii{\sf C}_{k-1/k}\not\tgw{{\sf C}}_{k/k+1}$.
\end{theorem}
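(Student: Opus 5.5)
We argue by contradiction with a diagonalization via the recursion theorem, using Fact~\ref{fact:fat-tree-lemma} in the same way as in Theorem~\ref{MP-WLEM-single-pc2}. Suppose $\sii{\sf C}_{k-1/k}\tgw{\sf C}_{k/k+1}$ via an index $d$. For each public instance $e\in\om=\dom_+(\sii{\sf C}_{k-1/k})$, the strategy gives a potential computation tree $(\tpot_e,\nu_e,\psi_e)$. The secret part of ${\sf C}_{k/k+1}$ is trivial, so every query is $\ast$, and the relevant branching takes place in $k+1$. Hence $H_e:=\tpot_e\cap(k+1)^{<\om}$ is a finitely branching well-founded tree. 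By weak K\"onig's lemma it is finite, and, as in Lemma~\ref{lem:finite-codomain-CT-maj-comp}, it is computable from $e$. Moreover $\psi_e$ is total on $\leaf H_e$ with values in $k=\cod_+(\sii{\sf C}_{k-1/k})$. Every actual computation tree $\tactual_{e|\alpha}$ is ${\sf C}_{k/k+1}$-branching, so each internal node has exactly $k$ successors, and $\tactual_{e|\alpha}$ is a $k$-fat subtree of $H_e$.

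Next I apply Fact~\ref{fact:fat-tree-lemma} to each $k$-fat finite subtree $T\subseteq H_e$, with the coloring $\psi_e\colon\leaf T\to k$. This needs $(n+1)$-fatness for $n$ colors. The trees are only $k$-fat while there are $k$ colors, so the fact does not apply directly. This is the main obstacle, and it must be handled by exploiting that the correct answers form a co-singleton rather than a single value. The plan is therefore a direct diagonalization instead. Using the recursion theorem, build $e$ that computes $H_e$ and waits until $\psi_e$ has converged on all leaves of $H_e$, which happens since $H_e$ is finite and $\psi_e$ is total on it. It then chooses $j<k$ such that \emph{some} $k$-fat subtree $T\subseteq H_e$ has $\psi_e[\leaf T]\subseteq\{j\}$, and enumerates $S_e=k\setminus\{j\}$. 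This makes $e\in\dom(\sii{\sf C}_{k-1/k})$, and the solution set $S_e$ avoids $j$. The key claim is that such a $j$ exists: every $k$-fat finite tree $T\subseteq(k+1)^{<\om}$ whose leaves are colored by $k$ colors contains a $k$-fat subtree that is monochromatic. I would prove this by induction on the height of the tree, proceeding from the leaves. If a node had, for every color $c$, at most $k-1$ children whose subtrees admit a $c$-monochromatic $k$-fat subtree, a counting argument over its at least $k$ children would have to give a contradiction. I am not confident this induction closes as stated, and the exact fatness thresholds are where I expect to adjust.

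Once $j$ and the monochromatic $T$ are in hand, the contradiction is immediate. Nimue has full knowledge of the secret information, and the reduction must work for every admissible secret instance. In particular, the adversarial Merlin (the oracle ${\sf C}_{k/k+1}$ with Nimue's query sets $\subseteq k+1$ of size $k$) can be made to follow exactly the branches of $T$. Then one can always choose a leaf of $T$ consistent with any actual tree: any two $k$-fat subtrees of $(k+1)^{<\om}$ share a leaf, by the same argument as in Theorem~\ref{MP-WLEM-single-pc2}, because at each node two $k$-subsets of $k+1$ intersect when $k\geq2$. So some leaf $\sigma\in\leaf T\cap\leaf\tactual_{e|\alpha}$ exists, and it yields the output $\psi_e(\sigma)=j\notin S_e$, contradicting correctness. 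This intersection argument actually needs only that $T$ be $k$-fat, which is why I aim for a $k$-fat rather than a $2$-fat monochromatic subtree. The pivotal combinatorial lemma, namely the existence of a monochromatic $k$-fat subtree with $k$ colors, is what I would verify first; if it fails, I would instead fall back to choosing $j$ as a color avoided by some $k$-fat subtree.
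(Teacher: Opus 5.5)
\textbf{There is a genuine gap in the combinatorial core.} Your overall architecture is right: use the recursion theorem, compute the finite potential tree, wait for $\psi_e$ to converge on all its leaves, enumerate a co-singleton $S_e=k\setminus\{j\}$, and finish with a leaf-intersection argument against every possible actual tree.

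Your ``key claim'' is false, though. You assert that every $k$-fat finite tree in $(k+1)^{<\om}$ with leaves colored by $k$ colors has a monochromatic $k$-fat subtree. A root with $k$ children colored $0,1,\dots,k-1$ is a counterexample. For $k\geq 3$ the claim fails even for the full $(k+1)$-branching potential tree $\tpot_e$: color the $k+1$ children of the root $0,0,1,\dots,k-1$, and every $k$-element set of children shows at least two colors.

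So the $j$ your algorithm searches for need not exist. Then $S_e$ is never made a co-singleton, $e\notin\dom(\sii{\sf C}_{k-1/k})$, and no contradiction arises. Your fallback, a color $j$ avoided by some $k$-fat subtree, points the wrong way. If such a subtree exists, Nimue can choose her $k$-element query sets so that the actual tree lies inside it. Then every output lies in $S_e=k\setminus\{j\}$ and the reduction is not refuted at $e$.

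\textbf{The fix is to reverse your fatness requirement.} In the intersection step the actual tree $\tactual_{e|\alpha}$ is already $k$-fat, so the diagonalizing tree only needs to be $2$-fat. At each common internal node, a $2$-element and a $k$-element subset of $k+1$ must meet because $2+k>k+1$. Following common successors from the root then reaches a common leaf of $\tpot_e$.

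A monochromatic $2$-fat subtree is exactly what Fact~\ref{fact:fat-tree-lemma} supplies, provided you apply it to the potential tree $\tpot_e$ itself rather than to actual trees. That tree is full $(k+1)$-branching, since $\cod_+({\sf C}_{k/k+1})=k+1$, hence $(k+1)$-fat. With the coloring $\psi_e\colon\leaf\tpot_e\to k$, this is precisely the Fact's hypothesis with $n=k$.

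So let $e$ search by brute force for a $2$-fat $U\subseteq\tpot_e$ on whose leaves $\psi_e$ is constantly $c$, and set $S_e=k\setminus\{c\}$. Every $k$-fat actual tree then shares a leaf with $U$, and at that leaf it outputs $c\notin S_e$. This is the paper's proof, and with this change your argument goes through. For $k=2$ your algorithm already coincides with it, since $k$-fat and $2$-fat then agree.
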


\begin{proof}
We assume $\sii{\sf C}_{k-1/k}\not\tgw{{\sf C}}_{k/k+1}$ via $\Phi(n)=(\tpot_n,\nu_n,\psi_n)$.
We may assume that the potted codomains of $\sii{\sf C}_{k-1/k}$ and ${\sf C}_{k/k+1}$ are $k$ and $k+1$.
\begin{itemize}
\item Since $\tpot_n\subseteq(k+1)^{<\om}$ is a finitely-branching well-founded tree, it is a finite tree.
\item For each \aaa instance $n\in\dom(\sii{\sf C}_{k-1/k})$, an \aaa computation tree $\tactual_n\subseteq \tpot_n$ is $k$-fat.
\item At each leaf $\sigma\in\leaf \tpot_n$, the computation $\psi_n(\sigma)$ always halts and $\psi_n(\sigma)<k$.
\end{itemize}
By using the recursion theorem, construct the following algorithm $e$ involving a self-reference:
\begin{enumerate}
\item Wait for seeing $\psi_e(\sigma)\down$ for all leaves $\sigma\in\leaf \tpot_e$.
Before observing this, keep $S_e=\emptyset$.
\item If we recognize this,
Applying Fact \ref{fact:fat-tree-lemma} to $\psi_e\colon\leaf{\tpot_e}\to k$, we see that there must exist a $2$-fat subtree $U\subseteq \tpot_e$ such that $\psi_e$ is constant on its leaves.

\item By brute-force, search for such a finite tree $U$.
Then there is a unique $c<k$ such that $\psi_e(\sigma)=c$ for any leaf $\sigma\in\leaf{U}$.
Then we set the $\Sigma_1$ set $S_e$ as $S_e=\{n<k:n\not=c\}$.
\end{enumerate}

{\em Verification:}
\begin{itemize}
\item Since $e\in\om=\dom_+(\sii{\sf C}_{k-1/k})$ is a \ppp instance, $\psi_e$ is total on $\leaf \tpot_e$; thus, for any $\sigma\in\leaf \tpot_e$, the computation $\psi_e(\sigma)$ halts at some stage $s_\sigma$.
Since $\tpot_e$ is finite, at the maximum stage $s=\max\{s_\sigma:\sigma\in \tpot_e\}$, we recognize the termination of $\psi_e$ on all leaves; hence, the algorithm proceeds to (2).
Moreover, the algorithm eventually finds an $U$ as in Fact \ref{fact:fat-tree-lemma}, so the algorithm proceeds to (3); hence, $S_e$ becomes a co-singleton, which means $e\in\dom(\sii{\sf C}_{k-1/k})$.
\item Since $U\subseteq(k+1)^{<\om}$ is $2$-fat and $\tactual_e\subseteq(k+1)^{<\om}$ is $k$-fat, there must be a common leaf $\sigma\in \leaf U\cap \leaf\tactual_e$.
As $\sigma\in \leaf U$, by our construction of $S_e$, we have $\psi(\sigma)\not\in S_e$ while $\sigma\in\leaf\tactual_e$.
\end{itemize}

Consequently, we obtain $\Phi^\U(e)\not\subseteq S_e=\sii{\sf C}_{k-1/k}(e)$.
This shows $\Sigma_1\text{-}{\sf C}_{k-1/k}\not\leq_{\sf tGW}{\sf C}_{k/k+1}$.
\end{proof}

\begin{cor}\label{cor:LLPOk-not-reduce-WLEMkp1}
$\pii{\sf C}_{\geq k-1/k}\not\tgw{\sf C}_{k/k+1}$.
\end{cor}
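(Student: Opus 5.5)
This follows from Theorem \ref{thm:separation-MPlorn-WLEMn} together with the reductions already established, by contraposition. Suppose $\pii{\sf C}_{\geq k-1/k}\tgw{\sf C}_{k/k+1}$. By Theorem \ref{thm:basic-Sigma-1-counique}, $\sii{\sf C}_{k-1/k}\equiv_{\sf tW}\pii{\sf C}_{k-1/k}$. By Observation \ref{obs:basic-relations-choice} (applied with $\Gamma=\Pi_1$), $\pii{\sf C}_{k-1/k}\tW\pii{\sf C}_{\geq k-1/k}$. Equivalently, Corollary \ref{obs:MP-reducible-to-LLPO} combined with Theorem \ref{thm:disjunctive-Markov-counique} and Proposition \ref{prop:LLPO-AcoUC} gives $\sii{\sf C}_{k-1/k}\equiv_{\sf tW}{\sf MP}_k^\lor\tW{\sf LLPO}_k\equiv_{\sf tW}\pii{\sf C}_{\geq k-1/k}$.

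Next we pass from one-query to multi-query reducibility. Any $\tW$-reduction $(\nu,\psi|\zeta)$ gives a $\tgw$-reduction: the strategy makes the single query $\nu(x)$ with advice $\zeta(x|\alpha)$ and then outputs $\psi(x,y)$. The resulting potential tree has depth one and is full-branching, so it is total. Hence $\tW\subseteq\tgw$.

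Combining these with transitivity of $\tgw$ (Proposition \ref{prop:transitive}) yields $\sii{\sf C}_{k-1/k}\tgw\pii{\sf C}_{\geq k-1/k}\tgw{\sf C}_{k/k+1}$. This contradicts Theorem \ref{thm:separation-MPlorn-WLEMn}.

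The only step needing care is the inclusion $\tW\subseteq\tgw$ in the bilayer setting. For $\sii{\sf C}_{k-1/k}$ and $\pii{\sf C}_{\geq k-1/k}$ this is immediate, since both are ordinary potted problems and the secret reduction $\zeta$ is trivial. The reduction is therefore just a one-round game, and the totality conditions transfer directly.

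Alternatively, one could rerun the fat-tree diagonalization of Theorem \ref{thm:separation-MPlorn-WLEMn} verbatim. In that version, the self-referential $e$ enumerates into the complement the constant value $c$ found on a $2$-fat subtree, making $P_e=k\setminus\{c\}$. The reduction argument above is shorter, so I would use it.
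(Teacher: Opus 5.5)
Your proposal is correct and follows the paper's own proof. The paper combines Observation~\ref{obs:basic-relations-choice} and Theorem~\ref{thm:basic-Sigma-1-counique} to get $\sii{\sf C}_{k-1/k}\tW\pii{\sf C}_{\geq k-1/k}$, then concludes directly from Theorem~\ref{thm:separation-MPlorn-WLEMn}; your check that one-query reductions are one-round multi-query reductions, so that transitivity of $\tgw$ applies, makes explicit a step the paper leaves implicit.
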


\begin{proof}
Observation \ref{obs:basic-relations-choice} and Theorem \ref{thm:basic-Sigma-1-counique} imply $\sii{\sf C}_{k-1/k}\tW\pii{\sf C}_{\geq k-1/k}$; hence the assertion directly follows from Theorem \ref{thm:separation-MPlorn-WLEMn}.
\end{proof}

The above is a trivial consequence of Theorem \ref{thm:separation-MPlorn-WLEMn}, but we emphasize here that it does not hold with respect to one-query Weihrauch reducibility $\leq_{\sf W}$ (\cite[Proposition 4.3]{Kih23}).
In fact, combining Proposition \ref{prop:LLPOn-reducible-WLEMnplusone} and Theorem \ref{thm:MP-remove}, we get $\pii{\sf C}_{\geq k-1/k}\leq_{\sf W}^{\sf G}{\sf C}_{k/k+1}$.

\begin{prop}\label{prop:separation-Eff-LLPO-n-nplusone}
$\pii{\sf C}_{\geq n-1/n}\not\tgw\pii{\sf C}_{\geq n/n+1}+\sii{\sf C}_{1/\om}$.
\end{prop}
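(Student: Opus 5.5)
The plan is to argue by contradiction and first remove Markov's principle from the oracle. If $\pii{\sf C}_{\geq n-1/n}\tgw\pii{\sf C}_{\geq n/n+1}+\sii{\sf C}_{1/\om}$, then Theorem \ref{thm:MP-remove} gives a (partial) multi-query Weihrauch reduction $\pii{\sf C}_{\geq n-1/n}\gw\pii{\sf C}_{\geq n/n+1}$. The index of this reduction is computable from the original one. This reduces the statement to the classical (partial) separation of Richman's ${\sf LLPO}_n$ hierarchy, where ${\sf LLPO}_n\not\leq^{\sf G}_{\sf W}{\sf LLPO}_{n+1}$. I will reprove that separation directly by a recursion-theorem diagonalization, in the style of Theorem \ref{thm:separation-MPlorn-WLEMn}.

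Fix the reduction $(\nu_e,\psi_e)$. For an instance $e$ (a $\Pi_1$ set $P_e\subseteq n$ with $|n\setminus P_e|\le1$), the actual computation tree is $(\pii{\sf C}_{\geq n/n+1};\nu_e)$-branching. Each query $q$ lets Merlin answer from $P_q\subseteq n+1$, where $P_q$ omits at most one element. So every actual tree is $n$-fat and finitely branching, hence finite. Moreover, the tree is a $\Pi_1$ object: the answers in $P_q[t]$ shrink over time, as in the proof of Theorem \ref{LLPO-equial-partial-computable}.

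The construction of the self-referential instance $e$ runs as follows. While $P_e=n$ (nothing removed yet), the instance is valid, so the reduction must work. Simulate the computation tree, using the current approximations $P_q[t]$ for the queries. We look for a finite $n$-fat subtree $S$, all of whose leaves converge and lie inside the current approximate tree. Since the real tree for $P_e=n$ is such an $S$, this search succeeds at some stage. Now apply Fact \ref{fact:fat-tree-lemma} to $\psi_e\colon\leaf S\to n$ to obtain a $2$-fat subtree $U\subseteq S$ on which $\psi_e$ is constant with value $c$. Then remove $c$ from $P_e$ by declaring $\varphi_e(c)\downarrow$. This keeps $e$ an instance, now with $P_e=n\setminus\{c\}$, and changing $P_e$ does not alter the public computation tree $(\nu_e,\psi_e)$.

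The main obstacle is to show that the actual tree for the final instance must contain a leaf of $U$, which yields the contradiction $\psi_e(\sigma)=c\notin P_e$. Here Merlin's answers to each query $q$ come from the true $P_q$, which may be smaller than the approximation used when $S$ was found. To handle this, I would let Merlin answer only from sets that omit at most one element. Every node of $U$ has at least $2$ children within $S$, but a single true omission could delete one of them, so $2$-fat is too weak. I would therefore strengthen the requirement: from an $(n+1)$-fat candidate we cannot extract anything, so instead use the fact that the actual tree is $n$-fat inside $(n+1)$-branching. The pigeonhole consequence needed is that an $n$-fat subtree and a $2$-fat subtree of an $(n+1)$-branching finite tree share a leaf, since $n+2>n+1$. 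So it suffices that $U$ lies in the potential tree and the final actual tree is $n$-fat. That follows because its nodes query $\Pi_1$ sets omitting at most one of $n+1$ values, regardless of the approximations. Thus I only need $S$ to be a subtree of $\tpot$-like structure, namely strings whose queried nodes have converged, and not of the approximate actual tree. The approximation is used only to guarantee that the search terminates. If the partial reduction lacks totality on some branches, I would restrict the search to $n$-fat trees whose internal nodes have converged queries, and rely on the true actual tree being among the candidates.
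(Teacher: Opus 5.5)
Your first step is exactly the paper's proof. Assuming the reduction, Theorem~\ref{thm:MP-remove} gives $\pii{\sf C}_{\geq n-1/n}\gw\pii{\sf C}_{\geq n/n+1}$. The paper then simply cites the classical separation of Richman's hierarchy \cite[Proposition 7.18]{BGP21}; had you stopped there, the proof would be complete.

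The self-contained diagonalization you add, however, has a genuine gap. Fact~\ref{fact:fat-tree-lemma} extracts a monochromatic $2$-fat subtree from an $n$-colouring only for an $(n+1)$-fat tree. Your $S$ is only $n$-fat, and an $n$-fat tree with $n$ colours need not contain such a subtree: take depth one with $n$ leaves coloured $0,\dots,n-1$. Your search does not exclude this shape. For example, a query whose $\Pi_1$ set has already dropped an element at the search stage offers Merlin only $n$ answers.

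Your last paragraph does not repair this. If you demand an $(n+1)$-fat converged $S$ so that the Fact applies, the search may never terminate. A $\gw$-reduction need not converge off actual plays, and a child that is later removed from a query set may lead to divergence. If you keep $S$ merely $n$-fat, the Fact is unavailable. So the approximation cannot be used ``only to guarantee that the search terminates''; it has to enter the combinatorics.

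A working version is as follows.
\begin{enumerate}
\item Search for a stage $s_0$ at which the \emph{whole} tree $S$, reachable from the root using children $P_{q(\sigma)}[s_0]$, is finite, fully converged, and has $|P_{q(\sigma)}[s_0]|\geq n$ at every internal node. Such a stage exists: once the finitely many query sets of the true actual tree for $P_e=n$ have settled, $S$ is that tree.
\item Call an internal node \emph{full} if $|P_{q(\sigma)}[s_0]|=n+1$, and \emph{reduced} if it equals $n$.
\item By induction from the leaves, every node admits a colour $c$ and a subtree with $\geq 2$ children at full nodes, $\geq 1$ child at reduced nodes, and all leaves coloured $c$. At full nodes this uses pigeonhole ($n+1$ children, $n$ colours), which is why $S$ must contain all children. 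Let $U$ be such a subtree at the root, with colour $c$.
\item Enumerate $c$ out of $P_e$ after stage $s_0$. On actual plays of the new instance each query set satisfies $P'_q\subseteq P_q[s_0]$ and $|P'_q|\geq n$. So it loses at most one child of $U$ at full nodes and none at reduced nodes.
\item Hence the new actual tree follows $U$ down to a leaf with output $c\notin P_e$, a contradiction.
\end{enumerate}
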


\begin{proof}
Otherwise, by Theorem \ref{thm:MP-remove} we get $\pii{\sf C}_{\geq n-1/n}\not\leq_{\sf W}^{\sf G}\pii{\sf C}_{\geq n/n+1}$, which contradicts with a known fact (\cite[Proposition 7.18]{BGP21}).
\end{proof}

\begin{prop}\label{prop:separation-EFF-LLPO-n-WLEM}
$\pii{\sf C}_{\geq n-1/n}\not\tgw{\sf C}_{n+1/n+2}+\sii{\sf C}_{1/\om}$.
\end{prop}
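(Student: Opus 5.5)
Suppose toward a contradiction that $\pii{\sf C}_{\geq n-1/n}\tgw{\sf C}_{n+1/n+2}+\sii{\sf C}_{1/\om}$. The plan is to reduce this to a known separation for traditional multi-query Weihrauch reducibility, in the same way as Proposition \ref{prop:separation-Eff-LLPO-n-nplusone}.

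First, apply Theorem \ref{thm:MP-remove} with $G={\sf C}_{n+1/n+2}$. This removes the Markov oracle and gives $\pii{\sf C}_{\geq n-1/n}\leq_{\sf W}^{\sf G}{\sf C}_{n+1/n+2}$. It then suffices to show that this fails. The natural route is a diagonalization in the style of Theorem \ref{thm:separation-MPlorn-WLEMn}, adapted to the partial setting. Because the reduction is now partial, the argument must use only the actual trees, which are finite, and not the potential trees.

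Fix a reduction $(\nu_e,\psi_e)$. Using the recursion theorem, build an index $e$ whose $\Pi_1$ set $P_e\subseteq n$ starts as all of $n$, so that $e$ is a valid instance whatever happens. Search for a finite $(n+1)$-fat subtree $S\subseteq(n+2)^{<\om}$ whose leaves all satisfy $\psi_e(\sigma)\down<n$.

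The key claim is that such an $S$ must be found. As long as nothing is removed from $P_e$, the instance $P_e=n$ is valid. Its actual computation tree is ${\sf C}_{n+1/n+2}$-branching, hence $(n+1)$-fat, and it is finite by K\"onig's lemma. So this tree itself is a candidate for $S$.

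Once $S$ is found, apply Fact \ref{fact:fat-tree-lemma} to $\psi_e\colon\leaf S\to n$. Since $S$ is $(n+1)$-fat and there are only $n$ colors, there is a $2$-fat subtree $U\subseteq S$ on whose leaves $\psi_e$ is constantly some $c<n$. Now remove $c$ from $P_e$, so that $P_e=n\setminus\{c\}$. This is still a valid instance for $\pii{\sf C}_{\geq n-1/n}$.

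The main obstacle is to guarantee that some actual computation path for this new instance ends at a leaf of $U$ with output $c$. That would contradict the requirement that every output lie in $P_e$, since $c\notin P_e$. The difficulty is that {\tt Nimue} may choose a different $(n+1)$-subset of $n+2$ at each query node. By fatness, $U$ keeps at least $2$ children at each node, and {\tt Nimue}'s allowed set omits only $1$ of the $n+2$ options. So at each node of $U$ at least one child of $U$ remains available to {\tt Merlin}, and {\tt Merlin} can stay inside $U$ until reaching a leaf.

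This requires that the queries along $U$ are exactly those made in the actual game. That holds because the queries of ${\sf C}_{n+1/n+2}$ are the trivial public instance $\ast$, so every query is of the same form. A further subtlety is that $U$ might not be contained in the actual tree; fatness handles this, because {\tt Merlin} only needs legal responses, and every response in {\tt Nimue}'s set is legal.

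If this direct argument runs into trouble, the fallback is to cite the known separation of ${\sf LLPO}_n$ from ${\sf WLEM}_{n+2}$ in the (extended) Weihrauch literature \cite{Kih23,BGP21}, exactly as the proof of Proposition \ref{prop:separation-Eff-LLPO-n-nplusone} does.
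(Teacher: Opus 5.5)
Your proposal is correct, and its first step is exactly the paper's: Theorem~\ref{thm:MP-remove} turns the assumption into $\pii{\sf C}_{\geq n-1/n}\gw{\sf C}_{n+1/n+2}$.

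The routes differ only in the second step. The paper stops there and invokes the known separation \cite[Proposition 4.5]{Kih23}. You instead reprove that separation directly, and your diagonalization goes through.

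In the partial setting there is no full-branching potential tree, which is what made Theorem~\ref{thm:separation-MPlorn-WLEMn} work. You therefore apply Fact~\ref{fact:fat-tree-lemma} to the actual tree for the instance $P_e=n$. That tree is $(n+1)$-fat precisely because the oracle has $n+2$ options. The resulting $2$-fat monochromatic $U$ still meets every co-singleton branching of $n+2$. This is because {\tt Arthur}'s moves depend only on $e$ and {\tt Merlin}'s answers; only {\tt Nimue}'s choices can react to the removal of $c$.

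Your route buys a self-contained proof, and it explains why the index $n+2$ is sharp. With ${\sf C}_{n/n+1}$ the actual trees are only $n$-fat, and indeed, by Proposition~\ref{prop:LLPOn-reducible-WLEMnplusone}, the analogous statement with ${\sf C}_{n/n+1}$ in place of ${\sf C}_{n+1/n+2}$ is false.

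One detail you should state explicitly. The search for $S$ must check that {\tt Arthur}'s (now partial) strategy converges to the query $\ast$ at every internal node of $S$ and to an output at every leaf. Checking only that the outputs at the leaves are below $n$ is not enough: otherwise a play through $U$ could stop at an internal node of $U$ with an output in $P_e$. The actual tree for $P_e=n$ satisfies this stronger condition, so the search still terminates.
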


\begin{proof}
Otherwise, by Theorem \ref{thm:MP-remove}, we get $\pii{\sf C}_{\geq n-1/n}\gw{\sf C}_{n+1/n+2}$, which contradicts with a known fact (\cite[Proposition 4.5]{Kih23}).
\end{proof}

%

\begin{theorem}\label{thm:separation-WLEMn-nplusone}
${\sf C}_{n-1/n}\not\tgw{\sf C}_{n/n+1}+F$ for any potted problem $F\pcolon\om\tto\om$.
\end{theorem}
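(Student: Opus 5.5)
The plan is to run a compactness/pigeonhole argument on the single potential computation tree. The point to exploit is that ${\sf C}_{n-1/n}$ has only a trivial public instance $\ast$, so {\tt Arthur}'s strategy yields one tree $\tpot=\tpot_\ast$, independent of the secret co-singleton $A$. The second ingredient is that $F$ has no secret layer, so its answers cannot depend on $A$ either.

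Suppose, towards a contradiction, that ${\sf C}_{n-1/n}\tgw{\sf C}_{n/n+1}+F$ via $(\tpot,\nu,\psi)$ together with {\tt Nimue}'s advice. We may take the potential codomains to be $n$ and $n+1$.

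\textbf{Step 1: prune along a fixed choice for $F$.} Fix a (not necessarily computable) function $f\colon\dom_+(F)\to\cod_+(F)$ with $f(q)\in F(q)$ whenever $q\in\dom(F)$. Let $T'\subseteq\tpot$ be the subtree defined as follows.
\begin{itemize}
\item At a node querying ${\sf C}_{n/n+1}$, keep all $n+1$ successors.
\item At a node querying $F$ with query $q$, keep only the successor $\sigma\fr f(q)$.
\end{itemize}
Since $\tpot$ is well-founded, $T'$ is well-founded. It is also finitely branching, so by K\"onig's lemma $T'$ is finite. Its leaves are leaves of $\tpot$, and $\psi$ maps them into $n$.

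Collapsing the unary $F$-nodes turns $T'$ into an $(n+1)$-fat finite tree. Fact \ref{fact:fat-tree-lemma} then gives a subtree $U\subseteq T'$ that is $2$-fat at every ${\sf C}_{n/n+1}$-node and on whose leaves $\psi$ is constant, say with value $c<n$.

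\textbf{Step 2: choose the bad secret instance and compare.} Take the secret instance $A=n\setminus\{c\}$, which lies in $\dom({\sf C}_{n-1/n})$. By assumption there is an actual computation tree $\tactual_A$ whose leaves all output elements of $A$, and in which every $F$-query lies in $\dom(F)$.

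Restrict $\tactual_A$ by following only the answer $f(q)$ at each $F$-node. This is legitimate because $f(q)\in F(q)$, so the restricted tree is still a subtree of $\tactual_A$ and each of its leaves outputs an element of $A$. The restricted tree lies inside $T'$. At each ${\sf C}_{n/n+1}$-node it has at least $n$ successors, because {\tt Merlin} may answer with any element of an $n$-element subset of $n+1$.

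We now descend simultaneously through $U$ and this restricted tree. At an $F$-node both trees follow $f(q)$. At a ${\sf C}_{n/n+1}$-node, a $2$-element successor set and an $n$-element successor set inside $n+1$ must intersect, so we can pick a common successor. By finiteness this descent reaches a common leaf $\sigma$. Since $\sigma$ is a leaf of $U$, we have $\psi(\sigma)=c$; since it is a leaf of the restricted tree, $\psi(\sigma)\in A$. But $c\notin A$, which is the desired contradiction.

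\textbf{Main obstacle.} The delicate part is Step 1: making precise that the $F$-oracle cannot help. The fact that makes this work is that the public $F$-queries along a path are determined by {\tt Arthur} alone, before any dependence on $A$ enters. So a single choice function $f$ can be used uniformly across all secret instances, and leaves of $U$ and of $\tactual_A$ coincide in the sense required.

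A second point to handle carefully is nodes whose $F$-query lies outside $\dom(F)$. Such nodes can occur in $T'$, and this is why $f$ is defined on all of $\dom_+(F)$. They cannot occur in $\tactual_A$, so the common-leaf descent never enters them.
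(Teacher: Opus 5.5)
Your proof is correct, but it takes a different route from the paper. The paper gives no direct argument. It cites Kihara's Proposition 4.6, which proves the stronger statement ${\sf C}_{n-1/n}\not\gw{\sf C}_{n/n+1}+F$ for traditional (partial) multi-query reducibility. Since every total strategy is in particular a partial one, $\tgw$ implies $\gw$, and the total statement follows.

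You instead work entirely inside the total framework, in the style of Theorem \ref{thm:separation-MPlorn-WLEMn}:
\begin{itemize}
\item you prune the potential tree once, independently of $A$, along a fixed choice function $f$ for $F$;
\item you make the pruned tree $T'$ finite by K\"onig's lemma;
\item you extract a $2$-fat monochromatic subtree with colour $c$ via Fact \ref{fact:fat-tree-lemma};
\item you find a common leaf with the actual tree for $A=n\setminus\{c\}$, because a $2$-element and an $n$-element subset of $n+1$ must meet.
\end{itemize}
All of these steps check out. Defining $f$ on all of $\dom_+(F)$, and observing that the descent never enters $F$-nodes outside $\dom(F)$, is exactly the right handling of that issue.

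The cost of your route is that it genuinely uses totality. You need $T'$ to be well-founded with $\psi$ defined on all of its leaves, so you only obtain the $\tgw$ version, whereas the citation gives the $\gw$ version at no extra cost.

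The fat-tree lemma is in fact more than you need. For every $a<n$, restrict the actual tree for the secret instance $n\setminus\{a\}$ along $f$, and descend through all $n$ restricted trees simultaneously. This works because being a leaf, and the query made at a node, are determined by {\tt Arthur}'s public moves alone, so the trees agree at any common node. At each ${\sf C}_{n/n+1}$-node, the $n$ answer sets are $n$-element subsets of $n+1$, each missing one point, so they share a common element. Hence all $n$ trees share a leaf $\sigma$ with $\psi(\sigma)\in\bigcap_{a<n}(n\setminus\{a\})=\emptyset$, a contradiction.

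This variant needs neither the potential tree nor K\"onig's lemma, so it also recovers the partial result that the paper cites.
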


\begin{proof}
${\sf C}_{n-1/n}\not\gw{\sf C}_{n/n+1}+F$ is shown in Kihara \cite[Proposition 4.6]{Kih23}.
\end{proof}

\begin{theorem}\label{thm:separation-DML-WLEM}
${\sf C}_{1/\om}\not\tgw{\sf C}_{1/2}+\sii{\sf C}_{1/\om}$.
\end{theorem}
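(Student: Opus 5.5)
Suppose toward a contradiction that ${\sf C}_{1/\om}\tgw{\sf C}_{1/2}+\sii{\sf C}_{1/\om}$. The plan is first to eliminate the Markov component via Theorem \ref{thm:MP-remove}: from $F\tgw G+\sii{\sf C}_{1/\om}$ we obtain $F\gw G$. This yields ${\sf C}_{1/\om}\gw{\sf C}_{1/2}$ in the traditional (partial) multi-query Weihrauch sense. It then suffices to refute this partial reduction, for which we can use the \aaa computation trees supplied by the lemma characterizing $\gw$.

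To refute ${\sf C}_{1/\om}\gw{\sf C}_{1/2}$, we work with the single public instance $\ast$ of ${\sf C}_{1/\om}$ and secret instances $\{a\}$ for $a\in\om$. The reduction is then a single computable pair $(\nu,\psi)$ with queries into $\dom_+({\sf C}_{1/2})$ and answers in $2$, so the relevant potential tree lives in $2^{<\om}$. For each $a$, {\tt Nimue} selects an \aaa tree $\tactual_a$ that is $({\sf C}_{1/2};\nu)$-branching. Since ${\sf C}_{1/2}$ is the secret identity on $2$, every internal node of $\tactual_a$ has exactly one successor, so $\tactual_a$ is a single path $\sigma_a\in 2^{<\om}$. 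On that path $\psi$ halts, and $\psi(\sigma_a)=a$.

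The contradiction should follow by counting, and this is the main step. A first attempt is to use that the $\sigma_a$ are finite binary strings and that $a\mapsto\sigma_a$ is injective (because $\psi(\sigma_a)=a$); but there are countably many such strings, so injectivity alone is not contradictory. Instead we use that $\psi$ is a single partial computable function not depending on $a$: the set $\{\sigma:\psi(\sigma)\downarrow\}$ is then a c.e.\ set of strings. Each $\sigma_a$ must be a leaf, meaning $\psi$ is not defined on proper prefixes, or at least the play stops at the first halting point. Moreover, the sequence $\sigma_a$ lies on the tree determined by where {\tt Arthur} keeps querying.

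This observation by itself does not yield a contradiction, and the hardest issue is now clear. Because ${\sf C}_{1/2}$ is secret, {\tt Nimue} can in principle code $a$ in binary, one query per bit, with $\psi(\sigma)$ decoding $\sigma$. The answer $\psi(\sigma_a)=a$ then looks attainable, using the tree of all strings that terminate properly, for instance via a prefix-free code. This suggests the partial reduction actually succeeds, so the obstruction must come from totality. The correct plan is therefore not to discard the \ppp requirement entirely. In the original $\tgw$ reduction the potential tree $\tpot_\ast$ is full-branching over $\cod_+({\sf C}_{1/2})+\cod_+(\sii{\sf C}_{1/\om})$, and it is well-founded.

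We therefore argue directly, as in Lemma \ref{lem:finite-codomain-CT-maj-comp}, by restricting attention to the part of $\tpot_\ast$ where the $\sii{\sf C}_{1/\om}$ answers are computed by unbounded search, as in Theorem \ref{thm:MP-remove}. The aim is to produce a total computable bound on all outputs over all secret $a$. Concretely, we would consider the subtree in which $\sii{\sf C}_{1/\om}$-nodes follow the true search answer and ${\sf C}_{1/2}$-nodes branch over $2$. Only one public instance is involved, so any convergent searches are fixed. This subtree is binary-branching and well-founded, hence finite by K\"onig's lemma, provided every $\sii{\sf C}_{1/\om}$ query along it is in $\dom$; that proviso is the delicate point. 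Handling it is where I expect the obstacle: a query might fall outside $\dom$ on branches that no \aaa play reaches. I would address this by letting such nodes follow all branches up to an arbitrary answer, for example $0$, while still using full branching, which keeps the tree finite. The \aaa leaf for every $a$ then lies in a fixed finite tree. Its finitely many outputs cannot include every $a\in\om$, which contradicts $\psi(\sigma_a)=a$ for all $a$.
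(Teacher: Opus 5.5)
Your final argument is correct and is essentially the paper's proof. With the single public instance $\ast$ there is one well-founded potential tree. The subtree that follows the true answer at $\sii{\sf C}_{1/\om}$-nodes and both answers at ${\sf C}_{1/2}$-nodes is at most $2$-branching, hence finite by K\"onig's lemma. It contains every \aaa tree $\tactual_a$, so its finitely many leaf outputs cannot include every secret $a$.

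You were right to abandon the opening detour through Theorem \ref{thm:MP-remove}, since ${\sf C}_{1/\om}\gw{\sf C}_{1/2}$ actually holds (Proposition \ref{prop:DML-gw-WLEM}). At $\sii{\sf C}_{1/\om}$-nodes whose query lies outside the domain, you can simply add no successor, as the paper does, because no \aaa tree passes through them. Just do not branch over all of $\om$ there, or the appeal to K\"onig's lemma is lost.
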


\begin{proof}
Assume ${\sf C}_{1/\om}\tgw{\sf C}_{1/2}+\sii{\sf C}_{1/\om}$.
We may assume that the \ppp codomain of ${\sf C}_{1/2}$ is $2$.
\begin{itemize}
\item The public instance of ${\sf C}_{1/\om}$ is unique, so all computations take place within a single computation tree $\Phi=(\tpot,\nu,\psi)$.
For each secret instance $c\in\om$, $\psi$ outputs a value ${\sf C}_{1/\om}(\ast|c)=c$ on all leaves of an \aaa computation tree $\tactual_c\subseteq \tpot$.
\item $\tactual_c$ is at most $2$-branching:
A node with a $\sii{\sf C}_{1/\om}$-query is $1$-branching, and a node with ${\sf C}_{1/2}$-query is $2$-branching.
Therefore, $\tactual_c$ is a finite-branching well-founded tree, so it is a finite tree.
\end{itemize}

The key point here is that since the computable strategy $\nu$ does not depend on the secret data $c$, a query assigned to a $\sii{\sf C}_{1/\om}$-node do not depend on $c$.
Now, we inductively define a subtree $H\subseteq\tpot$ as follows:
\begin{enumerate}
\item The empty string belongs to $H$.
\item At an internal node $\sigma\in H\cap\internal\tpot$, if $\nu(\sigma)$ makes a query to $\sii{\sf C}_{1/\om}$, for a unique solution $\sii{\sf C}_{1/\om}(\nu(\sigma))=n$, we put $\sigma\fr n\in H$.
\item If $\nu(\sigma)$ makes a query to ${\sf C}_{1/2}$, then put $\sigma\fr 0,\sigma\fr 1\in H$.
\end{enumerate}

This tree $H$ has the following properties:
\begin{itemize}
\item $H\subseteq\tpot$ is an at most $2$-branching well-founded tree, so it is a finite tree.
\item For each $c\in\om$, we have $\tactual_c\subseteq H$.
\end{itemize}

Based on this observation, we select a value $c\in\om$ as follows:
\begin{itemize}
\item For any leaf $\sigma\in\leaf\tpot$, the computation $\psi(\sigma)$ halts.
By finiteness of $H\subseteq \tpot$, the set $V=\{\psi(\sigma):\sigma\in \leaf H\}$ is finite.
Therefore, there must exist a number $c\not\in V$, so we select such a $c$.
\end{itemize}

{\em Verification:}
\begin{itemize}
\item An \aaa computation tree depends on $c$, but regardless of $c$, we must have $\tactual_c\subseteq H$.
Therefore, we have $\psi(\sigma)\in V$ for any leaf $\sigma\in\leaf{\tactual_c}\subseteq\leaf H$.
\item However, we have selected $c\not\in V$, which implies $\psi(\sigma)\not=c$.
\end{itemize}
Consequently, we obtain ${\sf C}_{1/\om}\not\tgw{\sf C}_{1/2}+\sii{\sf C}_{1/\om}$.
\end{proof}

The above result is interesting.
This is because, in fact, it is possible to use a typical {\tt while} loop to generate unbounded queries to ${\sf C}_{1/2}$ to compute a solution to ${\sf C}_{1/\om}$.

\begin{prop}\label{prop:DML-gw-WLEM}
${\sf C}_{1/\om}\gw{\sf C}_{1/2}$.
\end{prop}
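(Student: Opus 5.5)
We describe an explicit strategy for {\tt Arthur}-{\tt Nimue} in the traditional (partial) multi-query game, where unbounded loops are allowed. The idea is to perform a ``while'' loop that asks ${\sf C}_{1/2}$, one number at a time, whether the secret number has been reached. By Proposition \ref{prop:UC-equal-DML}, it suffices to solve ${\sf C}_{1/\om}$ in its ``secret identity'' form: {\tt Merlin} opens with the public move $\ast$ and the secret move $c\in\om$, and a solution is $c$ itself. We also have ${\sf C}_{1/2}\equiv{\sf C}_{\geq1/2}$, so a query to ${\sf C}_{1/2}$ with secret $b\in 2$ returns $b$.

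\underline{Round $m$} ($m=0,1,2,\dots$): {\tt Arthur} publicly queries ${\sf C}_{1/2}$ with public instance $\ast$. {\tt Nimue}, who knows $c$, supplies the secret bit $b_m=1$ if $c=m$ and $b_m=0$ if $c\neq m$. {\tt Merlin} must return $b_m$. If the answer is $1$, {\tt Arthur} terminates with output $m$. If it is $0$, {\tt Arthur} proceeds to round $m+1$. {\tt Arthur}'s strategy is computable: it depends only on the number of $0$-answers received so far, $\eta(\ast,0^m)=(0,\ast)$ and $\eta(\ast,0^m\fr 1)=(1,m)$. {\tt Nimue}'s strategy $\zeta(\ast,\sigma|c)$ is the bit $[c=|\sigma|]$, which is a legitimate (non-computable-in-general but arbitrary) function.

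Verification: for the secret instance $c$, every query $(\ast|b_m)$ lies in $\dom({\sf C}_{1/2})$, so {\tt Merlin} answers truthfully. The actual computation tree is therefore the single path $0^c\fr 1$, which is finite. The play thus terminates in round $c$ with output $c={\sf C}_{1/\om}(\ast|c)$, and $\Phi^{{\sf C}_{1/2}}(\ast|c)\subseteq{\sf C}_{1/\om}(\ast|c)$, giving ${\sf C}_{1/\om}\gw{\sf C}_{1/2}$.

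The only point needing care is the contrast with Theorem \ref{thm:separation-DML-WLEM}, since we must confirm that the traditional reducibility $\gw$ imposes no totality requirement. This is exactly where the argument could break, so it is the main check. Under $\gw$ the \ppp rule is removed, so {\tt Arthur} need not terminate on the spurious branch $0^\om$ of the potential tree. The potential tree here is not well-founded, which is precisely what $\tgw$ forbids. Termination is only required on actual plays, and those are finite as shown above.
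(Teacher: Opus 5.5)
Your proposal is correct and is essentially the paper's proof: the same while-loop, in which {\tt Nimue} feeds ${\sf C}_{1/2}$ the secret bit $[c=m]$ at round $m$ and {\tt Arthur} halts on the first answer $1$, with termination required only on the actual (finite) play. Your explicit clause $\eta(\ast,0^m\fr 1)=(1,m)$ is slightly cleaner than the paper's ``outputs the current round $n$'', which, read literally against the round indexing of the game, would output $c+1$ rather than $c$.
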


\begin{proof}
We perform the following play:
\begin{itemize}
\item {\em Round $0$.} {\tt Merlin}'s opening move is $(\ast|c)$.
\item {\em Round $n$.}
\begin{itemize}
\item If either $n=0$ or {\tt Merlin}'s last move is $0$, then {\tt Arthur} asks {\tt Nimue} to make a query.
{\tt Nimue} reads the secrete data $c$, and produces the truth value of the equality $n=c$.
That is, if this is true, {\tt Arthur}-{\tt Nimue} make a query $(\ast|1)$ to ${\sf C}_{1/2}$; otherwise, they make a query $(\ast|0)$ to ${\sf C}_{1/2}$.
\item If {\tt Merlin}'s last move is $1$, then {\tt Arthur} declares to terminate the game and outputs the current round $n$.
\end{itemize}
\end{itemize}

{\em Verification:}
\begin{itemize}
\item As ${\sf C}_{1/2}(\ast|i)=i$, the game terminates at the first round $n$ such that {\tt Arthur}-{\tt Nimue} make a query $(\ast|1)$.
In this case, the output is $n$.
\item 
Since this occurs only when $n=c$, the output must be $c={\sf C}_{1/\om}(\ast|c)$.
\end{itemize}
\end{proof}

This also implies that the reverse direction of Theorem \ref{thm:MP-remove} does not hold.
In other words, $\gw$ cannot be characterized as $\tgw$ equipped with Markov's principle ${\sf C}_{1/\om}$.
Intuitively, within a reduction $\tgw$ equipped with Markov's principle, while a plain unbounded search is possible this does not means that a $G$-relative unbounded search is possible.

\begin{cor}
There exist potted bilayer problems $F$ and $G$ such that $F\gw G$ but $F\not\tgw G+\sii{\sf C}_{1/\om}$.
\end{cor}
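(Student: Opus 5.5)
Take $F={\sf C}_{1/\om}$ and $G={\sf C}_{1/2}$. The corollary then follows directly from two results already proved, Proposition \ref{prop:DML-gw-WLEM} and Theorem \ref{thm:separation-DML-WLEM}.

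First, Proposition \ref{prop:DML-gw-WLEM} gives ${\sf C}_{1/\om}\gw{\sf C}_{1/2}$. That proof uses the \texttt{while}-loop strategy: at round $n$, {\tt Nimue} asks ${\sf C}_{1/2}$ whether $n=c$, and the game stops at the first affirmative answer. This witnesses the positive half of the claim, $F\gw G$.

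Second, Theorem \ref{thm:separation-DML-WLEM} gives ${\sf C}_{1/\om}\not\tgw{\sf C}_{1/2}+\sii{\sf C}_{1/\om}$, which is precisely the negative half, $F\not\tgw G+\sii{\sf C}_{1/\om}$. The one thing to check is that the sum ${\sf C}_{1/2}+\sii{\sf C}_{1/\om}$ appearing there is the same sum of potted bilayer problems as in the corollary. It is, since a potted problem is identified with a bilayer problem having the trivial secret set $\{\ast\}$, and the sum is defined componentwise.

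I expect no real obstacle, because all the work was done in Theorem \ref{thm:separation-DML-WLEM}. There, the key point was that the public strategy cannot see $c$, so every actual tree $\tactual_c$ lies inside a single finite tree $H$, and choosing $c$ outside the finite set of outputs on $\leaf H$ defeats the reduction. For context, I will also remark that this example shows the converse of Theorem \ref{thm:MP-remove} fails: adding Markov's principle to $\tgw$ does not recover $\gw$.
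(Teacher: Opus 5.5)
Your proposal is correct and matches the paper's argument exactly: the paper also takes $F={\sf C}_{1/\om}$ and $G={\sf C}_{1/2}$, and it obtains $F\gw G$ from Proposition~\ref{prop:DML-gw-WLEM} and $F\not\tgw G+\sii{\sf C}_{1/\om}$ from Theorem~\ref{thm:separation-DML-WLEM}. Your remark that this example refutes the converse of Theorem~\ref{thm:MP-remove} likewise agrees with the paper's surrounding discussion.
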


\begin{proof}
Consider $F={\sf C}_{1/\om}$ and $G={\sf C}_{1/2}$.
Then the assertion follows from Theorem \ref{thm:separation-DML-WLEM} and Proposition \ref{prop:DML-gw-WLEM}.
\end{proof}

Next, we make a slight modification to Lemma \ref{lem:finite-codomain-CT-maj-comp}.

\begin{definition}
A partial function $g\pcolon \om\to\om$ is {\em lower semicomputable} if there is a computable function $\tilde{g}\colon\om^2\to\om$ such that $g(n)=\max_{t\in\om} g(n,t)$ for any $n\in\dom(g)$.
\end{definition}

\begin{lemma}\label{lem:finite-codomain-CT-maj-comp2}
Assume that the codomain of a potted bilayer problem $\mathcal{U}$ is finite.
Then, for any potted problem $F\pcolon\om\tto\om$, if $F\tgw\sii{\sf C}_{1/\om}+\mathcal{U}$ then $F$ has a solution in the range bounded by a lower semicomputable function $g$; in other words,
\[\dom(F)\subseteq\dom(g);\qquad x\in\dom(F)\implies \exists y\leq g(x).\ y\in F(x).\]
\end{lemma}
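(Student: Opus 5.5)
We adapt the proof of Lemma \ref{lem:finite-codomain-CT-maj-comp}, replacing the \emph{total} search over the finite tree $H_n$ by a stagewise approximation in which $\sii{\sf C}_{1/\om}$-nodes are resolved as their $\Sigma_1$ sets enumerate elements. Assume $F\tgw\sii{\sf C}_{1/\om}+\mathcal{U}$ via $e$, with $\mathcal{U}$ taking values below $k$. For each \ppp instance $n$ we have the \ppp computation tree $(\tpot_n,\nu_n,\psi_n)$ computed from $e$. Let $h$ be the finite set of potential answers to $\mathcal{U}$-queries below $k$. Define, at each stage $t$, a finite tree $H_n[t]\subseteq\tpot_n$ by the same recursion as the tree $H$ in the proof of Theorem \ref{thm:separation-DML-WLEM}. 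At a node $\sigma$ with a $\mathcal{U}$-query, add all children $\sigma\fr i$ for $i\in h$. At a node $\sigma$ with a $\sii{\sf C}_{1/\om}$-query $q=\nu_n(\sigma)$, add $\sigma\fr m$ only if $m$ is the first element enumerated into $S_q$ by stage $t$; if none has appeared, $\sigma$ is a pending leaf. Since $H_n[t]$ is finitely branching and contained in the well-founded $\tpot_n$, it is finite by weak K\"onig's lemma, and it is computable uniformly in $(n,t)$.

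Next put $\tilde g(n,t)=\max\{\psi_n(\sigma)[t]:\sigma\in\leaf H_n[t],\ \psi_n(\sigma)\text{ halted by stage }t\}$, with $\max\emptyset=0$. This is a total computable function of $(n,t)$. The tree $H_n[t]$ only grows in $t$: once the first enumerated element of $S_q$ is fixed it never changes, and halted computations stay halted. Hence $\tilde g(n,t)$ is nondecreasing in $t$. Set $g(n)=\max_t\tilde g(n,t)$, which is lower semicomputable wherever the limit is finite.

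For the verification, fix $n\in\dom(F)$. Let $H_n=\bigcup_t H_n[t]$, the tree obtained by following, at each $\sii{\sf C}_{1/\om}$-node, the first enumerated element (if any) and branching over $h$ at $\mathcal{U}$-nodes. The main obstacle is showing $H_n$ is finite: the nodes reached in some actual play have queries in $\dom(\sii{\sf C}_{1/\om})$, but other nodes may have queries with $S_q=\emptyset$. Such nodes, however, simply remain leaves forever, contributing nothing. So $H_n$ is a finitely branching subtree of the well-founded tree $\tpot_n$, hence finite. Each of its genuine leaves eventually has $\psi_n$ halting, so $\tilde g(n,t)$ stabilizes and $g(n)<\infty$.

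It remains to show that some solution lies below $g(n)$. By the reduction there is a Nimue strategy $\zeta$ such that $\Phi^{\sii{\sf C}_{1/\om}+\mathcal{U}}_{e|\zeta}(n)\subseteq F(n)$. Its actual tree $\tactual_n$ is contained in $H_n$. This is because every $\sii{\sf C}_{1/\om}$-query in $\tactual_n$ lies in the domain, so $S_q$ is a singleton and its unique element is the first one enumerated. Also, every $\mathcal{U}$-answer lies in $h$. The tree $\tactual_n$ has some leaf $\sigma$, and then $\psi_n(\sigma)\in F(n)$ with $\psi_n(\sigma)\le g(n)$, as required.
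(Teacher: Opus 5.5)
Your proposal is correct and follows essentially the same route as the paper. Both arguments use stagewise finite approximations $H_n[t]$ of the finitely branching subtree $H_n\subseteq\tpot_n$: at $\mathcal{U}$-nodes the tree branches over all of $h$, and at $\sii{\sf C}_{1/\om}$-nodes it follows the enumerated element of $S_q$. The bound $g(n)$ is the supremum over $t$ of the largest output on the leaves, and the inclusion $\tactual_n\subseteq H_n$ shows that some solution lies below it.

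Your rule of always following the \emph{first} enumerated element of $S_q$ is slightly cleaner than the paper's ``$S_q=\{c\}$ at stage $t$''. With your rule, $H_n[t]$ and $\tilde g(n,t)$ are monotone in $t$, and $H_n=\bigcup_t H_n[t]$ holds literally even when some $S_q$ has two or more elements.
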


\begin{proof}
Assume that $\mathcal{U}$ always takes values less than $k$, and its \ppp codomain is $H$ with $k\subseteq H\subseteq\om$.
In this case, $h:=H\cap k$ is a finite set.
Assume that $F\tgw\sii{\sf C}_{1/\om}+\mathcal{U}$ via $e$.
\begin{itemize}
\item For each \ppp instance $n\in\dom_+(F)$, the index $e$ yields a \ppp computation tree $(\tpot^e_n,\nu^e_n,\psi^e_n)$.
Hereafter, the superscript $e$ will be omitted.
\item We inductively construct a subtree $H_n\subseteq \tpot_n$ as follows:
\begin{enumerate}
\item The empty set belongs to $H_n$.
\item For a internal node $\sigma\in H_n\cap\internal \tpot_n$, if $\nu_n(\sigma)$ makes a query to $\U$, then we declare that $\sigma$ is a full $h$-branching node; that is, we put $\sigma\fr i\in H_n$ for each $i\in h$.
\item For a internal node $\sigma\in H_n\cap\internal \tpot_n$, if $\nu_n(\sigma)$ makes a query $q$ to $\sii{\sf C}_{1/\om}$ and $\sii{\sf C}_{1/\om}(q)$ is a singleton $\{c\}$, then put $\sigma\fr c\in H_n$.
\end{enumerate}
\item For each \aaa instance $n\in\dom(F)$, an \aaa computation tree must satisfy $\tactual_n\subseteq H_n\subseteq \tpot_n$.
Since $H_n$ is a finitely-branching well-founded tree, it is a finite tree.
\item At each leaf $\sigma\in\leaf \tpot_n$, the value $\psi_n(\sigma)$ is defined.
By finiteness of $H_n$, the set $\{\psi_n(\sigma):\sigma\in H_n\cap\leaf \tpot_n\}$ is finite.
\end{itemize}

We construct an algorithm $\Xi_e$ approximating this construction as follows:
\begin{enumerate}
\item
At stage $t$, compute the subtree $H_n[t]\subseteq \tpot_n$ in the same way as above.
Here, at the step (3), we put $\sigma\fr c\in H_n[t]$ only when $S_q=\{c\}$ at stage $t$.
Otherwise, we do not perform any action.
\item 
Compute the values of $\psi_n(\sigma)$ for each leaf $\sigma\in H_n[t]\cap\leaf\tpot_n$, and obtain the finite set $R_n[t]=\{\psi_n(\sigma):\sigma\in H_n[t]\cap\leaf \tpot_n\}$.
Then $\Xi_e(n,t)$ computes the maximum value $\max R_n[t]$.
\end{enumerate}

{\em Verification:}
Let $n\in\dom(F)$ be an \aaa instance.
\begin{itemize}
\item 
One can see $H_n=\bigcup_{t\in\om}H_n[t]$; that is, for any $\sigma\in H_n$, there is a stage $t_\sigma$ such that $\sigma\in H_n[t_\sigma]$.
\item Since $H_n$ is finite, at the maximum stage $s=\max\{t_\sigma:\sigma\in H_n\}$, we get $\tactual_n\subseteq H_n=H_n[s]$.
\item In particular, we get $\bigcup_{t\in\om}R_n[t]=R_n[s]$, so $g(n):=\sup_t\Xi_e(n,t)$ takes the finite value $\max R_n[s]$.
\item The \aaa computation tree induced by {\tt Nimue}'s strategy $\zeta$ must satisfy $\tactual_n\subseteq H_n$; hence, we get
\[\Phi_{e|\zeta}^\mathcal{U}(n)=\psi_n[\leaf\tactual_n]\subseteq \psi_n[\leaf H_n]=R_n.\]

Therefore, regardless of $\zeta$, for any $y\in\Phi_{e|\zeta}^\mathcal{U}(n)$, we have $y\leq\max R_n[s]=g(n)$.
\item By our assumption $F\tgw\mathcal{U}$ via $e$, there is $\zeta$ such that $\Phi_{e|\zeta}^\mathcal{U}(n)\subseteq F(n)$.
Since $F$ is nonempty, there is some $y\in\Phi_{e|\zeta}^\mathcal{U}(n)\subseteq F(n)$; hence, we get $y\leq g(n)$ and $y\in F(n)$.
\end{itemize}

Consequently, $g$ gives a lower semicomputable upper bound for a solution of $F$.
\end{proof}

\section{Lawvere-Tierney Modality}\label{sec:LT-topology}
We have completed {\em computability-theoretic} separation proofs for various principles, but these are strictly computability-theoretic results; we have {\bf not} yet established {\em logical} separations.
The focus from here on is  how to translate computable separations into logical separations.
The key notion is a Lawvere-Tierney modality, and it is through this that logical separations are achieved in the subsequent sections.

For the basics of realizability topos theory, see van Oosten \cite{vOBook}.
For general topos theory, see \cite{SGL,elephant}.

\subsection{Tripos}

Recall from Definition \ref{def:potted-set} that a potted set $\paset{A}$ is a pair $\paset{A}=(A_-;A_+)$ such that $A_-\subseteq A_+\subseteq \N$ and $0\in A_+$.
We sometimes call $n\in A_+$ a \ppp realizer, and $n\in A_-$ an \aaa realizer.
Let $\Omega$ be the collection of all potted sets equipped with the following structure:

\begin{definition}
Heyting operations on $\Omega$ are defined as follows:
\begin{itemize}
\item {\em Top.} $\top=(\N;\N)$.
\item {\em Bottom.} $\bot=(\emptyset;\N)$.
\item {\em Join.} $\paset{A}\times \paset{B}=(A_-\times B_-;\ A_+\times B_+)$.
\item {\em Meet.} $\paset{A}+\paset{B}=(A_-+B_-;\ A_++B_+)$.
\item {\em Implication.} $\paset{A}\totarr \paset{B}=(A_-\arr B_-;\ (A_-\arr B_-)\cap (A_+\arr B_+))$.
\end{itemize}
Here, define $A\arr B=\{e\in\om:\forall n\in A.\ \varphi_e(n)\down\in B\}$ for $A,B$.
We assume that $0$ is a code of a trivial total function (i.e., ${\rm id}_\om$), which ensures $0\in(A_-\arr B_-)$.
As usual, $A\arr\bot$ is abbreviated as $\neg A$.
\end{definition}

We also use the following set operations:
\begin{definition}\label{def:set-operations-on-pasets}
Let $\paset{A},\paset{B},\paset{A}_i$ be potted sets.
\begin{itemize}
\item {\em Inclusion} $\paset{A}\subseteq \paset{B}$ if and only if $A_-\subseteq B_-$ and $A_+\subseteq B_+$.
\item {\em Union.} $\bigcup_{i\in I}\paset{A}_i=(\bigcup_{i\in I}(A_i)_-;\ \bigcup_{i\in I}(A_i)_+)$.
\item {\em Intersection.} $\bigcap_{i\in I}\paset{A}_i=(\bigcap_{i\in I}(A_i)_-;\ \bigcap_{i\in I}(A_i)_+)$.
\end{itemize}
\end{definition}


\begin{definition}
For each set $X$, we call a function $\varphi\colon X\to\Omega$ a {\bf predicate} on $X$.
By equipping with the pointwise Heyting operations, one can think of $\Omega^X$ as a Heyting pre-algebra as well.
\begin{itemize}
\item We say that such a predicate $\varphi$ is {\bf realizable} if $\varphi(x)$ has a common \aaa realizer for any $x\in X$; that is, $\bigcap_{x\in X}\varphi(x)_-$ is nonempty.
\item The set of all predicates $\Omega^X$ on $X$ is equipped with the {\em realizability preorder} $\leq_X$:
\[
\varphi\leq_X\psi\iff\bigcap_{x\in X}(\varphi(x)\arr\psi(x))_-\not=\emptyset.
\]
That is, $\varphi(x)\arr\psi(x)$ is realizable.
\item For a function $f\colon X\to Y$, and a predicate $\varphi\in\Omega^Y$, the {\em substitution} $\varphi\cdot f\in\Omega^X$ is defined as $(\varphi\cdot f)(x)=\varphi(f(x))$.
\end{itemize}  
\end{definition}

\begin{remark}
By the above convention, if $\varphi$ is not realizable, then an index $0$ of the trivial function realizes the negation $\neg\varphi$.
\end{remark}

For preordered sets $P,Q$, a function $\alpha\colon P\to Q$ is monotone if $x\leq_Py$ implies $\alpha(x)\leq_Q\alpha(y)$.

\begin{definition}
A {\bf natural transformation} on $\Omega$ is an assignment of a monotone function $\alpha_X\colon \Omega^X\to\Omega^X$ to each set $X$ which commutes with substitutions; that is, $\alpha_Y(\varphi)\cdot f\equiv_X\alpha_X(\varphi\cdot f)$ holds for any function $f\colon X\to Y$ and predicate $\varphi\in\Omega^Y$.
\end{definition}

\begin{obs}\label{obs:monotone-to-transformation}
Let $\alpha\colon\Omega\to\Omega$ be a monotone function.
Define $\alpha_X(\varphi)(x)=j(\varphi(x))$ for any $\varphi\in\Omega^X$ and $x\in X$.
Then, the assignment $X\mapsto \alpha_X$ is a natural transformation on $\Omega$.
\end{obs}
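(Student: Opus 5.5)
This is a direct verification against the definitions; note that the statement writes $j$ but means $\alpha$, i.e.\ $\alpha_X(\varphi)(x)=\alpha(\varphi(x))$. Two things must be checked: that each $\alpha_X$ is monotone for $\leq_X$, and that substitution commutes with $\alpha_X$ up to $\equiv_X$.

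\textbf{Naturality.} This is immediate and holds on the nose. For $f\colon X\to Y$, $\varphi\in\Omega^Y$ and $x\in X$,
\[(\alpha_Y(\varphi)\cdot f)(x)=\alpha(\varphi(f(x)))=\alpha((\varphi\cdot f)(x))=\alpha_X(\varphi\cdot f)(x).\]
The two predicates are literally equal, so each is below the other. The witness is the code $0$ of the identity, which by the convention on $\arr$ lies in $(\paset{A}\totarr\paset{A})_-$ for every $\paset{A}$.

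\textbf{Monotonicity.} This is the step that needs care. Suppose $\varphi\leq_X\psi$ is witnessed by a uniform realizer $e\in\bigcap_x(\varphi(x)\totarr\psi(x))_-$. We need one realizer in $\bigcap_x(\alpha(\varphi(x))\totarr\alpha(\psi(x)))_-$. Monotonicity of $\alpha\colon\Omega\to\Omega$ only gives, for each $x$ separately, some realizer $d_x$ of $\alpha(\varphi(x))\totarr\alpha(\psi(x))$, and $d_x$ may depend on $x$.

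To get uniformity, I would pass through $\Omega$ itself, i.e.\ the generic predicate argument. Index everything by the set $P=\{(\paset{A},\paset{B})\in\Omega^2 : e\in(\paset{A}\totarr\paset{B})_-\}$. The intended meaning of ``monotone'' here is that $\alpha$ is monotone as a map of predicates: if $\pi_0\leq_P\pi_1$ via the projections, then $\alpha\circ\pi_0\leq_P\alpha\circ\pi_1$ uniformly. Given that, the projections $\pi_0,\pi_1$ on $P$ satisfy $\pi_0\leq_P\pi_1$ via $e$, so there is a single realizer $d$ of $\alpha\circ\pi_0\leq_P\alpha\circ\pi_1$. Now the map $x\mapsto(\varphi(x),\psi(x))$ lands in $P$, and pulling $d$ back along it realizes $\alpha_X(\varphi)\leq_X\alpha_X(\psi)$.

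If instead ``monotone'' means the stronger usual requirement that $\paset{A}\subseteq\paset{B}$, or uniform realizability, is preserved with computable dependence $e\mapsto d(e)$, the same pullback argument applies with $d(e)$ as the witness.

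\textbf{Main obstacle.} The only real difficulty is this uniformity issue in the monotonicity step. I would resolve it by reading ``monotone'' in the uniform sense just described, which is the sense in which the paper later applies the observation to the modalities induced by oracles; for those, uniformity holds because the reduction is given by a single index.
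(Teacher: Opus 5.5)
Your naturality computation is exactly the paper's proof. The paper's entire argument is the one-line identity $\alpha_Y(\varphi)(f(x))=\alpha(\varphi(f(x)))=\alpha_X(\varphi\cdot f)(x)$, and like you it reads the $j$ in the statement as $\alpha$.

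The difference is that you also check that each $\alpha_X$ is monotone for $\leq_X$. The paper's definition of a natural transformation requires this, but its proof never addresses it. Your diagnosis is right and not merely pedantic: with the paper's literal notion of a monotone map (on $\Omega$, $p\leq q$ iff $(p\totarr q)_-\neq\emptyset$), the observation fails.

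Here is a counterexample. Any $\alpha$ with values of the form $(\{c(p)\};\om)$ is monotone, since a constant function realizes $(\{a\};\om)\totarr(\{b\};\om)$. Take $\theta(x)=(\{2x\};\om)$ and $\chi(x)=(\{2x+1\};\om)$, so $\theta\leq_\om\chi$ via the successor function. Set $c(\theta(x))=0$ and $c(\chi(x))=x$. A realizer of $\alpha_\om(\theta)\leq_\om\alpha_\om(\chi)$ would then be an index $d$ with $\varphi_d(0)=x$ for every $x$, which is impossible.

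So strengthening the hypothesis is unavoidable. Your generic condition (for each $e$, $\alpha\circ\pi_0\leq\alpha\circ\pi_1$ on $P_e$) is in fact equivalent to monotonicity of all the $\alpha_X$, via exactly the pullback you describe.

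The only place the paper uses the observation is for LT-modalities. These come with a single ${\tt m}$ realizing $(p\totarr q)\totarr(j(p)\totarr j(q))$ for all $p,q$. Under that internal reading, $\varphi_{\tt m}(e)$ is the uniform witness directly, so your detour through $P_e$ is harmless but unnecessary.
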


\begin{proof}
For each $f\colon X\to Y$ and $\varphi\in\Omega^Y$, we have $\alpha_Y(\varphi)(f(x))=\alpha(\varphi(f(x)))=\alpha_X(\varphi\cdot f)(x)$.
\end{proof}

\begin{definition}\
A function $j\colon \Omega\to \Omega$ is a {\bf Lawvere-Tierney modality} if it is an {internal closure operator}; that is,
\begin{enumerate}
\item {\em Monotone.} 
$(p\arr q)\arr(j(p)\arr j(q))$ is realizable.
\item {\em Inflationary.}
$p\arr j(p)$ is realizable.
\item {\em Idempotent.}
$j(j(p))\arr j(p)$ is realizable.
\end{enumerate}

As in Observation \ref{obs:monotone-to-transformation}, a LT-modality $j$ always yields a natural transformation $X\mapsto j_X$ endowed with some ${\tt m},\eta,\mu\in\om$ such that for any set $X$,
\begin{enumerate}
\item {\em Monotone.} ${\tt m}\in\bigcap_{\varphi,\psi\in\Omega^X}\bigcap_{x\in X}\big[(\varphi(x)\arr\psi(x))\arr[j_X\varphi(x)\arr j_X\psi(x)]\big]$.
\item {\em Inflationary.} $\eta\in\bigcap_{\varphi\in\Omega^X}\bigcap_{x\in X}(\varphi(x)\arr j_X\varphi(x))$.
\item {\em Idempotent.} $\mu\in\bigcap_{\varphi\in\Omega^X}\bigcap_{x\in X}[j_Xj_X\varphi(x)\arr j_X\varphi(x)]$.
\end{enumerate}
Here, $j_X(\varphi)$ is abbreviated as $j_X\varphi$.
Such a transformation is called a {\bf closure transformation} \cite{vOBook}.
Hereafter, $j_X$ is often abbreviated as $j$.
\end{definition}

\begin{definition}
For a LT-modality $j$ and a formula $\varphi$, inductively define $\eval{\varphi}^j$ as follows:
\begin{itemize}
\item $\eval{\top}^j=\top$.
\item $\eval{\bot}^j=j({\bot})$.
\item $\eval{p\land q}^j=\eval{p}^j\times\eval{q}^j)$.
\item $\eval{p\lor q}^j=\eval{p}^j+\eval{q}^j)$
\item $\eval{p\to q}^j=\eval{p}^j\arr j(\eval{q}^j)$.
\item $\eval{\exists x\varphi(x)}^j=\bigcup_{x\in X}\eval{\varphi(x)}^j$
\item $\eval{\forall x\varphi(x)}^j=\bigcap_{x\in X}j(\eval{\varphi(x)}^j)$
\end{itemize}

We say that a predicate $\varphi$ is {\bf $j$-realizable} if $j(\eval{\varphi}^j)$ has an \aaa realizer.
This is called the {\bf Kuroda $j$-translation} \cite{vdB19}.
\end{definition}

Note that $j$ is involved only with $\bot,\to,\forall$.
In this article, we consider only the case where $j$ is $\neg\neg$-dense, that is, where $j(\bot)=\bot$.
In this case, $j$ is involved only with $\to,\forall$.

\begin{obs}\label{obs:negneg-real-truth}
Let $j$ be $\neg\neg$-dense and $\varphi$ be a formula.
Then $\varphi$ is true if and only if the $\neg\neg$-translation $j(\eval{\varphi}^{\neg\neg})$ is realizable (by the trivial realizer $0$).
\end{obs}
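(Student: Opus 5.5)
The plan is to reduce everything to two facts. First, a $\neg\neg$-dense $j$ sits between the identity and $\neg\neg$ as far as realizability of single truth values is concerned. Second, the Kuroda $\neg\neg$-translation of any formula has an actual part that is either empty or contains $0$, according to whether the formula is classically true. Throughout, ``true'' is read in the classical meta-logic, with atomic predicates interpreted as potted sets whose actual part is nonempty exactly when the atom holds (e.g.\ $\top$ or $\bot$).

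\emph{Step 1 (behaviour of $\neg$ on potted sets).} For a potted set $\paset{p}$, compute $(\neg\paset{p})_-=p_-\arr\emptyset$. If $p_-\neq\emptyset$ this is empty. If $p_-=\emptyset$ it is vacuously all of $\om$, and in particular contains $0$. Hence $(\neg\neg\paset{p})_-$ is $\om$ when $p_-\neq\emptyset$ and $\emptyset$ otherwise. So $\neg\neg\paset{p}$ is realizable iff $\paset{p}$ is realizable, and in that case $0$ realizes it. The potential components play no role here, since realizability only concerns actual parts.

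\emph{Step 2 ($j$ versus $\neg\neg$).} By inflationarity, $p\arr j(p)$ is realized by $\eta$, so $p$ realizable implies $j(p)$ realizable. Conversely, suppose $a\in j(p)_-$ and suppose $p_-=\emptyset$. Then $0\in(p\arr\bot)_-$, and monotonicity ${\tt m}$ yields an element of $(j(p)\arr j(\bot))_-$. Applying it to $a$ gives an element of $j(\bot)_-=\bot_-=\emptyset$, which is a contradiction. Hence $j(p)$ is realizable iff $p$ is realizable iff $\neg\neg p$ is realizable, and by Step 1 $0$ realizes $\neg\neg p$ whenever $p$ is realizable. Applying this with $p=\eval{\varphi}^{\neg\neg}$ reduces the observation to the claim that $\eval{\varphi}^{\neg\neg}$ has nonempty actual part iff $\varphi$ is true, and that in this case the relevant realizer is trivial.

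\emph{Step 3 (induction on $\varphi$).} I show that $(\eval{\varphi}^{\neg\neg})_-\neq\emptyset$ iff $\varphi$ is true.
\begin{itemize}
\item \emph{Atoms and $\top,\bot$.} These hold by the interpretation; note $\eval{\bot}^{\neg\neg}=\neg\neg\bot=\bot$.
\item \emph{$\land$, $\lor$, $\exists$.} The product, the sum and the union of actual parts are nonempty exactly when both, one, or some component is nonempty.
\item \emph{$\to$.} Here $\eval{\varphi\to\psi}^{\neg\neg}=\eval{\varphi}^{\neg\neg}\arr\neg\neg\eval{\psi}^{\neg\neg}$. By Step 1 the actual part of the consequent is $\om$ or $\emptyset$. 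If it is $\om$, the identity index $0$ lies in the actual part of the implication. If it is $\emptyset$, the actual part of the implication is nonempty iff $(\eval{\varphi}^{\neg\neg})_-=\emptyset$. This matches classical truth of $\varphi\to\psi$.
\item \emph{$\forall$.} Here the interpretation is $\bigcap_x\neg\neg\eval{\varphi(x)}^{\neg\neg}$. Each factor has actual part $\om$ or $\emptyset$, so the intersection is $\om$ exactly when every instance is true.
\end{itemize}

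The main point requiring care is the implication and quantifier clauses. There one must use that the $\neg\neg$ applied to the consequent collapses actual parts to $\emptyset$ or $\om$. This is what makes the uniform trivial realizer $0$ available and avoids needing a common realizer across the instances $x$. Everything else is routine bookkeeping on actual parts.
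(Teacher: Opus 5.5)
The paper states this observation without proof, so there is no argument of the authors to compare with. Judged on its own, your argument for the equivalence itself is sound. Inflation gives ``$\paset{p}$ realizable $\Rightarrow$ $j(\paset{p})$ realizable''. Applying ${\tt m}$ to a realizer of $\paset{p}\arr\bot$ and using $j(\bot)_-=\emptyset$ gives the converse, which is exactly where $\neg\neg$-density is needed. Your induction then correctly shows $(\eval{\varphi}^{\neg\neg})_-\neq\emptyset$ iff $\varphi$ is true. The collapse of $\neg\neg$ is what supplies the uniformity needed in the $\to$ and $\forall$ clauses.

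What you do not establish is the parenthetical clause. You say the observation reduces to nonemptiness ``and that in this case the relevant realizer is trivial''. However, Step 3 proves only nonemptiness, and the only trivial realizer you exhibit is $0\in(\neg\neg\paset{p})_-$, not $0\in j(\eval{\varphi}^{\neg\neg})_-$. Under your reading the latter is false in general. Take $j$ to be the identity modality (which is $\neg\neg$-dense) and $\varphi\equiv\bot\lor\top$. Then $j(\eval{\varphi}^{\neg\neg})_-=\{\pair{1,b}:b\in\om\}$, which does not contain $0$. For $j=j_F$ the actual realizers are codes of computation trees, so $0$ lies there only by a coding accident. The trivial-realizer clause is only right for formulas whose outermost connective is $\neg$; there the actual part of $\eval{\neg\psi}^{j}$ itself, before applying $j$, is empty or contains $0$. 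That is exactly how the paper invokes the observation, e.g.\ in the proof of Theorem \ref{thm:MP-vs-CT0-realizability-rev}. Your Step 1 already yields the correct version: $\varphi$ is true iff $0$ realizes $\neg\neg\eval{\varphi}^{\neg\neg}$. You should state that, rather than claim triviality for $j(\eval{\varphi}^{\neg\neg})$ or for top-level $\land$, $\lor$, $\exists$.

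A smaller point concerns Step 1 and your $\to$ clause. They rely on the printed clause $(\paset{A}\arr\paset{B})_-=A_-\arr B_-$, which is evidently a typo, since it makes the potential part a subset of the actual part. The intended Grayson clause is $(\paset{A}\arr\paset{B})_-=(A_-\arr B_-)\cap(A_+\arr B_+)$. With it, $(\neg\paset{p})_-=p_+\arr\om$ rather than $\om$ when $p_-=\emptyset$. Also, the identity index need not realize $\eval{\varphi}^{\neg\neg}\arr\neg\neg\eval{\psi}^{\neg\neg}$. Your argument survives with two changes: replace ``is $\om$'' by ``contains every index of a total function'', and use an index of a constant function in the $\to$ clause.
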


\begin{remark}
A LT-modality/closure transformation can be regarded as an operation that transforms the truth values of predicates.
Its role here is to transform the semantics of a {\em computable universe} into the semantics of an {\em oracle-relative computable universe}.
In other words, a LT-modality serves the role of an oracle relativization \cite{Kih23,Kih22,AhBa26}.
This view is developed in Sections \ref{sec:topology-oracle} and \ref{sec:topos-sheafsubtopos}.
\end{remark}

\subsection{LT-topologies as oracles}\label{sec:topology-oracle}

The key idea is that a LT-modality is an oracle.
There is a way to explain the link between a LT-modality and an oracle in abstract terms.
As we have already seen, a multi-query oracle computation is a free monad $\lambda V.\mu S.(\sum_{x\in X}S^{F(x)}+V)$.
Similarly, a closure operator can also be understood as a monad.
Indeed, each oracle yields a ``free LT-modality'' $j_F(p)=\mu s.((\exists x\in X.\ F(x)\arr s)\lor p)$ and this is expected to provide a semantics for the $F$-relative computable universe.
See also Ahman-Bauer \cite{AhBa26}.

Once we realize that the shape of a free LT-modality is identical to that of a free monad, we see that it can be described as an oracle computation.
To be explicit, for an oracle $F$, $j_F(P)$ can be described as the set of all {\em codes of strategies for solving $P$ using the oracle $F$}.
In the concrete analysis, it is precisely this explicit description that is important; for example, the structural analysis of the LT topologies on the effective topos has advanced rapidly and significantly thanks to the discovery of this explicit description \cite{LvO13,Kih23,KiNg26}.

While the above is a plausible abstract idea, unfortunately our category is not quite that perfect, so we set aside the abstract discussion for now and treat it simply as the underlying idea.
Let us actually get to work and verify that this idea works correctly even within the setting of total computability.

\begin{terminology}
Let us review the terminology related to a computation tree.
Let $\paset{P}$ be a potted set, and $F\pcolon \paset{X}\times\Lambda\tto Y$ be a potted bilayer problem.
A \ppp computation tree of type $P_+$ is the following triple $(\tpot,\nu,\psi)$:
\begin{enumerate}
\item $\tpot\subseteq Y^{<\om}$ is a nonempty full-branching well-founded tree.
\item $\nu\colon \internal \tpot\to X_+$ is a total labeling function on the internal nodes.
\item $\psi\colon \leaf \tpot\to P_+$ is a total labeling function on the leaves.
\end{enumerate}

An \aaa computation tree by $F$ is an $(F;\nu)$-branching subtree $\tactual\subseteq\tpot$; that is,
\begin{enumerate}\setcounter{enumi}{3}
\item The empty string belongs to $\tactual$.
\item To each internal node $\sigma\in \tactual\cap\internal\tpot$, a query $\nu(\sigma)\in X_-$ is assigned, and
\[\{n\in Y:\sigma\fr n\in \tactual\}\in \overline{F}_{\nu(\sigma)}.\]
In other words,
there is a secret $\alpha\in\Lambda$ such that each $n\in F(\nu(\sigma)|\alpha)$ provides an immediate successor of $\sigma$.
\end{enumerate}

We say that a \ppp computation tree $(\tpot,\nu,\psi)$ $F$-solves $P_-$ if:
\begin{enumerate}\setcounter{enumi}{5}
\item There is an $(F;\nu)$-branching subtree $\tactual\subseteq\tpot$ such that $\psi(\rho)\down\in P_-$ for any leaf $\rho\in\leaf\tactual$.
\end{enumerate}
\end{terminology}



\begin{notation}
For each computation $\Phi_e=(\tpot^e,\nu^e,\psi^e)$ given by an index $e$,
\begin{itemize}
\item If $\Phi_e$ is a \ppp computation tree, we write $\Phi_e^+\down$.
It is of the type $P_+$ if 
\[\Phi_e^+=\psi^e[\leaf\tpot]=\{\psi^e(\sigma):\sigma\in\leaf \tpot\}\subseteq P_+.\]
\item For each $(F;\nu)$-branching subtree $\tactual\subseteq\tpot$, we write:
\begin{align*}
&\Phi_{e|\tactual}^F\down\iff\forall\sigma\in\leaf\tactual.\ \psi^e(\sigma)\down,\\
&\Phi_{e|\tactual}^F=\psi^e[\leaf\tactual]=\{\psi^e(\sigma):\sigma\in\leaf\tactual\}.
\end{align*}
\end{itemize}
\end{notation}

\begin{definition}
Let $\paset{P}$ be potted set, and $F\colon\paset{X}\times\Lambda\tto Y$ be a potted bilayer problem.
\begin{itemize}
\item Let $F^\solve_+(P_+)$ denote the set of all code of \ppp computation trees $(\tpot,\nu,\psi)$ of type $P_+$.
\item Let $F^\solve(P_-)$ denote the set of all codes of such trees that $F$-solve $P_-$.
\end{itemize}
\end{definition}

\begin{definition}
Let $F$ be a potted bilayer problem.
Then, $j_F\colon\Omega\to\Omega$ is defined as follows:
\begin{itemize}
\item {\em Potential.} $j_F(\paset{P})_+=F^\solve_+(P_+)$.
\item {\em Actual.} $j_F(\paset{P})_-=F^\solve(P_-)$.
\end{itemize}
\end{definition}

\begin{theorem}
Let $F\colon\paset{X}\times\Lambda\tto Y$ be a potted bilayer problem.
Then, $j=j_F$ is a LT-modality.
\end{theorem}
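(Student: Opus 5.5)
We need realizers ${\tt m},\eta,\mu$ for monotonicity, inflation and idempotence. In the potted setting each realizer must work on both layers: it must send \aaa realizers to \aaa realizers, and, by the definition of $\arr$ on potted sets, it must also send \ppp realizers to \ppp realizers. So in every case we will check two things. The first is a potential statement: the construction maps codes of \ppp computation trees of type $P_+$ to codes of \ppp computation trees of the required type. The second is an actual statement: whenever the input tree $F$-solves $P_-$, the output tree $F$-solves the target. Since $j$ is defined pointwise on $\Omega$, Observation~\ref{obs:monotone-to-transformation} turns uniform realizers on $\Omega$ into a closure transformation.

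Inflationary: $\eta(n)$ is a code of the one-node tree $\tpot=\{\ep\}$ (the root is a leaf) with $\psi(\ep)=n$. This is trivially full-branching and well-founded, and $n\in P_+$ gives type $P_+$. If $n\in P_-$, the subtree $\tactual=\{\ep\}$ is vacuously $(F;\nu)$-branching and $F$-solves $P_-$.

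Monotone: given a code $a$ of an element of $\paset{P}\arr\paset{Q}$ (so $\varphi_a$ maps $P_-\to Q_-$ and $P_+\to Q_+$) and a code $e$ of a tree $(\tpot,\nu,\psi)$, let ${\tt m}(a)(e)$ code $(\tpot,\nu,\varphi_a\circ\psi)$. It is total on leaves because $\psi$ lands in $P_+$ and $\varphi_a$ is total there, so the type is $Q_+$. The same $\tactual$ witnesses $F$-solving $Q_-$. The tree shape is untouched, so the index computations are uniform. For the potential part one also needs ${\tt m}(a)\in (A_+\arr B_+)$ only when $a$ is in the potential layer of the implication, and that is exactly our case.

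Idempotent: this is the main step, and it is essentially the tree-grafting in the proof of Proposition~\ref{prop:transitive}. Given a code $e$ of an outer tree $(\tpot,\nu,\psi)$ whose leaf labels $\psi(\rho)$ are themselves codes of trees $(\tpot^\rho,\nu^\rho,\psi^\rho)$ of type $P_+$, define $\mu(e)$ to code the tree $\tpot''=\tpot\cup\{\rho\fr\tau:\rho\in\leaf\tpot,\ \tau\in\tpot^{\psi(\rho)}\}$. On internal nodes of $\tpot$ it carries the label $\nu$, on $\rho\fr\tau$ with $\tau$ internal it carries $\nu^{\psi(\rho)}(\tau)$, and on leaves $\rho\fr\tau$ it carries $\psi^{\psi(\rho)}(\tau)$. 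Full-branching follows since each piece is full-branching. Well-foundedness holds because any path either stays in $\tpot$, which is well-founded, or enters a single grafted well-founded tree. Computability of the labels is uniform in $e$, because at a leaf $\rho$ the value $\psi(\rho)$ is already computed, being a \ppp realizer.

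For the actual part, take an $(F;\nu)$-branching $\tactual$ witnessing that $e$ solves $j(P)_-$. Then each leaf $\rho$ of $\tactual$ has $\psi(\rho)\in F^\solve(P_-)$ with a witness $\tactual^\rho$, and $\tactual\cup\{\rho\fr\tau:\rho\in\leaf\tactual,\tau\in\tactual^\rho\}$ is $(F;\nu'')$-branching and solves $P_-$. Choosing these witnesses secretly (non-computably) is allowed, since $\tactual$ carries no computability requirement. The only subtle point I expect is that at leaves of $\tpot$ outside $\tactual$, $\psi(\rho)$ is still a \ppp realizer of type $j(P)_+$. This is guaranteed by the potential layer of the hypothesis, so the grafted tree is a genuine \ppp computation tree everywhere, as the potential layer of $j(j(\paset{P}))\arr j(\paset{P})$ requires.
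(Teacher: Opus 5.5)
Your proposal is correct and follows essentially the same route as the paper. Inflation uses the one-node tree. Monotonicity post-composes the leaf labels with $\varphi_a$, keeping the same tree and query labels and hence the same actual witnesses. Idempotence grafts the inner computation trees onto the leaves of the outer tree, with the actual witnesses chosen non-computably. Your leaf labelling $\psi^{\psi(\rho)}(\tau)$ on the grafted tree is the correct one; the paper's displayed formula has $\nu_\rho(\rho')$ there by a slip.
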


\begin{proof}
\underline{\em Monotone:}
Let $e$ be a realizer for $\paset{A}\totarr \paset{B}$.
Consider the following algorithm $u$:
\begin{itemize}
\item Given a code of a computation $(\nu,\psi)$, transform the output by $e$ to obtain a code for the computation $(\nu,\varphi_e\circ\psi)$.
\end{itemize}

{\em Verification:}
First, let us focus on the following observation.
\begin{itemize}
\item $(\nu,\psi)$ and $(\nu,\varphi_e\circ\psi)$ have the same domains, so they shares a \ppp computation tree $\tpot$.
\item They also shares a query-labeling function $\nu$; so their $(F;\nu)$-branching subtrees are also equivalent.
\end{itemize}

{\em Potential.}
\begin{itemize}
\item Each \ppp realizer $e\in(\paset{A}\totarr \paset{B})_+$ transforms an element of $A_+$ into an element of $B_+$.
\item The above algorithm $ue$ recieves a code $a\in j(\paset{A})_+$ of a \ppp computation tree of type $A_+$, and returns a code of a \ppp computation tree of type $B_+$:
\begin{itemize}
\item As $\Phi^+_a\down\subseteq A_+$, we have $\Phi^+_{uea}=\varphi_e\circ\Phi^+_{a}\down\subseteq B_+$.
\end{itemize}
Thus, $uea\in j(\paset{B})_+$.
\item Therefore, we get $ue\in j(\paset{A})_+\arr j(\paset{B})_+=(j(\paset{A})\totarr j(\paset{B}))_+$.
\end{itemize}

{\em Actual.}
\begin{itemize}
\item Each \aaa realizer $e\in(\paset{A}\totarr \paset{B})_-$ transforms, in addition, an element of $A_-$ into an element of $B_-$.
\item Therefore, the above algorithm receives a code $a\in j(\paset{A})_-$ of a computation solving $A_-$, and returns a code of a computation solving $B_-$:
\begin{itemize}
\item There is an $(F;\nu)$-branching subtree $\tactual\subseteq\tpot$ such that $\Phi^F_{a|\tactual}\down\subseteq A_-$; hence, we get $\Phi^F_{uea|\tactual}=\varphi_e\circ\Phi_a^F\down\subseteq B_-$.
\end{itemize}
Thus, $uea\in j(\paset{B})_-$.
\item Therefore, we get $ue\in(j(\paset{A})\arr j(\paset{B}))_-$.
\end{itemize}

\medskip
\noindent
\underline{\em Inflationary:}
Let $a$ be a realizer for $\paset{A}$.
Consider the following algorithm $\eta$:
\begin{itemize}
\item Given $a$, returns a code of a computation which immediately outputs $a$ without making a query.
\end{itemize}

{\em Verification:}
\begin{itemize}
\item {\em Potential.} 
For each $a\in A_+$, $\eta a$ yields a \ppp computation of type $A_+$; hence, $\eta a\in j(\paset{A})_+$.
\begin{itemize}
\item As $a\in A_+$, we have $\Phi_{\eta a}^+\down=\{a\}\subseteq A_+$.
\end{itemize}
\item {\em Actual.}
For each $a\in A_-$, $\eta a$ $F$-solves $A_-$; hence, $\eta a\in j(\paset{A})_-$.
\begin{itemize}
\item As $a\in A_-$, we have $\Phi_{\eta a}^F\down=\{a\}\subseteq A_-$.
\end{itemize}
\end{itemize}

\medskip
\noindent
\underline{\em Idempotent:}
Let $a$ be a realizer for $jj(\paset{A})$.
\begin{itemize}
\item {\em Potential.}
Each \ppp realizer $a\in jj(\paset{A})_+$ is a code of a \ppp computation $(\nu,\psi)$ of type $j(\paset{A})_+$, whose output is a code of a \ppp computation of type $A_+$.
\end{itemize}

Let $\mu$ be an algorithm that composes these computations.
Formally, the algorithm $\mu$ is defined as follows:
\begin{itemize}
\item Each $a\in jj(\paset{A})_+$ induces a \ppp computation tree $(\tpot,\nu,\psi)$ of type $j(\paset{A})_+$.
To each leaf $\rho\in\leaf\tpot$, an output $\psi(\rho)\in j(A_+)$ is assigned.
This output also codes a \ppp computation tree $(\tpot_\rho,\nu_\rho,\psi_\rho)$ of type $A_+$.
By composing them, we construct the following computation tree $(\tpot^\mu,\nu^\mu,\psi^\mu)$:
\begin{align*}
&\tpot^\mu=\tpot\cup\{\rho\fr\sigma:\rho\in \leaf \tpot\mbox{ and }\sigma\in \tpot_\rho\}\\
&\nu^\mu(\sigma)=\nu(\sigma)\mbox{ for $\sigma\in \internal \tpot$}\\
&\nu^\mu(\rho\fr \sigma)=\nu_\rho(\sigma)\mbox{ for $\rho\in\leaf \tpot$ and $\sigma\in \internal \tpot_\rho$}\\
&\psi^\mu(\rho\fr \rho')=\nu_\rho(\rho')\mbox{ for $\rho\in\leaf\tpot$ and $\rho'\in \leaf \tpot_\rho$}
\end{align*}
\item Let $\mu a$ be a code of the tree $(\tpot^\mu,\nu^\mu,\psi^\mu)$.
\end{itemize}

{\em Verification:}
{\em Potential.}
\begin{itemize}
\item Since $\tpot^\mu$ is obtained by attaching a full-branching well-founded tree $\tpot_\rho$ to each leaf $\rho$ of the full-branching well-founded tree $T$, the resulting tree $\tpot^\mu$ is also a full-branching well-founded tree.

\item Since $\nu,\nu_\rho$ are $(X_+\times\Lambda)$-valued, the function $\nu^\mu$ is also $(X_+\times\Lambda)$-valued.
Moreover, $\nu^\mu$ is defined on $\internal \tpot^\mu$, the internal nodes.
\item Since $\psi_\rho$ is $A_+$-valued, the function $\psi^\mu$ is also $A_+$-valued.
Moreover, $\psi^\mu$ is defined on $\leaf \tpot^\mu$, the leaves.
\item Consequently, $(T^\mu,\nu^\mu,\psi^\mu)$ is a \ppp computation tree of type $A_+$; hence, $\mu a\in j(\paset{A})_+$.
\end{itemize}

{\em Actual.}
\begin{itemize}
\item Each \aaa realizer $a\in jj(\paset{A})_-$ is a code of a computation $(\nu,\psi)$ that $F$-solves $j(\paset{A})_-$, whose output is a code that $F$-solves $A_-$.
\begin{itemize}
\item There is an $(F;\nu)$-branching subtree $\tactual\subseteq\tpot$ such that the output $\psi(\rho)$ at each leaf $\rho\in\leaf\tactual$ codes a computation $(\tpot_\rho,\nu_\rho,\psi_\rho)$ that $F$-solves $A_-$.
\item That is, there is an $(F;\nu)$-branching subtree $\tactual_\rho\subseteq\tpot_\rho$ such that $\psi(\rho')\in A_-$ for any $\rho'\in\leaf\tactual_\rho$.
\end{itemize}
\item 
By joining these trees together, we obtain a new tree $\tactual^\mu\subseteq\tpot^\mu$.
\[\tactual^\mu=\tactual\cup\{\rho\fr\sigma:\rho\in \leaf \tactual\mbox{ and }\sigma\in \tactual_\rho\}.\]
Then, $\tactual^\mu\subseteq\tpot^\mu$ is $(F;\nu^\mu)$-branching.
This is because, for any $\sigma\in\internal\tactual^\mu$:
\begin{itemize}
\item If $\sigma\in\internal\tactual$ then $\sigma$ is an $\overline{F}_{\nu(\sigma)}$-branching node, and $\nu^\mu(\sigma)=\nu(\sigma)$.
\item If $\sigma=\rho\fr\tau$ for some $\rho\in\leaf\tactual$ and $\tau\in\internal\tactual_\rho$, then $\tau$ is an $\overline{F}_{\nu_\rho(\tau)}$-branching node in $\tactual_\rho$.
Since $\nu^\mu(\rho\fr\tau)=\nu_\rho(\tau)$, this $\sigma=\rho\fr\tau$ is an $\overline{F}_{\nu^\mu(\sigma)}$-branching node in $\tactual^\mu$.
\end{itemize}
\item $\psi^\mu$ is $A_-$-valued on $\leaf\tactual^\mu$:
\begin{itemize}
\item Each leaf $\sigma\in\leaf\tactual^\mu$ is of the form $\sigma=\rho\fr\rho'$ for some $\rho\in\leaf\tactual$ and $\rho'\in\leaf\tactual_{\rho}$.
Then $\psi^\mu(\rho\fr\rho')=\psi_\rho(\rho')\in A_-$.
\end{itemize}
Therefore, $(\tpot^\mu,\nu^\mu,\psi^\mu)$ $F$-solves $A_-$; that is, $\mu a\in j(\paset{A})_-$.
\end{itemize}

This shows that $\mu$ realizes $jj(\paset{A})\to j(\paset{A})$.
\end{proof}

\begin{definition}
For each LT-modality $j,k\colon\Omega\to\Omega$:
\[j\leq k\iff \bigcap_{P\in\Omega}\big(j(P)\arr k(P)\big)\mbox{ has an \aaa realizer.}\]
\end{definition}

\begin{theorem}
For any potted bilayer problems $F,G$:
\[F\tgw G\iff j_F\leq j_G.\]
\end{theorem}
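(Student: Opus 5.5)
The plan is to prove both directions by manipulating computation trees, reusing the composition construction from Proposition \ref{prop:transitive} and the idempotence part of the proof that $j_F$ is an LT-modality.

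\emph{($\Rightarrow$)} Suppose $F\tgw G$ via a total tree $\Phi=(\tpot_n,\nu_n,\psi_n:n\in\dom_+(F))$. We must build a single realizer $r$ with $r\in j_F(\paset{P})\arr j_G(\paset{P})$ for every potted set $\paset{P}$, at both the potential and the actual level.

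Given a code $a$ of a potential computation tree $(\tpot,\nu,\psi)$ using $F$, we let $ra$ be the code of the tree obtained by splicing. At each internal node $\sigma$ with query $q=\nu(\sigma)$, we insert the tree $\tpot_q$ of $\Phi$. Each leaf $\rho$ of $\tpot_q$ with $\psi_q(\rho)=m$ is then continued as the node $\sigma\fr m$ of the original tree. This is exactly the $\kappa$-construction of Proposition \ref{prop:transitive}, with the outer tree now of type $P_+$ instead of $\cod_+$ of a problem.

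\emph{Potential level.} Full-branching, well-foundedness and totality of the labels follow as in that proof. The key point is that potential-totality of $\Phi$ ($\nu(\sigma)\in X_+=\dom_+(F)$ gives a total tree) guarantees $ra\in j_G(\paset{P})_+$ for every $a\in j_F(\paset{P})_+$. This holds independently of $P_-$, because it is uniform in $\paset{P}$.

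\emph{Actual level.} If $a$ $F$-solves $P_-$ via an $(F;\nu)$-branching subtree $\tactual$, then every internal node of $\tactual$ has a query in $X_-$ together with a secret $\alpha$, with $(\nu(\sigma)|\alpha)\in\dom(F)$. The reduction supplies a $(G;\nu_q)$-branching subtree of $\tpot_q$ whose leaves output elements of $F(q|\alpha)$. Since $\tactual$ branches along some set in $\overline F_{q}$, we choose the secret $\alpha$ witnessing that branching set and use Nimue's subtree for $(q|\alpha)$. The resulting outputs lie in that set, so the concatenation stays inside $\tactual$. Gluing these subtrees yields a $(G;\cdot)$-branching subtree of the spliced tree whose leaves carry values in $P_-$.

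One subtlety needs checking: the branching set of $\tactual$ at $\sigma$ must equal some $F(q|\alpha)$ exactly. By definition of $(F;\nu)$-branching it does, and the reduction only guarantees outputs inside $F(q|\alpha)$, which suffices.

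\emph{($\Leftarrow$)} Suppose $r$ realizes $\bigcap_P(j_F(P)\arr j_G(P))$. Instantiate $\paset{P}$ with a potted set encoding the problem $F$ itself. Fix a public instance $x\in X_+$ and define
\[\paset{P}_x=(\{y\in Y:\ \text{condition}\};\ Y).\]
We cannot use a single secret here, so the bookkeeping must be redone. A natural approach is to use predicates over the index set $X\times\Lambda$ rather than single potted sets. Since a realizer of $\bigcap_P$ works for all $P$, we may, for each $(x|\alpha)$, take $\paset{P}_{x,\alpha}=(F(x|\alpha);Y)$ when $(x|\alpha)\in\dom(F)$, and $(\emptyset;Y)$ otherwise.

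Let $\eta_x$ be the computable one-query tree that asks $x$ and outputs the answer. Then $\eta_x\in j_F(\paset{P}_{x,\alpha})_+$, and $\eta_x\in j_F(\paset{P}_{x,\alpha})_-$ whenever $(x|\alpha)\in\dom(F)$. So $r\eta_x$ is a potential $G$-tree of type $Y$, and it $G$-solves $F(x|\alpha)$. Moreover, $x\mapsto r\eta_x$ is computable and total on $X_+$, which is exactly a total tree witnessing $F\tgw G$, with Nimue choosing the solving subtree for each $(x|\alpha)$.

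We expect the main obstacle to be the actual-level splicing in ($\Rightarrow$): matching secrets to the branching sets of $\tactual$. Bilayer problems are identified via $\overline{F}$, so we pick a witnessing secret per node. This uses choice, which is allowed in the meta-logic.
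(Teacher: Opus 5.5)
Your proposal is correct and follows the paper's proof. For ($\Rightarrow$), the paper also composes an $F$-computation for $\paset{P}$ with the reduction tree; it phrases this as uniform transitivity applied to $\paset{P}$ viewed as a problem $\dot P$, which is exactly your explicit splicing. For ($\Leftarrow$), it also applies the realizer to the one-query tree for $x$ with $\paset{P}$ instantiated as the solution set of $F$. Your indexing of these instances by the secret, $\paset{P}_{x,\alpha}=(F(x|\alpha);Y)$, is a welcome refinement, since the paper's $P_n=(F(n);\cod_+(F))$ silently treats $F$ as secret-free.
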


\begin{proof}
($\Rightarrow$)
Think of $P\in\Omega$ as a potted problem $\dot{P}(\ast)=P_-$, where $\dom_+(\dot{P})=\om$, $\dom_-(\dot{P})=\{\ast\}$ and $\cod_+(\dot{P})=P_+$.
\begin{itemize}
\item Then, an \aaa realizer $a$ for $j_F(P)$ can be identified with an index of a reduction for $\dot{P}\tgw F$.
\item If $e$ is an index of a reduction for $F\tgw G$, by transitivity (Proposition \ref{prop:transitive}), we get an index $t(a,e)$ of a reduction for $\dot{P}\tgw G$.
\item That is, $a\mapsto t(a,e)$ gives an \aaa realizer for $\bigcap_P(j_F(P)\arr j_G(P))$.
\end{itemize}

($\Leftarrow$)
Let $a$ be an \aaa realizer for $\bigcap_P(j_F(P)\arr j_G(P))$.
\begin{itemize}
\item For each \ppp instance $n\in\dom_+(F)$, define the potted set $P_n$ as follows:
\begin{itemize}
\item If $n\in\dom(F)$ then put $P_n=(F(n);\cod_+(F))$; otherwise, $P_n=(\emptyset;\cod_+(F))$.
\end{itemize}
\item Let $q_n$ be an algorithm which makes the query $n$ to $F$, and outputs an response from $F$ as is.
\begin{itemize}
\item If $n\in\dom_+(F)$ then it potentially outputs an element in $\cod_+(F)$, and if $n\in\dom(F)$ then it actually outputs an element in $F(n)$.
\item In the latter case, this means that $\Phi_{q_n}^F$ solves $P_n$.
Therefore, in any case, $q_n$ is an \aaa realizer for $j_F(P_n)$.
\end{itemize}
\item Applying $a$ to this, we obtain $aq_n\in j_G(P_n)$.
\begin{itemize}
\item If $n\in\dom_+(F)$ then $\Phi_{aq_n}^+$ is a \ppp computation tree of type $\cod_+(F)$, and if $n\in\dom(F)$ then $\Phi_{aq_n}^G$ solves $F(n)$.
\item Consequently, $n\mapsto aq_n$ gives an index of a reduction for $F\tgw G$.
\end{itemize}
\end{itemize}
\end{proof}

Hereafter, $j_F$-realizability is referred to simply as {\em $F$-realizability}, and the corresponding \aaa realizer is referred to as an {\em $F$-\aaa realizer}.
Note that $j_F$ is always $\neg\neg$-dense by the nonemptyness condition $F(x)\not=\emptyset$.

\subsection{Sheaf subtopos}\label{sec:topos-sheafsubtopos}
The structure consists of the truth-order on {\em predicates} $(\Omega^X,\leq_X)$ and {\em substitutions} (commonly called a {\bf tripos} \cite{vOBook}) merely provides a logical structure.
In particular, the logical structure of total computability used here is called the {\bf Grayson tripos}.
A tripos, by itself, has no objects.
Therefore, we typically add the following kind of objects.

\begin{definition}[\cite{vO97}]
An {\bf $\Omega$-set} is a set $X$ equipped with an internal equivalence relation $\approx_X\colon X^2\to\Omega$; that is, reflexivity, symmetricity, transitivity are realizable.
\end{definition}

\begin{example}
Define $\approx_{\sf N}\colon\om^2\to\Omega$ as follows:
\[
(n\approx_{\sf N} m)=
\begin{cases}
(\{n\};\om)&\mbox{ if }n=m,\\
(\emptyset;\om)&\mbox{ if }n\not=m.
\end{cases}
\]

Then ${\sf N}:=(\om,\approx_{\sf N})$ forms an $\Omega$-set, called the {\bf natural numbers object}.
\end{example}

We make a brief comment here:
\begin{itemize}
\item For an $\Omega$-set $X$, $(x\approx_Xx)$ is treated as the set of all {\em $X$-codes} of $x$.
For example, the unique ${\sf N}$-code of a natural number $n$ is $n$ itself.
\item In $\N$, if the external equality $=$ fails, then the internal equality $\approx_{\sf N}$ cannot be realizable.
Such an $\Omega$-set is usually called an {\em assembly}.
Therefore, an assembly has only information on codes.
\end{itemize}

\begin{fact}
The category of $\Omega$-sets and functional relations is a topos.
\end{fact}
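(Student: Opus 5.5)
The plan is to reduce the statement to the general tripos-to-topos construction of Hyland--Johnstone--Pitts, as in van Oosten \cite{vO97,vOBook}. It then suffices to check that the assignment $X\mapsto(\Omega^X,\leq_X)$, with substitution $\varphi\mapsto\varphi\cdot f$, is a tripos. Once that holds, the category of $\Omega$-sets (sets carrying a realizable internal equivalence relation $\approx_X\colon X^2\to\Omega$) with functional relations modulo realizable equivalence is exactly the topos that construction produces. So the work splits into three verifications, each done uniformly in $X$ and each producing realizers that do not depend on $X$.

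\textbf{Step 1: Heyting prealgebras.} For each $X$ I will show that $(\Omega^X,\leq_X)$ is a Heyting prealgebra under the pointwise operations. Conjunction is the product $\paset{A}\times\paset{B}$, disjunction is the sum $\paset{A}+\paset{B}$, implication is $\arr$, and the constants are $\top,\bot$. For each law I will write down explicit realizers, namely pairing and projections, case distinction, currying and evaluation. The point to check is that these realizers lie in the potential part as well as the actual part: they must send $A_+$-data to $B_+$-data totally. Since they are the usual total combinators, this holds. The definition of implication intersects with $A_+\arr B_+$ precisely so that currying stays total on potential realizers.

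\textbf{Step 2: substitution, quantifiers, Beck--Chevalley.} Substitution along $f\colon X\to Y$ is monotone and preserves the Heyting operations on the nose, because all operations are pointwise. For quantifiers I take, for $\varphi\in\Omega^X$,
\[
\exists_f\varphi(y)=\bigcup_{f(x)=y}\varphi(x),
\qquad
\forall_f\varphi(y)=\bigcap_{f(x)=y}\varphi(x)
\]
using the potted-set union and intersection of Definition~\ref{def:set-operations-on-pasets}. The intersection still contains $0$ in its potential part, and the empty fibre gives $\top$. I will then check the adjunctions $\exists_f\dashv f^*\dashv\forall_f$ with identity-like realizers. The Beck--Chevalley condition is immediate, since both sides are computed fibrewise by the same union or intersection.

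\textbf{Step 3: generic predicate.} I take $\mathrm{Prop}=\Omega$ itself, the set of all potted sets, with $\mathrm{id}_\Omega\in\Omega^\Omega$ as the generic predicate. Every $\varphi\in\Omega^X$ equals $\mathrm{id}_\Omega\cdot\varphi$ on the nose, so genericity is trivial. With Steps 1--3 in hand I apply the tripos-to-topos theorem and conclude that $\Omega$-sets with functional relations form a topos.

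\textbf{Main obstacle.} I expect the only real difficulty to be the bookkeeping in Step 1: the realizers witnessing the Heyting laws must lie in the actual parts of the relevant implications, and those are intersected with the potential parts. The delicate case is the interaction of implication with the convention that $0\in A_+$. Here I would first check that the chosen code $0$ of the identity, together with total pairing, actually lies in $(A_-\arr B_-)\cap(A_+\arr B_+)$ whenever the paper needs it. If that fails, I would follow Grayson's formulation in \cite{vO97} and reindex the potential part, replacing $A_+$ by $A_+\cup\{0\}$ with $0$ interpreted as a default total code. Everything else is routine and mirrors the standard modified-realizability tripos.
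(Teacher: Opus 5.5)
Your route (show that $X\mapsto(\Omega^X,\leq_X)$ is a tripos, then apply the Hyland--Johnstone--Pitts construction) is what the paper intends; it gives no argument beyond citing the tripos-to-topos construction and van Oosten's Grayson topos. But Step~2, which you treat as immediate, fails as stated. The plain fibrewise union and intersection are not adjoint to $f^*$ when $f$ is not surjective. The reason is that one realizer must serve every $y\in Y$ and must be total on potential realizers, including over points outside $f[X]$.

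For $\exists_f$: the union over an empty fibre is $(\emptyset;\emptyset)$, which is not a potted set, and no patch helps. Take $X=\{x_0\}$, $Y=\{y_0,y_1\}$, $f(x_0)=y_0$, $\varphi(x_0)=(\{0\};\{0\})$, $\psi(y_0)=(\{5\};\{0,5\})$, $\psi(y_1)=(\emptyset;\{0\})$. The constant-$5$ function realizes $\varphi\leq_X f^*\psi$. Yet a realizer $n$ of $\exists_f\varphi\leq_Y\psi$ would need $n(0)\in\{5\}$, since $0$ is an actual realizer over $y_0$. It would also need $n(0)\in\{0\}$, since $0$ is a potential realizer over $y_1$ whatever you put there.

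For $\forall_f$ with $\top=(\N;\N)$ on empty fibres: a realizer of $\psi\leq_Y\forall_f\varphi$ must be total on $\psi(y)_+$ for $y\notin f[X]$, whereas a realizer of $f^*\psi\leq_X\varphi$ may be a partial search. Take $X=\{e\geq1:\text{program } e \text{ halts on input } e\}$, $Y=X\sqcup\{\star\}$, $f$ the inclusion, $\psi(e)=(\{e\};\{0,e\})$, $\psi(\star)=(\N;\N)$, and $\varphi(e)=(\{s_e\};\{0,s_e\})$ with $s_e$ the halting time. Searching for $s_e$ realizes $f^*\psi\leq_X\varphi$. A realizer of $\psi\leq_Y\forall_f\varphi$, however, would be a total computable function returning $s_e$ on $X$, and that decides $X$.

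The repair is standard, and there are two options.
\begin{enumerate}
\item Check the adjunctions only along product projections $X\times Z\to X$. There all fibres are simultaneously empty or nonempty, so your formulas do work, with $\bot$ and $\top$ realized by the constant-zero code when $Z=\emptyset$. General $f$ is then obtained from an equality predicate.
\item Use guarded formulas, valid for every $f$:
\[
\exists_f\varphi(y)=\Bigl(\bigl\{\langle1,m\rangle: m\in{\textstyle\bigcup_{f(x)=y}}\varphi(x)_-\bigr\};\ \{0\}\cup\bigl\{\langle1,m\rangle: m\in{\textstyle\bigcup_{f(x)=y}}\varphi(x)_+\bigr\}\Bigr),
\qquad
\forall_f\varphi(y)={\textstyle\bigcap_{f(x)=y}}\bigl(\top\arr\varphi(x)\bigr).
\]
The nontrivial directions of the adjunctions are realized by ``if $k=\langle1,m\rangle$ then $n(m)$ else $0$'' and by $m\mapsto$ a code of $\lambda b.\,n(m)$. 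The latter is total by the $s$-$m$-$n$ theorem, since $n(m)$ is only evaluated once a dummy argument is supplied. Beck--Chevalley still holds fibrewise on the nose.
\end{enumerate}

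Two smaller points also need fixing. First, in Step~1 what you need is that $0$ lie in the \emph{potential} part $A_+\arr B_+$ of every implication. That holds precisely when $0$ codes the constant-zero function; the identity fails for $A_+=\N$, $B_+=\{0\}$. The implication should be read as $\bigl((A_-\arr B_-)\cap(A_+\arr B_+);\ A_+\arr B_+\bigr)$, since the paper's display has its components misplaced. Second, the identification with the Hyland--Johnstone--Pitts topos requires reading $\approx_X$ as a partial equivalence relation, with only symmetry and transitivity realized. The paper's own $\approx_{\mathsf N}$ is not uniformly reflexive.
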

This is known as the {\em tripos-to-topos construction}.
In our setting, the resulting topos of total computability is known as the {\bf Grayson topos} \cite{vO97}.
By adding a bit more structure to satisfy extensionality, one obtains the modified realizability tripos and the associated modified realizability topos.
However, within the scope of our discussion, there is no difference from the Grayson topos.
Therefore, in this article, we often refer to the Grayson topos as the modified realizability topos.

The interpretation of quantification in the category of $\Omega$-sets is given by:
\begin{itemize}
\item $\eval{\exists x^X\varphi(x)}=\bigcup_{x\in X}[(x\approx_Xx)\times\eval{\varphi(x)}]$.
\item $\eval{\forall x^X\varphi(x)}=\bigcap_{x\in X}[(x\approx_Xx)\arr\eval{\varphi(x)}]$.
\end{itemize}

If each quantification applies only to the natural numbers, and if $j$ is $\neg\neg$-dense (i.e., $j(p)_-$ is nonempty for any $p$), then the Kuroda $j$-translation can be concisely described as follows:
\begin{itemize}
\item $\eval{\exists x^\N\varphi(x)}=\bigcup_{x\in \om}[j(x\approx_{\sf N} x)\times\eval{\varphi(x)}^j]$.
\item $\eval{\forall x^\N\varphi(x)}=\bigcap_{x\in \om}[j(x\approx_{\sf N} x)\arr j(\eval{\varphi(x)}^j)]$.
\end{itemize}

Thus, $j$-realizability for arithmetical formulas can also be defined in the same way as before.
Note that here we distinguish between the symbols: $\N$ is the type of natural numbers, $\sfN$ is its interpretation in the category, and $\om$ is the set of all meta-natural numbers.

\begin{remark}
It is known that a closure transformation on a tripos corresponds to a Lawvere-Tierney topology on the corresponding topos \cite{vOBook}.
Furthermore, for each LT-topology $j$, there is a notion of a $j$-sheaf, and the set of all $j$-sheaves forms a subtopos of the original topos.
Then, $j$-realizability provides a semantics for the $j$-sheaf subtopos.
For computability-theoretic aspects of sheaves, see also \cite{AhBa26}.
\end{remark}

Although this article does not explain the notion of a sheaf, we offer a few comments below regarding what the natural numbers object is and what its exponential object is, as these are necessary for interpreting second-order arithmetic.

\begin{example}
${\sf N}_j:=(\om,j\circ{\approx_\N})$ is the natural numbers object in the $j$-sheaf subtopos.
If $j=j_F$ for some potted bilayer problem $F$, this means:
\[
(n\approx_{{\sf N}_j} m)=
\begin{cases}
(F^\solve\{n\};F^\solve_+\om)&\mbox{ if }n=m,\\
(\emptyset;F^\solve_+\om)&\mbox{ if }n\not=m.
\end{cases}
\]

Using this, we can verify that $j$-realizability for the arithmetical formulas mentioned above provides the arithmetical semantics for the $j$-sheaf subtopos.
\end{example}

Since the assemblies form an exponential ideal, the exponential object ${\sf N}_j^{{\sf N}_j}$ in the $j$-sheaf subtopos is also an assembly.

\begin{example}\label{exa:pre-exponential}
If $j=j_F$, the underlying set of ${\sf N}_j^{{\sf N}_j}$ consists of all functions $f\colon\om\to\om$ which are total $F$-computable as $\sfN_j\to\sfN_j$; that is, there is an index $e$ such that:
\begin{itemize}
\item {\em Potential.}
Given a \ppp $\sfN_j$-code $a$, i.e.~$a\in F^\solve_+\om$, $\Phi_e^+(a)$ outputs a \ppp $\sfN_j$-code.
\item {\em Actual.}
Given an \aaa $\sfN_j$-code $a$ of $n$, i.e.~$a\in F^\solve\{n\}$, $\Phi_e^F(a)$ outputs an \aaa $\sfN_j$-code of $f(n)$.
\end{itemize}
Such an $e$ is an \aaa $\sfN_j^{\sfN_j}$-code of $f$.
The set of \ppp codes is $F^\solve_+\om\arr F^\solve_+\om$.
\end{example}
However, using the technique below, this can be simplified.

\subsection{Translation techniques}

In the logic of potted sets, the set operations introduced in Definition \ref{def:set-operations-on-pasets} are used in the interpretation of predicate logic.
It should be noted that the LT-modality obtained from an oracle also satisfies monotonicity with respect to set operations.

\begin{obs}\label{obs:pot-truth-inclusion}
Let $j=j_F$, and $\paset{A},\paset{B},\paset{B}_i$ be potted sets.
\begin{enumerate}
\item $\paset{A}\subseteq \paset{B}$ implies $j(\paset{A})\subseteq j(\paset{B})$.
\item $\bigcup_{i\in I}j(\paset{B}_i)\subseteq j(\bigcup_{i\in I}\paset{B}_i)$.
\end{enumerate}
\end{obs}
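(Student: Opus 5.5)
The plan is to unfold the definition of $j_F$ directly: both claims are inclusions of sets of codes, so it suffices to show that every code lying in the left-hand side also lies in the right-hand side. Recall that $j(\paset{P})_+=F^\solve_+(P_+)$ is the set of codes of \ppp computation trees $(\tpot,\nu,\psi)$ of type $P_+$. Here $\tpot$ is full-branching and well-founded, $\nu$ is total on the internal nodes with values in $X_+$, and $\psi$ is total on the leaves with values in $P_+$. Likewise $j(\paset{P})_-=F^\solve(P_-)$ is the set of such codes for which some $(F;\nu)$-branching subtree $\tactual$ has $\psi[\leaf\tactual]\subseteq P_-$. In both cases the only way the target set enters is through a range condition on $\psi$, which is monotone in the target set.

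For (1), take $\paset{A}\subseteq\paset{B}$, so $A_+\subseteq B_+$ and $A_-\subseteq B_-$. Let $a\in j(\paset{A})_+$. Then $a$ codes a \ppp computation tree whose leaf labels lie in $A_+\subseteq B_+$. Conditions (1) and (2) of the terminology do not mention the target at all, so the same tree is of type $B_+$, and $a\in j(\paset{B})_+$. Now let $a\in j(\paset{A})_-$. The witnessing $(F;\nu)$-branching subtree $\tactual$ has leaf labels in $A_-\subseteq B_-$. The same $\tactual$ therefore witnesses that $a$ $F$-solves $B_-$, so $a\in j(\paset{B})_-$.

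For (2), note that $\paset{B}_i\subseteq\bigcup_{i\in I}\paset{B}_i$ for each $i$, taken componentwise as in Definition \ref{def:set-operations-on-pasets}. Applying (1) gives $j(\paset{B}_i)\subseteq j(\bigcup_i\paset{B}_i)$ for each $i$. Taking the union over $i\in I$, again componentwise, yields the claim.

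There is no real obstacle here. The only point requiring care is that membership in $j(\paset{P})$ is a property of the code alone, namely of the tree, $\nu$ and $\psi$, together with an existential witness $\tactual$. Nothing is re-encoded, so the identical code works and no realizer transformation is needed. This is why we obtain genuine inclusions rather than mere realizability of implications. One should also check that both components of the union are handled, including the constraint $0\in A_+$, but this is immediate.
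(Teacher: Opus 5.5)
Your proposal is correct and follows the same route as the paper. For (1), the paper also observes that the identical code is a potential computation tree of type $B_+$, and that the same witnessing subtree shows it $F$-solves $B_-$. For (2), it likewise applies (1) to each inclusion $\paset{B}_k\subseteq\bigcup_{i\in I}\paset{B}_i$ and takes the union.
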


\begin{proof}
(1)
\begin{itemize}
\item {\em Potential.}
Each \ppp realizer $a\in j(\paset{A})_+$ is a code of a \ppp computation tree of type $A_+$.
As $A_+\subseteq B_+$, in particular, $a$ is a code of a \ppp computation tree of type $B_+$; that is, $a\in j(\paset{B})_+$.
\item {\em Actual.}
In addition, if $a\in j(\paset{A})_-$ then $a$ $F$-solves $A_-$.
As $A_-\subseteq B_-$, in particular, $a$ $F$-solves $B_-$; therefore, $a\in j(\paset{B})_-$.
\end{itemize}

(2)
For each $k\in I$, since $\paset{B}_k\subseteq\bigcup_{i\in I}\paset{B}_i$ by the item (1), we get $j(\paset{B}_k)\subseteq j(\bigcup_{i\in I}\paset{B}_i)$.
Therefore, we get $\bigcup_{k\in I}j(\paset{B}_k)\subseteq j(\bigcup_{i\in I}\paset{B}_i)$.
\end{proof}

This leads to the following useful lemma:
\begin{lemma}\label{lem:AE-thm-simplify}
For potted sets $\paset{A},\paset{B}_i$, realizers for the following two formulas are mutually convertible.
Furthermore, such a conversion does not depend on $\paset{A},\paset{B}_i$.
\begin{enumerate}
\item $\paset{A}\arr j(\bigcup_{i\in I}\paset{B}_i)$.
\item $j(\paset{A})\arr j(\bigcup_{i\in I}j(\paset{B}_i))$.
\end{enumerate}
\end{lemma}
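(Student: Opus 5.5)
We build each conversion from the closure-operator structure of $j=j_F$ (the realizers ${\tt m},\eta,\mu$ from the theorem that $j_F$ is a LT-modality) together with the inclusion facts of Observation \ref{obs:pot-truth-inclusion}. Since those realizers are fixed indices independent of the potted sets, the uniformity claim will come for free, provided every step uses only them and fixed compositions.

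\emph{From (1) to (2).} Let $e$ realize $\paset{A}\arr j(\bigcup_i\paset{B}_i)$. By Observation \ref{obs:pot-truth-inclusion}(1), each $\paset{B}_k\subseteq\bigcup_i\paset{B}_i$, and $\eta$ maps $\paset{B}_k$ into $j(\paset{B}_k)$. Hence $\eta$ realizes $\bigcup_i\paset{B}_i\arr\bigcup_i j(\paset{B}_i)$: a realizer of the union lies in some $\paset{B}_k$, both potentially and actually, and this $\eta$ works uniformly for all $k$. Applying monotonicity ${\tt m}$ gives ${\tt m}\eta$, which realizes $j(\bigcup_i\paset{B}_i)\arr j(\bigcup_i j(\paset{B}_i))$. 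Composing with $e$ yields a realizer $c$ of $\paset{A}\arr j(\bigcup_i j(\paset{B}_i))$. Next, ${\tt m}c$ realizes $j(\paset{A})\arr jj(\bigcup_i j(\paset{B}_i))$, and post-composing with $\mu$ gives the required realizer $\lambda a.\,\mu({\tt m}c\,a)$ of (2).

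\emph{From (2) to (1).} Let $d$ realize (2). Precomposing with $\eta$ gives a realizer of $\paset{A}\arr j(\bigcup_i j(\paset{B}_i))$. By Observation \ref{obs:pot-truth-inclusion}(2), $\bigcup_i j(\paset{B}_i)\subseteq j(\bigcup_i\paset{B}_i)$, so the identity code $0$ realizes this inclusion; checking the potential and actual components separately is immediate from the inclusion. Then ${\tt m}0$ maps $j(\bigcup_i j(\paset{B}_i))$ into $jj(\bigcup_i\paset{B}_i)$, and $\mu$ maps that into $j(\bigcup_i\paset{B}_i)$. Composing all of these gives a realizer of (1).

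The only delicate point is to ensure that each of these composites is a genuine realizer in the potted sense. An element of $\paset{X}\arr\paset{Y}$ must send $X_-$ into $Y_-$, and must also send $X_+$ into $Y_+$ totally. This holds because the modality realizers were verified on both layers, and the compositions of total functions remain total on the potential domains. I expect the most care to be needed for the potential layer of the union step: there one must note that $\bigcup_i(B_i)_+$ is the potential part of the union, so $\eta$ lands in $\bigcup_i j(\paset{B}_i)_+$.
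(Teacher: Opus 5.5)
Your proposal is correct and follows essentially the same route as the paper. The direction (2)$\Rightarrow$(1) is identical: precompose with $\eta$, lift the inclusion $\bigcup_{i}j(\paset{B}_i)\subseteq j(\bigcup_{i}\paset{B}_i)$ of Observation~\ref{obs:pot-truth-inclusion}(2) via monotonicity, then collapse with $\mu$. In (1)$\Rightarrow$(2) you only reorder the same ingredients, composing $e$ with ${\tt m}\eta$ before lifting and applying $\mu$ where the paper lifts $e$ first and applies ${\tt m}\eta$ last, and this changes neither correctness nor the uniformity of the conversion.
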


\begin{proof}
(1)$\arr$(2):
\begin{itemize}
\item Assume that the formula (1) is realizable by $e\in \paset{A}\arr j(\bigcup_{i\in I}\paset{B}_i)$.
\item By monotonicity, we have $ue\in j(\paset{A})\arr jj(\bigcup_{i\in I}\paset{B}_i)$.
\item By inflation, for any $i\in I$, we have $\eta\in \paset{B}_i\arr j(\paset{B}_i)$; thus, $\eta\in\bigcup_{i\in I}\paset{B}_i\arr\bigcup_{i\in I}j(\paset{B}_i)$.
\item By monotonicity again, we have $u\eta\in j(\bigcup_{i\in I}\paset{B}_i)\arr j(\bigcup_{i\in I}j(\paset{B}_i))$.
\item Combining the above realizers with idempotence:
\[
j(\paset{A})\xarr{ue}jj\left(\bigcup_{i\in I}\paset{B}_i\right)\xarr{\mu}j\left(\bigcup_{i\in I}\paset{B}_i\right)\xarr{u\eta} j\left(\bigcup_{i\in I}j(\paset{B}_i)\right).
\]
This realizes the formula (2).
\end{itemize}

(2)$\arr$(1):
\begin{itemize}
\item Assume that the formula (2) is realizable by $e\in j(\paset{A})\arr j(\bigcup_{i\in I}j(\paset{B}_i))$.
\item By Observation \ref{obs:pot-truth-inclusion} (2), we have $\bigcup_{i\in I}j(\paset{B}_i)\subseteq j(\bigcup_{i\in i}\paset{B}_i)$.
Let $i$ be a code of this inclusion map.
\item By monotonicity, we get $ui\in j(\bigcup_{i\in I}j(\paset{B}_i))\arr jj(\bigcup_{i\in I}\paset{B}_i)$.
\item Combining the above realizers with inflation and idempotence:
\[
\paset{A}\xarr{\eta}j(\paset{A})\xarr{e}j\left(\bigcup_{i\in I}j(\paset{B}_i)\right)\xarr{ui} jj\left(\bigcup_{i\in I}\paset{B}_i\right)\xarr{\mu}j\left(\bigcup_{i\in I}\paset{B}_i\right).
\]
This realizes the formula (1).
\end{itemize}
\end{proof}

\begin{example}\label{exa:AE-thm-realizer-simplify}
The Kuroda $j$-translation of $\forall x^\N\exists y^\N A(x,y)$ according to the definition is:
\[
\bigcap_{x\in\om}\left(j(x\approx_\sfN x)\arr j\left(\bigcup_{y\in\N}\big(j(y\approx_\sfN y)\times A^j(x,y)\big)\right)\right)
\]

{\em Discussion:}
\begin{itemize}
\item By the $\land$-preservation, there is an effective conversion between $j(y\approx_\sfN y)\times A^j(x,y)$ and $j((y\approx_\sfN y)\times A^j(x,y))$.
\item by applying Lemma  \ref{lem:AE-thm-simplify}, the above formula is rewritten as follows:\[
\bigcap_{x\in\om}\left((x\approx_\sfN x)\arr j\left(\bigcup_{y\in\N}\big((y\approx_\sfN y)\times A^j(x,y)\big)\right)\right)
\]
\end{itemize}

Since this rewriting will be used frequently later on, we provide a detailed description for $j=j_F$ here.
Since $(x\approx_\sfN x)=(\om;\{x\})$, one can think of a $j$-realizer for $\forall x^\N\exists y^\N A(x,y)$ as:
\begin{itemize}
\item {\em Potential.}
\[
\bigcap_{x\in\om}\left(\om\arr F_+^\solve\left(\bigcup_{y\in\om}\big(\om\times A_+^j(x,y)\big)\right)\right)
\]
That is, each \ppp realizer is a code of a total computable multifunction $\Phi$ which, given an input $x \in \om$, outputs the pair of a value $y\in\om$ and a \ppp $j$-realizer of $A(x,z)$ for some $z$.
\item {\em Actual.}
It is the intersection of the above \ppp part with the following:
\[
\bigcap_{x\in\om}\left(\{x\}\arr F^\solve\left(\bigcup_{y\in\om}\big(\{y\}\times A_-^j(x,y)\big)\right)\right)
\]
That is, given $x\in\om$, the total multifunction $\Phi^F$ outputs the pair of a value $y\in\om$ and an \aaa $j$-realizer for $A(x,y)$.
\end{itemize}
\end{example}

\begin{example}\label{exa:func-on-nat}
If $j=j_F$, an $F$-computable function ${\sf N}_j\to{\sf N}_j$ can be identified with a computable function ${\sf N}_j\to{\sf N}_{jj}$, where ${\sf N}_{jj}=(\om,j\circ{\approx_{\sfN_j}})=(\om,j\circ j\circ{\approx_{\sfN}})$, by idempotence of $j$, it is merely a computable function of type ${\sf N}_j\to{\sf N}_j$.
The above argument shows that a computable function of type ${\sf N}_j\to{\sf N}_j$ can be identified with that of type ${\sf N}\to{\sf N}_j$.
Again, this is just an $F$-computable function of type ${\sf N}\to{\sf N}$.

In summary, the underlying set of the exponential object $\sfN_j^{\sfN_j}$ in Example \ref{exa:pre-exponential} can be replaced with the set of all total $F$-computable functions $f\colon\om\to\om$; that is, there is an index $e$ such that:
\begin{itemize}
\item {\em Potential.}
Given $n\in\om$, $\Phi_e^+(n)$ halts.
\item {\em Actual.}
Given $n\in\om$, $\Phi_e^F(n)$ outputs $f(n)$.
\end{itemize}
Such an $e$ is an \aaa $\sfN_j^{\sfN_j}$-code of $f$ (which will also be called an $F$-code).
The set of \ppp codes is $\om\arr \om$.
\end{example}

\section{Realizability}\label{sec:main-realizability}

\subsection{\bflogic{MP} and its variants}

We now address the main topic of this article: the analysis of Markov's principle and its weak variants.
Markov's principle, also known as the principle of double negation elimination for $\Sigma_1$ formulas, is formulated as follows.

\begin{definition}
Markov's principle
$\bflogic{MP}$ is the following principle:
\[\forall \alpha\in 2^\N.\ (\neg\forall n\in\N.\ \alpha(n)=0\ \longrightarrow\ \exists n\in\N\ \alpha(n)\not=0).\]
\end{definition}

\begin{definition}
$n$-disjunctive Markov's principle
$\bflogic{MP}^\lor_n$ is the following principle:
\begin{align*}
U(\alpha)&\equiv
\forall u,v\in n\times\N.\ (\alpha(u)\not=0\land \alpha(v)\not=0)\to u=v,
\\
E(\alpha)&\equiv
\neg\forall u\in n\times\N.\ \alpha(u)=0
\\
\bflogic{MP}^\lor_n&\equiv\forall \alpha\in 2^{n\times\N}.\ 
[U(\alpha)\land E(\alpha)
\longrightarrow
\exists k<n\forall s\in\N\ \alpha(k,s)=0].
\end{align*}
\end{definition}

\begin{notation}
For each $k,s$, put $\alpha_k(s)=\alpha(k,s)$.
If $\alpha(n)=0$ for any $n\in\N$, then we write $\alpha=0^\infty$.
\end{notation}

\begin{obs}\label{obs:V-true-realizable-trivial}
$U(\alpha)$ is true if and only if it is $F$-realizable, and such a realizer does not depend on $\alpha$.
\end{obs}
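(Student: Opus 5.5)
The formula $U(\alpha)$ is, up to the Kuroda $j$-translation, a universal statement whose matrix is an implication into an atomic equality between natural numbers. My plan is to compute $\eval{U(\alpha)}^{j}$ explicitly for $j=j_F$ and show two things. First, if $U(\alpha)$ fails, the value has empty actual part. Second, if $U(\alpha)$ holds, a single fixed code realizes it for every such $\alpha$. Throughout I use that $j_F$ is $\neg\neg$-dense, so $j(\bot)=\bot$, and that $j(\top)$ contains a fixed trivial code (the tree that immediately outputs $0$).

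I would unwind the clauses one at a time. Since $\alpha$ is a parameter, the relevant predicates are $\alpha(u)\not=0$ and $\alpha(v)\not=0$ for fixed $u,v$. Each is $\top$ or $\bot$ according to the external truth, up to a uniform realizer. Likewise $u=v$ is $(\{u\};\om)$ if $u=v$ and $\bot$ otherwise. By Example \ref{exa:AE-thm-realizer-simplify} and Lemma \ref{lem:AE-thm-simplify}, the quantifiers over $u,v\in n\times\N$ can be taken with the codes $(u\approx u)$ rather than $j(u\approx u)$. So a realizer of $\eval{U(\alpha)}^j$ amounts to a code $r$ with the following property: on inputs $u,v$ and a realizer of $\alpha(u)\not=0\land\alpha(v)\not=0$, it returns an element of $j(\eval{u=v})$. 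It must also be total on potential realizers, landing in $j(\cdot)_+$.

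For the direction ``true implies realizable'', I define $r(u,v,p)$ to be the code of the tree that makes no query and outputs $u$. Potentially this is a computation tree of type $\om$. Actually, whenever the premise has an actual realizer, $U(\alpha)$ gives $u=v$, so the output lies in $(u\approx v)_-=\{u\}$. In the remaining cases the premise has empty actual part, since its components are $\bot$, and the condition is vacuous. This $r$ does not mention $\alpha$, which gives the uniformity claim. The outer application of $j$ in ``$j$-realizable'' is handled by the inflationary code $\eta$.

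For the converse, suppose $U(\alpha)$ fails, witnessed by $u\neq v$ with $\alpha(u),\alpha(v)\neq 0$. Then the premise at $(u,v)$ has an actual realizer, and the conclusion is $j(\bot)$, whose actual part is empty by $\neg\neg$-density. Hence no actual realizer exists. This step uses only that $F(x)\neq\emptyset$, so that an actual tree has leaves whose outputs would have to lie in $\emptyset$.

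The main obstacle is bookkeeping rather than mathematics. I have to check the potential side: the trivial tree must be a potential computation tree of the required type, and the composite realizer must be total on all potential inputs, including inputs that are not actual realizers. I also need to track how the atomic formula $\alpha(u)\not=0$, a negation of an equality, is interpreted so that its actual part is exactly nonempty or empty according to truth. I expect this to follow from $\neg\neg$-density and the convention that $0$ realizes $\neg\varphi$ whenever $\varphi$ is unrealizable.
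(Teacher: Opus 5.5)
Your proposal is correct and follows the paper's argument. For the direction from truth to realizability you use the first-projection realizer (the tree that immediately outputs $u$), which does not mention $\alpha$. For the converse you use the paper's observation that a realizer would yield an actual realizer of $u=v$, which is impossible when $u\neq v$ because $j_F(\bot)_-=\emptyset$; you merely phrase this contrapositively. Your bookkeeping on the potential side and your appeal to Lemma \ref{lem:AE-thm-simplify} fill in details the paper leaves implicit.
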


\begin{proof}~
\begin{itemize}
\item Assume that $U(\alpha)$ has \aaa realizer $a$.
If both $\alpha(u)\not=0$ and $\alpha(v)\not=0$ are true, then applying $a$ to their realizers yields a realizer for $u=v$.
As the underlying object is an assembly, this means that $u=v$ is true.
\item Conversely, assume that $U(\alpha)$ is true.
If $u=v$ is true, then a code of $u$ is a realizer; so if $U(\alpha)$ is true then the first projection which, given codes of $u,v$, returns a code $u$ gives an \aaa realizer.
\end{itemize}
\end{proof}

Recall that $\sii{\sf C}_{1/\om}$ is linked to Markov's principle ${\sf MP}$.

\begin{theorem}\label{thm:MP-in-MP-sheaf}
Assume $\sii{\sf C}_{1/\om}\tgw F\tgw\sii{\sf C}_{1/\om}+\pii{\sf C}_{\geq 1/2}$.
Then $\bflogic{MP}$ is $F$-realizable.
\end{theorem}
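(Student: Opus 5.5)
We will build an $F$-\aaa realizer for $\bflogic{MP}$ out of the oracle $\sii{\sf C}_{1/\om}\tgw F$. The upper bound $F\tgw\sii{\sf C}_{1/\om}+\pii{\sf C}_{\geq 1/2}$ is used only to control how elements $\alpha\in 2^\N$ are represented in the $F$-sheaf.

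\emph{Step 1: decode $\alpha$.} By Example \ref{exa:func-on-nat}, an element $\alpha$ of $2^\N$ in the $j_F$-sheaf is given by an $F$-code $e$. This is a total tree ($\Phi_e^+(n)\down$ for all $n$) such that $\Phi_e^F(n)$ outputs $\alpha(n)$. Compose $e$ with the reduction witnessing $F\tgw\sii{\sf C}_{1/\om}+\pii{\sf C}_{\geq 1/2}$ via Proposition \ref{prop:transitive}. Then $\alpha$ is a single-valued function $\alpha\tgw\sii{\sf C}_{1/\om}+\pii{\sf C}_{\geq 1/2}$. Theorem \ref{LLPO-equial-partial-computable} now gives an index $\ep(e')$, computable uniformly from $e$, with $\varphi_{\ep(e')}=\alpha$. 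So every $\alpha$ is computable, uniformly in its code. Since $\alpha$ is total two-valued, $\varphi_{\ep(e')}$ is total and two-valued on the actual side.

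\emph{Step 2: the premise.} Next we unfold the Kuroda $j$-translation of $\neg\forall n\,\alpha(n)=0$. Because $j_F$ is $\neg\neg$-dense, Observation \ref{obs:negneg-real-truth} shows that realizability of a negation reduces to truth. So the premise being $F$-\aaa realized just means that, externally, $\alpha\neq0^\infty$; the realizer itself carries no information. Hence there is a true $n$ with $\alpha(n)=1$.

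\emph{Step 3: the conclusion.} Using Example \ref{exa:AE-thm-realizer-simplify}, it suffices to produce, uniformly in $e$, an $F$-computation tree that outputs some $n$ together with a realizer of $\alpha(n)\neq0$. The latter is again trivial by $\neg\neg$-density, or alternatively $n$ itself serves as the realizer. Let $\nu(e)$ be an index of the $\Sigma_1$ set $\{n:\varphi_{\ep(e')}(n)=1\}$, made unique by taking the first found element as in Proposition \ref{prop:basic-Sigma-1-choice}. The computation queries $\nu(e)$ to $\sii{\sf C}_{1/\om}$, which is available because $\sii{\sf C}_{1/\om}\tgw F$, and outputs the answer.

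\emph{Step 4: totality.} This is the delicate point. The potential side requires the tree to be total even for codes $e$ that are only \ppp. The \ppp part of the realizer must therefore work on all \ppp inputs, and the oracle tree must be built without waiting on $\varphi_{\ep(e')}$. This works because $\nu(e)$ is a total function producing an index, and the query to $\sii{\sf C}_{1/\om}$ has potential domain $\om$, so the \ppp tree is total. On the actual side $\nu(e)\in\dom$ by Step 2.

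The main obstacle I expect is Step 1. There we must check that the uniformity in Theorem \ref{LLPO-equial-partial-computable} survives composition with the reduction. We must also check that $\ep(e')$ only needs to be correct on actual codes, while the \ppp side is handled by the totality of the index-producing map. The remaining work is bookkeeping of potential and actual realizers.
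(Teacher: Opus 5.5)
Your proposal is correct and follows the paper's proof. Uniform transitivity together with Theorem~\ref{LLPO-equial-partial-computable} yields $\varphi_{\ep(t(e))}=\alpha$, the negated premise carries no computational content, and the witness is produced by an $F$-simulated search coming from $\sii{\sf C}_{1/\om}\tgw F$.

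The one difference is in how the search is invoked. The paper feeds $\ep(t(e))$ to a reduction ${\sf MP}\tgw F$ obtained from Proposition~\ref{prop:MP-as-Sigma-1-choice}, and it checks potential totality only for actual codes $e$, where $\ep(t(e))\in\dom_+({\sf MP})$. You instead query the $\Sigma_1$ index directly to $\sii{\sf C}_{1/\om}$, whose potential domain is all of $\om$. This makes the potential tree total for every potential code $e$, which is a slightly more careful treatment of the potential side.
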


\begin{proof}
{\em Realizer for $\bflogic{MP}$:}
Based on the argument in Example \ref{exa:AE-thm-realizer-simplify}, an $F$-realizer $r$ for $\bflogic{MP}$ is described as follows.
\begin{itemize}
\item {\em Instance.}
An $F$-code $a$ of a sequence $\alpha\in 2^\N$ is, by Example \ref{exa:func-on-nat}, a code of $F$-computation which, given $n\in\om$, returns $\alpha(n)$; that is, $\Phi^F_a(n)=\alpha(n)$.
\item {\em Potential.} $0$ is a \ppp realizer for any proposition; in particular, for $\neg\forall n\alpha(n)=0$.
Hence, $\Phi^+_r(a,0)$ halts.
\item {\em Actual.} Note that a $\Pi_1$ formula is true iff it is realizable.
Hence, if $\neg\forall n\alpha(n)=0$ is true, then it is realizable, and $0$ is its \aaa realizer since $\neg$ is the outmost connective.
In this case, $\Phi_r^F(a,0)$ outputs an existential witness for the conclusion of $\bflogic{MP}$; that is, for any $s\in \Phi_r^F(a,0)$, we have $\alpha(s)\not=0$.
\end{itemize}

\medskip
To show that $\bflogic{MP}$ is $F$-realizable, let an $F$-code $e$ for $\alpha\in 2^\N$ be given.
\begin{itemize}
\item That is, $\alpha\tgw F$ via some $e$.
By the assumption, we have $\alpha=\Phi_e^F\tgw F\tgw G:=\sii{\sf C}_{1/\om}+\pii{\sf C}_{\geq 1/2}$, so by uniform transitivity (see the last paragraph of the proof of Proposition \ref{prop:transitive}) there is a total computable function $t$ such that $\alpha\tgw G$ via $t(e)$.
That is, we have $\Phi_{t(e)}^{G}(n)=\Phi_e^{F}(n)=\alpha(n)$ for any $n\in\N$.
\item Using a computable function in Theorem \ref{LLPO-equial-partial-computable}, we get $\varphi_{\ep(t(e))}=\alpha$.
\end{itemize}

By Proposition \ref{prop:MP-as-Sigma-1-choice}, our assumption implies ${\sf MP}\tgw F$.
Thus, there is an $F$-computable process simulating ${\sf MP}(\ep(t(e)))$.
Formally, perform the following process:
\begin{itemize}
\item Assume ${\sf MP}\tgw F$ via $u$.
\begin{itemize}
\item {\em Potential.}
For each \ppp instance $a\in\dom_+({\sf MP})$, since $u$ gives a total computation tree of type $\dom_+({\sf MP})\to \cod_+({\sf MP})$, the \ppp computation $\Phi_u^+(a)$ halts.
\item {\em Actual.} For each \aaa instance $a\in\dom({\sf MP})$, we have $\Phi_u^F(a)\subseteq{\sf MP}(a)$.
\end{itemize}
\item Given a code $e$, one can compute an index $r$ such that $\Phi_r^+(e,0)=\Phi_u^+(\ep(t(e)))$.
\end{itemize}

{\em Verification:}
\begin{itemize}
\item {\em Potential.}
If $e$ is a code of a sequence $\alpha\in 2^\N$, then we have $\varphi_{\ep(t(e))}=\alpha$ as seen above, $\ep(t(e))$ is a code of a total computable function; that is, $\ep(t(e))\in\dom_+({\sf MP})$.
Hence, the computation $\Phi_r^+(e,0)=\Phi_u^+(\ep(t(e)))$ halts as seen above.
\item {\em Actual.}
If $\neg\exists n\alpha(n)=0$ is true, the code $\ep(t(e))$ of $\varphi_{\ep(t(e))}=\alpha$ belongs to $\dom({\sf MP})$.
Hence, for any $s\in\Phi_r^F(e,0)=\Phi_u^F(\ep(t(e)))\subseteq {\sf MP}(\ep(t(e)))$ we have $\alpha(s)\not=0$.
\end{itemize}

By the above characterization of a realizer for $\bflogic{MP}$, this means that $r$ is an $F$-realizer for $\bflogic{MP}$.
\end{proof}

\begin{theorem}\label{thm:MPn-reducible-to-realizable}
Assume $\sii{\sf C}_{n-1/n}\tgw F\tgw\sii{\sf C}_{1/\om}+\pii{\sf C}_{\geq 1/2}$.
Then $\bflogic{MP}_n^\lor$ is $F$-realizable.
\end{theorem}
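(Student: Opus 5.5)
The plan is to imitate the proof of Theorem \ref{thm:MP-in-MP-sheaf} almost verbatim, replacing ${\sf MP}$ by ${\sf MP}_n^\lor$ and Proposition \ref{prop:MP-as-Sigma-1-choice} by Theorem \ref{thm:disjunctive-Markov-counique}.

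The first step is to unfold what an $F$-realizer for $\bflogic{MP}_n^\lor$ looks like, as in Example \ref{exa:AE-thm-realizer-simplify}. An instance is an $F$-code $e$ of $\alpha\in 2^{n\times\N}$ together with realizers of the premises. $U(\alpha)$ is a $\Pi_1$-type statement whose realizability is equivalent to truth with a uniform realizer (Observation \ref{obs:V-true-realizable-trivial}). $E(\alpha)$ is a negation, so $0$ realizes it whenever it is true, and $0$ is always a \ppp realizer. Hence the realizer $r$ only needs to behave as follows. Potentially, $\Phi^+_r(e,\cdot)$ halts with a \ppp code of type $n\times(\text{\ppp realizers of }\forall s\,\alpha(k,s)=0)$. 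Actually, whenever $U(\alpha)\land E(\alpha)$ is true, every output $k\in\Phi^F_r(e,\cdot)$ satisfies $\forall s\,\alpha(k,s)=0$. Here the universal statement is again $\Pi_1$, so it is realized by a trivial code once it is true.

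The second step is to make $\alpha$ computable. Since $\alpha\tgw F\tgw G:=\sii{\sf C}_{1/\om}+\pii{\sf C}_{\geq 1/2}$ via $e$, uniform transitivity (Proposition \ref{prop:transitive}) gives $t(e)$ witnessing $\alpha\tgw G$. Theorem \ref{LLPO-equial-partial-computable} then gives a total computable index $\ep(t(e))$ with $\varphi_{\ep(t(e))}=\alpha$. This index is total and two-valued, so $\ep(t(e))\in\dom_+({\sf MP}_n^\lor)$; this is the point where the potential-domain requirement has to be checked carefully. When $U(\alpha)$ and $E(\alpha)$ hold classically, there is exactly one $k<n$ with some $s$ such that $\alpha(k,s)=1$, so $\ep(t(e))\in\dom({\sf MP}_n^\lor)$.

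The third step is to apply the hypothesis. By Theorem \ref{thm:disjunctive-Markov-counique}, ${\sf MP}_n^\lor\equiv_{\sf tW}\sii{\sf C}_{n-1/n}\tgw F$, so ${\sf MP}_n^\lor\tgw F$ via some $u$. Let $r$ compute, from $e$, the index $\Phi_u(\ep(t(e)))$ and pair each output $k$ with the trivial realizer $0$. Totality of $\Phi_u^+$ on $\dom_+({\sf MP}_n^\lor)$ gives the potential clause. The inclusion $\Phi^F_u(\ep(t(e)))\subseteq{\sf MP}_n^\lor(\ep(t(e)))=\{k<n:\forall s\,\alpha(k,s)=0\}$ gives the actual clause.

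I expect the main obstacle to be the bookkeeping in the first step. Specifically, I must verify that the realizers of $\forall s\,\alpha(k,s)=0$ in the $j_F$-translation really are trivial whenever the statement is true. This uses $\neg\neg$-density and the fact that $\alpha$ is $F$-computable, so each $\alpha(k,s)=0$ has a $j_F$-realizer exactly when it holds, via the code $\Phi_e$. I also need to ensure that the conversions of Lemma \ref{lem:AE-thm-simplify} are uniform in $\alpha$.
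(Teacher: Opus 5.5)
Your proposal is essentially the paper's own proof: you describe the realizer using the premise realizer $\langle\pi,0\rangle$, make $\alpha$ computable through the uniform transitivity function $t$ and the index $\ep(t(e))$ from Theorem~\ref{LLPO-equial-partial-computable}, and then run a witness $u$ of ${\sf MP}_n^\lor\tgw F$ (from Theorem~\ref{thm:disjunctive-Markov-counique}) on $\ep(t(e))$. One point that you, like the paper, check only for actual codes is the potential clause: for a merely potential $F$-code $e$, $\varphi_{\ep(t(e))}$ may be partial and $\Phi_u^+$ need not halt on it, so it is safer to run a witness of $\pii{\sf C}_{n-1/n}\tgw F$ (potential domain $\om$) on a $\Pi_1$ index of $\{k<n:\neg\exists s\,(\varphi_{\ep(t(e))}(k,s){\downarrow}=1)\}$, which is defined for every $e$.
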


\begin{proof}
The argument is similar to the proof of Theorem \ref{thm:MP-in-MP-sheaf}.

{\em Realizer for $\bflogic{MP}_n^\lor$:}
Based on the argument in Example \ref{exa:AE-thm-realizer-simplify}, an $F$-realizer $r$ for $\bflogic{MP}_n^\lor$ is described as follows.
\begin{itemize}
\item {\em Instance.}
An $F$-code $a$ of a sequence $\alpha\in 2^{n\times\N}$ is, by Example \ref{exa:func-on-nat}, a code of $F$-computation which, given $(k,s)$, returns $\alpha(k,s)$; that is, $\Phi^F_a(k,s)=\alpha(k,s)$.
\item {\em Actual.} By Observation \ref{obs:V-true-realizable-trivial}, if $U(\alpha)$ is true, then a code $\pi$ of the first projection is an \aaa realizer.
If $E(\alpha)$ is true, it is realizable and $0$ is its \aaa realizer as before.
Hence, if the premise $U(\alpha)\land E(\alpha)$ is true, $\pair{\pi,0}$ is its \aaa realizer.
In this case, $\Phi_r^F(a,\pair{\pi,0})$ outputs an existential witness for the conclusion of $\bflogic{MP}_n^\lor$; that is, for any $k\in \Phi_r^F(a,\pair{\pi,0})$, we have $\alpha_k=0^\infty$.
\item {\em Potential.} One can see $\pi\in U(\alpha)_+$.
Thus, $\pair{\pi,0}$ is a \ppp realizer for $U(\alpha)\land E(\alpha)$.
Hence, $\Phi^+_r(a,\pair{\pi,0})$ halts whenever $\alpha$ is total.
\end{itemize}

\medskip
To show that $\bflogic{MP}_n^\lor$ is $F$-realizable, let an $F$-code $e$ for $\alpha\in 2^{n\times\N}$ be given.
\begin{itemize}
\item That is, $\alpha\tgw F$ via some $e$.
By the assumption, we have $\alpha=\Phi_e^F\tgw F\tgw G:=\sii{\sf C}_{1/\om}+\pii{\sf C}_{\geq 1/2}$, so by uniform transitivity (Proposition \ref{prop:transitive}) there is a total computable function $t$ such that $\alpha\tgw G$ via $t(e)$.
That is, we have $\Phi_{t(e)}^{G}(k,s)=\Phi_e^{F}(k,s)=\alpha(k,s)$ for any $(k,s)$.
\item Using a computable function in Theorem \ref{LLPO-equial-partial-computable}, we get $\varphi_{\ep(t(e))}=\alpha$.
\end{itemize}

By Theorem \ref{thm:disjunctive-Markov-counique}, our assumption implies ${\sf MP}_n^\lor\tgw F$.
Thus, there is an $F$-computable process simulationg ${\sf MP}_n^\lor(\ep(t(e)))$.
Formally, perform the following process:
\begin{itemize}
\item Assume ${\sf MP}_n^\lor\tgw F$ via $u$.
\begin{itemize}
\item {\em Potential.} For each \ppp instance $a\in\dom_+({\sf MP}_n^\lor)$, since $u$ gives a total computation tree of type $\dom_+({\sf MP}_n^\lor)\to\cod_+({\sf MP}_n^\lor)$, the \ppp computation $\Phi_u^+(a)$ halts.
\item {\em Actual.} For each \aaa instance $a\in\dom({\sf MP}_n^\lor)$, we have $\Phi_u^F(a)\subseteq{\sf MP}_n^\lor(a)$.
\end{itemize}
\item Given a code $e$, one can compute an index $r$ such that $\Phi_r^+(e,\pair{\pi,0})=\Phi_u^+(\ep(t(e)))$.
\end{itemize}

{\em Verification:}
\begin{itemize}
\item {\em Potential.}
If $e$ is a code of a sequence $\alpha\in 2^{n\times\N}$, then we have $\varphi_{\ep(t(e))}=\alpha$ as seen above, $\ep(t(e))$ is a code of a total computable function; that is, $\ep(t(e))\in\dom_+({\sf MP}_n^\lor)$.
Hence, the computation $\Phi_r^+(e,\pair{\pi,0})=\Phi_u^+(\ep(t(e)))$ halts as seen above.
\item {\em Actual.}
If $U(\alpha)\land E(\alpha)$ is true, the code $\ep(t(e))$ of $\varphi_{\ep(t(e))}=\alpha$ belongs to $\dom({\sf MP}_n^\lor)$.
Hence, for any $k\in\Phi_r^F(e,\pair{\pi,0})=\Phi_u^F(\ep(t(e)))\subseteq {\sf MP}_n^\lor(\ep(t(e)))$ we have $\alpha_k=0^\infty$.
\end{itemize}

By the above characterization of a realizer for $\bflogic{MP}_n^\lor$, this means that $r$ is an $F$-realizer for $\bflogic{MP}_n^\lor$.
\end{proof}

\begin{remark}
The upper bound $\sii{\sf C}_{1/\om}+\pii{\sf C}_{\geq 1/2}$ in Theorems \ref{thm:MP-in-MP-sheaf} and \ref{thm:MPn-reducible-to-realizable} is used solely to obtain a computable code for a given $\alpha\in 2^\N$.
Thus, by Theorem \ref{MP-WLEM-single-pc}, the upper bound can be replaced with $\sii{\sf C}_{1/\om}+{\sf C}_{2/3}$.
\end{remark}

\begin{prop}\label{prop:MP-in-MP-sheaf2}
Assume $F\tgw\sii{\sf C}_{1/\om}+{\sf C}_{2/3}$.
\begin{enumerate}
\item If $\sii{\sf C}_{1/\om}\tgw F$, then $\bflogic{MP}$ is $F$-realizable.
\item If $\sii{\sf C}_{n-1/n}\tgw F$, then $\bflogic{MP}^\lor$ is $F$-realizable.
\end{enumerate}
\end{prop}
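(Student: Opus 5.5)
The plan is to rerun the proofs of Theorems \ref{thm:MP-in-MP-sheaf} and \ref{thm:MPn-reducible-to-realizable}, replacing the upper bound $\sii{\sf C}_{1/\om}+\pii{\sf C}_{\geq 1/2}$ by $G:=\sii{\sf C}_{1/\om}+{\sf C}_{2/3}$. As the preceding remark notes, that upper bound enters those proofs only in one place: turning an $F$-code $e$ of a sequence $\alpha$ into an index of a genuinely computable function equal to $\alpha$. The rest of each argument only uses the lower bound. There, the lower bound is used through Proposition \ref{prop:MP-as-Sigma-1-choice} (${\sf MP}\tgw F$), respectively Theorem \ref{thm:disjunctive-Markov-counique} (${\sf MP}_n^\lor\tgw F$).

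\textbf{Step 1 (computable code).} Let $e$ be an $F$-code of $\alpha\in 2^\N$ (or $\alpha\in 2^{n\times\N}$). By Example \ref{exa:func-on-nat}, $e$ witnesses $\alpha\tgw F$. By the hypothesis $F\tgw G$ and the uniform transitivity recorded at the end of the proof of Proposition \ref{prop:transitive}, we obtain a total computable $t$ such that $\alpha\tgw G$ via $t(e)$. Now $\alpha$ is a total single-valued function on $\om$, with the pairing $(k,s)\mapsto\pair{k,s}$ in the disjunctive case. So the forward direction of Theorem \ref{MP-WLEM-single-pc} applies, and its uniform algorithm $\ep$ gives $\varphi_{\ep(t(e))}=\alpha$. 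In particular $\ep(t(e))\in\dom_+({\sf MP})$, respectively $\dom_+({\sf MP}_n^\lor)$, since $\varphi_{\ep(t(e))}$ is total and two-valued.

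\textbf{Step 2 (realizer).} For (1), fix $u$ witnessing ${\sf MP}\tgw F$. Let $r$ be an index with $\Phi_r(e,0)=\Phi_u(\ep(t(e)))$. For the potential part, the potential tree $\Phi_u^+$ is total on $\dom_+({\sf MP})$. For the actual part, if $\neg\forall n\,\alpha(n)=0$ is true, its trivial realizer is $0$, and $\ep(t(e))\in\dom({\sf MP})$, so every output $s$ satisfies $\alpha(s)\neq0$. For (2), use the same construction with ${\sf MP}_n^\lor$ and the premise realizer $\pair{\pi,0}$ from Observation \ref{obs:V-true-realizable-trivial}, exactly as in Theorem \ref{thm:MPn-reducible-to-realizable}. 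The statement writes $\bflogic{MP}^\lor$; I read it as $\bflogic{MP}_n^\lor$, which for $n=2$ gives $\bflogic{MP}^\lor$.

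\textbf{Main obstacle.} Step 1 is the only point needing care. Theorem \ref{MP-WLEM-single-pc} is stated for $f\pcolon\om\to\om$, so we must check two things. First, the index produced by Theorem \ref{thm:MP-remove} is computed uniformly from $t(e)$. Second, its brute-force search over $2$-fat trees terminates on every input, because $\alpha$ is total and so every input is an actual instance. Both follow from the construction in that proof, which already records that $\ep$ is effective.
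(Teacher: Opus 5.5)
Your proposal is correct and follows the paper's route exactly. The paper gives no separate proof; it justifies the proposition by the preceding remark that the upper bound in Theorems \ref{thm:MP-in-MP-sheaf} and \ref{thm:MPn-reducible-to-realizable} serves only to turn an $F$-code of $\alpha$ into a computable index, which the uniform map $\ep$ from Theorem \ref{MP-WLEM-single-pc} supplies equally well. Your reading of item (2) as $\bflogic{MP}_n^\lor$ is the intended one, since via Theorem \ref{thm:disjunctive-Markov-counique} the hypothesis $\sii{\sf C}_{n-1/n}\tgw F$ gives ${\sf MP}_n^\lor\tgw F$.
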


\begin{remark}
By the same reason, this upper bound is unnecessary for the realizability of the variant $\mathbf{MP}_{\rm TM}$, in which the premise of Markov's principle is modified to ``{for any $\alpha \in 2^\mathbb{N}$, {\em if $\alpha$ is computable}, then...}''
It has been shown that this variant $\mathbf{MP}_{\rm TM}$ is equivalent to the primitive recursive Markov's principle $\mathbf{MP}_{\rm PR}$ \cite{CFKRR}:
\[
\bflogic{MP}_{\rm PR}\colon
\qquad
\forall e\in\N.~(\neg\forall s\in\N.\ \varphi_e(0)[s]{\uparrow}\ \longrightarrow\ \exists s\in\N.\ \varphi_e(0)[s]\down)
\]

Clearly, $\bflogic{MP}$ implies $\bflogic{MP}_{\rm PR}$.
\end{remark}

\begin{obs}\label{obs:MP-in-MP-sheaf3}
If $\sii{\sf C}_{1/\om}\tgw F$, then $\bflogic{MP}_{\rm PR}$ is $F$-realizable.
\end{obs}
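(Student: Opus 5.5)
The plan is to follow the template of the proof of Theorem \ref{thm:MP-in-MP-sheaf}, but without the upper bound on $F$. That bound was needed there only to turn an arbitrary $F$-code of $\alpha\in 2^\N$ into a genuine computable index. In $\bflogic{MP}_{\rm PR}$ the quantified object is already a natural number $e$, given through its $\sfN_j$-code. The instance of ${\sf MP}_{\sf PR}$ is therefore available directly, and no decoding step such as Theorem \ref{LLPO-equial-partial-computable} is needed.

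First, I describe an $F$-realizer for $\bflogic{MP}_{\rm PR}$, using the simplification of Example \ref{exa:AE-thm-realizer-simplify} and Lemma \ref{lem:AE-thm-simplify}. For the outer universal quantifier I may assume the input is an $\sfN$-code, namely $e$ itself, with potential code set $\om$. The premise $\neg\forall s.\ \varphi_e(0)[s]\upar$ has outermost connective $\neg$. As in the proof of Theorem \ref{thm:MP-in-MP-sheaf}, $0$ is a potential realizer of it in all cases. Because $j_F$ is $\neg\neg$-dense, if the premise is $F$-realizable then it is true, and then $0$ is also an actual realizer. The kernel $\varphi_e(0)[s]$ is primitive recursive, so its $F$-realizability coincides with its truth. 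The realizer therefore only needs to produce, on input $(e,0)$, a total $F$-computation tree whose leaves output pairs $\pair{s,c}$. Here $c$ must be a potential realizer of $\varphi_e(0)[s]\down$, and when $\varphi_e(0)\down$ holds, $s$ must satisfy $\varphi_e(0)[s]\down$ and $c$ must be an actual realizer.

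Next, I build this from the hypothesis. By Propositions \ref{prop:MP-MPPR-equivalent} and \ref{prop:MP-as-Sigma-1-choice}, ${\sf MP}_{\sf PR}\tgw\sii{\sf C}_{1/\om}\tgw F$. Transitivity (Proposition \ref{prop:transitive}) then gives an index $u$ witnessing ${\sf MP}_{\sf PR}\tgw F$. Since $\dom_+({\sf MP}_{\sf PR})=\om$, $\Phi^+_u(e)$ is a total computation tree for every $e$, with leaves in $\cod_+=\om$. When $\varphi_e(0)\down$, we have $e\in\dom({\sf MP}_{\sf PR})$, so there is an $(F;\nu)$-branching subtree all of whose leaves $s$ satisfy $\varphi_e(0)[s]\down$.

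I then let $r$ post-compose each leaf output $s$ with the total map $s\mapsto\pair{s,c_s}$. Here $c_s$ is a fixed canonical realizer of the decidable atomic formula, say the trivial code $0$. This is the monotonicity trick from the proof that $j_F$ is an LT-modality. The result is potentially total for every $e$, and it actually solves the conclusion whenever the premise holds. The transformation is uniform in $e$, so $r$ is a single realizer.

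The main obstacle I expect is bookkeeping rather than mathematics. I must check that the interpretation of $\forall e\in\N$ (with the $j$ in front of $e\approx e$) and of the implication under $j_F$ reduces, via Lemma \ref{lem:AE-thm-simplify}, to exactly the form above. I must also check that a primitive recursive atomic formula, and hence its negation-wrapped premise, is $F$-realizable if and only if it is true. For the latter I would argue as in Observation \ref{obs:V-true-realizable-trivial} together with Observation \ref{obs:negneg-real-truth}.
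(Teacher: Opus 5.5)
Your proposal is correct and is essentially the argument the paper intends. It is the proof of Theorem~\ref{thm:MP-in-MP-sheaf} with the decoding step via Theorem~\ref{LLPO-equial-partial-computable} removed: the instance of ${\sf MP}_{\sf PR}$ is already a natural number, so the upper bound on $F$ is no longer needed. The paper motivates this through the variant $\bflogic{MP}_{\rm TM}$ and the cited equivalence $\bflogic{MP}_{\rm TM}\leftrightarrow\bflogic{MP}_{\rm PR}$, whereas you realize $\bflogic{MP}_{\rm PR}$ directly from ${\sf MP}_{\sf PR}\tW\sii{\sf C}_{1/\om}\tgw F$, which is slightly more self-contained.
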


Next, let us show the converse.

\begin{theorem}\label{thm:MP-realizable-to-reducible}
If $\bflogic{MP}$ is $F$-realizable, then $\sii{\sf C}_{1/\om}\tgw F$.
\end{theorem}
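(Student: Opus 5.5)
By Proposition \ref{prop:MP-as-Sigma-1-choice} we have ${\sf MP}_{\sf PR}\equiv_{\sf tW}\sii{\sf C}_{1/\om}$, so it suffices to show ${\sf MP}_{\sf PR}\tgw F$. Equivalently, we need a total computable map sending each index $q$ to a code of an $F$-computation tree that is potentially total with values in $\om$ and that, whenever $\varphi_q(0)\down$, $F$-solves the set of halting stages of $\varphi_q(0)$. The plan is to obtain this tree by feeding a suitable instance into the given $F$-realizer $r$ of $\bflogic{MP}$, using the unfolded description of such realizers from the proof of Theorem \ref{thm:MP-in-MP-sheaf} (via Example \ref{exa:AE-thm-realizer-simplify}).

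First, given $q$, compute an index $b(q)$ of the primitive recursive sequence $\alpha_q(s)=1$ if $\varphi_q(0)[s]\down$ and $0$ otherwise, as in the proof of Proposition \ref{prop:MP-MPPR-equivalent}. By Proposition \ref{prop:below-id-total-computable-choice} and Example \ref{exa:func-on-nat}, a total computable sequence has an $F$-code obtained uniformly: the query-free tree that outputs $\alpha_q(n)$ at its root. This is an actual $\sfN_j^{\sfN_j}$-code and hence also a potential one. Call this code $a(q)$; the map $q\mapsto a(q)$ is total computable.

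Next, apply $r$ to $a(q)$ and to the premise realizer $0$. On the potential side, $0$ potentially realizes every proposition. The potential part of $r$ is a total function on potential codes, so it returns a potential tree of type $\om\times(\text{codes})$. Post-compose its leaf labels with the first projection to get a potential tree of type $\om$. On the actual side, suppose $\varphi_q(0)\down$. Then $\alpha_q\neq 0^\infty$, so $\neg\forall n\,\alpha_q(n)=0$ is true. Being a negated formula, it is realized by $0$ (it is $\neg\neg$-stable, and $j_F$ is $\neg\neg$-dense). Hence the resulting tree $F$-solves the set of pairs $(s,c)$ with $c$ an actual $j$-realizer of $\alpha_q(s)\neq 0$. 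That existence of a realizer forces $\alpha_q(s)\neq0$ to be true, because negative atomic formulas are realizable only when true. So after projection, every actual leaf gives an $s$ with $\varphi_q(0)[s]\down$, i.e.\ $s\in{\sf MP}_{\sf PR}(q)$. The identification of actual $\sfN_j$-codes for the witness uses the simplification in Example \ref{exa:AE-thm-realizer-simplify}: the witness component is a plain number $s$ inside $j(\bigcup_s(\{s\}\times\cdots))$.

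The main obstacle is bookkeeping in the unfolding of $r$ under the Kuroda translation. The intermediate $j$'s around the implication and the $\forall\alpha$ quantifier must collapse, via the monotone/inflationary/idempotent realizers $\mathtt{m},\eta,\mu$ and Lemma \ref{lem:AE-thm-simplify}, into a single $F$-tree, and this must be done uniformly in $q$ so that the potential totality is preserved. In particular, I would check carefully that applying $\mu$ to the nested tree and projecting stays inside $F_+^\solve(\om)$ for every $q$, not only for those with $\varphi_q(0)\down$.
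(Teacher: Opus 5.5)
Your proposal is correct and is essentially the paper's proof: you take the query-free $F$-code of a computable $0$-$1$ sequence, feed it together with the trivial premise realizer $0$ into $r$, and read off the witness, with potential totality coming from the potential part of $r$ and correctness coming from its actual part. The only difference is cosmetic: you reduce ${\sf MP}_{\sf PR}$ through the sequences $\alpha_q$ rather than reducing ${\sf MP}$ directly, which amounts to precomposing the paper's reduction with the inner reduction of Proposition~\ref{prop:MP-MPPR-equivalent}; as a small bonus, this shows that realizability of $\bflogic{MP}_{\rm PR}$ already suffices, as the paper remarks after the theorem.
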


\begin{proof}
Let $r$ be an $F$-realizer for $\bflogic{MP}$.
\begin{itemize}
\item Recalling the specific description of a realizer for $\bflogic{MP}$ in the proof of Theorem \ref{thm:MP-in-MP-sheaf}, for any $F$-relative code $e$ of a total computable function, $\Phi_r^+(e,0)$ halts.
\item Given a computable function $\varphi_e$, its index $\eta(e)$ of an $F$-relative computation can be obtained by considering a computation that does not make any queries to $F$.
\item Then compute an index $m$ such that $\Phi_m^+(e)=\Phi_r^+(\eta(e),0)$.
\end{itemize}

{\em Verification:}
\begin{itemize}
\item {\em Potential.}
Assume that a code $e\in{\rm dom}_+({\sf MP})$ of a total computable function is given.
\begin{itemize}
\item Then $\eta(e)$ is an $F$-relative index of $\alpha=\varphi_e$; that is, $\Phi_{\eta(e)}^F(n)=\alpha(n)$.
\item Since $\eta(e)$ is an $F$-code of a total computable function, the computation $\Phi_m^+(e)=\Phi_r^+(\eta(e),0)$ must halt.
\end{itemize}
\item {\em Actual.}
For each \aaa instance $e\in\dom({\sf MP})$, for $\alpha=\varphi_e$, we have $\neg\forall n\alpha(n)=0$.
Since $\eta(e)$ is an $F$-code of $\alpha$, and $r$ is a realizer for $\bflogic{MP}$, for any $n\in\Phi_m^F(e)=\Phi_r^F(\eta(e),0)$, we must have $\alpha(n)=0$.
Therefore, we get $\Phi_m^F(e)\subseteq{\sf MP}(e)$.
\end{itemize}

Consequently, $m$ is a code of a reduction for ${\sf MP}\tgw F$.
By Proposition \ref{prop:MP-as-Sigma-1-choice}, we get the conclusion.
\end{proof}

\begin{remark}
Regarding Theorem \ref{thm:MP-realizable-to-reducible}, it is obvious that $\bflogic{MP}$ can be replaced by $\bflogic{MP}_{\rm PR}$.
\end{remark}

\begin{theorem}\label{thm:MPnlor-realizable-to-above}
If $\bflogic{MP}_n^\lor$ is $F$-realizable, then $\sii{\sf C}_{n-1/n}\tgw F$.
\end{theorem}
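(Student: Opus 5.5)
We follow the proof of Theorem \ref{thm:MP-realizable-to-reducible} closely, with ${\sf MP}_n^\lor$ in place of ${\sf MP}$. By Theorem \ref{thm:disjunctive-Markov-counique}, ${\sf MP}_n^\lor\equiv_{\sf tW}\sii{\sf C}_{n-1/n}$, so it suffices to show ${\sf MP}_n^\lor\tgw F$. Then $\sii{\sf C}_{n-1/n}\tgw F$ follows by transitivity (Proposition \ref{prop:transitive}, since a one-query reduction is a special multi-query one).

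Let $r$ be an $F$-realizer for $\bflogic{MP}_n^\lor$. Recall from the proof of Theorem \ref{thm:MPn-reducible-to-realizable} what such a realizer amounts to. For an $F$-code $a$ of $\alpha\in 2^{n\times\N}$, the pair $\pair{\pi,0}$ is always a \ppp realizer of the premise $U(\alpha)\land E(\alpha)$, where $\pi$ is the first projection. By Observation \ref{obs:V-true-realizable-trivial}, and since $E(\alpha)$ is a negation, $\pair{\pi,0}$ is also an \aaa realizer whenever the premise is true. Hence $\Phi_r^+(a,\pair{\pi,0})$ is always a total computation tree with outputs in $n$. When the premise holds, every element of $\Phi_r^F(a,\pair{\pi,0})$ is some $k<n$ with $\alpha_k=0^\infty$.

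Now let $e\in\dom_+({\sf MP}_n^\lor)$, so $\varphi_e$ is total and two-valued on $n\times\om$. Let $\eta(e)$ be the $F$-relative index that computes $\varphi_e$ without querying $F$. This is an \aaa $F$-code of $\alpha=\varphi_e$ by Example \ref{exa:func-on-nat}. Effectively compute $m$ with $\Phi_m(e)=\Phi_r(\eta(e),\pair{\pi,0})$.

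For the potential part: $\eta(e)$ is a code of a total function, so $\Phi_m^+(e)$ is a \ppp computation tree. Its outputs lie in $n=\cod_+({\sf MP}_n^\lor)$, because $r$ realizes an existential over $k<n$. Strictly, the witness is paired with a realizer of $\forall s\,\alpha(k,s)=0$, so we compose with the first projection; this preserves totality.

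For the actual part: let $e\in\dom({\sf MP}_n^\lor)$. Then exactly one $k<n$ has some $t$ with $\alpha(k,t)=1$. So $U(\alpha)$ is true, and $E(\alpha)$ is true as well, since $\forall u\,\alpha(u)=0$ is false. Hence $\pair{\pi,0}$ is an \aaa realizer of the premise. The realizer property then gives some $(F;\nu)$-branching subtree all of whose leaf outputs are $k$ with $\alpha_k=0^\infty$, i.e.\ $\Phi_m^F(e)\subseteq{\sf MP}_n^\lor(e)$. Thus $m$ witnesses ${\sf MP}_n^\lor\tgw F$.

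The main obstacle is bookkeeping the premise realizer. We must check that $\pair{\pi,0}$ is a genuine \ppp realizer of $U(\alpha)\land E(\alpha)$ for every total two-valued $\alpha$, not only those in the actual domain. For $U$ this means $\pi$ lies in the potential part of the nested implications. This holds because potential parts of assembly equalities are all of $\om$, and implication potentials only require totality, which $\pi$ has. The rest mirrors Theorem \ref{thm:MP-realizable-to-reducible} verbatim.
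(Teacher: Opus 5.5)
Your argument is the paper's own proof of this theorem, with slightly more bookkeeping. You feed $r$ the query-free $F$-code $\eta(e)$ of $\varphi_e$ together with the premise realizer $\pair{\pi,0}$, check totality on $\dom_+({\sf MP}_n^\lor)$ and correctness on $\dom({\sf MP}_n^\lor)$, and pass to $\sii{\sf C}_{n-1/n}$ via Theorem~\ref{thm:disjunctive-Markov-counique}.

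One inference needs a small patch; the paper makes the same step silently. Under the reading of $\dom({\sf MP}_n^\lor)$ that you state, exactly one row $k$ contains a $1$, but that row may contain several. $U(\alpha)$, however, forbids two distinct positions $u\neq v$ in $n\times\N$ with $\alpha(u)=\alpha(v)=1$. So ``$U(\alpha)$ is true'' does not follow, and in that case $r$ promises nothing about $\alpha$.

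The fix is to run $r$ on the code of the uniformly computable $\alpha'$ that keeps only the first $1$ of $\varphi_e$, along a fixed computable enumeration of $n\times\om$. This $\alpha'$ is total and two-valued for every $e\in\dom_+({\sf MP}_n^\lor)$. For $e\in\dom({\sf MP}_n^\lor)$ it satisfies $U(\alpha')\land E(\alpha')$ and has exactly the same zero rows as $\varphi_e$.

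Likewise, the witness in a merely potential realizer of $\exists k<n$ need not lie below $n$. So after projecting it out, send values $\geq n$ to $0$ to land in $\cod_+({\sf MP}_n^\lor)=n$; this leaves the actual part unchanged.
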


\begin{proof}
The argument is similar to the proof of Theorem \ref{thm:MP-realizable-to-reducible}.
Let $r$ be an $F$-realizer for $\bflogic{MP}_n^\lor$.
\begin{itemize}
\item Recalling the specific description of a realizer for $\bflogic{MP}_n^\lor$ in the proof of Theorem \ref{thm:MPn-reducible-to-realizable}, for any $F$-relative code $e$ of a total computable function, $\Phi_r^+(e,\pair{\pi,0})$ halts.
\item Given a computable function $\varphi_e$, its index $\eta(e)$ of an $F$-relative computation can be obtained by considering a computation that does not make any queries to $F$.
\item Then compute an index $m$ such that $\Phi_m^+(e)=\Phi_r^+(\eta(e),\pair{\pi,0})$.
\end{itemize}

{\em Verification:}
\begin{itemize}
\item {\em Potential.}
Assume that a code $e\in{\rm dom}_+({\sf MP}_n^\lor)$ of a total computable function $\alpha\in 2^{n\times\N}$ is given.
\begin{itemize}
\item Then $\eta(e)$ is an $F$-relative index of $\alpha=\varphi_e$; that is, $\Phi_{\eta(e)}^F(k,s)=\alpha(k,s)$.
\item Since $\eta(e)$ is an $F$-code of a total computable function, the computation $\Phi_m^+(e)=\Phi_r^+(\eta(e),\pair{\pi,0})$ must halt.
\end{itemize}
\item {\em Actual.}
For each \aaa instance $e\in\dom({\sf MP}_n^\lor)$, for $\alpha=\varphi_e$, the premise $U(\alpha)\land E(\alpha)$ is true.
As seen in Theorem \ref{thm:MPn-reducible-to-realizable}, $\pair{\pi,0}$ is its \aaa realizer.
Since $\eta(e)$ is an $F$-code of $\alpha$, and $r$ is a realizer for $\bflogic{MP}_n^\lor$, for any $k\in\Phi_m^F(e)=\Phi_r^F(\eta(e),0)$, we must have $\alpha_k=0^\infty$.
Therefore, we get $\Phi_m^F(e)\subseteq{\sf MP}_n^\lor(e)$.
\end{itemize}

Consequently, $m$ is a code of a reduction for ${\sf MP}_n^\lor\tgw F$.
By Theorem \ref{thm:disjunctive-Markov-counique}, we get the conclusion.
\end{proof}

\subsection{\bflogic{LLPO} and its variants}
Richman \cite{Ri90} introduced $\bflogic{LLPO}_n$, a weaker variant of $\bflogic{LLPO}$, the lessor limited principle of omniscience.
Note that $\bflogic{LLPO}$ is equivalent to $\bflogic{LLPO}_2$.

\begin{definition}
$\bflogic{LLPO}_n$ is the following principle:
\[
\forall \alpha\in 2^{n\times\N}.\ 
[U(\alpha)
\longrightarrow
(\exists k<n\forall s^\N\ \alpha(k,s)=0)].
\]
\end{definition}


Recall from Proposition \ref{prop:LLPO-AcoUC} that $\pii{\sf C}_{\geq n-1/n}$ is linked to ${\sf LLPO}_n$.
The following is similar to the standard argument on Lifschitz realizability.

\begin{theorem}\label{thm:LLPO-reducible-to-realizable}
Assume $\pii{\sf C}_{\geq n-1/n}\tgw F\tgw\sii{\sf C}_{1/\om}+\pii{\sf C}_{\geq 1/2}$.
Then $\bflogic{LLPO}_n$ is $F$-realizable.
\end{theorem}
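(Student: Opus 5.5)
The plan is to copy the proof of Theorem \ref{thm:MPn-reducible-to-realizable}, replacing ${\sf MP}_n^\lor$ by ${\sf LLPO}_n$ and Theorem \ref{thm:disjunctive-Markov-counique} by Proposition \ref{prop:LLPO-AcoUC}. First I fix the shape of an $F$-realizer $r$ for $\bflogic{LLPO}_n$, following Example \ref{exa:AE-thm-realizer-simplify}. An instance is an $F$-code $a$ of $\alpha\in 2^{n\times\N}$, so $\Phi^F_a(k,s)=\alpha(k,s)$ by Example \ref{exa:func-on-nat}. The premise $U(\alpha)$ has the fixed realizer $\pi$ by Observation \ref{obs:V-true-realizable-trivial}, and $\pi\in U(\alpha)_+$ always. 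The conclusion $\exists k<n\,\forall s\,\alpha(k,s)=0$ has a $\Pi_1$ matrix, which is realizable iff true (this uses $\neg\neg$-density of $j_F$). So it suffices to give a computable $r$ with two properties. \emph{Potential:} $\Phi_r^+(a,\pi)$ halts whenever $a$ codes a total $\alpha$. \emph{Actual:} if $U(\alpha)$ is true, every $k\in\Phi_r^F(a,\pi)$ satisfies $\alpha_k=0^\infty$, with the trivial realizer of the universal part paired with $k$.

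Next, given an $F$-code $e$ of $\alpha$, I would use $F\tgw G:=\sii{\sf C}_{1/\om}+\pii{\sf C}_{\geq 1/2}$ and the uniform transitivity map $t$ from Proposition \ref{prop:transitive}. This gives $\alpha\tgw G$ via $t(e)$. Theorem \ref{LLPO-equial-partial-computable} then gives $\varphi_{\ep(t(e))}=\alpha$, so $\ep(t(e))\in\dom_+({\sf LLPO}_n)$. Combining Proposition \ref{prop:LLPO-AcoUC} with the hypothesis $\pii{\sf C}_{\geq n-1/n}\tgw F$, fix $u$ witnessing ${\sf LLPO}_n\tgw F$. Then let $r$ be an index with $\Phi^+_r(e,\pi)=\Phi^+_u(\ep(t(e)))$, followed by the computable postprocessing that attaches the trivial realizer to each output $k$.

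Verification. \emph{Potential:} $\ep(t(e))$ is a potential instance of ${\sf LLPO}_n$, so $u$'s total tree yields a halting potential computation. \emph{Actual:} if $U(\alpha)$ is true, then at most one row of $\alpha$ contains a $1$, so $\ep(t(e))\in\dom({\sf LLPO}_n)$. Hence every output satisfies $k\in{\sf LLPO}_n(\ep(t(e)))$, i.e.\ $\alpha_k=0^\infty$.

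The main obstacle is the step where the $F$-code is turned into a genuine computable code through Theorem \ref{LLPO-equial-partial-computable}. That theorem is stated for single-valued $f$ with computability on its actual domain. I must check that $\ep(t(e))$ computes $\alpha$ totally, which holds since $\alpha$ is total and single-valued, and that the map is uniform in $e$. Everything else is bookkeeping identical to Theorem \ref{thm:MPn-reducible-to-realizable}.
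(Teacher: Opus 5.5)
Your proposal is correct and is essentially the paper's own proof. It uses the same explicit description of an $F$-realizer for $\bflogic{LLPO}_n$ (with the fixed realizer $\pi$ of $U(\alpha)$), the same uniform transitivity map $t$ together with $\ep$ from Theorem~\ref{LLPO-equial-partial-computable} to turn an $F$-code of $\alpha$ into a genuine computable index, and the same simulation of a fixed $u$ witnessing ${\sf LLPO}_n\tgw F$ via Proposition~\ref{prop:LLPO-AcoUC}; the only difference is that you spell out the postprocessing pairing each output $k$ with the trivial realizer of the $\Pi_1$ matrix, which the paper leaves implicit.
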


\begin{proof}
The argument is similar to the proof of Theorem \ref{thm:MPn-reducible-to-realizable}.

{Realizer for $\bflogic{LLPO}_n$:}
Based on the argument in Example \ref{exa:AE-thm-realizer-simplify}, an $F$-realizer $r$ for $\bflogic{LLPO}_n$ is described as follows.
\begin{itemize}
\item {\em Instance.}
An $F$-code $a$ of a sequence $\alpha\in 2^{n\times\N}$ is, by Example \ref{exa:func-on-nat}, a code of $F$-computation which given $(k,s)$, returns $\alpha(k,s)$; that is, $\Phi^F_a(k,s)=\alpha(k,s)$.
\item {\em Actual.} As seen in Theorem \ref{thm:MPn-reducible-to-realizable}, if $U(\alpha)$ is true, then it has an \aaa realizer $\pi$.
In this case, $\Phi_r^F(a,\pi)$ outputs an existential witness for the conclusion of $\bflogic{LLPO}_n$; that is, for any $k\in \Phi_r^F(a,0)$, we have $\alpha_k=0^\infty$.
\item {\em Potential.} One can see $\pi\in U(\alpha)_+$.
Hence, $\Phi^+_r(a,\pi)$ halts whenever $\alpha$ is total.
\end{itemize}

\medskip
To show that $\bflogic{LLPO}_n$ is $F$-realizable, let an $F$-code $e$ for $\alpha\in 2^{n\times\N}$ be given.
\begin{itemize}
\item That is, $\alpha\tgw F$ via some $e$.
By the assumption, we have $\alpha=\Phi_e^F\tgw F\tgw G:=\sii{\sf C}_{1/\om}+\pii{\sf C}_{\geq 1/2}$, so by uniform transitivity (Proposition \ref{prop:transitive}) there is a total computable function $t$ such that $\alpha\tgw G$ via $t(e)$.
That is, we have $\Phi_{t(e)}^{G}(k,s)=\Phi_e^{F}(k,s)=\alpha(k,s)$ for any $(k,s)$.
\item Using a computable function in Theorem \ref{LLPO-equial-partial-computable}, we get $\varphi_{\ep(t(e))}=\alpha$.
\end{itemize}

By Proposition \ref{prop:LLPO-AcoUC}, our assumption implies ${\sf LLPO}_n\tgw F$.
Thus, there is an $F$-computable process simulating ${\sf LLPO}_n(\ep(t(e)))$.
Formally, perform the following process:
\begin{itemize}
\item Assume ${\sf LLPO}_n\tgw F$ via $u$.
\begin{itemize}
\item {\em Potential.} For each \ppp instance $a\in\dom_+({\sf LLPO}_n)$, since $u$ gives a total computation tree of type $\dom_+({\sf LLPO}_n)\to\cod_+({\sf LLPO}_n)$, the \ppp computation $\Phi_u^+(a)$ halts.
\item {\em Actual.} For each \aaa instance $a\in\dom({\sf LLPO}_n)$, we have $\Phi_u^F(a)\subseteq{\sf LLPO}_n(a)$.
\end{itemize}
\item Given a code $e$, one can compute an index $r$ such that $\Phi_r^+(e,\pi)=\Phi_u^+(\ep(t(e)))$.
\end{itemize}

{\em Verification:}
\begin{itemize}
\item {\em Potential.}
If $e$ is a code of a sequence $\alpha\in 2^{n\times\N}$, then we have $\varphi_{\ep(t(e))}=\alpha$ as seen above, $\ep(t(e))$ is a code of a total computable function; that is, $\ep(t(e))\in\dom_+({\sf LLPO}_n)$.
Hence, the computation $\Phi_r^+(e,\pi)=\Phi_u^+(\ep(t(e)))$ halts as seen above.
\item {\em Actual.}
If $U(\alpha)$ is true, the code $\ep(t(e))$ of $\varphi_{\ep(t(e))}=\alpha$ belongs to $\dom({\sf LLPO}_n)$.
Hence, for any $k\in\Phi_r^F(e,\pi)=\Phi_u^F(\ep(t(e)))\subseteq {\sf LLPO}_n(\ep(t(e)))$ we have $\alpha_k=0^\infty$.
\end{itemize}

By the above characterization of a realizer for $\bflogic{LLPO}_n$, this means that $r$ is an $F$-realizer for $\bflogic{LLPO}_n$.
\end{proof}

\begin{remark}
Again, the upper bound $\sii{\sf C}_{1/\om}+\pii{\sf C}_{\geq 1/2}$ in Theorem \ref{thm:LLPO-reducible-to-realizable} is used solely to obtain a computable code for a given $\alpha\in 2^\N$.
Thus, by Theorem \ref{MP-WLEM-single-pc}, the upper bound can be replaced with $\sii{\sf C}_{1/\om}+{\sf C}_{2/3}$.
\end{remark}

\begin{prop}\label{prop:LLPO-reducible-to-realizable2}
If $\pii{\sf C}_{\geq n-1/n}\tgw F\tgw\sii{\sf C}_{1/\om}+{\sf C}_{2/3}$, then $\bflogic{LLPO}_n$ is $F$-realizable.
\end{prop}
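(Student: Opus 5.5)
The plan is to copy the proof of Theorem \ref{thm:LLPO-reducible-to-realizable} verbatim. The only change is how we turn an $F$-code of $\alpha\in 2^{n\times\N}$ into a genuine index of a total computable function, as the preceding Remark indicates. The realizer for $\bflogic{LLPO}_n$ has the same shape as before. An $F$-code $a$ of $\alpha$ satisfies $\Phi^F_a(k,s)=\alpha(k,s)$. If $U(\alpha)$ is true, then by Observation \ref{obs:V-true-realizable-trivial} the first projection $\pi$ is an \aaa realizer of $U(\alpha)$; it is also always a \ppp realizer. So it suffices to produce $r$ such that $\Phi^+_r(a,\pi)$ halts whenever $a$ codes a total $\alpha$, and such that every output of $\Phi_r^F(a,\pi)$ is some $k$ with $\alpha_k=0^\infty$ whenever $U(\alpha)$ holds.

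First, fix an $F$-code $e$ of $\alpha$, so $\alpha\tgw F$ via $e$. Using $F\tgw G:=\sii{\sf C}_{1/\om}+{\sf C}_{2/3}$ and the uniform transitivity at the end of the proof of Proposition \ref{prop:transitive}, we get a total computable $t$ with $\alpha\tgw G$ via $t(e)$. Since $\alpha$ is a single-valued total function $\om\to\om$ (after coding pairs), Theorem \ref{MP-WLEM-single-pc} applies in place of Theorem \ref{LLPO-equial-partial-computable}. Its proof gives the uniform algorithm $\Theta_{t(e)}=\varphi_{\ep(t(e))}$, and this computes $\alpha$. Hence $\ep(t(e))$ is an honest index of a total two-valued function on $n\times\om$, so $\ep(t(e))\in\dom_+({\sf LLPO}_n)$.

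Next, Proposition \ref{prop:LLPO-AcoUC} and the hypothesis $\pii{\sf C}_{\geq n-1/n}\tgw F$ give ${\sf LLPO}_n\tgw F$ via some $u$. Let $r$ be computed so that $\Phi_r(e,\pi)=\Phi_u(\ep(t(e)))$. The potential part holds because $u$ is a total tree on $\dom_+({\sf LLPO}_n)$ and $\ep(t(e))$ lies there. For the actual part, if $U(\alpha)$ is true then $\ep(t(e))\in\dom({\sf LLPO}_n)$. Therefore every $k\in\Phi_u^F(\ep(t(e)))$ satisfies $\alpha_k=0^\infty$, which is exactly the realizability clause.

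The only delicate point is the step using Theorem \ref{MP-WLEM-single-pc}. Its proof first invokes Theorem \ref{thm:MP-remove} to pass from $\tgw$ to $\gw$, and then runs the brute-force search over $2$-fat finite trees. I need this to be uniform in the reduction index, that is, $\ep$ must be total computable. It must also produce a correct value on every input, since $\alpha$ is total and every $(k,s)$ is an actual instance. Both facts are stated in that proof, so the verification reduces to the same bookkeeping as in Theorem \ref{thm:LLPO-reducible-to-realizable}.
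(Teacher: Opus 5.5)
Your proposal is the paper's argument. The Remark before the proposition says to rerun the proof of Theorem~\ref{thm:LLPO-reducible-to-realizable} using the uniform $\ep$ of Theorem~\ref{MP-WLEM-single-pc} instead of Theorem~\ref{LLPO-equial-partial-computable}, which is exactly what you do.

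You share one small point with the paper: the potential clause must hold for every potential code $e$, not only for actual codes of some $\alpha$, and for such $e$ the function $\varphi_{\ep(t(e))}$ may be partial. This is handled by taking $u$ to be the composite of two reductions. The first is the obvious reduction ${\sf LLPO}_n\tW\pii{\sf C}_{\geq n-1/n}$, whose inner map sends every $a\in\om$ to an index of the $\Pi_1$ set $\{k<n:\neg\exists s\ \varphi_a(k,s)\down\neq 0\}$. The second is the hypothesis $\pii{\sf C}_{\geq n-1/n}\tgw F$. Since $\dom_+(\pii{\sf C}_{\geq n-1/n})=\om$, the tree $\Phi^+_u(\ep(t(e)))$ is then total for every $e$.
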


%
%

Next, let us show the converse.

\begin{theorem}\label{thm:LLPOn-realizable-to-above}
If $\bflogic{LLPO}_n$ is $F$-realizable, then $\pii{\sf C}_{\geq n-1/n}\tgw F$.
\end{theorem}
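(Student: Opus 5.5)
The plan is to mirror the proofs of Theorems \ref{thm:MP-realizable-to-reducible} and \ref{thm:MPnlor-realizable-to-above}. First I will show ${\sf LLPO}_n\tgw F$. Then Proposition \ref{prop:LLPO-AcoUC}, which gives ${\sf LLPO}_n\equiv_{\sf tW}\pii{\sf C}_{\geq n-1/n}$, together with transitivity (Proposition \ref{prop:transitive}) yields $\pii{\sf C}_{\geq n-1/n}\tgw F$.

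Let $r$ be an $F$-realizer for $\bflogic{LLPO}_n$, read through the description given in the proof of Theorem \ref{thm:LLPO-reducible-to-realizable}. For any $F$-code $a$ of a total $\alpha\in 2^{n\times\N}$, the potential computation $\Phi^+_r(a,\pi)$ halts, because $\pi\in U(\alpha)_+$ always holds. Moreover, if $U(\alpha)$ is true, then $\pi$ is an actual realizer of $U(\alpha)$ by Observation \ref{obs:V-true-realizable-trivial}. In that case every $k\in\Phi^F_r(a,\pi)$ satisfies $\alpha_k=0^\infty$.

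Given $e\in\dom_+({\sf LLPO}_n)$, $\varphi_e$ is total and two-valued on $n\times\om$. I will let $\eta(e)$ be the trivial $F$-relative index computing $\varphi_e$ with no queries, so $\Phi^F_{\eta(e)}=\varphi_e$. I will then compute an index $m$ with $\Phi_m(e)=\Phi_r(\eta(e),\pi)$, meaning the same computation tree, queries and outputs.

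For the potential part, $\eta(e)$ codes a total $F$-computable function, so $\Phi^+_m(e)$ halts and is a total tree with leaves labelled in $\om$. To respect the potential codomain $n$, I will post-compose the outputs with the total map $k\mapsto k$ if $k<n$ and $k\mapsto 0$ otherwise. This does not change actual solutions, since those already lie below $n$.

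For the actual part, take $e\in\dom({\sf LLPO}_n)$. Then $\varphi_e(k,t)=1$ for at most one $k<n$, so $U(\alpha)$ holds for $\alpha=\varphi_e$. Hence every $k\in\Phi^F_m(e)$ satisfies $\alpha_k=0^\infty$, that is, $k\in{\sf LLPO}_n(e)$.

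The main point to check is that $U(\alpha)$ in the formal principle, which ranges over pairs $u,v\in n\times\N$, matches the domain condition of ${\sf LLPO}_n$. The formal $U(\alpha)$ forbids two distinct positions $(k,s)\ne(k',s')$ with value $1$, which is stronger than the domain condition "at most one row $k$ has a $1$". If this mismatch matters, I will first normalize $\varphi_e$ computably. The replacement sets $\alpha'(k,s)=1$ exactly when $s$ is the first stage at which any row shows a $1$ and $k$ is the least row showing a $1$ at that stage, and sets $\alpha'(k,s)=0$ otherwise.

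This $\alpha'$ always satisfies $U$ and is uniformly computable from $e$. For $e\in\dom({\sf LLPO}_n)$, the all-zero rows of $\alpha'$ are exactly the all-zero rows of $\varphi_e$. The argument above then runs with $\alpha'$ in place of $\varphi_e$.
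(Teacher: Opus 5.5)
Your proposal is correct and follows essentially the same route as the paper: you feed the query-free $F$-code of $\varphi_e$, together with the trivial realizer $\pi$ of $U$, into the given realizer $r$ to obtain ${\sf LLPO}_n\tgw F$, and then conclude via Proposition \ref{prop:LLPO-AcoUC}. Your normalization step is genuinely needed, not just a precaution: the paper asserts that $U(\varphi_e)$ holds for every $e\in\dom({\sf LLPO}_n)$, which is false when the unique nonzero row contains several $1$'s, and your $\alpha'$ (together with clipping the outputs into $n$) closes this gap.
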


\begin{proof}
The argument is similar to the proof of Theorem \ref{thm:MP-realizable-to-reducible}.
Let $r$ be an $F$-realizer for $\bflogic{LLPO}_n$.
\begin{itemize}
\item Recalling the specific description of a realizer for $\bflogic{LLPO}_n$ in the proof of Theorem \ref{thm:LLPO-reducible-to-realizable}, for any $F$-relative code $e$ of a total computable function, $\Phi_r^+(e,\pi)$ halts.
\item Given a computable function $\varphi_e$, its index $\eta(e)$ of an $F$-relative computation can be obtained by considering a computation that does not make any queries to $F$.
\item Then compute an index $m$ such that $\Phi_m^+(e)=\Phi_r^+(\eta(e),\pi)$.
\end{itemize}

{\em Verification:}
\begin{itemize}
\item {\em Potential.}
Assume that a code $e\in{\rm dom}_+({\sf LLPO}_n)$ of a total computable function $\alpha\in 2^{n\times\N}$ is given.
\begin{itemize}
\item Then $\eta(e)$ is an $F$-relative index of $\alpha=\varphi_e$; that is, $\Phi_{\eta(e)}^F(k,s)=\alpha(k,s)$.
\item Since $\eta(e)$ is an $F$-code of a total computable function, the computation $\Phi_m^+(e)=\Phi_r^+(\eta(e),\pi)$ must halt.
\end{itemize}
\item {\em Actual.}
For each \aaa instance $e\in\dom({\sf LLPO}_n)$, for $\alpha=\varphi_e$, the premise $U(\alpha)$ is true.
As seen in Theorem \ref{thm:MPn-reducible-to-realizable}, $\pi$ is its \aaa realizer.
Since $\eta(e)$ is an $F$-code of $\alpha$, and $r$ is a realizer for $\bflogic{LLPO}_n$, for any $k\in\Phi_m^F(e)=\Phi_r^F(\eta(e),0)$, we must have $\alpha_k=0^\infty$.
Therefore, we get $\Phi_m^F(e)\subseteq{\sf LLPO}_n(e)$.
\end{itemize}

Consequently, $m$ is a code of a reduction for ${\sf LLPO}_n\tgw F$.
By Proposition \ref{prop:LLPO-AcoUC}, we get the conclusion.
\end{proof}


\subsection{\bflogic{WLEM} and its variants}\label{sec:WLEM}

The weak law of excluded middle $\bflogic{WLEM}$ states that one of two incompatible principles, $A$ and $\neg A$, is false.
The principle $\bflogic{WLEM}_n$ states that one of $n$ mutually incompatible principles is false.

\begin{definition}\label{def:WLEM_n-def}
For each $n<\om$, then $\bflogic{WLEM}_n$ is the following principle:
\[
\left(\neg\bigvee_{\substack{k,\ell<n\\ k\not=\ell}}A_k\land A_\ell\right)\longrightarrow\bigvee_{k<n}\neg A_k.
\]

If $n=\om$, then $\bflogic{WLEM}_\om$ stands for the following principle:
\[
\big(\neg\exists k,\ell\in\N.\ (k\not=\ell \land A_k\land A_\ell)\big)\longrightarrow\exists k\in\N\ \neg A_k.
\]
\end{definition}

We link the co-unique choice principle ${\sf C}_{n-1/n}$ to $\bflogic{WLEM}_n$.

\begin{theorem}\label{thm:WLEMn-realizable-iff-reducible}
Let $F$ be a potted bilayer problem, and $n\leq\om$.
\[
{\sf C}_{n-1/n}\tgw F\iff\mbox{$\bflogic{WLEM}_n$ is $F$-realizable.}
\]
\end{theorem}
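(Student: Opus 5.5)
We unfold the $j_F$-realizability of $\bflogic{WLEM}_n$ by the Kuroda translation and observe that a realizer is essentially a uniform multi-query reduction of ${\sf C}_{n-1/n}$ to $F$. The propositional variables $A_k$ range over arbitrary truth values in $\Omega$, i.e.\ over all potted sets $\paset{A}_k$. Since $j_F$ is $\neg\neg$-dense, negated formulas behave classically. $\neg A_k$ has an \aaa realizer (namely $0$) iff $(A_k)_-=\emptyset$, and otherwise $\neg A_k$ is actually empty. Likewise the premise $\neg\bigvee_{k\not=\ell}(A_k\land A_\ell)$ is actually realized by $0$ exactly when at most one $(A_k)_-$ is nonempty, and its \ppp realizers always include $0$.

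Following Example \ref{exa:AE-thm-realizer-simplify} and Lemma \ref{lem:AE-thm-simplify}, the conclusion $\bigvee_{k<n}\neg A_k$ (an $\exists k$ with an $\N$-code) is realized by an element of $F^\solve(\bigcup_k(\{k\}\times(\neg A_k)_-))$. So a realizer $r$ amounts to a code $\Phi_r(0)$ of a total computation tree with leaves in $n$ (up to pairing), which, whenever at most one $(A_k)_-$ is nonempty, $F$-solves the set $\{k<n:(A_k)_-=\emptyset\}$. The key observation is that the public side of the computation receives only the trivial realizer $0$ and never sees the $A_k$. Hence the potential tree is one fixed tree independent of $\vec A$, while only the \aaa branching (chosen by {\tt Nimue}) may depend on $\vec A$.

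($\Rightarrow$) Suppose ${\sf C}_{n-1/n}\tgw F$ via a total tree $\Phi=(\tpot,\nu,\psi)$. The public instance of ${\sf C}_{n-1/n}$ is just $\ast$, so there is a single tree, and for each co-singleton secret $B\subseteq n$ there is an $(F;\nu)$-branching subtree with leaves labelled in $B$. Given $\vec A$ satisfying the premise, let $B=\{k<n:(A_k)_-=\emptyset\}$; this is a co-singleton or all of $n$. In the latter case we replace it by $n\setminus\{0\}$, as in Proposition \ref{prop:coUC-equal-WLEM}. The realizer ignores its input, outputs the code of $\Phi$ post-composed with $k\mapsto\pair{k,0}$ (monotonicity of $j_F$), and for each leaf value $k\in B$ the pair $\pair{k,0}$ realizes $\neg A_k$. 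Potential totality holds because $\Phi$ is a total tree regardless of $\vec A$. For $n=\om$ the same argument applies, noting that the $\N$-code of $k$ is $k$.

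($\Leftarrow$) Given a realizer $r$, we specialise $\vec A$ for each co-singleton (or full) $B\subseteq n$ by taking $\paset{A}_k=(\emptyset;\om)$ for $k\in B$ and $\paset{A}_k=\top$ otherwise. The premise then holds, so $\varphi_r(0)$ codes a tree that $F$-solves $\bigcup_{k\in B}\{k\}\times\{0\}$. Composing with the first projection yields the single total tree witnessing ${\sf C}_{n-1/n}\tgw F$, where {\tt Nimue}'s strategy for secret $B$ is the branching subtree supplied for that instance.

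The main obstacle is the bookkeeping in the unfolding. We must check carefully that the potential part of the Kuroda translation (implications with $j$ applied to the consequent, and the union over $k$) really forces a single total computation tree independent of $\vec A$. We must also handle the $n=\om$ case, where the premise involves $\exists k,\ell\in\N$ with $\N$-codes but is still $\neg$-headed and hence trivially realized. The step I expect to need the most care is applying Lemma \ref{lem:AE-thm-simplify} to strip the inner $j$'s in the disjunction/existential.
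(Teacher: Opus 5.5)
Your proposal is correct and follows the paper's proof. For ($\Rightarrow$), the paper likewise runs the single fixed reduction tree (public instance $\ast$) on the secret instance $N(\vec A)$ of indices whose negation is realizable; it cites Proposition~\ref{prop:coUC-equal-WLEM} at the point where you inline the replacement of $n$ by $n\setminus\{0\}$. For ($\Leftarrow$), it specializes to instances in which exactly one $A_a$ holds, namely $B^a_i\equiv(i=a)$, which is your choice of $\top$ and $(\emptyset;\om)$ written as formulas.

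One small correction: the tree obtained from $r$ solves $\bigcup_{k\in B}\{k\}\times(\neg A_k)_-$, not $\bigcup_{k\in B}\{k\}\times\{0\}$. This does not affect the argument once you project to the first coordinate.
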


\begin{proof}
We describe the case where $k < \omega$, but the same applies when $k=\omega$.

{\em Realizer for $\bflogic{WLEM}_k$:}
\begin{itemize}
\item {\em Premise.}
Since the outmost connective of $\neg\vee_{n\not=m<k}A_m\land A_n$ is $\neg$, this is $F$-realizable iff $0$ is an $F$-\aaa realizer.
Moreover, this is $F$-realizable iff there is at most one $i$ such that $A_i$ is $F$-realizable.
\end{itemize}

Based on the argument in Example \ref{exa:AE-thm-realizer-simplify}, an $F$-realizer $r$ for $\bflogic{WLEM}_k$ is described as follows.
\begin{itemize}
\item {\em Potential.} $\Phi_r^+(0)$ outputs a number less than $k$.
\item {\em Actual.}
If there is at most one $F$-realizable one among the $A_i$'s, then some \aaa computation tree for $\Phi_r^F(0)$ outputs an $i<k$ such that $\neg A_i$ is $F$-realizable.
\begin{itemize}
\item That is, there is an \aaa computation tree $(\tactual_k,\nu_k,\psi_k)$ such that if $\sigma\in\leaf\tactual_k$ and $\psi_k(\sigma)=i$ then $\neg A_i$ is $F$-realizable.
\end{itemize}
\end{itemize}

For $A=(A_i)_{i<k}$, consider the following set:
\[N(A)=\{i<k:\neg A_i\mbox{ is $F$-realizable}\}.\]

To summarize the above, the following are equivalent.
\begin{enumerate}
\item $r$ is an $F$-\aaa realizer for $\bflogic{WLEM}_k$.
\item $\Phi_r^+(0)$ is a total computation tree; and if there is at most one $F$-realizable one among the $A_i$'s, then some \aaa computation tree for $\Phi_r^F(0)$ solves ${\sf C}_{\geq k-1/k}(\ast|N(A))$.
\end{enumerate}


\medskip
We show the equivalence $\iff$ in the statement of the theorem.

$\Rightarrow$:
Proposition \ref{prop:coUC-equal-WLEM}, we may assume ${\sf C}_{\geq k-1/k}\tgw F$ via $r$.
\begin{itemize}
\item {\em Potential.}
Any subset of $k$ is a secret \ppp instance of ${\sf C}_{k-1/k}$.
In particular, $N(A)\subseteq k$ can be thought of as a secret \ppp instance, so the computation $\Phi_r^+(0)$ halts.
\item {\em Actual.}
If $(A_i)_{i<k}$ satisfies the premise of $\bflogic{WLEM}_k$, then $(\ast | N(A))\in\dom({\sf C}_{\geq k-1/k})$.
Therefore, for each secret instance $N(A)$, there is an \aaa computation tree for $\Phi_r^F(0)$ which solves ${\sf C}_{\geq k-1/k}(\ast|N(A))$.
\end{itemize}

Consequently, by the above (2)$\Rightarrow$(1), $r$ is an $F$-\aaa realizer for $\bflogic{WLEM}_k$.

\medskip
$\Leftarrow$:
Let $r$ be an $F$-\aaa realizer for $\bflogic{WLEM}_k$.
For each $a,i<k$, consider the following formula:
\[
B^a_i\equiv (i=a)
\]

\begin{itemize}
\item $i\not=a$ is true iff $\neg B^a_i$ is true iff $0$ is an \aaa realizer for $\neg B^a_i$.
\item 
For any two distinct $u$ and $v$, it is impossible for $u=a$ and $v=a$ to hold simultaneously.
In other words, it is impossible for $B_u^a\land B_v^a$ to hold.
Therefore, the premise of $\bflogic{WLEM}_k$ for $(B^i_a)_{i<k}$ is always realizable.
\end{itemize}

To show ${\sf C}_{k-1/k}\tgw F$, proceed with the following algorithm:
\begin{itemize}
\item {\em Potential.}
Regardless of a \ppp instance $(\ast|A)$, it simulates $\Phi_r^+(0)$ as is.
This always halts.
\item {\em Actual.} 
For each \aaa instance $(\ast|A)\in\dom({\sf C}_{k-1/k})$, $A\subseteq k$ is a co-singleton, so it is of the form $A=k\setminus\{a\}$.

Since $r$ is an $F$-realizer for $\bflogic{WLEM}_k$, for the instance $(B^a_i)_{i<k}$, there is an \aaa computation tree $\tactual_{r|a}\subseteq \tpot_r$ for $\Phi_r^F(0)$ which returns an \aaa realizer for $\bigvee_{i<k}\neg B^a_i$.
This output must be $i\not=a$, so this computation solves ${\sf C}_{k-1/k}(\ast|A\setminus\{a\})$.
\end{itemize}

Consequently, we obtain ${\sf C}_{k-1/k}\tgw F$.
\end{proof}

The weak law of excluded middle $\bflogic{WLEM}$ ($=\bflogic{WLEM}_2$) is better known as {\em propositional de Morgan's law} $\neg(A\land B)\to \neg A\lor \neg B$.
We next consider de Morgan's law on the natural numbers, denoted by $\bflogic{DML}_\N$:
\begin{definition}\label{def:DML-nat}
$\bflogic{DML}_\N$ is the following principle:
\[
\neg\forall n\in\N\  A(n)\longrightarrow \exists n\in\N\ \neg A(n).
\]
\end{definition}


\begin{theorem}\label{thm:DML-reducible-iff-DNE-realizable}
Let $F$ be a potted bilayer problem.
\begin{align*}
{\sf C}_{1/\om}\tgw F&\iff\mbox{$\bflogic{DML}_\N$ is $F$-realizable.}
\end{align*}
\end{theorem}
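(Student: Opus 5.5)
I will follow the same pattern as the proof of Theorem \ref{thm:WLEMn-realizable-iff-reducible}, with ${\sf C}_{\geq 1/\om}$ playing the role of ${\sf C}_{\geq k-1/k}$. By Proposition \ref{prop:UC-equal-DML} it suffices to prove ${\sf C}_{\geq 1/\om}\tgw F$ iff $\bflogic{DML}_\N$ is $F$-realizable.

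The first step is to unfold what an $F$-realizer $r$ of $\bflogic{DML}_\N$ is, for a family $A=(A(n))_{n\in\om}$ of predicates. The premise $\neg\forall n\,A(n)$ has outermost connective $\neg$. Since $j_F$ is $\neg\neg$-dense, it is $F$-realizable iff $0$ realizes it, iff $\forall n\,A(n)$ is not $F$-realizable. The $j$-interpretation of $\forall n$ is the intersection of $j(\eval{A(n)}^j)$, so this holds iff there is no \emph{common} realizer of the $A(n)$'s.

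For the conclusion I apply Example \ref{exa:AE-thm-realizer-simplify} and Lemma \ref{lem:AE-thm-simplify}. A realizer is then a code $\Phi_r^+(0)$ of a total computation tree with outputs in $\om$ (together with a potential realizer of $\neg A(n)$, which may be taken to be $0$). Actually, some $(F;\nu)$-branching subtree has all its leaf outputs $n$ such that $\neg A(n)$ is $F$-realizable, i.e. $A(n)$ is not $F$-realizable. Setting
\[
N(A)=\{n\in\om:\neg A(n)\mbox{ is $F$-realizable}\},
\]
I obtain the same equivalence as before. Namely, $r$ realizes $\bflogic{DML}_\N$ iff $\Phi_r^+(0)$ is a total tree of type $\om$ and, whenever the premise holds, some actual tree for $\Phi_r^F(0)$ solves ${\sf C}_{\geq 1/\om}(\ast|N(A))$.

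For ($\Rightarrow$), take a reduction $r$ witnessing ${\sf C}_{\geq 1/\om}\tgw F$ and feed it the secret instance $N(A)$. Its potential totality does not depend on the secret, so $\Phi_r^+(0)$ halts. If the premise holds, I need $N(A)\neq\emptyset$, and this is the main obstacle. The premise only says there is no common realizer for all $A(n)$, whereas $N(A)\neq\emptyset$ needs some single $A(n)$ with no realizer at all. Classically in the meta-logic: if every $A(n)$ had an actual $F$-realizer, these realizers could still differ with $n$, so I cannot immediately conclude. I would resolve this using the precise interpretation in which $A(n)$ ranges over $\Omega$-valued predicates realized uniformly. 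If this does not close the gap, I would instead use the definition of $\eval{\forall}$ as intersection together with the fact that, once the premise is realized by $0$, it suffices to produce an $n$ with $\eval{A(n)}_-=\emptyset$. I expect to need exactly this case analysis, possibly restricting $A$ to $\neg\neg$-stable or set-indexed predicates as in the WLEM proof. Granting $N(A)\neq\emptyset$, the actual tree solving ${\sf C}_{\geq 1/\om}(\ast|N(A))$ yields, by the equivalence above, a realizer of the conclusion.

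For ($\Leftarrow$), given a realizer $r$, I instantiate it with $B^a(n)\equiv(n\neq a)$ for each $a\in\om$. Then $\forall n\,B^a(n)$ is false, hence not realizable, so the premise is realized by $0$. Moreover, $\neg B^a(n)$ is realizable iff $n=a$, so $N(B^a)=\{a\}$. The realizer $r$ ignores the secret data, so the strategy ``simulate $\Phi_r^+(0)$'' is total on all potential instances $(\ast|A)$. For an actual instance $(\ast|\{a\})$ of ${\sf C}_{1/\om}$, the realizer gives an actual tree whose leaves output elements of $N(B^a)=\{a\}$, i.e. output $a$. Hence ${\sf C}_{1/\om}\tgw F$ follows directly, without going through ${\sf C}_{\geq1/\om}$.
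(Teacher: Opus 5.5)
Your ($\Leftarrow$) direction is correct and is the paper's argument: instantiate with $n\neq a$, note that the premise is always realized by $0$, and read off $a$ from the leaves.

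The gap is in ($\Rightarrow$), at exactly the step you flag and leave open. You need $N(A)\neq\emptyset$ whenever $\neg\forall n\,A(n)$ is $F$-realizable. Otherwise $(\ast|N(A))$ is not an actual instance of ${\sf C}_{\geq 1/\om}$, and the reduction guarantees nothing about the leaf outputs.

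Your proposed remedies would change the statement rather than prove it. These were reinterpreting $A$ as ``uniformly realized'' predicates, or restricting to $\neg\neg$-stable or set-indexed families. The WLEM template does not help either. There the premise concerns finitely many conjunctions $A_k\land A_\ell$, and each is realizable iff both conjuncts are, so no uniformity issue arises. For the infinite conjunction $\forall n\,A(n)$ the step genuinely depends on $F$. For instance, with $F={\rm id}_\om$ and $A(n)$ actually realized only by $h(n)$ for a noncomputable $h$, every $A(n)$ is realizable but $\forall n\,A(n)$ is not.

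The missing idea is to use the hypothesis ${\sf C}_{1/\om}\tgw F$ to uniformize. Suppose every $A(n)$ has an actual $F$-realizer $r(n)$, chosen arbitrarily and noncomputably. Regard $n\mapsto r(n)$ as a potted problem $\om\to\om$. Then Proposition~\ref{prop:DML-upper-bound} gives $r\tW{\sf C}_{1/\om}$: {\tt Nimue} secretly submits $\{r(n)\}$ and {\tt Arthur} outputs {\tt Merlin}'s answer. By transitivity (Proposition~\ref{prop:transitive}) you get $r\tgw F$. Via Example~\ref{exa:AE-thm-realizer-simplify}, that reduction is a single $F$-realizer of $\forall n\,A(n)$.

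So under the hypothesis, $\forall n\,A(n)$ is $F$-realizable iff every $A(n)$ is. Realizability of the premise therefore yields some $n$ with $A(n)$ not $F$-realizable, i.e.\ $N(A)\neq\emptyset$. With this lemma in place, the rest of your ($\Rightarrow$) argument goes through. The paper phrases that last step as building a ${\sf C}_{\geq 1/\om}$-realizer in which {\tt Nimue} plays $N(A)$, then transporting it along ${\sf C}_{\geq 1/\om}\tgw F$; this is equivalent to your direct use of the reduction.
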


\begin{proof}
Assume ${\sf C}_{1/\om}\tgw F$.
By Proposition \ref{prop:UC-equal-DML}, we have ${\sf C}_{\geq 1/\om}\tgw F$.

%
%

{\em Realizer for $\bflogic{DML}_\N$:}
Under the assumption ${\sf C}_{1/\om}\tgw F$, the following equivalence holds:
\[\mbox{$A(n)$ is $F$-realizable for all $n\in\N$}\iff\mbox{$\forall n\in\N.A(n)$ is $F$-realizable.}\]

The backward direction ($\Leftarrow$) is obvious, so we only show the forward direction ($\Rightarrow$):
\begin{itemize}
\item Let $r(n)$ be an $F$-\aaa realizer for $A(n)$.
\item As in Example \ref{exa:AE-thm-realizer-simplify}, it suffices to give an $F$-computation which, given $n\in\om$, returns an $F$-\aaa realizer for $A(n)$.
\item We thinking of this total function $r\colon\om\to\om$ as a potted problem.
Then, by Proposition \ref{prop:DML-upper-bound}, we get $r\tW{\sf C}_{1/\om}$.
\item By transitivity (Proposition \ref{prop:transitive}), we have $r\tgw F$.
This reduction gives an $F$-realizer for $\forall n\in\N.A(n)$.
\end{itemize}

Based on the above observation, we analyze a realizer for $\bflogic{DML}_\N$.
\begin{itemize}
\item {\em Premise.}
Since the outmost connective of $\neg\forall n\in\N.A(n)$ is $\neg$, this is $F$-realizable iff $0$ is an $F$-\aaa realizer.
Moreover, by the above observation, this is $F$-realizable iff $\forall n^\N A(n)$ is not $F$-realizable iff $A(n)$ is not $F$-realizable for some $n\in\N$.
\item {\em Realizer.}
As in Example \ref{exa:AE-thm-realizer-simplify}, if $r$ is an $F$-\aaa realizer for $\bflogic{DML}_\N$ then both $\Phi_r^+(0)$ and $\Phi_r^F(0)$ always halt, and if $A(n)$ is not $F$-realizable for some $n\in\N$, then it outputs such an $n$.
\end{itemize}

To construct a ${\sf C}_{\geq 1/\om}$-realizer for $\bflogic{DML}_\N$, we consider the following strategy $r$:
\begin{itemize}
\item {\tt Merlin} secretly plays an $A$.
In response, {\tt Arthur} asks {\tt Nimue} to make a query, and {\tt Nimue} plays the following secret move $N(A)$.
\[N(A)=\{n\in\N:A(n)\mbox{ is not $F$-realizable}\}.\]
\item In response to {\tt Merlin}'s answer $m$ in the next round, {\tt Arthur} declares $m$ as the output.
\end{itemize}

{\em Verification:}
\begin{itemize}
\item {\em Potential.}
{\tt Arthur} always declares the termination of the game, and outputs a natural number.
That is, $\Phi_r^+(0)$ always returns a \ppp realizer for $\exists n^\N \neg A(n)$.
\item {\em Actual.}
If the premise $\neg\forall n^\N A(n)$ is realizable, then $N(A)\not=\emptyset$ holds, so {\tt Nimue}'s query $N(A)$ is contained in ${\sf C}_{\geq 1/\om}$.
Then {\tt Arthur} outputs {\tt Merlin}'s answer $m\in N(A)$ as is, and this is an \aaa realizer of $\exists n^\N\neg A(n)$.
That is, for $G={\sf DML}_\om$, $\Phi^G_r(0)$ must output an \aaa realizer for $\exists n^\N\neg A(n)$.
\item As $G\tgw F$, this $G$-\aaa realizer can be transformed into an $F$-\aaa realizer.
\end{itemize}

This shows that $\bflogic{DML}_\N$ is $F$-realizable.

\medskip

$\Leftarrow$:
Let $r$ be an $F$-\aaa realizer for $\bflogic{DML}_\N$.
For each $a\in\N$, consider the following formula:
\[
B_a(n)\equiv (n\not=a)
\]

\begin{itemize}
\item $n=a$ is true iff $\neg B_a(n)$ is true iff $0$ is an \aaa realizer for $\neg B_a(n)$.
\item Therefore, the premise of $\bflogic{DML}_\N$ for $B_a$ is always realizable
\end{itemize}

To show ${\sf C}_{1/\om}\tgw F$, proceed with the following algorithm:
\begin{itemize}
\item {\em Potential.} Regardless of a \ppp instance $(\ast|A)$, it simulates $\Phi_r^+(0)$ as is.
This always halts.
\item {\em Actual.}
For each \aaa instance $(\ast|A)\in{\sf C}_{1/\om}$, $A$ is a singleton, so it is of the form $A=\{a\}$.

Since $r$ is an $F$-realizer for $\bflogic{DML}_\N$, for the instance $B_a$, there is an \aaa computation tree $\tactual_{r|a}\subseteq \tpot_r$ wgucg returns an \aaa realizer for $\exists n\neg B_a(n)$.
This output must be $n=a$, so this computation solves ${\sf C}_{1/\om}(\ast|\{a\})$.
\end{itemize}

Consequently, we obtain ${\sf C}_{1/\om}\tgw F$.
\end{proof}

\begin{remark}
In the effective topos, both ${\sf C}_{1/2}$ and ${\sf C}_{1/\om}$ are the strongest non-trivial oracles which induce the double negation topology.
In the total computable setting, we have seen in Theorem \ref{thm:separation-DML-WLEM} that ${\sf C}_{1/2}<_{\sf tW}^{\sf G}{\sf C}_{1/\om}$ holds.
The question here is whether ${\sf C}_{1/\om}$ induces the double negation topology.
By Proposition \ref{prop:DML-upper-bound}, one might expect ${\sf C}_{1/\om}$ to be the strongest non-trivial oracle; however:
\begin{itemize}
\item To realize $\neg\neg A\to A$, even when $A$ is not \aaa realizable, we must output an \ppp realizer for $A$.
\item However, $(\neg\neg A\to A)_+$ does not preserve any information about $A_+$, and ${\sf C}_{1/\om}$ has no way to restore it.
\item In general, an oracles with a fixed \ppp codomain, such as ${\sf C}_{1/\om}$ has no way to handle different \ppp realizers for each distinct $A$.
\end{itemize}

In fact, it seems correct to generally introduce oracles generated from logical principles as $\Omega$-valued functions, which have no fixed \ppp codomain.
\end{remark}


\subsection{$\bflogic{CT}_0$ and its variants}\label{sec:Church-thesis}

There is a previous work on the separation of strengths of the weak Markov principles using Kripke models \cite{HeLu16}.
However, unlike the Kripke model (the topological model), the realizability model automatically guarantees the validity of principles regarding computability.
Here, we consider some variants of {\bf Church's thesis for constructivists}.

First, $\bflogic{CT}_0$ is a principle stating that any total multifunction is computable.
\begin{definition}
$\bflogic{CT}_0$ is the following principle:
\[
\forall x\in\N\exists y\in\N.\ A(x,y)
\to
\exists e\in\N\forall x\in\N.\ {\varphi_e(x)\downarrow}\land A(x,\varphi_e(x))
].
\]

The principle obtained by replacing $\exists y$ with $\exists!y$ is called $\bflogic{CT}_0!$.
\end{definition}

Next, $\bflogic{nCT}_0$  is a principle stating that any partial multifunction is computable.
\begin{definition}
$\bflogic{nCT}_0$ is the following principle:
\begin{align*}
&\forall x\in\N.\ (\neg D(x)\to \exists y\in\N.\ A(x,y))\\
\longrightarrow\exists e\in\N&\forall x\in\N.\ [\neg D(x)\to {\varphi_e(x)\downarrow}\land A(x,\varphi_e(x))].
\end{align*}

The principle obtained by replacing $\exists y$ with $\exists!y$ is called $\bflogic{nCT}_0!$.
\end{definition}

Note that this is different from $\bflogic{ECT}_0$ (extended Church's thesis), which uses the notion of almost negativity; however, almost negativity performs well only for partial computability and is not a natural notion for total computability.

$\bflogic{CT}_0$ also does not quite align well with the total computability setting, in that its realizer requires an unbounded search for halting $\varphi_e(x)\down$ for computations.
For this reason, we introduce the following weaker principle.

\begin{definition}
$\bflogic{wCT}_0$ is the following principle\footnote{
Note that this is different from what is commonly referred to as $\bflogic{WCT}_0$.
In $\bflogic{WCT}_0$, $\neg\neg$ appears before $\exists e$.
}:
\begin{align*}
\forall x\in\N\exists y\in\N.\ A(x,y)
\to
\exists e\in\N\forall x\in\N.\ \neg\neg[\varphi_e(x)\down\land A(x,\varphi_e(x))].
\end{align*}

The principle obtained by replacing $\exists y$ with $\exists!y$ is called $\bflogic{wCT}_0!$.
\end{definition}

\begin{definition}
$\bflogic{wnCT}_0$ is the following principle:
\begin{align*}
&\forall x\in\N.\ (\neg D(x)\to \exists y\in\N.\ A(x,y))\\
\longrightarrow\exists e\in\N&\forall x\in\N.\ [\neg D(x)\to \neg\neg ({\varphi_e(x)\downarrow}\land A(x,\varphi_e(x)))].
\end{align*}

The principle obtained by replacing $\exists y$ with $\exists!y$ is called $\bflogic{wnCT}_0!$.
\end{definition}


Relativization to an oracle weaker than Markov's principle preserves weak Church's thesis.

\begin{theorem}\label{thm:MP-vs-CT0-realizability}
If $F\tgw\sii{\sf C}_{1/\om}$ then $\bflogic{wnCT}_0$ is $F$-realizable.
\end{theorem}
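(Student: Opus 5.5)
The plan is to transform an $F$-realizer of the premise of $\bflogic{wnCT}_0$ into an actual index $e$ by making the $F$-computation genuinely computable via Theorem \ref{MP-equial-partial-computable}.

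First, unfold the realizer of the premise as in Example \ref{exa:AE-thm-realizer-simplify}. The premise $\forall x(\neg D(x)\to\exists y A(x,y))$ is $F$-realized by some $a$. Since the outermost connective of $\neg D(x)$ is $\neg$, it is $F$-realizable iff $0$ is an \aaa realizer of it, and $0$ is always a \ppp realizer. So $a$ yields, uniformly in $x$, an index $b(x)$ of an $F$-computation tree $\Phi^F_{b(x)}$. Its \ppp part always halts. Whenever $\neg D(x)$ is $F$-realizable, every leaf of some \aaa computation tree outputs a pair $\pair{y,c}$ with $c$ an \aaa $F$-realizer of $A(x,y)$. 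In other words, we get a potted problem $H$ with $\dom_+(H)=\om$, $\dom(H)=\{x:\neg D(x)\mbox{ is $F$-realizable}\}$, and $H(x)$ the set of such pairs, with $H\tgw F$ uniformly in $a$.

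Second, compose with the hypothesis $F\tgw\sii{\sf C}_{1/\om}$ using the uniform transitivity at the end of Proposition \ref{prop:transitive}. This gives $H\tgw\sii{\sf C}_{1/\om}$ via $t(a,d)$, where $d$ is a fixed index. By the remark after Theorem \ref{MP-equial-partial-computable}, $\varphi_{\delta(t(a,d))}$ is a partial computable choice function for $H$. Let $e$ be an index of $x\mapsto\pi_0(\varphi_{\delta(t(a,d))}(x))$; it is obtained effectively from $a$. For $x\in\dom(H)$, $\varphi_e(x)\down=y$ with $A(x,y)$ $F$-realized by $\pi_1(\varphi_{\delta(t(a,d))}(x))$.

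Third, build the conclusion realizer. The existential witness $e$ is output immediately with no queries, which is total. For each $x$ we must then realize $\neg D(x)\to\neg\neg(\varphi_e(x)\down\land A(x,\varphi_e(x)))$. Because the consequent starts with $\neg$, it suffices to show that the consequent is \emph{true}, i.e. $F$-realizable by $0$, whenever $\neg D(x)$ is $F$-realizable; the \ppp requirement is met trivially by a constant function returning $0$. Here $j_F$ is $\neg\neg$-dense, so $\neg\neg P$ is realized by $0$ iff $P$ is $F$-realizable. In our case $\varphi_e(x)\down$ is a true $\Sigma_1$ statement realized by its halting stage, and $A(x,\varphi_e(x))$ is realized as above. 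The conjunction is therefore realizable, and $0$ realizes the double negation. This total constant behaviour is exactly why the weak form is provable without unbounded search at the realizer level: we never need to compute the stage, only know one exists.

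The main obstacle is the bookkeeping in the first step. We have to check that the \aaa domain of $H$ is correctly the set where $\neg D(x)$ is realized, even though $D$ may be arbitrary. We also have to check that the choice function from Theorem \ref{MP-equial-partial-computable} picks a leaf of an \emph{actual} tree, so that its output really carries a valid realizer. That theorem's $\Gamma_e$ follows an \aaa path only on $\dom(H)$, which is all we need. Off $\dom(H)$, $\varphi_e(x)$ may diverge, but the implication then has an unrealizable premise and the trivial realizer $0$ still works.
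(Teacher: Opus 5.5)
Your proposal is correct and takes essentially the same route as the paper. You compose the premise realizer with $F\tgw\sii{\sf C}_{1/\om}$ via the uniform transitivity function $t$, apply $\delta$ from the remark after Theorem~\ref{MP-equial-partial-computable} to get a partial computable choice function, take an index of its first projection as the witness $e$, and realize the negative remainder by the constant-$0$ function; packaging the premise realizer as an explicit potted problem $H$ is only a cosmetic difference from the paper's direct use of $\Phi_a^F$.
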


\begin{proof}
Assume $F\tgw G:=\sii{\sf C}_{1/\om}$.
For each index $e$, we have $\Phi_e^F\tgw F\tgw G$, so by transitivity (Proposition \ref{prop:transitive}), there is a total computable function $t$ such that $\Phi_{t(e)}^{G}$ refines $\Phi_e^{F}$.

\medskip
{\em Realizer for $\bflogic{wnCT}_0$.}
Let $a$ be an $F$-realizer for the premise $\forall x^\N(\neg D(x)\to \exists y^\N A(x,y))$ of $\bflogic{wnCT}_0$.
As in Example \ref{exa:AE-thm-realizer-simplify}, its $F$-realizer is described as follows:
\begin{itemize}
\item Since the outmost connective $\neg D(x)$ is $\neg$, we only need to consider $0$ as an \ppp/\aaa realizer.
Thus, $\Phi_a^+(x,0)$ yields a realizer for $\exists yA(x,y)$.
\item {\em Potential.}
For each $x\in\om$, we have $\Phi^+_{a}(x,0)\down$, and for any $\pair{y,z}\in\Phi^+_{a}(x,0)$, $z$ is a \ppp realizer for $A(x,y')$ for some $y'$.
\item {\em Actual.}
For each $x\in\om$, if $\neg D(x)$ is true, we have $\Phi^{F}_{a}(x,0)\down$, and for any $\pair{y,z}\in\Phi^{F}_{a}(x,0)$, $z$ is a $F$-\aaa realizer for $A(x,y)$.
\end{itemize}

Regarding the conclusion of $\bflogic{wnCT}_0$, since the part following $\exists e^\N$ starts with the negation, it suffices to provide an existential witness $e$.
\begin{itemize}
\item If $a$ is an \aaa realizer for the premise, and if $\neg D(x)$ is \aaa realizable, then $\Phi_a^F(x,0)$ halts.
\item Therefore, using a computable function in Theorem \ref{MP-equial-partial-computable}, we obtain $\varphi_{\delta(t(a))}(x)\down\in\Phi_{t(a)}^G(x)\subseteq\Phi_a^F(x,0)$.
\begin{itemize}
\item Its output is a pair, so let $e_i$ be an index of the computation that yields its $i$th coordinate; that is, $\varphi_{\delta(t(a))}(x)=\pair{\varphi_{e_0}(x),\varphi_{e_1}(x)}$.
\item This $e_0$ yields a desired index for an existential witness.
\end{itemize}
\item A realizer for $\bflogic{wnCT}_0$ is an algorithm which, given $a$, returns a code $e_0$ for $\pi_0\circ\varphi_{\delta(t(a))}$.
\end{itemize}

\medskip
\noindent
{\em Verification:}
\begin{itemize}
\item {\em Potential.}
\begin{itemize}
\item Since $t,\delta$ are total computable functions, $\delta(t(a))$ is always defined regardless of $a$.
\item Hence, a code $e_0$ for $\pi_0\circ\varphi_{\delta(t(a))}$ is always defined.
\end{itemize}
\item {\em Actual.}
If $a$ is an $F$-\aaa realizer, then let $x\in\om$ be such that $\neg D(x)$ is realizable.
\begin{itemize}
\item The computation $\varphi_{\delta(t(a))}(x)$ halts, and
its halting stage is an \aaa realizer for $\varphi_{\delta(t(a))}(x)\down$.
\item 
Furthermore, $\varphi_{e_1}(x)$ is an $F$-\aaa realizer for $A(x,\varphi_{e_0}(x))$.
In particular, $A(x,\varphi_{e_0})(x)$ is $F$-realizable.
\item
Consequently, if $\neg D(x)$ is realizable, then so is $\neg\neg(\varphi_{e_0}(x)\down\land A(x,\varphi_{e_0}(x)))$.
Its outmost connective is $\neg$, so $0$ gives its realizer.
Thus, for the part following $\exists^\N e$ in $\bflogic{wnCT}_0$, a code for a constant function that outputs $0$ regardless of the input provides an \aaa realizer for $\bflogic{wnCT}_0$.
\end{itemize}
\end{itemize}
Therefore, $\bflogic{wnCT}_0$ is $F$-realizable.
\end{proof}

If $F$ is exactly Markov's principle, then $\bflogic{w}$ can be eliminated.

\begin{theorem}\label{thm:MP-vs-CT0-realizability-2}
$\bflogic{nCT}_0$ is $\sii{\sf C}_{1/\om}$-realizable.
\end{theorem}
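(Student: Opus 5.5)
We follow the proof of Theorem \ref{thm:MP-vs-CT0-realizability} with $F=G=\sii{\sf C}_{1/\om}$, so no transitivity translation $t$ is needed. The only new work is realizing the conjunct $\varphi_e(x)\down\land A(x,\varphi_e(x))$ directly rather than under $\neg\neg$. Let $a$ be an $F$-realizer for the premise $\forall x^\N(\neg D(x)\to\exists y^\N A(x,y))$, described as in Example \ref{exa:AE-thm-realizer-simplify}. Then $\Phi_a^+(x,0)$ is a total computation tree whose outputs are pairs $\pair{y,z}$. Moreover, if $\neg D(x)$ is true (realized by $0$), every leaf of the actual tree outputs a pair with $z$ an $F$-actual realizer of $A(x,y)$.

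\textbf{Step 1 (the witness).} By the remark after Theorem \ref{MP-equial-partial-computable}, we get a total computable $\delta$ such that $\varphi_{\delta(a)}$ is a (partial) choice function for $x\mapsto\Phi_a^F(x,0)$. It simulates the actual tree, searching at each $\sii{\sf C}_{1/\om}$-node for the enumerated element. Let $e_0,e_1$ be indices, computable from $a$, for $\pi_0\circ\varphi_{\delta(a)}$ and $\pi_1\circ\varphi_{\delta(a)}$. The realizer of the conclusion outputs the existential witness $e_0$, which is total in $a$ and hence serves as a potential realizer as well.

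\textbf{Step 2 (realizing the matrix).} For $\forall x(\neg D(x)\to\varphi_{e_0}(x)\down\land A(x,\varphi_{e_0}(x)))$, we must produce, uniformly in $x$ and the realizer $0$ of $\neg D(x)$, a $\sii{\sf C}_{1/\om}$-computation tree. This tree must be total on the potential side, and on the actual side it must output a pair consisting of a realizer of $\varphi_{e_0}(x)\down$ and an actual realizer of $A(x,\varphi_{e_0}(x))$. We use a single query to $\sii{\sf C}_{1/\om}$ asking for a halting stage $s$ of $\varphi_{\delta(a)}(x)$, i.e.\ the $\Sigma_1$ set $\{s:\varphi_{\delta(a)}(x)[s]\down\text{ first at }s\}$. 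This query is always defined, so the tree is total. When $\neg D(x)$ holds, this set is a singleton, so the query is an actual instance. On the leaf labelled $s$, we output $\pair{s,\varphi_{e_1}(x)[s]}$, falling back on a default value $0$ if the computation has not halted by stage $s$. This output is total, and on actual branches it gives the halting stage together with $z=\pi_1\varphi_{\delta(a)}(x)$, which is an actual $F$-realizer of $A(x,\varphi_{e_0}(x))$.

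\textbf{Main obstacle.} The delicate point is the potential half of $\varphi_{e_0}(x)\down\land A(x,\varphi_{e_0}(x))$. When $\neg D(x)$ fails or the search diverges, we still need the fallback output to lie in the potential set $A(x,\varphi_{e_0}(x))_+$, yet $\varphi_{e_0}(x)$ may be undefined there. The potential part of $\neg D(x)\to\cdots$ requires totality on all of $\om$, though membership is only needed in $\bigcup$/$\bigcap$ of the potential sets. We would handle this as follows. The potential condition for the implication is the intersection over all $x$ of the potential sets, and if $D(x)$ is realizable, the actual realizer set of $\neg D(x)$ is empty, so only potential realizers matter. In that case, we take the fallback $z$ from any leaf of the potential tree $\Phi_a^+(x,0)$, for instance the leftmost leaf, which is computable and total. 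Whichever $y$ it carries is a potential realizer of $A(x,y')$ for some $y'$. To make the $y$-component match, we instead use leftmost-leaf witnesses in defining $e_0$ on stages before halting; in other words, we redefine $e_0$ as a total function via the leftmost leaf of the potential tree whenever the simulation has not yet converged. We expect this bookkeeping to be where care is needed, and if exact matching fails, we would rely on the Grayson-style convention that the atomic equality $\approx_\sfN$ has potential part $\om$, so that potential realizers are insensitive to $y$.
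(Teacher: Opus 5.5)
Your Steps 1--2 follow the paper's proof: the same witness $e_0$ (an index for $\pi_0\circ\varphi_{\delta(a)}$, with no detour through $t$), and a single Markov-type query for a halting stage. The paper queries ${\sf MP}_{\sf PR}\tW\sii{\sf C}_{1/\om}$ on an index $d(x)$ that simulates $\varphi_{e_0}(x)$ step by step.

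The one real difference is where the realizer of $A$ comes from.
\begin{itemize}
\item The paper takes $e_1$ to be an $F$-index of $\pi_1\circ\Phi_a^F$, i.e.\ it reruns $a$ as an oracle computation. This is potentially total because $\Phi_a^+$ is. It matches the witness on actual branches because $\sii{\sf C}_{1/\om}$ is single-valued, so $\Phi_a^F(x)=\{\varphi_{\delta(a)}(x)\}$.
\item You read the realizer off the simulated run, truncated at the queried stage. This gives the match with the witness for free, but you must supply a default value on the potential side.
\end{itemize}

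Two points need repair.

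\textbf{Stage mismatch.} The first halting stage $s$ of $\varphi_{\delta(a)}(x)$ is in general not a halting stage of $\varphi_{e_0}(x)$ or of $\varphi_{e_1}(x)$, because these composite indices take extra steps. As written, $s$ need not realize $\varphi_{e_0}(x)\down$. Worse, $\varphi_{e_1}(x)[s]$ can still be undefined on an actual branch, so your default $0$ would be output exactly where an actual realizer of $A$ is required. The fix is:
\begin{itemize}
\item output $\pi_1(\varphi_{\delta(a)}(x)[s])$ directly;
\item let the query be the $\Sigma_1$ set of pairs $\pair{s,s'}$ of first halting stages of $\varphi_{\delta(a)}(x)$ and $\varphi_{e_0}(x)$, which is still a singleton whenever $\neg D(x)$ holds. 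Alternatively, query the halting stage of $\varphi_{e_0}(x)$ itself, as the paper does.
\end{itemize}

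\textbf{Redefining $e_0$.} Drop the plan to redefine $e_0$ ``via the leftmost leaf whenever the simulation has not yet converged.'' That is not a computable definition, since it requires deciding convergence. Any change to the value of $\varphi_{e_0}(x)$ would also break the actual requirement that the witness equal $\pi_0\varphi_{\delta(a)}(x)$ when $\neg D(x)$ holds.

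It is also unnecessary. By induction on arithmetical formulas, the potential part of $\eval{A(x,y)}^j$ does not depend on the numerical parameters: atomic formulas have potential part $\om$, and $j_F(\paset{P})_+$ depends only on $P_+$. Hence the $z$-component of the leftmost leaf of the potential tree of $\Phi_a^+(x,0)$ is a valid total fallback; so is $0$, under the paper's convention that $0$ lies in every potential set. The paper's $e_1$ tacitly relies on the same independence, since $\Phi_a^+$ only guarantees potential realizers of $A(x,y')$ for some $y'$.
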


\begin{proof}
The proof is exactly the same as in Theorem \ref{thm:MP-vs-CT0-realizability} up to the construction of $e_0$.
Here, there is no need to go through $t$.
\begin{itemize}
\item By single-valuedness of $F=\sii{\sf C}_{1/\om}$, we can ensure $\Phi_{a}^{F}(x)=\{\varphi_{\delta(a)}(x)\}$.
\item 
Define $e_0$ as in Theorem \ref{thm:MP-vs-CT0-realizability}, and let $e_1$ be an $F$-index of $\pi_1\circ\Phi_{a}^F$.
\begin{itemize}
\item That is, $x\in\dom(\Phi_{a}^F)$ implies $\varphi_{\delta(a)}(x)=\pair{\varphi_{e_0}(x),\Phi^F_{e_1}(x)}$, and $\Phi^F_{e_1}(x)$ is an $F$-realizer for $A(x,\varphi_{e_0}(x))$.
\end{itemize}
\end{itemize}

Next, we construct an upper bound on the number of steps required to compute $\varphi_{e_0}(x)$.
\begin{itemize}
\item Given $x$, compute an index $d(x)$ such that $\varphi_{d(x)}(0)[s]\simeq\varphi_{e_0}(x)[s]$ for any stage $s$ (that is, an algorithm $d(x)$ fully simulates $\varphi_{e_0}(x)$, including computation steps).
\item Make a query $d(x)$ to ${\sf MP}_{\sf PR}$, and receive its solution $s(x)$.
\begin{itemize}
\item As the \ppp domain of ${\sf MP}_{\sf PR}$ is $\om$, we always receive some solution.
\item As ${\sf MP}_{\sf PR}\tW\sii{\sf C}_{1/\om}=F$, a similar result can be obtained using the $F$-computation.
\end{itemize}
\item Let $e_s$ be an index of the $F$-computation which, given an input $x$, first makes a query $d(x)$ to ${\sf MP}_{\sf PR}$, and outputs an response from ${\sf MP}_{\sf PR}$ as is.
\end{itemize}

Then, we show that $e_0,e_1,e_s$ realize the conclusion part of $\bflogic{nCT}_0$.

\medskip
\noindent
{\em Verification:}
\begin{itemize}
\item {\em Potential.}
\begin{itemize}
\item Since $\delta$ is a total computable function, regardless of $a$, $\delta(a)$ is always defined.
Hence, a code $e_0$ for $\pi_0\circ\varphi_{\delta(a)}$ is always defined.
The same applies to $e_1$.
Then, one can also compute a code $e_s$ since it is explicitly constructed from $e_0$.
\item Since both $\Phi_{e_s}^+$ and $\Phi_{e_1}^+$ are total computable, an index of this pair gives a \ppp realizer for the part following $\exists^\N e$ in $\bflogic{nCT}_0$ with an existential witness $e=e_0$.
\end{itemize}
\item {\em Actual.}
If $a$ is an $F$-\aaa realizer, then let $x\in\om$ be such that $\neg D(x)$ is realizable.
\begin{itemize}
\item Then the computations $\Phi_a^F(x)$ and $\varphi_{\delta(a)}(x)=\pair{\varphi_{e_0}(x),\Phi_{e_1}^F(x)}$ halt.
\item By the definition of ${\sf MP}_{\sf PR}$, ${\sf MP}_{\sf PR}(d(x))$ gives an upper bound on the halting stage of $\varphi_{d(x)}(0)$.
By the definition of $d$, this is a halting stage of $\varphi_{e_0}(x)$.
Therefore, $z\in\Phi_{e_s}^F(x)$ implies $\varphi_{e_0}(x)[z]\down$.
\item As mentioned above, $\Phi_{e_1}^F(x)$ gives an \aaa realizer for $A(x,\varphi_{e_0}(x))$.
\item Consequently, whenever $\neg D(x)$ is realizable, an index of the pair $\Phi_{e_s}^G,\Phi_{e_1}^G$ gives an \aaa realizer for the part following $\exists^\N e$ in $\bflogic{nCT}_0$ with an existential witness $e=e_0$.
\end{itemize}
\end{itemize}

Therefore, $\bflogic{nCT}_0$ is $F$-realizable, where recall $F=\sii{\sf C}_{1/\om}$.
\end{proof}

\begin{prop}\label{prop:id-CT0}
$\bflogic{CT}_0$ is ${\rm id}_\om$-realizable.
\end{prop}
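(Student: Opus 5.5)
The plan is to unwind the ${\rm id}_\om$-realizability of $\bflogic{CT}_0$ using the simplification in Example \ref{exa:AE-thm-realizer-simplify}, and then to exploit that ${\rm id}_\om$-computations are just total computable functions (Proposition \ref{prop:below-id-total-computable-choice}). The guiding point is that in this topos the existential witness $y$ is produced by a total algorithm. So no unbounded search is ever needed to verify $\varphi_e(x)\down$: we can take $e$ to be an index of that total algorithm and compute its halting time directly.

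First I fix the shape of a realizer. Let $a$ be an ${\rm id}_\om$-realizer for the premise $\forall x\exists y\,A(x,y)$. By Example \ref{exa:AE-thm-realizer-simplify}, $a$ codes an ${\rm id}_\om$-computation which, on input $x$, outputs a pair $\pair{y,z}$. Potentially, this pair consists of some number and a potential realizer. Actually, $z$ is an actual ${\rm id}_\om$-realizer of $A(x,y)$.

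Next I make this computation genuinely total computable. The simulation in Proposition \ref{prop:below-id-total-computable-choice} is uniform: it follows the unique path of the potential tree in which Merlin echoes each query. This gives a total computable $\delta$ such that $\varphi_{\delta(a)}$ is total, because totality only requires the potential part of $a$. Moreover, whenever $a$ is actual, $\varphi_{\delta(a)}(x)=\pair{y,z}$ with $z$ an actual realizer of $A(x,y)$.

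Let $e_0$ and $e_1$ be indices of $\pi_0\circ\varphi_{\delta(a)}$ and $\pi_1\circ\varphi_{\delta(a)}$, both computable from $a$. The realizer of the conclusion then outputs the witness $e=e_0$ together with a code of the ${\rm id}_\om$-computation that maps $x$ to $\pair{s(x),\varphi_{e_1}(x)}$. Here $s(x)$ is the halting stage of $\varphi_{e_0}(x)$, which is found by running $\varphi_{e_0}(x)$ to completion.

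It remains to verify the two parts. For the potential part, everything is total: $\varphi_{e_0}$ is total whatever $a$ is, so $s(x)$ is found, and $\varphi_{e_1}$ is total. Actual realizers of $\varphi_{e_0}(x)\down$ are halting stages, as in the proof of Theorem \ref{thm:MP-vs-CT0-realizability-2}, so $s(x)$ qualifies. For the actual part, $\varphi_{e_1}(x)$ realizes $A(x,\varphi_{e_0}(x))$.

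The main obstacle I expect is the potential part when $a$ is only a potential realizer. I must check that the potential realizers of the premise are exactly those $a$ for which the potential computation tree is total. If that holds, then $\varphi_{\delta(a)}$ is total even for merely potential $a$, so the search for $s(x)$ terminates and the potential clause holds. I would verify this against the definition of $(\paset{A}\totarr\paset{B})_+$ and ${\rm id}_\om{}^\solve_+$, where potential codes range over all of $\om\arr\om$.
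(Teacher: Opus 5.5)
Your proposal is correct and takes the paper's approach. The paper only sketches it: a realizer for the premise yields an index of a total computable function, so its halting stage can be computed. You fill in the details in the way the paper does for Theorem \ref{thm:WLEMthree-CTrealizable}, namely echo simulation, projection indices $e_0,e_1$, and the halting stage found by running the now-total $\varphi_{e_0}$. The obstacle you flag is settled by definition, since every potential realizer codes a well-founded, full-branching potential computation tree with total labels, so the echo path reaches a leaf for every potential $a$.
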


\begin{proof}[Proof (Sketch)]
A realizer for the premise of $\bflogic{CT}_0$ yields an index of a total computable function, so we can compute its halting stage.
\end{proof}

The converse of Theorem \ref{thm:MP-vs-CT0-realizability} also holds.
Since we are dealing here with constructive Church's thesis in first-order arithmetic, the problems must also be written as arithmetical formulas.

\begin{theorem}\label{thm:MP-vs-CT0-realizability-rev}
Let $F\pcolon\om\tto\om$ be a potted problem whose graph is arithmetically definable (in classical logic).
\begin{enumerate}
\item If $\bflogic{wnCT}_0$ is $F$-realizable, then $F\tgw\sii{\sf C}_{1/\om}$.
\item Let $F$ be total, i.e., $\dom_+(F)=\om$.
If $\bflogic{wCT}_0$ is $F$-realizable, then $F\tgw\sii{\sf C}_{1/\om}$.
\end{enumerate}
\end{theorem}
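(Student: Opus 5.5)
The plan is to read off a computable choice function for $F$ from a realizer of the relevant instance of Church's thesis, and then apply Theorem \ref{MP-equial-partial-computable}. That theorem says that $F\tgw\sii{\sf C}_{1/\om}$ holds iff $F$ has a (partial) computable choice function. I take the notation $F\pcolon\om\tto\om$ to mean $\dom_+(F)=\om$ and $\cod_+(F)=\om$. This is the delicate point in (1), and I flag it again below.

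\emph{Choice of instance.} Write $R(x,y)$ for an arithmetical formula defining $y\in F(x)$, and $\delta(x)$ for one defining $x\in\dom(F)$. Directly asserted, these arithmetical formulas need not be $F$-realizable even when true. So I only use them under double negation. By Observation \ref{obs:negneg-real-truth} and $\neg\neg$-density of $j_F$, a formula whose outermost connective is $\neg$ is $F$-realizable iff it is true, and then $0$ realizes it. For (1), set
\[
D(x)\equiv\neg\delta(x),\qquad A(x,y)\equiv\neg\neg R(x,y).
\]
Then $\neg D(x)$ is $F$-realizable iff $x\in\dom(F)$, and $A(x,y)$ is $F$-realizable iff $y\in F(x)$, in both cases with realizer $0$.

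\emph{Realizing the premise.} I must $F$-realize $\forall x(\neg D(x)\to\exists y\,A(x,y))$. Following Example \ref{exa:AE-thm-realizer-simplify}, I take the $F$-computation that on input $x$ makes the single query $x$ to $F$, receives $y$, and outputs $\pair{y,0}$.
\begin{itemize}
\item \emph{Potential.} Since $x\in\om=\dom_+(F)$, every response lies in $\cod_+(F)$, so the potential tree is a one-query full-branching tree. This is exactly where $\dom_+(F)=\om$ is needed: the potential realizer of $\neg D(x)$ is $0$ for every $x$, so the computation must be potentially total on all of $\om$.
\item \emph{Actual.} If $\neg D(x)$ is realizable, then $x\in\dom(F)$, every actual response satisfies $y\in F(x)$, and $0$ actually realizes $A(x,y)$.
\end{itemize}

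\emph{Extracting the index.} Applying the $F$-realizer of $\bflogic{wnCT}_0$ to this premise realizer yields an $F$-computation whose actual computation tree is nonempty. Working classically in the metatheory, I pick any leaf and let $e$ be the existential witness output there. Because $r$ is an actual realizer, for every $x$ with $\neg D(x)$ realizable, the formula $\neg\neg(\varphi_e(x)\down\land A(x,\varphi_e(x)))$ is $F$-realizable, hence true. So for all $x\in\dom(F)$ we have $\varphi_e(x)\down$ and $\varphi_e(x)\in F(x)$. Thus $\varphi_e$ is a computable choice function for $F$, and Theorem \ref{MP-equial-partial-computable} gives $F\tgw\sii{\sf C}_{1/\om}$.

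\emph{Item (2).} The argument is the same with $D$ dropped. Since $F$ is total, $\dom(F)=\om$, so the one-query computation realizes $\forall x\exists y\,\neg\neg R(x,y)$ both potentially and actually. The conclusion of $\bflogic{wCT}_0$ then gives an $e$ such that, for every $x$, $\varphi_e(x)\down\in F(x)$. Again Theorem \ref{MP-equial-partial-computable} applies; in fact $\varphi_e$ is total, so even $F\tgw{\rm id}_\om$ by Proposition \ref{prop:below-id-total-computable-choice}.

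\emph{Expected obstacle.} The main difficulty is checking that the premise realizer is genuinely potentially total, which depends on $\dom_+(F)=\om$. If the intended reading allows a smaller potential domain, I would try to make the premise realizer query a fixed element of $\dom_+(F)$ whenever $x\notin\dom_+(F)$. That move needs decidability of $\dom_+(F)$, which I would then have to add as a hypothesis.
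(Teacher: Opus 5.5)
Your overall route is the paper's. You realize the premise of the Church's thesis instance by the one-query $F$-computation $x\mapsto\langle y,0\rangle$, read off the index $e$ from a leaf of the actual tree, observe that $\varphi_e$ is a computable choice function for $F$, and apply Theorem \ref{MP-equial-partial-computable}.

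\textbf{The gap is in your choice of instance.} You set $A(x,y)\equiv\neg\neg R(x,y)$ and $D(x)\equiv\neg\delta(x)$ for arbitrary arithmetical definitions $R,\delta$. You justify this with the claim that a formula whose outermost connective is $\neg$ is $F$-realizable iff it is true, and that claim is false. By $\neg\neg$-density, $\neg\psi$ is $F$-realizable iff $\psi$ is not. Hence $\neg\neg\psi$ is $F$-realizable iff $\psi$ itself is $F$-realizable, and for a general arithmetical $\psi$ this differs from truth in both directions:
\begin{itemize}
\item a true $\Pi_2$ formula $\forall z\exists w\,\theta$ with no $F$-computable Skolem function is true but not $F$-realizable;
\item $\neg\chi$ with $\chi$ true but not $F$-realizable is $F$-realizable (by $0$) but false.
\end{itemize}

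Consequently, three steps of your proposal fail as written:
\begin{itemize}
\item $0$ need not realize $\neg\neg R(x,y)$ when $y\in F(x)$, so your premise realizer can fail actually.
\item $\neg\neg\delta(x)$ can be realizable with $x\notin\dom(F)$, and then your query $x$ is illegal on an actual branch.
\item The extraction step yields only $F$-realizability of $R(x,\varphi_e(x))$, not $\varphi_e(x)\in F(x)$.
\end{itemize}
Observation \ref{obs:negneg-real-truth} concerns the double-negation \emph{translation}, which inserts double negations throughout the formula (at atomic formulas and after quantifiers). It does not apply to a formula that merely has $\neg\neg$ prefixed.

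\textbf{The fix is what the paper does.} Replace $R$ and $\delta$ by their negative (double-negation) translations $R^{\neg\neg}$ and $\delta^{\neg\neg}$. These are classically equivalent to $R$ and $\delta$, so they still define the graph and domain of $F$. Being negative formulas, they are $F$-realizable iff true, with a trivial realizer. Take $D(x)\equiv\neg\delta^{\neg\neg}(x)$ and $A(x,y)\equiv R^{\neg\neg}(x,y)$, which is the paper's $\neg\neg D_F^{\neg\neg}$ and $G_F^{\neg\neg}$. With this change the rest of your argument goes through unchanged. That includes your explicit one-query premise realizer, which the paper only calls the trivial realizer. It also includes your correct observation that for total $F$ (i.e.\ $\dom(F)=\om$) one even gets $F\tgw{\rm id}_\om$.

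Your worry about $\dom_+(F)$ is moot, since the notation $F\pcolon\om\tto\om$ already fixes $\dom_+(F)=\om$.
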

\begin{proof}
Assume that the graph of $F$ is defined by an arithmetical formula $G_F(x,y)$ (which describes $y\in F(x)$).
Then, the \ppp domain is also expressed by an arithmetical formula $D_F(x)$.
Now, consider the double negation translations of two formulas $G_F^{\neg\neg}(x,y)$ and $D_F^{\neg\neg}(x)$.
Since the underlying logic is classical, these translations define the same function and the same \ppp domain.
By Observation \ref{obs:negneg-real-truth}, such a negative formula is true iff it is $F$-realizable by $0$ (by $\neg\neg$-density).

\begin{itemize}
\item 
Thus, the premise $\forall x(\neg\neg D_F^{\neg\neg}(x)\to\exists y G_F^{\neg\neg}(x,y))$ is $F$-realizable:
Apply a realizer for $\bflogic{wnCT}_0$ to its trivial realizer for the above premise.
\item This produces an index $e$ such that $G_F^{\neg\neg}(x,\varphi_e(x))$ is $F$-realizable.
This implies that $\varphi_e(x)\in F(x)$ is true, so $\varphi_e$ is a choice function for $F$.
\item Since $\varphi_e(x)$ is a partial computable function, Theorem \ref{MP-equial-partial-computable} implies $\varphi_e\tgw\sii{\sf C}_{1/\om}$; in particular, $F\tgw\sii{\sf C}_{1/\om}$.
\end{itemize}

When $F$ is total, $D_F$ is unnecessary, so the principle $\bflogic{wCT}_0$ is sufficient.
\end{proof}


In the setting of partial computability, $\pii{\sf C}_{\geq 1/2}$-realizability corresponds to the so-called {\em Lifschitz realizability} (see e.g.~\cite{vOBook}), which was introduced to realize the unique Church's thesis ${\sf CT}_0!$ (Lifschitz \cite{Lif79}).
In the setting of total computability, $\pii{\sf C}_{\geq 1/2}$-realizability (total Lischitz realizability) realizes a weak variant to the unique Church's thesis.

\begin{theorem}\label{thm:MP-LLPO-models-CT0u}
If $F\tgw\sii{\sf C}_{1/\om}+\pii{\sf C}_{\geq 1/2}$ then $\bflogic{wnCT}_0!$ is $F$-realizable.
\end{theorem}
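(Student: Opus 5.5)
We follow the proof of Theorem \ref{thm:MP-vs-CT0-realizability}, substituting Theorem \ref{LLPO-equial-partial-computable} for Theorem \ref{MP-equial-partial-computable}. Put $G:=\sii{\sf C}_{1/\om}+\pii{\sf C}_{\geq 1/2}$ and assume $F\tgw G$. By uniform transitivity (the final remark in the proof of Proposition \ref{prop:transitive}) there is a total computable $t$ such that, for every index $a$, $\Phi_{t(a)}^G$ refines $\Phi_a^F$. Let $a$ be an $F$-realizer for the premise $\forall x(\neg D(x)\to\exists! y A(x,y))$. As in Example \ref{exa:AE-thm-realizer-simplify}, the premise $\neg D(x)$ may be taken to be realized by $0$. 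Whenever $\neg D(x)$ is true, $\Phi_a^F(x,0)$ halts, and every element of it is a pair $\pair{y,z}$ in which $z$ is an $F$-\aaa realizer for $A(x,y)$.

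The key step is to extract a single-valued problem so that Theorem \ref{LLPO-equial-partial-computable} applies. Define $f(x)$ to be the unique $y$ with $A(x,y)$ $F$-realizable, on those $x$ where $\neg D(x)$ holds. Uniqueness, together with the fact that $\neg\neg$-translated equality on $\sfN$ is truthful, shows that the first coordinates of all outputs of $\Phi_a^F(x,0)$ coincide. The realizer for $\exists!y$ includes a realizer for the uniqueness clause, and since equality in $\sfN$ is an assembly, realizability of $y=y'$ implies $y=y'$; this is the same argument as in Observation \ref{obs:V-true-realizable-trivial}. Hence $\pi_0\circ\Phi_{a}^F(\cdot,0)$ is a partial single-valued $f$ with $f\tgw G$ via an index computable from $t(a)$. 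Theorem \ref{LLPO-equial-partial-computable} then yields $\varphi_{\ep(\cdot)}=f$ on $\dom(f)$, with an index $e_0$ computed uniformly and totally from $a$. The dependence on $a$ is total because $t$, $\ep$ and the projection are total computable.

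The realizer returns $e_0$ as the existential witness, followed by the constant-$0$ function for the part $\forall x[\neg D(x)\to\neg\neg(\cdots)]$, whose outermost connective (after the premise) is $\neg$. For the \ppp part, $e_0$ is always defined, and $0$ is a \ppp realizer of any negated formula. For the \aaa part, suppose $\neg D(x)$ is true. Then $\varphi_{e_0}(x)\downarrow=f(x)$, and $A(x,f(x))$ is $F$-realizable. The statement $\varphi_{e_0}(x)\downarrow$ is realized by a halting stage, which exists because the computation halts. So $\varphi_{e_0}(x)\downarrow\land A(x,\varphi_{e_0}(x))$ is realizable, and by $\neg\neg$-density its double negation is realized by $0$.

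The main obstacle is the single-valuedness step. Under the general $F$, different \aaa computation trees, which are chosen by {\tt Nimue} and may depend on secret data, could a priori output different first coordinates. We must argue that the uniqueness conjunct forces every leaf of every admissible actual tree to carry the same $y$. Theorem \ref{LLPO-equial-partial-computable} requires this to hold on all leaves of the $G$-actual tree used in the refinement, so the refinement $\Phi_{t(a)}^G\subseteq\Phi_a^F$ must be checked to preserve it. Refinement does give inclusion of output sets, which suffices.
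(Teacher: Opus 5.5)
Your proposal is correct and follows essentially the same route as the paper. Both arguments take uniform transitivity to get $t$, derive single-valuedness of the first coordinate from the uniqueness conjunct, obtain $e_0$ from the uniform index $\ep$ of Theorem~\ref{LLPO-equial-partial-computable}, and realize the $\neg\neg$-part by the constant-$0$ function. The one cosmetic difference is that the paper first forms an index $p(a)$ for $\pi_0\circ\Phi_a$ and sets $e_0=\ep(t(p(a)))$, whereas you apply $t$ to $a$ and project afterwards.
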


\begin{proof}
Assume $F\tgw G:=\sii{\sf C}_{1/\om}+\pii{\sf C}_{\geq 1/2}$.
For each index $e$, we have $\Phi_e^F\tgw F\tgw G$, so by transitivity (Proposition \ref{prop:transitive}), there is a total computable function $t$ such that $\Phi_{t(e)}^{G}$ refines $\Phi_e^{F}$.

\medskip

{\em Realizer for $\bflogic{wnCT}_0!$.}
Let $a$ be an $F$-realizer for the premise $\forall x^\N(\neg D(x)\to\exists! y^\N A(x,y))$ of $\bflogic{wnCT}_0!$.
As in Example \ref{exa:AE-thm-realizer-simplify}, its $F$-realizer is described as follows:
\begin{itemize}
\item Since the outmost connective $\neg D(x)$ is $\neg$, we only need to consider $0$ as an \ppp /\aaa realizer.
Thus, regardless of $x$, $\Phi_a^+(x,0)$ is always defined, and if $\neg D(x)$ is true, $\Phi_a^F(x,0)$ produces a realizer for $\exists ! yA(x,y)$.
\end{itemize}

Here, if we expand the $\exists!y$ part, we see that $\Phi_a^+(x,0)$ is attempting to $F$-realize the following:
\[
\exists y^\N(A(x,y)\land \forall z^\N (A(x,z)\to y=z)).
\]

As in Example \ref{exa:AE-thm-realizer-simplify}, its $F$-realizer is described as follows.
Hereafter, we omit $0$ and write $\Phi_a^F(x)$.
\begin{itemize}
\item {\em Potential.}
For each $x\in\om$, we have $\Phi^+_{a}(x)\down$, and for any $\pair{y,w}\in\Phi^+_{a}(x)$, $\pi_0 w$ is a \ppp realizer for $A(x,y')$ for some $y'$.
\item {\em Actual.}
If $\neg D(x)$ is true, we have $\Phi^F_{a}(x)\down$, and for any $\pair{y,w}\in\Phi^F_{a}(x)$, $\pi_0w$ is an $F$-\aaa realizer for $A(x,y)$.
\begin{itemize}
\item Such a $y$ is unique:
If $\pair{y,v},\pair{z,w}\in\Phi_a^F(x)$ then $\pi_0v$ and $\pi_0w$ give realizers for $A(x,y)$ and $A(x,z)$, respectively.
Then, $\pi_1v(\pi_0w)$ is a realizer for $y=z$.
\end{itemize}
\end{itemize}

\medskip

To realize the conclusion of $\bflogic{wnCT}_0!$, we consider the following process:
\begin{itemize}
\item Put $\Phi_{p(a)}^+=\pi_0\circ\Phi_a^+$.
\begin{itemize}
\item By the above argument, if $a$ is an $F$-\aaa realizer for the premise, then $\Phi_{p(a)}^F$ is single-valued.
\end{itemize}
\item 
Using a computable function $\ep$ in Theorem \ref{LLPO-equial-partial-computable}, we take $e_0:=\ep(t(p(a))$.
\begin{itemize}
\item If $a$ is an $F$-\aaa realizer and $\neg D(x)$ is true ($=$ realizable by $0$), we obtain $\Phi_{p(a)}^F(x)=\Phi_{t(p(a))}^G(x)=\varphi_{\ep(t(p(a)))}(x)$.
\end{itemize}
\item We claim that the process $a\mapsto\ep(t(p(a))=e_0$ gives a realizer for $\bflogic{wnCT}_0!$.
\end{itemize}

\medskip
\noindent
{\em Verification:}
\begin{itemize}
\item {\em Potential.}
\begin{itemize}
\item Since $p,t,\delta$ are total computable functions, $e_0=\ep(t(p(a))$is always defined regardless of $a$.
\end{itemize}
\item {\em Actual.}
If $a$ is an $F$-\aaa realizer, then let $x\in\om$ be such that $\neg D(x)$ is realizable.
\begin{itemize}
\item The computation $\varphi_{e_0}(x)$ halts, and its halting stage is an \aaa realizer for $\varphi_{e_0}(x)\down$.
\item 
Furthermore, for any $\pair{y,w}\in\Phi^F_{a}(x)$, $\pi_0w$ is an $F$-\aaa realizer for $A(x,y)$.
In addition, we must have $y=\varphi_{e_0}(x)$, so $\pi_0w$ is an $F$-\aaa realizer for $A(x,\varphi_{e_0}(x))$.
\item
Consequently, if $\neg D(x)$ is realizable, then so is $\neg\neg(\varphi_{e_0}(x)\down\land A(x,\varphi_{e_0}(x)))$.
Its outmost connective is $\neg$, so $0$ gives its realizer.
Thus, the part following $\exists^\N e$ in $\bflogic{wnCT}_0!$, a code for a constant function that outputs $0$ regardless of the input procides an \aaa realizer for $\bflogic{wnCT}_0!$.
\end{itemize}
\end{itemize}
Therefore, $\bflogic{wnCT}_0!$ is $F$-realizable.
\end{proof}

If $\sii{\sf C}_{1/\om}\tgw F$ then, as in Theorem \ref{thm:MP-vs-CT0-realizability-2}, $\bflogic{w}$ can be eliminated.

\begin{theorem}\label{thm:MP-LLPO-models-CT0u-2}
If $\sii{\sf C}_{1/\om}\tgw F\tgw\sii{\sf C}_{1/\om}+\pii{\sf C}_{\geq 1/2}$ then $\bflogic{nCT}_0!$ is $F$-realizable.
\end{theorem}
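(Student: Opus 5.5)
The plan is to reuse the proof of Theorem \ref{thm:MP-LLPO-models-CT0u} verbatim up to the extraction of the index $e_0=\ep(t(p(a)))$. Then I would add a halting-time witness exactly as in Theorem \ref{thm:MP-vs-CT0-realizability-2}, using the lower bound $\sii{\sf C}_{1/\om}\tgw F$.

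First, assume $F\tgw G:=\sii{\sf C}_{1/\om}+\pii{\sf C}_{\geq 1/2}$, and let $t$ be the uniform transitivity function from Proposition \ref{prop:transitive}. Given an $F$-realizer $a$ for the premise $\forall x(\neg D(x)\to\exists! y\,A(x,y))$, I form $p(a)$ with $\Phi_{p(a)}^+=\pi_0\circ\Phi_a^+$. As shown before, $\Phi^F_{p(a)}(x)$ is single-valued whenever $\neg D(x)$ holds. Theorem \ref{LLPO-equial-partial-computable} then gives a partial computable $\varphi_{e_0}$ with $e_0=\ep(t(p(a)))$, which halts exactly on such $x$ (and possibly elsewhere) with the unique value $y$. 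For the second component I let $e_1$ be an $F$-index of $x\mapsto\pi_0(\pi_1\Phi_a^F(x,0))$. This is a total $F$-computation, since $\Phi_a^+$ is total, and at actual instances it outputs an $F$-realizer for $A(x,y)$. Single-valuedness (uniqueness of $y$) guarantees that this $y$ equals $\varphi_{e_0}(x)$, so the output realizes $A(x,\varphi_{e_0}(x))$.

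Second, for the halting witness, I build $d(x)$ simulating $\varphi_{e_0}(x)$ step by step as $\varphi_{d(x)}(0)$. I query ${\sf MP}_{\sf PR}(d(x))$, which is possible inside $F$-computations via ${\sf MP}_{\sf PR}\tW\sii{\sf C}_{1/\om}\tgw F$ and Proposition \ref{prop:MP-as-Sigma-1-choice}, composed through Proposition \ref{prop:transitive}. Call the resulting $F$-index $e_s$. Potentially, the potted domain of ${\sf MP}_{\sf PR}$ is $\om$, so $\Phi_{e_s}^+$ is total regardless of $x$. Actually, when $\neg D(x)$ holds, $\varphi_{e_0}(x)\down$, so $d(x)\in\dom({\sf MP}_{\sf PR})$ and every answer $s$ satisfies $\varphi_{e_0}(x)[s]\down$. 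The realizer for the conclusion maps $a$ to $\langle e_0,\text{code of }x\mapsto\langle 0\mapsto\langle\Phi_{e_s}^F(x),\Phi_{e_1}^F(x)\rangle\rangle\rangle$; here the $\neg D(x)$ premise is again handled by the trivial realizer $0$.

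The main obstacle is the potential side. The pair returned must be a potential realizer for $\varphi_{e_0}(x)\down\land A(x,\varphi_{e_0}(x))$ even when $\neg D(x)$ fails, or $a$ is only a potential realizer. In that case $\varphi_{e_0}(x)$ may diverge, so $A(x,\varphi_{e_0}(x))_+$ must be read as the potential realizers for $A(x,y')$ for some $y'$ (the $\exists y$ convention of Example \ref{exa:AE-thm-realizer-simplify}). I would check that $\Phi_{e_1}^+$ outputs such a potential realizer, using the potential clause of the realizer description in Theorem \ref{thm:MP-LLPO-models-CT0u}. I would also check that all constructions ($p,t,\ep,d$ and the composition with the ${\sf MP}_{\sf PR}$ reduction) are total computable in $a$, so that the outer realizer lies in the potential implication set.
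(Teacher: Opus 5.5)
Your proposal is correct and follows the paper's proof essentially step for step. You reuse $e_0=\ep(t(p(a)))$ from Theorem~\ref{thm:MP-LLPO-models-CT0u}, take $e_1$ to be an $F$-index of $\pi_0\circ\pi_1\circ\Phi_a^F$ (correct by uniqueness of $y$), and obtain the halting-stage witness $e_s$ by querying ${\sf MP}_{\sf PR}$ on the simulating index $d(x)$ via ${\sf MP}_{\sf PR}\tW\sii{\sf C}_{1/\om}\tgw F$, exactly as in Theorem~\ref{thm:MP-vs-CT0-realizability-2}; your reading of the potential side (potential realizers of $A(x,y')$ for some $y'$, as in Example~\ref{exa:AE-thm-realizer-simplify}) is the one the paper's verification uses implicitly.
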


\begin{proof}
The proof is exactly the same as in Theorem \ref{thm:MP-LLPO-models-CT0u} up to the construction of $e_0$.
Then incorporate the argument in Theorem \ref{thm:MP-vs-CT0-realizability-2}.
\begin{itemize}
\item Define $e_0$ as in Theorem \ref{thm:MP-LLPO-models-CT0u}, and let $e_1$ be an $F$-index of $\pi_0\circ\pi_1\circ\Phi_{a}^F$.
\begin{itemize}
\item Here, if $\neg D(x)$ is true, for any $\pair{y,w}\in\Phi_a^F(x)$, $\pi_0w$ is an $F$-\aaa realizer for $A(x,y)$.
\item By uniqueness, we must have $y=\varphi_{e_0}(x)$; hence, any element in $\Phi^F_{e_1}(x)=\pi_0\circ\pi_1\circ\Phi^F_a(x)$ is an $F$-\aaa realizer for $A(x,\varphi_{e_0}(x))$.
\end{itemize}
\end{itemize}

Next, the construction of an index $e_s$ computing an upper bound on the halting stage of $\varphi_{e_0}(x)$ is the same as in Theorem \ref{thm:MP-vs-CT0-realizability-2}.
Here, we use ${\sf MP}_{\sf PR}\tW\sii{\sf C}_{1/\om}\tgw F$.
Then, we show that $e_0,e_1,e_s$ realize the conclusion part of $\bflogic{nCT}_0!$.

\medskip
\noindent
{\em Verification:} 
\begin{itemize}
\item {\em Potential.}
\begin{itemize}
\item Since $\delta$ is a total computable function, regardless of $a$, $\delta(a)$ is always defined.
Hence, a code $e_0$ for $\pi_0\circ\varphi_{\delta(a)}$ is always defined.
The same applies to $e_1$.
Then, one can also compute a code $e_s$ since it is explicitly constructed from $e_0$
\item Since both $\Phi_{e_s}^+$ and $\Phi_{e_1}^+$ are total computable, an index of this pair gives a \ppp realizer for the part following $\exists^\N e$ in $\bflogic{nCT}_0!$ with an existential witness $e=e_0$.
\end{itemize}
\item {\em Actual.}
If $a$ is an $F$-\aaa realizer, then let $x\in\om$ be such that $\neg D(x)$ is realizable.
\begin{itemize}
\item 
Then the computations $\varphi_{e_0}(x)$ and $\Phi_{e_1}^F(x)$ halt.
\item By the same argument as in Theorem \ref{thm:MP-vs-CT0-realizability-2}, the computation $\Phi_{e_s}^F(x)$ halts and gives an upper bound on the halting stage of $\varphi_{e_0}(x)$.
That is, $z\in\Phi_{e_s}^F(x)$ implies $\varphi_{e_0}(x)[z]\down$.
\item As mentioned above, $\Phi_{e_1}^F(x)$ gives an \aaa realizer for $A(x,\varphi_{e_0}(x))$.
\item Consequently, whenever $\neg D(x)$ is realizable, an index of the pair $\Phi_{e_s}^F,\Phi_{e_1}^F$ gives an \aaa realizer for the part following $\exists^\N e$ in $\bflogic{nCT}_0!$ with an existential witness $e=e_0$.
\end{itemize}
\end{itemize}
Therefore, $\bflogic{nCT}_0!$ is $F$-realizable.
\end{proof}

\begin{theorem}\label{thm:WLEMthree-CTrealizable}
If $F\tgw {\sf C}_{2/3}$, then $\bflogic{nCT}_0!$ is $F$-realizable.
\end{theorem}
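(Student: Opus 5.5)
We will follow the proofs of Theorems \ref{thm:MP-LLPO-models-CT0u} and \ref{thm:MP-LLPO-models-CT0u-2}, replacing Theorem \ref{LLPO-equial-partial-computable} by Theorem \ref{MP-WLEM-single-pc2}. That theorem says that single-valued problems below ${\sf C}_{2/3}$ are \emph{total} computable. This totality is what lets us drop both the prefix ${\bf w}$ and any appeal to Markov's principle.

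\textbf{Step 1: reduce to ${\sf C}_{2/3}$.} Assume $F\tgw G:={\sf C}_{2/3}$, and let $a$ be an $F$-realizer for the premise $\forall x(\neg D(x)\to\exists! y A(x,y))$. As in Example \ref{exa:AE-thm-realizer-simplify}, $\Phi_a^+(x,0)$ is a total computation tree for every $x$. Put $\Phi_{p(a)}^+=\pi_0\circ\Phi_a^+(\cdot,0)$. By uniform transitivity (Proposition \ref{prop:transitive}) we get $\Phi_{t(p(a))}^G$ refining $\Phi_{p(a)}^F$, with $t$ total computable.

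Uniqueness in the premise makes $\Phi_{p(a)}^F(x)$ single-valued whenever $\neg D(x)$ holds. So on the set $\{x:\neg D(x)\}$, the problem $\Phi_{p(a)}^F$ is a single-valued partial function reducible to ${\sf C}_{2/3}$, with potential domain $\om$.

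\textbf{Step 2: total extraction.} Apply the algorithm $\Theta$ from the proof of Theorem \ref{MP-WLEM-single-pc2} and set $e_0:=\ep(t(p(a)))$. Because the potential tree $\tpot_x\subseteq 3^{<\om}$ is finite and computable, with $\psi_x$ total on its leaves, the search over $2$-fat subtrees terminates for \emph{every} $x$. Hence $\varphi_{e_0}$ is total regardless of whether $a$ is an actual realizer. When $\neg D(x)$ holds, the common-leaf argument gives $\varphi_{e_0}(x)=$ the unique $y$ in $\Phi_{p(a)}^F(x)$.

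The main point to verify is that the fat-tree argument of Theorem \ref{MP-WLEM-single-pc2} survives here. The actual tree now arises from the composite reduction, not a direct one, and Nimue's secret choices depend on the instance. This is fine because we only need \emph{some} $2$-fat actual subtree whose leaves are labelled with the correct $y$, and any two $2$-fat subtrees of $3^{<\om}$ share a leaf.

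\textbf{Step 3: the realizer for the conclusion.} We must realize $\forall x(\neg D(x)\to \varphi_{e_0}(x)\down\land A(x,\varphi_{e_0}(x)))$. For $\varphi_{e_0}(x)\down$ we give the halting stage. Since $\varphi_{e_0}$ is total, this stage is computed by a plain total search with no query to ${\sf MP}_{\sf PR}$; so $e_s$ is total computable, and it is a potential realizer for every $x$. For $A(x,\varphi_{e_0}(x))$ we take $e_1$ to be an $F$-index of $\pi_0\circ\pi_1\circ\Phi_a^F(\cdot,0)$, exactly as in Theorem \ref{thm:MP-LLPO-models-CT0u-2}. By uniqueness, every $\pair{y,w}\in\Phi_a^F(x,0)$ has $y=\varphi_{e_0}(x)$, so $e_1$ yields an actual realizer whenever $\neg D(x)$ is true.

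On the potential side, all the maps $a\mapsto(e_0,e_s,e_1)$ are total. On the actual side, correctness holds whenever $a$ is an actual realizer and $\neg D(x)$ is true. This gives $F$-realizability of $\bflogic{nCT}_0!$.
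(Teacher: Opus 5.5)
Your proposal is correct and essentially the paper's own proof. The paper likewise sets $e_0:=\ep(t(p(a)))$ using the total extraction of Theorem~\ref{MP-WLEM-single-pc2}, obtains the halting-stage index $e_s$ by plain search (relying on the totality of $\varphi_{e_0}$, with no appeal to ${\sf MP}_{\sf PR}$), and takes $e_1$ to be an $F$-index of $\pi_0\circ\pi_1\circ\Phi_a^F$. The common-leaf argument you check is sound provided both $2$-fat trees are subtrees of the same $\tpot_x$ whose leaves are leaves of $\tpot_x$. The paper leaves this step implicit.
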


\begin{proof}
The argument is exactly the same as in Theorem \ref{thm:MP-LLPO-models-CT0u}.
Here, we use a computable function $\ep$ in Theorem \ref{MP-WLEM-single-pc2}.

\medskip

Assume $F\tgw G={\sf C}_{2/3}$.
For each index $e$, we have $\Phi_e^F\tgw F\tgw G$, so by transitivity (Proposition \ref{prop:transitive}), there is a total computable function $t$ such that $\Phi_{t(e)}^{G}$ refines $\Phi_e^{F}$.

\medskip

{\em Realizer for $\bflogic{nCT}_0!$}.
Let $a$ be an $F$-realizer for the premise $\forall x^\N(\neg D(x)\to\exists! y^\N A(x,y))$ of $\bflogic{nCT}_0!$.
As in Theorem \ref{thm:MP-LLPO-models-CT0u}, by omitting the input $0$, such an $F$-realizer $a$ is described as follows:
\begin{itemize}
\item {\em Potential.}
For each $x\in\N$, we have $\Phi^+_{a}(x)\down$, and for any $\pair{y,w}\in\Phi^+_{a}(x)$, $\pi_0 w$ is a \ppp realizer for $A(x,y')$ for some $y'$.
\item {\em Actual.}
If $\neg D(x)$ is true, we have $\Phi^F_{a}(x)\down$, and for any $\pair{y,w}\in\Phi^F_{a}(x)$, $\pi_0w$ is an $F$-\aaa realizer for $A(x,y)$.
\begin{itemize}
\item Such a $y$ is unique:
If $\pair{y,v},\pair{z,w}\in\Phi_a^F(x)$ then $\pi_0v$ and $\pi_0w$ give realizers for $A(x,y)$ and $A(x,z)$, respectively.
Then, $\pi_1v(\pi_0w)$ is a realizer for $y=z$.
\end{itemize}
\end{itemize}

\medskip

To realize the conclusion of $\bflogic{nCT}_0!$, we need to construct an index $e$ of a computation, an upper bound $z$ of its halting stage, and a realizer $w$ for $A(x,\varphi_e(x))$ in an $F$-computable manner.

{\em Index $e$.}
\begin{itemize}
\item Put $\Phi_{p(a)}^+=\pi_0\circ\Phi_a^+$.
\begin{itemize}
\item By the above argument, if $a$ is an $F$-\aaa realizer for the premise, then $\Phi_{p(a)}^F$ is single-valued.
\end{itemize}
\item 
Using a computable function $\ep$ in Theorem \ref{MP-WLEM-single-pc2}, we take $e_0:=\ep(t(p(a))$.
\begin{itemize}
\item For each \ppp realizer $a$, as seen above, $\Phi_a^+$ is total, so is $\Phi_{t(p(a))}^+$ (as a $G$-relative computation).
Therefore, by Theorem \ref{MP-WLEM-single-pc2}, $\ep(t(p(a)))$ is an index of a total computable function.
\item If $a$ is an $F$-\aaa realizer and $\neg D(x)$ is true, we obtain $\Phi_{p(a)}^F(x)=\Phi_{t(p(a))}^G(x)=\varphi_{\ep(t(p(a)))}(x)$.
\end{itemize}
\item We claim that the process $a\mapsto\ep(t(p(a))=e_0$ gives a desired index of a computation.
\end{itemize}

{\em Realizer $w$ for $A$.}
\begin{itemize}
\item Let $e_1$ be an $F$-index of $\pi_0\circ\pi_1\circ\Phi_{a}^F$.
\begin{itemize}
\item Here, if $\neg D(x)$ is true, for any $\pair{y,w}\in\Phi_a^F(x)$, $\pi_0w$ is an $F$-\aaa realizer for $A(x,y)$.
\item By uniqueness, we must have $y=\varphi_{e_0}(x)$; hence, any element in $\Phi^F_{e_1}(x)=\pi_0\circ\pi_1\circ\Phi^F_a(x)$ is an $F$-\aaa realizer for $A(x,\varphi_{e_0}(x))$.
\end{itemize}
\end{itemize}

{\em Halting stage $z$}
\begin{itemize}
\item Since $\varphi_{e_0}$ is total, the algorithm $z$ which, given $x\in\N$, computes the halting step $\varphi_{e_0}(x)[z(x)]\down$ is total computable.
Therefore, we can get an index $e_s$ for the $F$-computation of $z$.
\end{itemize}

Then, we show that $e_0,e_1,e_s$ realize the conclusion part of $\bflogic{nCT}_0$.

\medskip
\noindent
{\em Verification:}
\begin{itemize}
\item {\em Potential.}
\begin{itemize}
\item As before, regardless of $a$, a code $e_0=\ep(t(p(a)))$ is always defined.
The same applies to $e_1$.
Then, one can also compute a code $e_s$ since it is explicitly constructed from $e_0$.
\item Since both $\Phi_{e_s}^+$ and $\Phi_{e_1}^+$ are total computable, an index of this pair gives a \ppp realizer for the part following $\exists^\N e$ in $\bflogic{nCT}_0!$ with an existential witness $e=e_0$.
\end{itemize}
\item {\em Actual.}
If $a$ is an $F$-\aaa realizer, then let $x\in\om$ be such that $\neg D(x)$ is realizable.
\begin{itemize}
\item 
Then the computations $\varphi_{e_0}(x)$ and $\Phi_{e_1}^F(x)$ halt.
\item As before, the computation $\Phi_{e_s}^F(x)$ halts and gives an upper bound on the halting stage $z(x)$ of $\varphi_{e_0}(x)$.
\item As mentioned above, $\Phi_{e_1}^F(x)$ gives an \aaa realizer for $A(x,\varphi_{e_0}(x))$.
\item Consequently, whenever $\neg D(x)$ is realizable, an index of the pair $\Phi_{e_s}^F,\Phi_{e_1}^F$ gives an \aaa realizer for the part following $\exists^\N e$ in $\bflogic{nCT}_0!$ with an existential witness $e=e_0$.
\end{itemize}
\end{itemize}

Therefore, $\bflogic{nCT}_0!$ is $F$-realizable.
\end{proof}

\begin{theorem}\label{thm:WLEMthree-CTrealizable2}
If $\sii{\sf C}_{1/\om}\tgw F\tgw\sii{\sf C}_{1/\om}+{\sf C}_{2/3}$, then $\bflogic{nCT}_0!$ is $F$-realizable.
\end{theorem}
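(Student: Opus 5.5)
The plan is to combine the arguments of Theorem \ref{thm:MP-LLPO-models-CT0u-2} and Theorem \ref{thm:MP-LLPO-models-CT0u}, replacing the use of Theorem \ref{LLPO-equial-partial-computable} by Theorem \ref{MP-WLEM-single-pc}. That theorem supplies a total computable $\ep$: if a single-valued partial $f$ satisfies $f\tgw\sii{\sf C}_{1/\om}+{\sf C}_{2/3}$ via $c$, then $\varphi_{\ep(c)}$ computes $f$ on $\dom(f)$. Set $G:=\sii{\sf C}_{1/\om}+{\sf C}_{2/3}$. By $F\tgw G$ and the uniform transitivity of Proposition \ref{prop:transitive}, there is a total computable $t$ such that $\Phi^G_{t(e)}$ refines $\Phi^F_e$ for every index $e$.

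First, let $a$ be an $F$-realizer for the premise $\forall x(\neg D(x)\to\exists!y\,A(x,y))$, and unpack it as in Example \ref{exa:AE-thm-realizer-simplify}. Let $\Phi_{p(a)}:=\pi_0\circ\Phi_a$. When $a$ is an actual realizer and $\neg D(x)$ holds, the uniqueness clause forces $\Phi^F_{p(a)}(x)$ to be a singleton; this is the same argument as in Theorem \ref{thm:MP-LLPO-models-CT0u}. Then $\Phi^G_{t(p(a))}$ is single-valued on the relevant inputs and agrees with $\Phi^F_{p(a)}$ there. Put $e_0:=\ep(t(p(a)))$. This is always defined because $\ep,t,p$ are total, and $\varphi_{e_0}(x)$ equals the unique $y$ whenever $\neg D(x)$ holds. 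Next, let $e_1$ be an $F$-index of $\pi_0\circ\pi_1\circ\Phi^F_a$. Uniqueness again shows that every output is an $F$-actual realizer for $A(x,\varphi_{e_0}(x))$.

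For the halting-stage component, $\varphi_{e_0}$ need not be total here, so the idea of Theorem \ref{thm:MP-vs-CT0-realizability-2} is used. The hypothesis $\sii{\sf C}_{1/\om}\tgw F$ gives ${\sf MP}_{\sf PR}\tW\sii{\sf C}_{1/\om}\tgw F$ via Proposition \ref{prop:MP-MPPR-equivalent} and Proposition \ref{prop:MP-as-Sigma-1-choice}. Compute $d(x)$ so that $\varphi_{d(x)}(0)$ simulates $\varphi_{e_0}(x)$ step by step, and let $e_s$ be the $F$-computation that queries ${\sf MP}_{\sf PR}$ at $d(x)$ and returns the answer. Since the potential domain of ${\sf MP}_{\sf PR}$ is $\om$, $\Phi^+_{e_s}$ is total. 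When $\varphi_{e_0}(x)\down$ the query lies in the actual domain, so each answer $z$ satisfies $\varphi_{e_0}(x)[z]\down$. The realizer is then $a\mapsto(e_0,\langle e_s,e_1\rangle)$. The potential part is total since all constructions are explicit; the actual part follows from the preceding verifications whenever $\neg D(x)$ is realizable.

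The main obstacle is the step that invokes $\ep$. Theorem \ref{MP-WLEM-single-pc} is stated for a single-valued $f$ with a reduction via $e$, whereas here $\Phi^G_{t(p(a))}$ is single-valued only on $\{x:\neg D(x)\}$. I therefore need to check that its proof, the brute-force search for a constant $2$-fat subtree after eliminating $\sii{\sf C}_{1/\om}$ via Theorem \ref{thm:MP-remove}, goes through uniformly for the partial single-valued function $x\mapsto$ the unique $y$ on that domain. It should, because the brute-force search depends only on $\psi_n$ on those inputs. The same domain-restriction point already arises implicitly in Theorem \ref{thm:MP-LLPO-models-CT0u-2}, so I would mirror that treatment.
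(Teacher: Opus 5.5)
Your proposal matches the paper's proof, which reruns the argument of Theorem \ref{thm:MP-LLPO-models-CT0u-2} with the function $\ep$ taken from Theorem \ref{MP-WLEM-single-pc} in place of the one from Theorem \ref{LLPO-equial-partial-computable}, exactly as you do. The obstacle you flag does not arise: Theorem \ref{MP-WLEM-single-pc} is already stated for partial $f\pcolon\om\to\om$, so you can apply it directly to the partial function sending $x$ to the unique $y$ on $\{x:\neg D(x)\}$, with $t(p(a))$ as the witnessing reduction.
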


\begin{proof}
The argument is exactly the same as in Theorem \ref{thm:MP-LLPO-models-CT0u-2}.
Here, we use a computable function $\ep$ in Theorem \ref{MP-WLEM-single-pc}.
\end{proof}

\subsection{Failure of $\bflogic{CT}_0$}

The reason we have dealt with the artificial principle $\bflogic{wCT}_0$ rather than the usual constructive Church's thesis $\bflogic{CT}_0$ is that, in fact, $\bflogic{CT}_0$ is not $F$-realizable for most {\em computable} oracles $F$.

\begin{theorem}\label{thm:MPn-sheaf-not-CT0}
$\bflogic{CT}_0$ is not $\sii{\sf C}_{n-1/n}$-realizable.
\end{theorem}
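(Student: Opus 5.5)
The plan is to derive from an $F$-realizer of $\bflogic{CT}_0$, for $F=\sii{\sf C}_{n-1/n}$, a \emph{total} computable choice function for $\sii{\sf C}_{n-1/n}$ itself, and then refute that by diagonalization. The diagonalization is the argument of Theorem \ref{thm:MPomega-not-total-computable} with $\om$ replaced by $n$: given a total computable $\varphi_k$, use the recursion theorem to build $e$ that waits for $\varphi_k(e)\down$ and then enumerates $n\setminus\{\varphi_k(e)\}$ into $S_e$ (or all of $n\setminus\{0\}$ if $\varphi_k(e)\geq n$). As a by-product, this also shows that the bound-based argument below would already give a contradiction.

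\emph{Step 1: the premise.} Instantiate $\bflogic{CT}_0$ with the arithmetical formula
\[A(x,y)\;\equiv\;y<n\land\big(\neg\neg(\text{$S_x$ is a co-singleton in $n$})\to y\in S_x\big).\]
I would check that $\forall x\exists y\,A(x,y)$ is $F$-realizable: on input $x$, query $x$ to $F$ and return the answer. Potentially this always yields some $y<n$. Actually, if the negated hypothesis is realizable, then $x\in\dom(F)$ and the answer lies in $S_x$. The hypothesis of the implication is a negative formula, so by Observation \ref{obs:negneg-real-truth} it is realizable exactly when it is true. Moreover $y\in S_x$, being $\Sigma_1$, has an actual realizer only if it is true.

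\emph{Step 2: extract a bound.} Applying the $F$-realizer of $\bflogic{CT}_0$ to this premise realizer gives an $F$-computation of an index $e$, together with, for each $x$, an $F$-computation of a halting stage $s$ with $\varphi_e(x)[s]\down$ and a realizer of $A(x,\varphi_e(x))$. Combining these into one $F$-computation (Proposition \ref{prop:transitive}), the problem
\[x\mapsto\{\langle e,s\rangle:\varphi_e(x)[s]\down\land A(x,\varphi_e(x))\}\]
is $\tgw F$. Since the codomain of $F$ is the finite set $n$, Lemma \ref{lem:finite-codomain-CT-maj-comp} yields a total computable $g$ such that, for every $x$, some $\langle e,s\rangle\leq g(x)$ is a correct solution. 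In particular, for $x\in\dom(\sii{\sf C}_{n-1/n})$, the value $\varphi_e(x)$ computed within $s$ steps lies in $S_x$.

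\emph{Step 3: diagonalize against all bounded candidates (main obstacle).} Without Markov's principle we cannot tell which pair $\langle e,s\rangle\leq g(x)$ is the correct one, so the construction must defeat all of them at once. The candidate values
\[V_x=\{\varphi_e(x)[s]:\langle e,s\rangle\leq g(x),\ \varphi_e(x)[s]\down\}\]
form a finite set computable from $x$. By the recursion theorem, build $x$ that computes $g(x)$ and $V_x$ and then tries to make $S_x=n\setminus\{c\}$ for some $c\in V_x$ that no correct pair can output. The difficulty is that $V_x$ may contain more than one value, so a single exclusion need not kill every candidate.

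To overcome this I would not use the crude bound $g$. Instead I would follow the proof of Theorem \ref{thm:separation-MPlorn-WLEMn} directly. The $F$-computation producing $\langle e,s\rangle$ and then the value $\varphi_e(x)$ (run for $s$ steps, defaulting to $0$) is a finite full $n$-branching tree $T_x$, with leaves labelled by values in $n$. Any actual computation tree is $(n-1)$-fat. Using the fat-tree combinatorics (Fact \ref{fact:fat-tree-lemma}) in the self-referential construction of $x$, choose $c$ so that every $(n-1)$-fat subtree of $T_x$ has a leaf labelled $c$, and set $S_x=n\setminus\{c\}$; this contradicts correctness.

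Making this counting work for all $n\geq2$, especially $n=2$ where $(n-1)$-fat subtrees need not share leaves, is the step I expect to be hardest. A fallback is to exploit that the same index $e$ must serve all $x$. Only the finitely many $e$-leaves of the $x$-independent first part of the tree matter, so one can diagonalize separately against each such candidate $e$ on different inputs $x$.
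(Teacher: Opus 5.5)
\paragraph{Verdict.} Your approach has a genuine gap at Step~1, and it is fatal: the premise you feed to $\bflogic{CT}_0$ is not $F$-realizable.

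\paragraph{Why Step~1 fails.} An actual $F$-realizer of $\forall x\exists y\,A(x,y)$ must actually $F$-solve $\exists y\,A(x,y)$ for \emph{every} $x$. This includes those $x$ for which $S_x$ is not a co-singleton in $n$. For such $x$ your computation queries $x\notin\dom(F)$, so it has no actual computation tree at all.

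No cleverer realizer can fix this. Any $F$-realizer would give a total extension $\hat F$ of $\sii{\sf C}_{n-1/n}$ with $\hat F\tgw F\tgw\sii{\sf C}_{1/\om}$. By Theorem~\ref{MP-equial-partial-computable}, $\hat F$ then has a computable choice function defined on $\dom(\hat F)=\om$. That is a total computable choice function for $\sii{\sf C}_{n-1/n}$, which your own diagonalization rules out. This is exactly the non-totalizability behind Corollary~\ref{cor:MP-doesnot-sat-IP}.

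A sanity check shows the plan proves too much. If Step~1 held, Steps~2--3 would be unnecessary: any leaf of the actual tree for the conclusion already yields some $e$ with $\varphi_e$ a total computable choice function. The identical argument would also refute $\bflogic{wCT}_0$, which is $F$-realizable by Theorem~\ref{thm:MP-vs-CT0-realizability}.

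\paragraph{Step~3 also fails.} The fat-tree claim is false. Take the one-query tree that asks $x$ and returns the answer. For every $c<n$ it has an $(n-1)$-fat subtree avoiding $c$. No contradiction can come merely from $F$ computing its own choice problem.

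\paragraph{The missing idea.} $\bflogic{CT}_0$ can only be instantiated with relations that are \emph{totally} $F$-realizable, so the paper builds one by hand.

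\begin{itemize}
\item It takes $F=\pii{\sf C}_{n-1/n}$ and an $F$-computation $d$. On input $x$, $d$ runs the full $n$-branching tree of height $x$ and outputs the leaf reached.
\item Every query $q(\sigma)$ is a genuine $\Pi_1$ co-singleton. Hence $i(d)$ realizes $\forall x\exists y\,A_d(x,y)$, where $A_d$ is the $\neg\neg$-translation of $y\in\Phi^F_d(x)$.
\item Via the recursion theorem, $d$ reads off the finite potential tree of $\Phi_r^+(i(d))$, which is finite since $\cod_+(F)=n$. This gives its $c$ candidate indices $e$.
\item It also reads a stage bound $s(d)$ from the finite potential trees of the halting-stage realizers at input $c+1$.
\item At each level $s\le c$, $d$ prunes the branch followed by $\varphi_e(c+1)[s(d)]$ for one remaining candidate $e$. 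Every candidate then outputs a leaf outside the actual tree, contradicting correctness.
\end{itemize}

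The halting-stage component of $\bflogic{CT}_0$ is what makes this work. It lets $d$ evaluate the candidates without waiting, which keeps $d$ total. This is precisely what separates $\bflogic{CT}_0$ from $\bflogic{wCT}_0$, which remains realizable for this oracle.
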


\begin{proof}
Consider $F=\pii{\sf C}_{n-1/n}$, which is $\equiv_{\sf tW}$-equivalent to $\sii{\sf C}_{n-1/n}$ by Theorem \ref{thm:basic-Sigma-1-counique}.
Suppose for the sake of contradiction that $\bflogic{CT}_0$ is $F$-realizable by a realizer $r$.
\begin{itemize}
\item Since $F$ is arithmetic, $\Phi_d^F(x)\down=y$ can also be expressed as an arithmetical formula with parameters $(d,x,y)$.
\item For each $d$, let $A_d(x,y)$ be the double negation translation of such a formula.
\item Since its outmost connective is $\neg$, $\Phi_d^F(x)\down=y$ is true iff $0$ is an \aaa realizer for $A_d(x,y)$.
\end{itemize}

For each $d\in\N$, let $i(d)$ be an index of the following computation making a realizer for the premise $\forall x\exists yA_d(x,y)$:
\begin{itemize}
\item Let $\iota$ be a total computable function such that $\iota(y)=\pair{y,0}$.
Then, put $\Phi_{i(d)}^+=\iota\circ\Phi_d^+$.
Then note that $y\in\Phi_d^+(x)$ iff $\pair{y,0}\in\Phi_{i(d)}^+(x)$.
\end{itemize}

{\em Verification:}
\begin{itemize}
\item {\em Potential.}
If $\Phi_d^+$ is total, so is $\Phi_{i(d)}^+$.
\begin{itemize}
\item In particular, $i(d)$ is a \ppp realizer for $\forall x\exists yA_d(x,y)$.
\end{itemize}
\item {\em Actual.} 
If $\Phi_d^F$ is total, so is $\Phi_{i(d)}^F$, and we have $\Phi_{i(d)}^F(x)=\Phi_{d}^F(x)\times\{0\}$.
Then $i(d)$ is an \aaa $F$-realizer for $\forall x\exists yA_d(x,y)$:
\begin{itemize}
\item Given $x\in\om$, if $\pair{y,0}\in\Phi^F_{i(d)}(x)$ then $y\in \Phi_d^F(x)$ holds.
Thus, $0$ realizes $A_d(x,y)$,
so $\pair{y,0}$ is an $F$-\aaa realizer for $\exists yA_d(x,y)$.
\end{itemize}
\end{itemize}

Therefore, if $\Phi_{d}^+$ is a total function, then $i(d)$ is a \ppp realizer of the premise of $\bflogic{CT}_0$, so the computation $\Phi_r^+(i(d))$ halts and outputs a \ppp realizer for the conclusion $\exists e\forall x\ \varphi_e(x)\down\land A_d(x,\varphi_e(x))$.
If $\Phi_{i(d)}^F$ is an \aaa realizer, then $\Phi_r^F(i(d))$ yields an \aaa realizer for the conclusion. 
An explicit description of a realizer for the conclusion is:
\begin{itemize}
\item {\em Potential.}
It is the pair of a natural number $e$ and a code $p$ of a total computable function such that, for any $x\in\om$, the computation $\Phi_p^+(x)$ halts and outputs a pair $\pair{s,a}$.
\item {\em Actual.}
For each $\pair{s,a}\in\Phi_p^F(x)$, we have $\varphi_e(x)[s]\down$, and moreover, $a$ is an \aaa realizer for $A_d(x,\varphi_e(x))$.
In particular, $\varphi_e(x)\in\Phi_d^F(x)$ is true.
\end{itemize}

So far, we have discussed the general behavior of a total function of an index $d$; from here on, we will specify a particular index $d$.
We may assume $\cod_+(F)=n$.

\medskip
\noindent
{\em Construction:}
For each instance $x\in\om$ of $\Phi_d^+$, the \ppp computation tree $(\tpot^d_x,\nu^d_x,\psi^d_x)$ is given as follows:
\begin{itemize}
\item $\tpot_x^d$ is the full $n$-branching tree $n^{\leq x}$ of height $x$.
\item 
The query assigned to an internal node $\sigma\in \internal \tpot_x^d$ refers to the algorithm $\nu^d_x(\sigma)=q(\sigma)$, which will be defined later.
\item The output assigned to an leaf $\sigma\in\leaf \tpot_x^d$ is the leaf $\psi^d_x(\sigma)=\sigma$ itself.
\end{itemize}

Note that this algorithm $d$ refers to an algorithm $q(\sigma)$ (which will be defined later).
This $q(\sigma)$ will refer to $d$, so $d$ is defined using the recursion theorem.
We will ensure that $q(\sigma)$ is always defined; therefore, regardless of $x$, $\Phi_d^+(x)$ is always defined.
In other words, $\Phi_d^+$ is total.
Also, note that this is a computation that outputs the leaves of the computation tree itself.

\medskip
\noindent

For the $F$-computable function given by this code $d$, $\bflogic{CT}_0$ must yield an index $e$ and a halting stage $s$.
We attempt to approximate these values.
In fact, the finiteness of the associated computation trees allows for a certain type of finite approximation.
In details, for the index $d$, we perform the following computation:
\begin{itemize}
\item Since $\Phi_d^+$ is total, the computation $\Phi_r^+(i(d))$ must halt.
\item First, we analyze the \ppp computation tree $(\tpot,\nu,\psi)$ for $\Phi_{r}^+(i(d))$.
As $\cod_+(F)=n$, $\tpot$ is the full $n$-branching tree, which is finite by well-foundedness.
\item Hence one can compute the full information of the finite set $\Phi_r^+(i(d))=\{\psi(\tau):\tau\in \leaf \tpot\}$ of all resulting outputs.
As seen above, each output $\psi(\tau)$ is the pair of an index $e$ and a code $p$ of an $F$-computation, so take the set $E^d=\pi_0\circ\Phi_r^+(i(d))$ consisting of the former indices.
Let $c=c_d$ be the number of elements in this finite set.
\item For each $\pair{e,p}\in\Phi_r^+(i(d))$, let $(\tpot_p,\nu_p,\psi_p)$ be the \ppp computation tree for $\Phi_{p}^+(c+1)$.
Similarly, $\tpot_p$ is also a finite set, one can also compute the full information of the finite set $\Phi_p^+(c+1)=\{\psi_p(\tau):\tau\in\leaf \tpot_p\}$ of all resulting outputs.
\begin{itemize}
\item {\em Actual.}
If $\Phi_d^F$ is total, for any $\pair{e,p}\in\Phi_r^F(i(d))$ and $\pair{s,a}\in\Phi_p^F(x)$, we have $\varphi_e(x)[s]\down$.
\end{itemize}
\item Compute an upper bound on the possible computation stages, $s(d)=\max(\pi_0\circ\Phi_p^+(c+1))$.
\item
Restrict $E^d$ to indices of computations that halt by stage $s(d)$ and output a leaf of the computation tree $\tpot^d_{c+1}$.
\[
H^d=\{e\in E^d:\varphi_e(c+1)[s(d)]\down\in\leaf \tpot^d_{c+1}=n^{c+1}\}.
\]
\begin{itemize}
\item {\em Actual.}
If $\Phi_d^F$ is total, for any $\pair{e,p}\in\Phi_r^F(i(d))$, we have $\varphi_e(c+1)[s_d]\down$.
Since $\Phi_d^F$ is a computation which outputs a leaf of its computation tree, we get $\varphi_e(c+1)\in \Phi_d^F(c+1)\subseteq\leaf \tpot^d_{c+1}$; therefore, $e\in H^d$.
\end{itemize}
\end{itemize}

\medskip
\noindent
The construction of the algorithm $q(\sigma)$:
Based on the above, for each $\sigma\in n^{<\om}$, we construct an algorithm $q(\sigma)$ including a reference to $d$ as follows:
\begin{enumerate}
\item First put $H^d_0=H^d$,
At stage $s\leq c$, we inductively assume that we have constructed $H^d_s\subseteq H^d$.
\item If $H^d_s\not=\emptyset$:
\begin{enumerate}
\item Pick the least element $e\in H^d_s$, and put $H^d_{s+1}=H^d_s\setminus\{e\}$.
\item Since $e\in H^d$, we have $\varphi_e(c+1)[s(d)]\in n^{c+1}$.
Therefore, $\varphi_e(c+1)[s(d)]$ extends $\sigma\fr i$ for some node $\sigma\in n^s$ of height $s$ and $i<n$.
\item Then let $q(\sigma)$ be an index of the $\Pi_1$ co-singleton $P_{q(\sigma)}=n\setminus\{i\}$.
\item 
For other nodes $\tau\in n^s$ of height $s$, we set $P_{q(\tau)}=n\setminus\{0\}$.
\end{enumerate}
\item If either $H^d_s=\emptyset$ or $s>c$:
\begin{itemize}
\item For all nodes $\tau\in n^s$ of height $s$, we set $P_{q(\tau)}=n\setminus\{0\}$.
\end{itemize}
\end{enumerate}

{\em Verification:}
\begin{itemize}
\item By our construction, for any $\sigma\in n^{<\om}$, $P_{q(\sigma)}$ is a co-singleton; hence, $q(\sigma)\in\dom(F)$.
This ensures totality of $\Phi_d^F$.
\item Therefore, $\Phi_r^F(i(d))$ produces an \aaa realizer for the conclusion of $\bflogic{CT}_0$.
In particular, for any $\pair{e,p}\in\Phi_r^F(i(d))$, we get $\varphi_e(c+1)[s(d)]\down\in \Phi_d^F(c+1)$; hence, $e\in H^d$.
\item $H^d$ has at most $c$ elements, and $(H_s^d)_s$ is a decreasing sequence in which elements are removed one by one starting from $H^d$; thus, there is a stage $s$ such that $e=\min H^d_s$, so $e$ is removed from $H^d$ at some stage.
\item $\varphi_e(c+1)[s(d)]$ extends some $\sigma\fr i$; in this case, by the construction (2-c), we get $i\not\in P_{q(\sigma)}$.
This implies $\sigma\fr i\not\in \tpot^d_{c+1}$.
In particular, we obtain $\varphi_e(c+1)[s(d)]\not\in \tpot^d_{c+1}$.
\item
By our construction, for each input $x$, $\Phi_d^F$ outputs a leaf of the computation tree $\tpot^d_x$, we must have $\varphi_e(c+1)\in\Phi_d^F(c+1)\subseteq \tpot^d_{c+1}$.
However, this is impossible, which leads to a contradiction.
\end{itemize}
Consequently, $\bflogic{CT}_0$ is not $F$-realizable.
\end{proof}

Recall from Theorem \ref{thm:WLEMthree-CTrealizable} that $\bflogic{CT}_0!$ is $\sii{\sf C}_{n-1/n}$-realizable for $n\geq 3$.
However, interestingly, we next show that this is false for $n=2$.
Indeed, $\bflogic{CT}_0!$ is not $\pii{\sf C}_{\geq 1/2}$-realizable.
This is a crucial difference from Lifschitz realizability (partial $\pii{\sf C}_{\geq 1/2}$-realizability).

\begin{theorem}\label{thm:LLPO-not-CT0unique}
$\bflogic{CT}_0!$ is neither $\sii{\sf C}_{1/2}$-realizable nor $\pii{\sf C}_{\geq 1/2}$-realizable.
\end{theorem}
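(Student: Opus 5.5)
The plan is to rerun the diagonal construction from the proof of Theorem \ref{thm:MPn-sheaf-not-CT0}, arranging the oracle answers so that the diagonal function is single-valued. Then the premise of $\bflogic{CT}_0!$ is realized, not just that of $\bflogic{CT}_0$.

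First, reduce to one oracle shape. Let $F$ be either $\sii{\sf C}_{1/2}$ or $\pii{\sf C}_{\geq 1/2}$, and put $\cod_+(F)=2$. In both cases I will only ever ask queries whose solution set is a singleton $\{j\}\subseteq 2$:
\begin{itemize}
\item for $\sii{\sf C}_{1/2}$, an index $q$ with $S_q=\{j\}$;
\item for $\pii{\sf C}_{\geq 1/2}$, an index $q$ with $P_q=\{j\}$, which has $|2\setminus P_q|=1$ and so lies in the actual domain.
\end{itemize}
Then every actual computation tree of the diagonal algorithm is a single path. Consequently $\Phi_d^F(x)$ is a singleton, namely the unique leaf reached.

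Suppose $r$ realizes $\bflogic{CT}_0!$. As before, let $A_d(x,y)$ be the double negation translation of ``$\Phi_d^F(x)\down=y$''. Because the actual tree is a path, $\forall x\exists!y\,A_d(x,y)$ is $F$-realizable. The uniqueness clause is trivial: two realized values $y,z$ are both true outputs, so $y=z$ is true, and a realizer for it can be written down uniformly. Hence $r$ applied to (a uniform realizer built from) $d$ yields an index $e$ and a halting-stage bound, exactly as in Theorem \ref{thm:MPn-sheaf-not-CT0}.

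Next comes the finite approximation, copied from that proof. The potential tree for $\Phi_r^+$ is a binary well-founded tree, so it is finite by weak K\"onig's lemma. From it compute the finite set $E^d$ of candidate indices, put $c=|E^d|$, compute the stage bound $s(d)$ from the potential trees at input $c+1$, and form
\[
H^d=\{e\in E^d:\varphi_e(c+1)[s(d)]\down\in 2^{c+1}\}.
\]

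Then define the queries $q(\sigma)$, with $d$ fixed by the recursion theorem, so that the unique actual path avoids every $e\in H^d$. List $H^d=\{e_0,\dots,e_{m-1}\}$ with $m\leq c$. Build the actual path $\rho_0\subset\rho_1\subset\cdots$ level by level. At level $s<m$, if the leaf $\varphi_{e_s}(c+1)[s(d)]$ extends $\rho_s$ via the bit $i$, give the node $\rho_s$ the query with solution set $\{1-i\}$; otherwise use $\{0\}$. Every other node receives the query $\{0\}$, and all levels $\geq m$ also use $\{0\}$. Since the answer set is a singleton, $\rho_{s+1}$ is determined, and each $e_s$ has been steered off the path by level $s+1\leq c+1$.

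For the contradiction, the actual realizer gives some $\pair{e,p}$ with $e\in H^d$ and $\varphi_e(c+1)\in\Phi_d^F(c+1)=\{\rho_{c+1}\}$. But $\varphi_e(c+1)\neq\rho_{c+1}$ by construction, which is impossible.

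I expect the main obstacle to be the self-reference bookkeeping: $q(\sigma)$ depends on $E^d$, $c$ and $s(d)$, which depend on $d$. The point is that these are computed from potential trees only, and potential trees do not depend on the answers to the queries. So everything is total and computable, the recursion theorem applies, and $\Phi_d^+$ is total. A secondary check is that the uniqueness realizer required by $\bflogic{CT}_0!$ is obtained uniformly from $d$, via the $\neg\neg$-translation trick (negative formulas are realized by $0$).
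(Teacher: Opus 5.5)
Your proposal is correct and follows essentially the paper's proof. The paper reruns the diagonalization of Theorem~\ref{thm:MPn-sheaf-not-CT0} with $n=2$, noting that a co-singleton of $2$ is a singleton, so the actual tree of $d$ is a single chain and $\Phi_d^F$ realizes the premise of $\bflogic{CT}_0!$; the bounded-stage contradiction then goes through unchanged. The one difference is cosmetic: you steer the explicit path $\rho_s$ away from each candidate leaf, whereas the paper places the avoiding query at the node $\sigma\in 2^s$ below that leaf, and both remove every $e\in H^d$.
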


\begin{proof}
The argument is the same as in Theorem \ref{thm:MPn-sheaf-not-CT0}.
\begin{itemize}
\item 
Here, note that the previous argument applies not only to $\pii{\sf C}_{1/2}$ but also to $\pii{\sf C}_{\geq 1/2}$.
\end{itemize}

The key point is that, for $n=2$, a co-singleton is also a singleton.
\begin{itemize}
\item 
The previous construction yields a $\Pi_1$ co-singleton $P_{q(\sigma)}\subseteq n$ for each $\sigma\in n^{<\om}$; however, when $n=2$, this is also a $\Pi_1$ singleton.
\item This implies that the \aaa computation tree $\tactual_x^d\subseteq \tpot_x^d$ forms a single chain; thus, $\tactual_x^d$ has a unique leaf.
In particular, there is only one possible value for $\psi_x^d$.
\item Consequently, $\Phi_d^F$ yields a total single-valued function.
Therefore, $d$ is a realizer for the premise $\forall x\exists! yA_d(x,y)$ of $\bflogic{CT}_0!$.
\end{itemize}

The remaining part follows from an argument entirely analogous to that of Theorem \ref{thm:MPn-sheaf-not-CT0}.
\end{proof}

\begin{obs}\label{thm:WLEM-not-CT0unique}
$\bflogic{CT}_0!$ is not ${\sf C}_{1/2}$-realizable.
\end{obs}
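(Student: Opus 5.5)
The plan is to rerun the diagonal argument of Theorem \ref{thm:LLPO-not-CT0unique}, with the $\Pi_1$ co-singleton queries replaced by secret singleton queries to ${\sf C}_{1/2}$. Since the codomain of ${\sf C}_{1/2}$ is $2$, every \ppp computation tree over $F={\sf C}_{1/2}$ is a finite full binary tree (by weak K\"onig's lemma). Every \aaa computation tree is a single chain, because ${\sf C}_{1/2}(\ast|a)=\{a\}$. So every $F$-computation is single-valued, and its unique leaf is chosen by {\tt Nimue}'s secret bits.

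Assume $r$ $F$-realizes $\bflogic{CT}_0!$. As in Theorem \ref{thm:MPn-sheaf-not-CT0}, I will not use arithmetical formulas for the premise, since $F$ has a secret layer. Instead I fix a single $F$-computation $d$ and take $A(x,y)\equiv (y=g(x))$, where $g$ is an externally chosen function. Concretely, $\tpot^d_x=2^{\leq x}$ with every query equal to $\ast$ and output $\psi^d_x(\sigma)=\sigma$. A secret strategy $\zeta$ that answers along a fixed path $\beta\in 2^\om$ gives an \aaa computation with the unique leaf $\beta\upto x$. Hence $d$ is an $F$-realizer that the function $x\mapsto\beta\upto x$ is total. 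Since $\Phi_d^F$ is single-valued, the same $d$ realizes the premise $\forall x\exists! y\,(y=\beta\upto x)$, where uniqueness holds trivially for an equality in the assembly ${\sf N}$. The crucial point is that one and the same public code $d$ works for \emph{every} $\beta$; no recursion theorem is needed, because the adversary is now the secret $\beta$ rather than a computable $q(\sigma)$.

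Next I apply $r$ to the realizer $i(d)$. By totality, $\Phi^+_r(i(d))$ is a finite computable binary tree, so the set $E=\pi_0\circ\Phi^+_r(i(d))$ of candidate indices is finite and independent of $\beta$. Let $c=|E|$. For each $\beta$, the \aaa realizer must produce some $e\in E$ with $\varphi_e(c+1)\down=\beta\upto(c+1)$; a halting-stage realizer is not even needed here. Each $e\in E$ determines at most one string of length $c+1$, so the $2^{c+1}>c$ strings cannot all be covered. I choose $\beta$ with $\beta\upto(c+1)$ outside $\{\varphi_e(c+1):e\in E\}$ and obtain a contradiction. This is the same counting as in Theorem \ref{thm:separation-DML-WLEM}.

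The main obstacle is checking that the realizer for the premise really is a \emph{uniform} $F$-\aaa realizer across all $\beta$, and that the \aaa realizer of the conclusion must select its witness $e$ from leaves of the fixed finite tree $\tpot$ whatever {\tt Nimue} does. The first point holds because the secret data enter only through $\zeta$, and the realizer of the $\Pi$-type premise is quantified over predicates, so $\beta$ may be taken as a parameter of $A$. The second is the potential/actual inclusion $\tactual\subseteq\tpot$. Once these are in place, the pigeonhole step is routine.
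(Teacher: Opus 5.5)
Your argument is sound if ``$\bflogic{CT}_0!$ is ${\sf C}_{1/2}$-realizable'' means that a single realizer works for every predicate $A$. Under that reading it is a genuinely different route from the paper's, and it proves less.

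The paper fixes a non-computable arithmetical $h\colon\om\to 2$. By Proposition \ref{prop:WLEM-upper-bound}, $h\tW{\sf C}_{1/2}$, since Nimue simply hands over $h(x)$ as the secret. So the premise for the double-negation translation $H$ of the graph of $h$ is realizable, while the conclusion is outright false because no $\varphi_e$ equals $h$.

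Your code $d$ is the same device as Proposition \ref{prop:WLEM-upper-bound}. The difference is that you keep the target function arbitrary and get the contradiction by pigeonhole against the finitely many indices offered by the fixed tree $\Phi^+_r(i(d))$. Taking $E$ also over the finitely many leaves of the outer $j$-tree keeps it finite and $\beta$-independent.

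What your route buys is a purely intensional cause of failure: one public code names every $x\mapsto\beta\upto x$, and no non-computable function is needed. What it costs is the following.
\begin{itemize}
\item The $\beta$-independence of $E$ is exactly the assumption that one number $r$ realizes all instances $A_\beta$ with $\beta$ hidden. It does not follow from $\tactual\subseteq\tpot$.
\item The pigeonhole only constrains $\beta\upto(c+1)$, so your $\beta$ may be eventually zero. In that case the instance $A_\beta$ is realizable on its own: output an index of $x\mapsto\beta\upto x$.
\item Consequently you never exhibit an unrealizable instance.
\end{itemize}

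The paper does use this uniform reading for $\bflogic{WLEM}_n$ and $\bflogic{DML}_\N$ (Theorems \ref{thm:WLEMn-realizable-iff-reducible} and \ref{thm:DML-reducible-iff-DNE-realizable}), so your proof is legitimate in that sense. For Church's thesis, however, the paper works with arithmetical instances (cf.\ Theorem \ref{thm:MP-vs-CT0-realizability-rev}). Their numerical parameters are passed publicly and cannot play the role of your secret $\beta$.

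The paper's argument refutes a single parameter-free arithmetical instance, so it yields $\neg\bflogic{CT}_0!$ under either reading. To get that from your setup, take $\beta$ non-computable and arithmetical, using the double-negation translation of its graph. The counting then becomes superfluous and you are back at the paper's proof.

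A minor point: Theorem \ref{thm:MPn-sheaf-not-CT0} does use arithmetical formulas $A_d$, so your ``as in'' there is misplaced.
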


\begin{proof}
Let $h\colon\om\to 2$ be a non-computable arithmetical function (e.g.~the characteristic function of the halting problem), and let $H$ be the double negation translation of its graph.
By Proposition \ref{prop:WLEM-upper-bound}, we have $h\leq_{\sf tW}{\sf C}_{1/2}$; hence,
$\forall x\exists ! yH(x,y)$ is ${\sf C}_{1/2}$-realizable.
By non-computability of $h$, $H(x,\varphi_e(x))$ is false for any $e$; in particular, it cannot be realizable relative to any oracle.
\end{proof}

\subsection{Majorization Versions of $\bflogic{CT}_0$}\label{sec:major-Church-thesis}

We introduce the principle $\bflogic{CT}_0^{\rm maj}$, which states that every arithmetically definable function is majorized by some computable function, as follows.
\begin{definition}
$\bflogic{CT}_0^{\rm maj}$ is the following principle:
\[
\forall x\in\N\exists y\in\N.\ A(x,y)
\to
\exists e\in\N\forall x\in\N.\ {\varphi_e(x)\downarrow}\land \exists y\leq\varphi_e(x).\ A(x,y).
].
\]
\end{definition}

\begin{theorem}\label{thm:finite-maj-realizable}
Let $F\pcolon\om\times\Lambda\tto\om$ be a potted bilayer function whose codomain is finite.
Then, $\bflogic{CT}_0^{\rm maj}$ is $F$-realizable.
%
\end{theorem}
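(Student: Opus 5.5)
The plan is to build a uniform realizer by a compactness argument, following the proof of Lemma \ref{lem:finite-codomain-CT-maj-comp}. Let $a$ be an $F$-realizer for the premise $\forall x\exists yA(x,y)$. As in Example \ref{exa:AE-thm-realizer-simplify}, for each $x\in\om$ the potential computation $\Phi_a^+(x)$ is a total computation tree $(\tpot_x,\nu_x,\psi_x)$. Each output $\psi_x(\sigma)$ is a pair $\pair{y,w}$. If $a$ is an actual realizer, then some $(F;\nu_x)$-branching subtree $\tactual_x\subseteq\tpot_x$ has all its leaf outputs $\pair{y,w}$ with $w$ an $F$-actual realizer of $A(x,y)$.

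First I would assume, as in Lemma \ref{lem:finite-codomain-CT-maj-comp}, that $F$ takes values below $k$, so $h=\cod_+(F)\cap k$ is finite. Then $H_x=\tpot_x\cap h^{<\om}$ is a finitely branching well-founded tree, hence finite by K\"onig's lemma, and it is computable uniformly in $a$ and $x$. Every actual tree $\tactual_x$ lies inside $H_x$, whatever the secret choices of {\tt Nimue} are. Next define $g_a(x)=\max\{\pi_0\psi_x(\sigma):\sigma\in\leaf H_x\}$. This is computed by waiting for $\psi_x$ to converge on the finitely many leaves of $H_x$. Convergence is guaranteed because $\psi_x$ is total on $\leaf\tpot_x$ whenever $a$ is a potential realizer. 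Let $e=b(a)$ be an index for $g_a$, obtained uniformly from $a$.

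The realizer for the conclusion $\exists e\forall x(\varphi_e(x)\down\land\exists y\leq\varphi_e(x)A(x,y))$ then outputs the witness $e=b(a)$, together with an $F$-computation that, given $x$, supplies three things. The first is a halting stage for $\varphi_e(x)$, which is computable because $g_a$ is total. The second is a realizer for $\exists y\leq\varphi_e(x)A(x,y)$, obtained by running $\Phi_a^F(x)$ itself. Each actual leaf output $\pair{y,w}$ satisfies $y\leq g_a(x)$, since that leaf lies in $H_x$, so returning $\pair{y,\pair{\text{trivial realizer of }y\leq g_a(x),w}}$ works. The third is the potential side, which follows because every construction is total whenever $a$ is merely a potential realizer. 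By Lemma \ref{lem:AE-thm-simplify}, an $F$-computation returning realizers is enough.

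The step I expect to be the main obstacle is the interface between the $\Omega$-set interpretation and the oracle tree. One part is checking that the inner part $\forall x(\dots)$ may be realized by an $F$-computation in $x$, as licensed by Example \ref{exa:AE-thm-realizer-simplify}. The other part is confirming that the bounded quantifier $y\leq\varphi_e(x)$ and the decidable atom $\varphi_e(x)[s]\down$ are realized trivially, both potentially and actually. This has to hold even though $a$'s potential outputs may not be genuine witnesses, so totality must come from $H_x$ rather than from $\tactual_x$.
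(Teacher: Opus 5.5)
Your proposal is correct and follows essentially the same route as the paper. The witness $e$ is an index for the compactness bound of Lemma~\ref{lem:finite-codomain-CT-maj-comp} (the paper's $\beta(a)$), the halting stage is found by search because that bound is total, and $y$ together with its realizer is read off directly from $\Phi_a^F(x)$, whose actual leaves all lie in the finite tree $H_x$. The only cosmetic difference is that you bound the first coordinates $\pi_0\psi_x(\sigma)$, whereas the paper bounds the whole pairs $\pair{y,z}$ and implicitly uses $y\leq\pair{y,z}$.
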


\begin{proof}
{\em Premise.}
Let $a$ be an $F$-\aaa realizer for the premise $\forall x\exists y A(x,y)$ of $\bflogic{CT}_0^{\rm maj}$.
As in Example \ref{exa:AE-thm-realizer-simplify}, and similar to Theorem \ref{thm:MP-vs-CT0-realizability}, etc., such an $F$-realizer $a$ is described as follows:
\begin{itemize}
\item 
For any $\pair{y,z}\in\Phi_a^F(x)$, the second value $z$ is an $F$-realizer for $A(x,y)$.
\end{itemize}

{\em Conclusion.}
To realize the conclusion of $\bflogic{CT}_0^{\rm maj}$, it suffices to construct an index $e$ of an upper bound, its halting stage $s$, some $y\leq\varphi_e(x)$, and a realizer for $A(x,y)$.

{\em Index for an upper bound.}
\begin{itemize}
\item 
Using a computable function $\beta$ in Lemma \ref{lem:finite-codomain-CT-maj-comp}, we see that some $\pair{y,z}\leq\varphi_{\beta(a)}(x)$ must gives a solution $\pair{y,z}\in\Phi_{a}^{F}(x)$.
Looking at the verification part of the proof of Lemma \ref{lem:finite-codomain-CT-maj-comp}, we see that, in fact, for any $\pair{y,z}\in\Phi_a^F(x)$, we have $\pair{y,z}\leq\varphi_{\beta(a)}(x)$.
\item 
Since $\varphi_{\beta(a)}$ is a total computable function, we obtain an $F$-index $e_0$ for $\varphi_{\beta(a)}$ by considering a total $F$-computation that does not make queries to $F$.
\end{itemize}

{\em Halting stage.}
\begin{itemize}
\item 
Since $\varphi_{e_0}$ is a total function, for any $x\in\om$, the function $s$ that computes the halting stage $s(x)$, i.e.~$\varphi_{e_0}(x)[s(x)] \downarrow$, is total computable.
Therefore, we can obtain an index $e_s$ for this $F$-computation.
\end{itemize}

{\em Output $y$ and Realizer for $A$.}
\begin{itemize}
\item $\Phi_a^F$ directly produces them.
\begin{itemize}
\item As seen above, for any $\pair{y,z}\in\Phi_a^F(x)$, the second value $z$ is a realizer for $A(x,y)$.
\item As mentioned above, the verification part of the proof of Lemma \ref{lem:finite-codomain-CT-maj-comp} shows that in fact, for any $\pair{y,z}\in\Phi_a^F$, we have $\pair{y,z}\leq\varphi_{\beta(a)}(x)=\varphi_{e_0}(x)$.
In particular, $y\leq\varphi_{e_0}(x)$ is guaranteed.
\end{itemize}
\end{itemize}

Consequently, $\bflogic{CT}_0^{\rm maj}$ is $F$-realizable.
\end{proof}

We next introduce the principle $\bflogic{sCT}_0^{\rm maj}$, which states that every arithmetically definable function is majorized by some lower semicomputable function, as follows.
\begin{definition}
$\bflogic{sCT}_0^{\rm maj}$ is the following principle:
\[
\forall x\in\N\exists y\in\N.\ A(x,y)
\longrightarrow
\exists e\in\N\forall x\in\N\exists y\in\N.\ (A(x,y)\land \neg\neg\exists t\in\N.\ {\varphi_e(x,t)\down}\land y\leq\varphi_e(x)).
\]
\end{definition}

\begin{prop}\label{prop:finite-semi-maj-realizable}
Let $F\pcolon\om\times\Lambda\tto\om$ be a potted bilayer function whose codomain is finite.
Then, $\bflogic{sCT}_0^{\rm maj}$ is $(\sii{\sf C}_{1/\om}+F)$-realizable.
\end{prop}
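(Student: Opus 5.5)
The proof will run parallel to Theorem \ref{thm:finite-maj-realizable}, with Lemma \ref{lem:finite-codomain-CT-maj-comp2} in place of Lemma \ref{lem:finite-codomain-CT-maj-comp}. Put $G=\sii{\sf C}_{1/\om}+F$ and let $a$ be a $G$-\aaa realizer for the premise $\forall x\exists yA(x,y)$. As in Example \ref{exa:AE-thm-realizer-simplify}, $\Phi_a^+$ is a total \ppp computation, and for each $x$ and each $\pair{y,z}\in\Phi_a^G(x)$ the second coordinate $z$ is a $G$-\aaa realizer for $A(x,y)$.

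\emph{Bound.} I would apply the construction in the proof of Lemma \ref{lem:finite-codomain-CT-maj-comp2} to the computation $a$, taking $\mathcal{U}=F$. This yields, uniformly in $a$, an index of the computable approximation $\Xi_a(x,t)=\max R_x[t]$. Set $e=e(a)$ to be an index for $(x,t)\mapsto\Xi_a(x,t)$. Then $e$ is total computable and obtained from $a$ by a total computable map, so the existential witness for $\exists e$ is always produced, both potentially and actually. As in Theorem \ref{thm:finite-maj-realizable}, the proof of the lemma gives more than stated: for every $x$ there is a stage $s$ with $H_x=H_x[s]$. Hence every $\pair{y,z}\in\Phi_a^G(x)$ satisfies $\pair{y,z}\in R_x[s]$, so $y\le\pair{y,z}\le\sup_t\Xi_a(x,t)$. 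In fact some single $t$ with $\Xi_a(x,t)\ge y$ exists.

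\emph{Realizer for the body.} Given $x$, the $G$-computation runs $\Phi_a^G(x)$ and outputs $\pair{y,\pair{z,0}}$ for each result $\pair{y,z}$. Here $z$ realizes $A(x,y)$. The conjunct $\neg\neg\exists t(\varphi_e(x,t)\down\land y\le\varphi_e(x,t))$ has $\neg$ as its outermost connective, so by $\neg\neg$-density it is realized by $0$ exactly when it is true, and it is true by the previous paragraph. The potential part is immediate because $\Phi_a^+$ is total and the pairing is total computable. The case $x\notin$ the actual domain does not arise, since the premise quantifies over all $x$.

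The main obstacle will be making the uniformity claim precise. The lemma's proof tracks $H_x[t]$ through $\sii{\sf C}_{1/\om}$-queries by waiting for $S_q=\{c\}$ at stage $t$. I must check that the maximum is taken over leaves reachable in the approximation of $\tpot_x$, and that every actual leaf's output $\pair{y,z}$, not merely some solution, is eventually included. One subtlety is that the $\sii{\sf C}_{1/\om}$-nodes on the actual tree lie in the actual domain, so $S_q$ is a singleton and the approximation stabilizes. The other is that $F$-nodes branch over the full finite codomain $h$, which covers every choice Nimue can make.
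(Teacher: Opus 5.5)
Your proposal is correct and takes the paper's route. The paper's entire proof is to repeat Theorem~\ref{thm:finite-maj-realizable} with Lemma~\ref{lem:finite-codomain-CT-maj-comp2} in place of Lemma~\ref{lem:finite-codomain-CT-maj-comp}.

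You supply details the paper leaves implicit. You use the fact that the bound holds for every output of every actual tree, not just for some output. You also note that the bound conjunct now sits under $\neg\neg$, so it is realized by $0$ whenever it is true. As a result, you correctly drop the halting-stage component that was needed in the $\bflogic{CT}_0^{\rm maj}$ case.
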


\begin{proof}
The argument is the same as in Theorem \ref{thm:finite-maj-realizable}.
Here, use Lemma \ref{lem:finite-codomain-CT-maj-comp2} instead of Lemma \ref{lem:finite-codomain-CT-maj-comp}.
\end{proof}

\subsection{Independence of Premise $\bflogic{IP}$}\label{sec:inde-premi}

A prime example of a principle that does not hold in Kleene realizability (i.e., partial computable realizability), but does hold in modified realizability (a kind of total computable realizability) is the principle of {\bf independence of premise}.

\begin{definition}
$\bflogic{IP}$ is the following principle:
\[
\forall x\in\N.[
(\neg D(x)\to\exists y A(x,y))
\to
\exists y\in\N.(\neg D(x)\to A(x,y))
].
\]
\end{definition}

\begin{theorem}\label{thm:totality-vs-IP-realizability}
Let $F\pcolon\om\tto\om$ a potted problem whose graph is arithmetically definable.
Then the following two conditions are equivalent:
\begin{enumerate}
\item $\bflogic{IP}$ is $F$-realizable.
\item $F$ is multi-query total Weihrauch equivalent to a total potted problem (i.e., its \ppp /\aaa domains are $\om$).
\end{enumerate}
\end{theorem}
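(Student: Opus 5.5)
The plan is to prove both implications directly from the explicit description of $F$-realizers in Example \ref{exa:AE-thm-realizer-simplify}. Throughout we use that $F\eqtgw G$ gives $j_F\leq j_G\leq j_F$ by the theorem in Section \ref{sec:topology-oracle}. Hence $F$-realizability and $G$-realizability coincide, uniformly in the realizers.

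\emph{(2)$\Rightarrow$(1).} By the remark above we may assume that $F$ itself is total, i.e.\ $\dom(F)=\dom_+(F)$.

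The key observation is about computation trees. Every \ppp computation tree $(\tpot,\nu,\psi)$ for $F$ has all its queries in $\dom_+(F)=\dom(F)$. So $\tpot$ contains an $(F;\nu)$-branching subtree, and every \ppp computation is also an \aaa $F$-computation producing some outputs.

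Now let $a$ be an $F$-realizer of $\neg D(x)\to\exists y A(x,y)$. By Example \ref{exa:AE-thm-realizer-simplify}, $\Phi_a^+(x,0)$ is a total tree whose leaves carry pairs $\pair{y,z}$. If $\neg D(x)$ is true, the leaves of an \aaa subtree carry pairs with $z$ an \aaa realizer of $A(x,y)$. The realizer for the conclusion runs the same tree and relabels each leaf $\pair{y,z}$ as $\pair{y,c_z}$, where $c_z$ is a code of the constant function with value $z$ (this is the realizer of $\neg D(x)\to A(x,y)$).

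We then check the two levels.
\begin{itemize}
\item \emph{Potential level.} This is immediate, since the relabelled tree is still a total tree.
\item \emph{Actual level, $\neg D(x)$ true.} We use the \aaa subtree given by $a$, on whose leaves $c_z$ realizes the implication.
\item \emph{Actual level, $\neg D(x)$ false.} Then $\neg D(x)$ has no \aaa realizer (here $j_F$ is $\neg\neg$-dense), so every \ppp realizer of the implication is an \aaa one. By totality of $F$, some $(F;\nu)$-branching subtree exists, and it witnesses \aaa realizability of $\exists y(\neg D(x)\to A(x,y))$.
\end{itemize}
This conversion is uniform in $a$, so it yields an $F$-realizer of $\bflogic{IP}$.

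\emph{(1)$\Rightarrow$(2).} Let $X=\dom_+(F)$ and $Y=\cod_+(F)$. Define the total problem $\hat F$ with $\dom(\hat F)=\dom_+(\hat F)=X$, $\hat F(x)=F(x)$ for $x\in\dom(F)$, and $\hat F(x)=Y$ otherwise.

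The reduction $F\tW\hat F$ is trivial: use the identity maps, since $\hat F$ agrees with $F$ on $\dom(F)$.

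For $\hat F\tgw F$ we apply $\bflogic{IP}$ to two arithmetical formulas. We take $D(x)$ to be the double negation translation of ``$x\notin\dom(F)$'', and $A(x,y)$ the translation of ``$y\in F(x)$''. By Observation \ref{obs:negneg-real-truth} these are realizable (by $0$) exactly when true. The premise $\neg D(x)\to\exists yA(x,y)$ is $F$-realized by the one-query computation that asks $x$ and returns $\pair{y,0}$. To keep the query inside the potential domain, we let $D$ also absorb ``$x\notin X$'' and feed only $x\in X$.

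Applying the $\bflogic{IP}$-realizer, we get a uniform total $F$-computation $x\mapsto\Phi^+(x)$ of pairs $\pair{y,w}$ with $y\in Y$. Because it \aaa realizes $\exists y(\neg D(x)\to A(x,y))$ for every $x$, for every $x\in X$ there is an $(F;\nu)$-branching subtree whose leaves give such $y$. When $x\in\dom(F)$, $\neg D(x)$ is realized by $0$, so $w\,0$ realizes $A(x,y)$ and hence $y\in F(x)$. Projecting to $y$ gives a strategy witnessing $\hat F\tgw F$.

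The main obstacle is exactly this last step. We must justify that for $x\notin\dom(F)$ the \aaa realizability of the conclusion really provides an \aaa $F$-branching subtree, that is, that the computation does not get stuck on a query outside $\dom(F)$. I would derive this by unfolding $j_F$ of the existential as in Example \ref{exa:AE-thm-realizer-simplify}: an \aaa realizer lies in $F^\solve(\cdots)$ for every $x$, which by definition supplies such a subtree, with the inner implication vacuously \aaa realized when $\neg D(x)$ is unrealizable.
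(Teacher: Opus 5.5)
Your proposal is correct and follows the paper's proof in both directions. For (2)$\Rightarrow$(1) you relabel leaves $\pair{y,z}\mapsto\pair{y,c_z}$ (the paper's $\pair{\pi_0,k\circ\pi_1}\circ\Phi_a^+(0)$), with totality of $F$ guaranteeing an actual branching subtree; for (1)$\Rightarrow$(2) you apply the $\bflogic{IP}$-realizer to the one-query premise realizer built from double-negation translations of the graph and domain of $F$, and then unfold $j_F$ to get an actual subtree for every $x$. The differences are cosmetic: you fix the totalization $\hat F$ in advance (padding with $Y$ off $\dom(F)$, as in Proposition \ref{prop:bilayer-extension}) instead of taking $\hat F=\pi_0\circ\Phi_b^F$, and your step of absorbing ``$x\notin X$'' into $D$ is unnecessary since $\dom_+(F)=\om$ by hypothesis.
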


\begin{proof}
We first give an explicit description of an $F$-realizer for $\bflogic{IP}$.
As in Example \ref{exa:AE-thm-realizer-simplify}, an $F$-realizer $a$ for the premise $\neg D(x)\to\exists y^\N A(x,y)$ is described as follows:
\begin{itemize}
\item {\em Potential.}
Regardless of $x$, $0$ is a \ppp realizer for $\neg D(x)$, so for any $\pair{y,z}\in\Phi_a^+(0)$, the second value $z$ is a \ppp realizer for $A(x,y')$ for some $y'$.
\item {\em Actual.}
If $\neg D(x)$ is realizable, $0$ is its $F$-\aaa realizer, so for any $\pair{y,z}\in\Phi_a^F(0)$, the second value $z$ is an $F$-\aaa realizer for $A(x,y)$.
\end{itemize}

Similarly, an $F$-realizer for the conclusion $\exists y^\N(\neg D(x)\to A(x,y))$ is a pair of a natural number $y$ and an index $b$ such that:
\begin{itemize}
\item {\em Potential.}
As $0$ is a \ppp realizer for $\neg D(x)$, any $z\in\Phi_b^+(0)$ is a \ppp realizer for $A(x,y')$ for some $y'$.
\item {\em Actual.}
If $\neg D(x)$ is realizable, $0$ is its $F$-\aaa realizer, so any $z\in\Phi_b^F(0)$ is an $F$-\aaa realizer for $A(x,y)$.
\end{itemize}


(2)$\Rightarrow$(1):
Without loss of generality, we may assume that $F$ itself is total.
Consider the following process:
\begin{itemize}
\item Let $a$ be an \ppp realizer for the premise $\neg D(x)\to\exists y^\N A(x,y)$.
\item We want to construct an $F$-\ppp realizer for the conclusion $\exists y^\N(\neg D(x)\to A(x,y))$:
\begin{itemize}
\item Each $y\in\pi_0\circ\Phi_a^+(0)$ is a \ppp realizer for the existential witness $y^\N$.
\item 
Let $k(c)$ be an index of the computation that immediately outputs $c$ regardless of the input.
The composition $k\circ\pi_1\circ\Phi_a^+(0)$ appears to yield an $F$-realizer for the part inside the parentheses.
\end{itemize}
\item 
Therefore, as a $F$-computable algorithm for deriving the conclusion from the premise, we consider the process $a\mapsto\pair{\pi_0,k\circ\pi_1}\circ\Phi_a^+(0)$.
\end{itemize}

{\em Verification:}
\begin{itemize}
\item {\em Potential.}
Let $a$ be a \ppp realizer for the premise.
\begin{itemize}
\item Since $\Phi_a^+$, $\pi_i$ and $k$ are total, the computation $\pair{\pi_0,k\circ\pi_1}\circ\Phi_a^+(0)$ must halt.
\item The second value of $\pair{y,b}\in\pair{\pi_0,k\circ\pi_1}\circ\Phi_a^+(0)$ is of the form $b=k(z)$ for some $\pair{y,z}\in\Phi_a^+(0)$.
\item As $0$ is a \ppp realizer for $\neg D(x)$, we have $\Phi_b(0)=\Phi_{k(z)}(0)=z$.
As mentioned above, this is a \ppp realizer for $A(x,y')$ for some $y'$.
\item 
Therefore, any $\pair{y,b}\in\pair{\pi_0,k\circ\pi_1}\circ\Phi_a^+(0)$ is a \ppp realizer for $\exists y^\N(\neg D(x)\to A(x,y))$.
\end{itemize}
\item {\em Actual.}
Let $a$ be an $F$-\aaa realizer for the premise.
\begin{itemize}
\item As $F$ is total, regardless of realizability for $D(x)$, the computation of $\Phi_a^F(0)$ make queries only in the \aaa domain of $F$.
Therefore, $\pair{\pi_0,k\circ\pi_1}\circ\Phi_a^F(0)$ always output some value.
\item 
For the remaining part, by the same argument as above, any such element yields an $F$-\aaa realizer for the conclusion part.
\end{itemize}
\end{itemize}

Consequently, $\bflogic{IP}$ is $F$-realizable.

\medskip

(1)$\Rightarrow$(2):
Since $F$ has an arithmetical graph $G_F$, its domain can also be expressed by an arithmetical formula $D_F$.
Consider the double-negation translations $D_F^{\neg\neg}$ and $G_F^{\neg\neg}$.
We construct an algorithm $a$ which $F$-realizes the premise $\forall x^\N(\neg\neg D_F^{\neg\neg}(x)\to\exists y^\N G_F^{\neg\neg}(x,y))$ of $\bflogic{IP}$:
\begin{itemize}
\item Given $x\in\om$, use this $x$ as a query to $F$.
Consider a total $F$-computable process $\Phi_a^+$ that receives a response $y\in F(x)$ and outputs the pair $(y,0)$ regardless of the input.
\end{itemize}

{\em Verification:}
We show that $a$ is an $F$-realizer for the premise of $\bflogic{IP}$.
\begin{itemize}
\item {\em Potential.}
$0$ is a \ppp realizer for $\neg\neg D_F^{\neg\neg}(x)$.
Then $\Phi_a^+(0)$ makes a query $x$ to $F$, and since $x\in\om=\dom_+(F)$, the process always receives some $y$.
Therefore, regardless of the input, $\Phi_a^+(x)$ always returns an index of a computation that outputs the pair $\pair{y,0}$.
This is a \ppp realizer for $\neg\neg D_F^{\neg\neg}(x)\to\exists y^\N G_F^{\neg\neg}(x,y)$.
\item {\em Actual.}
If $\neg\neg D^{\neg\neg}_F(x)$ is realizable, then $0$ is its \aaa realizer and $D_F(x)$ is true, so $x\in{\rm dom}(F)$.
Thus, the process must receive some $y\in F(x)$.
Therefore, regardless of an \aaa realizer for $\neg\neg D^{\neg\neg}_F(x)$, $\Phi_a^F(x)$ outputs the pair of $y\in F(x)$ and a realizer $0$ for $\neg\neg G_F^{\neg\neg}(x,y)$.
\end{itemize}

Hence, $a$ realizes the premise of $\bflogic{IP}$.
Applying an $F$-realizer for $\bflogic{IP}$ to this $a$, we get an $F$-realizer $b$ for the conclusion $\forall x^\N\exists y^\N(\neg\neg D_F^{\neg\neg}(x)\to G_F^{\neg\neg}(x,y))$.
\begin{itemize}
\item This means that some total extension $\hat{F}$ of $F$ is total $F$-computable.
Indeed, $\hat{F}=\pi_0\circ\Phi_b^F$ is a total extension of $F$:
\begin{itemize}
\item {\em Totality.}
Since any $x\in\om$ is an \aaa realizer for $(x\approx_{\sf N} x)$, the computation $\Phi_b^F(x)$ halts.
Therefore, $\pi_0\circ\Phi_b^F(x)$ is defined.
\item {\em Extension.}
Since any $x\in\om$ is an \aaa realizer for $(x\approx_\sfN x)$, for any $\pair{y,z}\in\Phi_b^F(x)$, the second value $z$ is an $F$-\aaa realizer for $\neg\neg D_F^{\neg\neg}(x)\to \neg\neg G_F^{\neg\neg}(x,y)$.
If $x\in\dom(F)$ then $\neg\neg D_F^{\neg\neg}(x)$ has an \aaa realizer $0$, so $\Phi_z^F(0)$ outputs an $F$-\aaa realizer for $\neg\neg G_F^{\neg\neg}(x,y)$.
In particular, $G_F(x,y)$ is true, which means $y\in F(x)$.
\end{itemize}
\end{itemize}

Consequently, we get $\hat{F}=\pi_0\circ\Phi_b^F$, which implies $\hat{F}\tgw F$.
As $F$ is an extension of $\hat{F}$, we also have $F\tW\hat{F}$.
Combining them, $F$ is total Weihrauch equivalent to the total extension $\hat{F}$.
%
\end{proof}

\begin{cor}\label{cor:LLPOn-sat-IP}
$\bflogic{IP}$ is $\pii{\sf C}_{\geq n-1/n}$-realizable for any $n\leq\om$.
\end{cor}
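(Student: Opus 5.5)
The plan is to apply Theorem \ref{thm:totality-vs-IP-realizability} with $F=\pii{\sf C}_{\geq n-1/n}$. That theorem has two hypotheses to check: the graph of $F$ must be arithmetically definable, and $F$ must be multi-query total Weihrauch equivalent to a total potted problem. Once both hold, direction (2)$\Rightarrow$(1) of the theorem gives that $\bflogic{IP}$ is $F$-realizable.

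For arithmetical definability, I will write the graph explicitly. We have $\dom_+(F)=\om$, and $e\in\dom(F)$ iff $|n\setminus P_e|\leq 1$. This says that for all distinct $k,\ell<n$, not both $\varphi_e(k)\down$ and $\varphi_e(\ell)\down$. For $n=\om$ the condition is interpreted as in the definition, namely that the complement of $P_e$ has at most one element. The relation $y\in F(e)$ says that $y<n$ and $\varphi_e(y)\uparrow$, which is a $\Pi_1$ condition. Every clause here is arithmetical, including the case $n=\om$.

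For equivalence to a total problem, I will use Lemma \ref{lem:LLPO-totalizable}. It gives ${\sf LLPO}_n\equiv_{\sf tW}L_n$, where $L_n\colon\om\tto\om$ has $\dom(L_n)=\dom_+(L_n)=\om$. Proposition \ref{prop:LLPO-AcoUC} gives ${\sf LLPO}_n\equiv_{\sf tW}\pii{\sf C}_{\geq n-1/n}$, and in fact the proof of Lemma \ref{lem:LLPO-totalizable} already reduces $L_n$ directly to $\pii{\sf C}_{\geq n-1/n}$ and back. A one-query reduction is a special case of a multi-query one: take the strategy that makes a single query $\nu(x)$ and outputs $\psi(x,y)$. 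So $\equiv_{\sf tW}$ implies $\eqtgw$, and $F$ is multi-query equivalent to the total problem $L_n$.

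The one point I need to handle carefully is that direction (2)$\Rightarrow$(1) of Theorem \ref{thm:totality-vs-IP-realizability} says ``without loss of generality $F$ itself is total''. To justify this, I will note that $F$-realizability depends only on $j_F$. By the theorem $F\tgw G\iff j_F\leq j_G$, equivalent problems give equivalent LT-modalities, and $\bflogic{IP}$ is $j$-realizable for one of them iff it is for the other. Hence $\bflogic{IP}$ is $\pii{\sf C}_{\geq n-1/n}$-realizable iff it is $L_n$-realizable. So it suffices to run the (2)$\Rightarrow$(1) argument for $L_n$ itself. That argument uses only that every query in $\om$ lies in the actual domain, so it does not even require $L_n$'s graph to be arithmetical, although it is.
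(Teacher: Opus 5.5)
Your proposal is correct and follows the paper's own argument: the paper proves the corollary in one line by combining direction (2)$\Rightarrow$(1) of Theorem~\ref{thm:totality-vs-IP-realizability} with the totalization $L_n$ from Lemma~\ref{lem:LLPO-totalizable}. Your extra step makes explicit the ``without loss of generality $F$ is total'' clause that the paper leaves implicit, by using $F\tgw G\iff j_F\leq j_G$ to get $j_{\pii{\sf C}_{\geq n-1/n}}\equiv j_{L_n}$. Your arithmetical-definability check is harmless, though that direction of the theorem does not need it.
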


\begin{proof}
By Theorem \ref{thm:totality-vs-IP-realizability} and Lemma \ref{lem:LLPO-totalizable}.
\end{proof}

\begin{cor}\label{cor:MP-doesnot-sat-IP}
$\bflogic{IP}$ is neither $\sii{\sf C}_{1/\om}$-realizable nor $\sii{\sf C}_{n-1/n}$-realizable.
\end{cor}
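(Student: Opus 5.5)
The plan is to use Theorem \ref{thm:totality-vs-IP-realizability}. Since $\sii{\sf C}_{1/\om}$ and $\sii{\sf C}_{n-1/n}$ have arithmetically definable graphs, it suffices to show that neither problem is multi-query total Weihrauch equivalent to a total potted problem. The equivalence $\pii{\sf C}_{n-1/n}\equiv_{\sf tW}\sii{\sf C}_{n-1/n}$ from Theorem \ref{thm:basic-Sigma-1-counique} does not affect the argument. Suppose $F\eqtgw \hat F$ with $\hat F$ total, meaning $\dom(\hat F)=\dom_+(\hat F)=\om$.

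The key observation is that if $\hat F$ is total, then every \ppp computation tree for $\hat F$ has all of its queries in the actual domain. The \aaa tree is therefore a nonempty subtree for \emph{every} input $x\in\dom_+(\cdot)$. Taking $F\tgw\hat F$ and composing with $\hat F\tgw F$, I would then show that $F$ has a "total refinement" that is relative only to $\hat F$: there is $G$ with $\dom(G)=\dom_+(F)=\om$ that refines $F$ and satisfies $G\tgw\hat F\tgw F$. Concretely, I set $G(x)=\Phi^{\hat F}(x)$, which is defined for all $x$. From this I aim for a contradiction by exhibiting a single index $e\notin\dom(F)$ on which the $F$-computation is forced to produce values, and then using the recursion theorem to make those values wrong.

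For $F=\sii{\sf C}_{1/\om}$ I would use the following characterization. By Theorem \ref{MP-equial-partial-computable}, $G\tgw\sii{\sf C}_{1/\om}$ means $G$ has a partial computable choice $\varphi_{\delta}$ defined on $\dom(G)=\om$. That choice is total computable, so $G$, and hence $F$, is total computable. This would mean $\sii{\sf C}_{1/\om}\tgw{\rm id}_\om$, which already contradicts Theorem \ref{thm:MPomega-not-total-computable}, since $\sii{\sf C}_{\om-1/\om}\tW\sii{\sf C}_{1/\om}$ by Observation \ref{obs:basic-relations-choice}. For $F=\sii{\sf C}_{n-1/n}$ the same pattern applies. $G\tgw\sii{\sf C}_{n-1/n}\tgw\sii{\sf C}_{1/\om}$ gives a computable choice defined everywhere, so $\sii{\sf C}_{n-1/n}$ would be total computable. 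This is refuted by the recursion-theorem argument of Theorem \ref{thm:MPomega-not-total-computable} adapted to $n$ options: wait for $\varphi_k(e)$ and enumerate $n\setminus\{\varphi_k(e)\}$ (if $\varphi_k(e)\ge n$, enumerate $n\setminus\{0\}$).

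The main obstacle is justifying the passage "$\hat F$ total and $G=\Phi^{\hat F}$ implies that the choice extracted via Theorem \ref{MP-equial-partial-computable} converges on all of $\om$." That theorem's choice function simulates Merlin by waiting on the queries to ${\sf MP}_{\sf PR}$, and convergence is guaranteed only when those queries lie in $\dom({\sf MP}_{\sf PR})$. I would handle this by composing the strategies $\Phi$ (for $F\le \hat F$) and $\Psi$ (for $\hat F\le F\le\sii{\sf C}_{1/\om}$) through Proposition \ref{prop:transitive}. Every query $\Phi$ makes lies in $\dom(\hat F)=\om$, and $\Psi$ reaches actual queries to $\sii{\sf C}_{1/\om}$ on actual instances of $\hat F$, which are all inputs. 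Consequently the composed actual tree exists for every $x$, and the leftmost-path simulation halts for every $x$. I will check this carefully with the $(G;\nu)$-branching formalism.
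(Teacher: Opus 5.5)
Your proposal is correct and follows essentially the paper's route: $\bflogic{IP}$-realizability yields, via Theorem~\ref{thm:totality-vs-IP-realizability}, a total refinement $G$ of $F$ with $G\tgw F\tgw\sii{\sf C}_{1/\om}$, so Theorem~\ref{MP-equial-partial-computable} gives a total computable choice for $F$, which the recursion-theorem diagonalization refutes. The only difference is cosmetic: you rebuild the refinement as $G=\Phi^{\hat F}$ from the bare equivalence in item (2), whereas the paper takes the total extension $\hat F$ directly from that theorem's proof. Your ``main obstacle'' is already discharged by the statement of Theorem~\ref{MP-equial-partial-computable}, whose choice function is defined on all of $\dom(G)=\om$; note also that the diagonal index you construct lies in $\dom(F)$, not outside it as your first paragraph suggests.
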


\begin{proof}
Let $F\in\{\sii{\sf C}_{1/\om},\sii{\sf C}_{n-1/n}\}_{n\leq\om}$.
Suppose that $\bflogic{IP}$ is $F$-realizable.
\begin{itemize}
\item Theorem \ref{thm:totality-vs-IP-realizability} gives an total extension $\hat{F}$ of $F$ such that $\hat{F}\tgw F$.
\item Since $\hat{F}\tgw\sii{\sf C}_{1/\om}$, by Theorem \ref{MP-equial-partial-computable}, $\hat{F}$ has a computable choice function $g$.
\end{itemize}

However, no total extension of $\sii{\sf C}_{1/\om}$ or $\sii{\sf C}_{n-1/n}$ is computable:
\begin{itemize}
\item Suppose that $\Pi_1\text{-}{\sf coUC}_\om$ has a total computable choice function $g\colon\om\to\om$.
\item By the recursion theorem, one can take an index $e$ such that $P_e=\om\setminus\{g(e)\}$.
\item Then we have $e\in\dom(\Pi_1\text{-}{\sf C}_{\om-1/\om})$ and $g(e)\not\in\Pi_1\text{-}{\sf C}_{\om-1/\om}(e)=P_e$.
\end{itemize}
Consequently, $\bflogic{IP}$ is not $F$-realizable.
\end{proof}

\begin{fact}[see Troelstra {\cite[Theorem 3.4.14]{Tro73}}]\label{fact:IP-ECT-inconsistent}
$\bflogic{IP}$ is incompatible with $\bflogic{ECT}_0$ over Heyting arithmetic.
\end{fact}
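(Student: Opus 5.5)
Since this is cited from Troelstra, I would reconstruct the standard argument rather than invent a new one. I have not pinned down the one step that actually uses $\bflogic{IP}$, so this is a plan with a known gap, flagged below. Write $H(x)$ for $\exists z\,T(x,x,z)$, where $T$ is Kleene's $T$-predicate.

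The argument has two halves.

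\emph{Combinatorial half.} Use $\bflogic{ECT}_0$, applied to an $\forall x\exists y$-statement whose matrix is almost negative, to produce an index $e$ with $\forall x\,\varphi_e(x)\downarrow$, and then refute this inside $\mathrm{HA}$ by the recursion theorem. The target is a statement of the form
\[
\forall x\,\exists y\le 1\,\big[(y=0\to\neg H(x))\land(y\neq 0\to\neg\neg H(x))\big].
\]
Its matrix is negative, hence almost negative, so $\bflogic{ECT}_0$ gives $e$ with $\forall x\,[\varphi_e(x)\downarrow\land\varphi_e(x)\le1\land\cdots]$. Internally in $\mathrm{HA}$, diagonalize as in Theorem \ref{thm:MPomega-not-total-computable}. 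Choose $x_0$ with $\varphi_{x_0}(x_0)\downarrow$ iff $\varphi_e(x_0)=0$, which is decidable since $\varphi_e$ is total. If $\varphi_e(x_0)=0$, then $\neg H(x_0)$ holds, contradicting $H(x_0)$. If $\varphi_e(x_0)=1$, then $\neg\neg H(x_0)$ holds while $\varphi_{x_0}(x_0)\uparrow$ gives $\neg H(x_0)$. Either way we get $\bot$. This half is routine; a ``$T$-predicate'' version of the self-reference we already used.

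\emph{Logical half: supplying the $\forall\exists$-statement from $\bflogic{IP}$.} The pattern to exploit is this. $\bflogic{IP}$ turns an $\mathrm{HA}$-provable $\forall x(\neg D(x)\to\exists y\,A(x,y))$ into $\forall x\,\exists y\,(\neg D(x)\to A(x,y))$. The new matrix $\neg D\to A$ is almost negative whenever $A$ is. So after $\bflogic{ECT}_0$ we get a \emph{total} computable $y$-selector, defined even where $\neg D(x)$ fails. That extra totality is what $\bflogic{IP}$ contributes and what contradicts partial computability. This is the same phenomenon as Theorem \ref{thm:totality-vs-IP-realizability} and Corollary \ref{cor:MP-doesnot-sat-IP}: $\bflogic{IP}$ forces total extensions of partial problems, and $\bflogic{ECT}_0$ forces them to be computable. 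So I would choose $D$ and $A$ so that $\neg D(x)$ carves out a partial domain. On that domain, a witness is intuitionistically available from the premise. Outside it, no computable total extension can exist, for example the $\Pi_1$-co-unique-choice diagonalization of Corollary \ref{cor:MP-doesnot-sat-IP}.

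\emph{The step I have not settled (main obstacle).} I do not yet have a concrete $\mathrm{HA}$-provable instance $\forall x(\neg D(x)\to\exists y\,A(x,y))$ with $A$ almost negative that delivers the target statement above. I checked two natural candidates and both fail:
\begin{itemize}
\item Taking $\neg D(x)\equiv\neg\neg(H(x)\lor\neg H(x))$ makes the premise true, but it only yields $\neg\neg\exists y\,A$, not $\exists y\,A$.
\item Taking $\neg D(x)\equiv\neg\neg H(x)$ with $A(x,y)\equiv T(x,x,y)$ would need Markov's principle, which is not available in $\mathrm{HA}$.
\end{itemize}
What I would try next is a co-unique-choice instance in the spirit of Corollary \ref{cor:MP-doesnot-sat-IP}: $\neg D(x)$ should express ``the $\Pi_1$ set $P_x$ is a co-singleton'', and $A(x,y)$ should be ``$y\in P_x$'', which is negative. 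The open question is whether, in $\mathrm{HA}$, a witness $y$ can be extracted from $\neg D(x)$. The point that needs careful checking is whether $\mathrm{HA}$ proves $\neg D(x)\to\exists y\,A(x,y)$. If it does not, I would adjust $D$ so that the witness is obtainable constructively, for instance by letting $y$ range over a code-independent default value guarded by the negated hypothesis. Once such an instance is secured, the remaining steps are the routine ones described above.
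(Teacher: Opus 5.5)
Your gap cannot be closed along the route you set up. Applying $\bflogic{ECT}_0$ to a hypothesis-free statement $\forall x\exists y\,B$ is just $\bflogic{CT}_0$. You also only feed $\bflogic{IP}$ premises that are already theorems of $\mathrm{HA}$. So the whole argument would take place in $\mathrm{HA}+\bflogic{IP}+\bflogic{CT}_0$, and its logical half would have to derive $\forall x(\neg H(x)\lor\neg\neg H(x))$ there, which your own combinatorial half refutes using $\bflogic{CT}_0$ alone. But $\mathrm{HA}+\bflogic{IP}+\bflogic{CT}_0$ is consistent: modified realizability validates both principles (this is ${\rm id}_\om$-realizability in the paper; see item (1) of Theorem \ref{thm:main-theorem} and \cite[Corollary 3.4.13]{Tro73}). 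Hence no choice of $D,A$ can work. In particular, your co-singleton fallback has $\bflogic{MP}^\lor_\om$ as its premise, which $\mathrm{HA}$ does not prove, since it fails in that same model.

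The missing idea is to use what $\bflogic{ECT}_0$ has beyond $\bflogic{CT}_0$, namely almost negative hypotheses containing $\exists$ in front of prime formulas, and to stop trying to prove the premise of $\bflogic{IP}$ at all. Take $D(x):\equiv\neg H(x)$ and $A(x,y):\equiv T(x,x,y)$, and put $M(x):\equiv\neg\neg H(x)\to\exists y\,T(x,x,y)$. This is a Markov instance, so it is unprovable, but it is almost negative. $\bflogic{IP}$ gives $\forall x\,[M(x)\to\exists y\,(\neg\neg H(x)\to T(x,x,y))]$, and $\bflogic{ECT}_0$ with hypothesis $M$ yields an $e$ with
\[
\forall x\,\bigl[M(x)\to\varphi_e(x)\downarrow\land\bigl(\neg\neg H(x)\to T(x,x,\varphi_e(x))\bigr)\bigr].
\]
By the recursion theorem, choose $x_0$ such that $\varphi_{x_0}(x_0)$ computes $\varphi_e(x_0)$ and then halts, padded so that its computation code is not $\varphi_e(x_0)$. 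Then $\mathrm{HA}$ proves $H(x_0)\leftrightarrow\varphi_e(x_0)\downarrow$ and $\varphi_e(x_0)\downarrow\to\neg T(x_0,x_0,\varphi_e(x_0))$. Now $M(x_0)$ gives $\varphi_e(x_0)\downarrow$, hence $H(x_0)$, hence $T(x_0,x_0,\varphi_e(x_0))$, a contradiction; so $\neg M(x_0)$. Since $H\to M$ and $\neg H\to M$ hold intuitionistically, $\neg M(x_0)$ yields $\neg H(x_0)\land\neg\neg H(x_0)$, i.e.\ $\bot$. No Markov principle is used: $M$ appears only as the hypothesis of $\bflogic{ECT}_0$ and is then refuted, which is absurd because $\neg\neg M$ is an intuitionistic tautology. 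Your diagonalization survives in this form, but you should drop the target $\forall x(\neg H(x)\lor\neg\neg H(x))$ and the search for a provable premise.
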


\begin{cor}
If $F$ is a total potted problem with an arithmetical graph, then $\bflogic{ECT}_0$ is not $F$-realizable.
\end{cor}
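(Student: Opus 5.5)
The plan is to argue by contradiction, combining Theorem~\ref{thm:totality-vs-IP-realizability} with Fact~\ref{fact:IP-ECT-inconsistent}. Let $F$ be a total potted problem with an arithmetical graph, and suppose $\bflogic{ECT}_0$ is $F$-realizable. Since $F$ is total, it is trivially multi-query total Weihrauch equivalent to a total potted problem, namely itself. So the direction (2)$\Rightarrow$(1) of Theorem~\ref{thm:totality-vs-IP-realizability} shows that $\bflogic{IP}$ is $F$-realizable.

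Next I will use soundness of $F$-realizability for Heyting arithmetic. As explained in Section~\ref{sec:topos-sheafsubtopos}, $F$-realizability is the semantics of the $j_F$-sheaf subtopos of the Grayson (modified realizability) topos. Every topos validates intuitionistic higher-order logic, and its natural numbers object validates $\mathsf{HA}$, so $F$-realizability is sound for $\mathsf{HA}$. It is also closed under modus ponens and conjunction, so the set of $F$-realizable sentences is deductively closed over $\mathsf{HA}$.

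By Fact~\ref{fact:IP-ECT-inconsistent}, $\mathsf{HA}+\bflogic{IP}+\bflogic{ECT}_0\vdash\bot$. Both $\bflogic{IP}$ and $\bflogic{ECT}_0$ would then be $F$-realizable, so $\bot$ would be $F$-realizable. But $j_F$ is $\neg\neg$-dense (noted after the theorem $F\tgw G\iff j_F\leq j_G$), so $j_F(\bot)=\bot$, and $\bot=(\emptyset;\N)$ has no actual realizer. This is the required contradiction.

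The main obstacle is the soundness step. The paper has not spelled out $F$-realizability of $\bflogic{ECT}_0$ or a full soundness theorem; it relies on the tripos/topos correspondence. I will cite that correspondence (closure transformations correspond to LT-topologies, and realizability gives the semantics of the sheaf subtopos) rather than verify the $\mathsf{HA}$ axioms by hand. The only points that need care are that the interpretation of the arithmetic used in Troelstra's proof, including the schema instances for formulas mentioning $\varphi_e$, agrees with the Kuroda $j_F$-translation, and that the proof of the Fact uses only finitely many instances of the schemata.
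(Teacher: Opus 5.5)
Your proposal is correct and is essentially the paper's proof. The paper simply cites Theorem~\ref{thm:totality-vs-IP-realizability}, direction (2)$\Rightarrow$(1), which applies trivially since $F$ is itself total, to get $\bflogic{IP}$, and then cites Fact~\ref{fact:IP-ECT-inconsistent} for the inconsistency. You additionally make explicit two steps the paper leaves implicit: soundness of $F$-realizability for $\mathsf{HA}$ via the $j_F$-sheaf subtopos semantics, and the fact that $\bot$ has no actual $F$-realizer ($\neg\neg$-density, coming from $F$ being nonempty-valued).
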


\begin{proof}
By Theorem \ref{thm:totality-vs-IP-realizability} and Fact \ref{fact:IP-ECT-inconsistent}.
\end{proof}

\begin{remark}
On the one hand,
$(\sii{\sf C}_{1/\om}+\pii{\sf C}_{\geq 1/2})$-realizability appears to correspond to Lifshitz realizability.
On the other hand, the behavior of $\pii{\sf C}_{\geq 1/2})$-realizability (total Lifshitz realizability) differs significantly from that of Lifshitz realizability:
By Theorem \ref{thm:LLPO-not-CT0unique} and Corollary \ref{cor:LLPOn-sat-IP}, $\bflogic{IP}$ is $\pii{\sf C}_{\geq 1/2})$-realizable, but $\bflogic{CT}_0!$ is not $\pii{\sf C}_{\geq 1/2})$-realizable.
\end{remark}

Note that the arithmetical definability of the graph is not used in the proof of Theorem \ref{thm:totality-vs-IP-realizability} (2)$\Rightarrow$(1).
In fact, the following holds:

\begin{prop}\label{prop:totality-vs-IP-realizability-bilayer}
Let $F\colon\om\times\Lambda\tto \om$ be a total bilayer potted problem.
Then $\bflogic{IP}$ is $F$-realizable.
\end{prop}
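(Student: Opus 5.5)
The plan is to mimic the direction (2)$\Rightarrow$(1) of Theorem \ref{thm:totality-vs-IP-realizability}, which never used arithmetical definability, and to check that the presence of secret instances does not interfere. First, unfold an $F$-realizer $a$ for the premise $\neg D(x)\to\exists y^\N A(x,y)$ as in Example \ref{exa:AE-thm-realizer-simplify}. Since the outermost connective of the hypothesis is $\neg$, the only relevant \ppp/\aaa realizer is $0$. So $\Phi_a^+(0)$ is a total \ppp computation tree whose leaves are labelled by pairs $\pair{y,z}$, where $z$ is a \ppp realizer of some $A(x,y')$. Moreover, when $\neg D(x)$ holds, there is an $(F;\nu)$-branching subtree whose leaf labels $\pair{y,z}$ have $z$ an $F$-\aaa realizer of $A(x,y)$.

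The realizer of the conclusion will be the same uniform transformation as before: $a\mapsto\pair{\pi_0,k\circ\pi_1}\circ\Phi_a^+(0)$. Here $k(c)$ codes the query-free computation that outputs $c$ on any input. Post-composing the leaf labelling with a total computable map leaves the tree and the query labelling $\nu$ unchanged, exactly as in the monotonicity part of the proof that $j_F$ is an LT-modality. Hence the potential clause is immediate: every leaf label $\pair{y,k(z)}$ is a \ppp realizer of $\exists y^\N(\neg D(x)\to A(x,y))$, because $\Phi^+_{k(z)}(0)=z$.

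The key step is the actual clause when $\neg D(x)$ is \emph{not} realizable. There we have no a priori guarantee that $\Phi_a^F(0)$ has any $F$-branching subtree. This is where totality of $F$ enters. I read the hypothesis as $\dom(F)=\dom_+(F)=\om\times\Lambda$, so for each query $q$ and every secret $\beta\in\Lambda$, $F(q|\beta)\neq\emptyset$. Therefore one can build an $(F;\nu)$-branching subtree of the \ppp tree by letting {\tt Nimue} pick an arbitrary secret at every internal node (using choice in the meta-theory) and taking all resulting successors. This subtree is well founded because it sits inside the well-founded \ppp tree, so the computation always produces outputs.

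With that subtree in hand, the argument finishes as follows. If $\neg D(x)$ holds, we instead use the given subtree that witnesses the \aaa realizability of the premise, and the leaf labels $\pair{y,k(z)}$ are \aaa realizers of the conclusion. If $\neg D(x)$ fails, the implication $\neg D(x)\to A(x,y)$ is \aaa realized by any \ppp realizer whose application lands in the \ppp part, since there is no \aaa realizer of $\neg D(x)$ to check against; so any subtree suffices. Then I would apply Lemma \ref{lem:AE-thm-simplify} to pass between the simplified and the official Kuroda translations, which gives a single realizer uniform in $x$, $D$ and $A$. The main obstacle is being careful that this case split is only in the verification and not in the algorithm. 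The algorithm is uniform; only the choice of witnessing subtree (Nimue's strategy) depends on whether $\neg D(x)$ holds, and that choice is permitted to be noncomputable.
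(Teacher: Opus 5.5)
Your proposal is correct and takes essentially the same route as the paper. You use the same uniform transformation $a\mapsto\langle\pi_0,k\circ\pi_1\rangle\circ\Phi_a^+(0)$, verify the actual clause with the premise's witnessing $(F;\nu)$-branching subtree when $\neg D(x)$ is realizable, and otherwise use totality of $F$ (so {\tt Nimue} can always make an answerable query, and the actual part of $\neg D(x)\to A(x,y)$ collapses to its potential part). You are in fact more explicit than the paper, whose proof only indicates where totality enters by referring back to Theorem \ref{thm:totality-vs-IP-realizability}.
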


\begin{proof}
The argument is the same as in the proof of Theorem \ref{thm:totality-vs-IP-realizability} (2)$\Rightarrow$(1).
Since {\tt Nimue}'s strategy has no effect on the potential part, it suffices to discuss only the actual part.
\begin{itemize}
\item Let $a$ be an \aaa realizer for the premise $\neg D(x)\to\exists y^\N A(x,y)$; that is, there is $F$-branching tree $\mathbb{T}$ such that $\Phi_{a|\mathbb{T}}^F(0)$ outputs a pair of an existential witness $y$ and an realizer for $A(x.y)$.
\item Then $\pair{\pi_0,k\circ\pi_1}\circ\Phi_{a|\mathbb{T}}^F(0)$ is a realizer for the conclusion $\exists y(\neg D(x)\to A(x,y))$.
\end{itemize}
This shows that $\bflogic{IP}$ is $F$-realizable.
\end{proof}

\begin{cor}\label{cor:IP-wlem}
$\bflogic{IP}$ is ${\sf C}_{k/n}$-realizable for any $k\leq n\leq\om$.
\end{cor}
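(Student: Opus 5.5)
The plan is to deduce this directly from Proposition \ref{prop:totality-vs-IP-realizability-bilayer}. That proposition says $\bflogic{IP}$ is $F$-realizable for every \emph{total} potted bilayer problem $F\colon\om\times\Lambda\tto\om$. So it suffices to show that ${\sf C}_{k/n}$ is, or is $\eqtgw$-equivalent to, such a total bilayer problem.

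\textbf{Step 1: reduce to a total problem.} As defined, ${\sf C}_{k/n}$ is not total, because its \aaa domain consists only of the secret instances $A$ with $|A|=k$. We therefore define a total bilayer problem $\hat{C}\colon\{\ast\}\times\Lambda\tto n$ with $\Lambda=\{A\subseteq n:|A|=k\}$, $\dom(\hat{C})=\dom_+(\hat{C})=\{\ast\}\times\Lambda$ and $\hat{C}(\ast|A)=A$. This matches the paper's own identification of ${\sf C}_{1/n}$ with a ``secret identity function.'' The public coordinate $\{\ast\}$ can be recoded as $\om$ by a constant map, so $\hat{C}$ has the required shape $\om\times\Lambda\tto\om$.

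\textbf{Step 2: check the equivalence.}
\begin{itemize}
\item $\hat{C}\tW{\sf C}_{k/n}$ holds via the identity maps, since $\hat{C}$ is simply the restriction of ${\sf C}_{k/n}$ to secret instances in its \aaa domain.
\item ${\sf C}_{k/n}\tW\hat{C}$ also holds. Take $\nu(\ast)=\ast$ and $\psi(\ast,y)=y$, and let the secret reduction be $\zeta(\ast|A)=A$ when $|A|=k$ and some fixed $k$-element set otherwise.
\end{itemize}
This argument is essentially the one in Proposition \ref{prop:bilayer-extension}. Totality of all the maps is immediate.

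\textbf{Step 3: transfer realizability.} Since $j_F$ depends on $F$ only up to $\eqtgw$ (by the theorem $F\tgw G\iff j_F\leq j_G$), $F$-realizability is invariant under $\eqtgw$. Hence $\bflogic{IP}$ is ${\sf C}_{k/n}$-realizable because it is $\hat{C}$-realizable by Proposition \ref{prop:totality-vs-IP-realizability-bilayer}.

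The main obstacle I expect is the degenerate case $k=n$, and more generally $n=\om$ with $k=\om$, which the statement's ``$k\leq n$'' allows but the definition does not literally cover. In that case I would interpret ${\sf C}_{n/n}$ as the trivial choice on the full set and handle it the same way; the problem is then total outright. A second point to check is that the potential codomain must contain $0$, which holds here.
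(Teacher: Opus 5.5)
Your proposal is correct and is essentially the paper's proof. The paper also notes that ${\sf C}_{k/n}$ is equivalent to the total problem $F(m|\alpha)={\sf C}_{k/n}(\ast|\alpha)$ on public instances $m\in\om$, and then invokes Proposition~\ref{prop:totality-vs-IP-realizability-bilayer}. You make explicit two things the paper leaves implicit: the two one-query reductions, and the invariance of realizability under $\eqtgw$ (via $F\tgw G\iff j_F\leq j_G$). Your worry about $k=n$ is harmless, since Steps 1--2 only use that the set of admissible secret instances is nonempty.
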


\begin{proof}
Clearly ${\sf C}_{k/n}$ is equivalent to a total problem $F$ defined by $F(n|\alpha)={\sf C}_{k/n}(\ast|\alpha)$ for any $n\in\om$.
Thus, the assertion follows from Proposition \ref{prop:totality-vs-IP-realizability-bilayer}.
\end{proof}

\subsection{Proof of Main Theorem}\label{sec:proof-main}

Finally, we prove Theorem \ref{thm:main-theorem}; indeed, for $n\geq 2$, each of the following items is realizable with respect to some oracle:
\begin{enumerate}
\item $\neg\bflogic{MP}^\lor_{\om}+\bflogic{CT}_0+\bflogic{IP}$.
\item $\bflogic{MP}_{n+1}^\lor+\neg\bflogic{MP}_{n}^\lor+\bflogic{wCT}_0+\bflogic{CT}_0!+\neg\bflogic{CT}_0+\neg\bflogic{IP}$.
\item $\bflogic{MP}^\lor+\neg\bflogic{MP}_{\rm PR}+\bflogic{wCT}_0+\neg\bflogic{CT}_0!+\neg\bflogic{IP}$.
\item $\bflogic{MP}+\bflogic{CT}_0+\neg\bflogic{IP}$.
\item $\bflogic{LLPO}_{n+1}+\neg\bflogic{MP}_{n}^\lor+\bflogic{CT}_0!+\bflogic{CT}_0^{\rm maj}+\bflogic{IP}$.
\item $\bflogic{LLPO}+\neg\bflogic{MP}_{\rm PR}+\bflogic{wCT}_0!+\neg\bflogic{CT}_0!+\bflogic{CT}_0^{\rm maj}+\bflogic{IP}$.
\item $\bflogic{MP}+\bflogic{LLPO}_{n+1}+\neg\bflogic{LLPO}_{n}+\bflogic{CT}_0!$.
\item $\bflogic{WLEM}_{n+1}+\neg\bflogic{MP}_{n}^\lor+\bflogic{CT}_0!+\bflogic{CT}_0^{\rm maj}+\bflogic{IP}$.
\item $\bflogic{WLEM}+\neg\bflogic{MP}_{\rm PR}+\neg\bflogic{CT}_0!+\bflogic{CT}_0^{\rm maj}+\bflogic{IP}$.
\item $\bflogic{MP}+\bflogic{LLPO}_{n}+\bflogic{WLEM}_{n+1}+\neg\bflogic{LLPO}_{n-1}+\neg\bflogic{WLEM}_{n}+\bflogic{CT}_0!$.
\item $\bflogic{MP}+\bflogic{LLPO}+\bflogic{WLEM}_{n+1}+\neg\bflogic{WLEM}_n+\bflogic{CT}_0!$.
\item $\bflogic{MP}_{\rm PR}+\bflogic{WLEM}+\neg\bflogic{LEM}+\neg\bflogic{CT}_0^{\rm maj}+\bflogic{sCT}_0^{\rm maj}$.
\end{enumerate}

\begin{proof}[Proof of Theorem \ref{thm:main-theorem}]

(1)
It is known that $\bflogic{CT}_0$ and $\bflogic{IP}$ are modified realizable \cite[Corollary 3.4.13]{Tro73}.
For the sake of completeness, we explain the reason here.
Consider ${\rm id}_\om$-realizability.
\begin{itemize}
\item By Theorem \ref{thm:MPomega-not-total-computable}, we have $\sii{\sf C}_{\om-1/\om}\not\tgw{\rm id}_\om$, so by Theorem \ref{thm:MPnlor-realizable-to-above}, $\bflogic{MP}_{\om}^\lor$ is not ${\rm id}_\om$-realizable.
\item By Proposition \ref{prop:id-CT0}, $\bflogic{CT}_0$ is ${\rm id}_\om$-realizable.
\item By Theorem \ref{thm:totality-vs-IP-realizability}, since ${\rm id}_\om$ is total, $\bflogic{IP}$ is ${\rm id}_\om$-realizable.
\end{itemize}

(2)
Consider $F:=\sii{\sf C}_{n/n+1}$.
\begin{itemize}
\item By Theorem \ref{thm:MPn-reducible-to-realizable}, $\bflogic{MP}_{n+1}^\lor$ is $F$-realizable.
\item Theorem \ref{thm:separation-MPlorn-WLEMn} implies $\sii{\sf C}_{n-1/n}\not\tgw\sii{\sf C}_{n/n+1}$; hence, by Theorem \ref{thm:MPnlor-realizable-to-above}, $\bflogic{MP}_{n}^\lor$ is not $F$-realizable.
\item Since $\sii{\sf C}_{n/n+1}\tW\sii{\sf C}_{1/\om}$, by Theorem \ref{thm:MP-vs-CT0-realizability}, $\bflogic{wCT}_0$ is $F$-realizable.
\item Since $n\geq 2$, we have $\sii{\sf C}_{n/n+1}\tW{\sf C}_{n/n+1}\tW{\sf C}_{2/3}$; thus, by Theorem \ref{thm:WLEMthree-CTrealizable}, $\bflogic{CT}_0!$ is $F$-realizable.
\item By Theorem \ref{thm:MPn-sheaf-not-CT0}, $\bflogic{CT}_0$ is not $F$-realizable.
\item By Corollary \ref{cor:MP-doesnot-sat-IP}, $\bflogic{IP}$ is not $F$-realizable.
\end{itemize}

(3)
Consider $F:=\sii{\sf C}_{1/2}$.
\begin{itemize}
\item By Theorem \ref{thm:MPn-reducible-to-realizable}, $\bflogic{MP}^\lor$ is $F$-realizable.
\item Theorem \ref{thm:MP-WLEM-separation} implies $\sii{\sf C}_{1/\om}\not\tgw\sii{\sf C}_{1/2}$; thus, by Theorem \ref{thm:MP-realizable-to-reducible}, $\bflogic{MP}_{\rm PR}$ is not $F$-realizable.
\item Since $\sii{\sf C}_{1/2}\tW\sii{\sf C}_{1/\om}$, by Theorem \ref{thm:MP-vs-CT0-realizability}, $\bflogic{wCT}_0$ is $F$-realizable.
\item By Theorem \ref{thm:LLPO-not-CT0unique}, $\bflogic{CT}_0!$ is not $F$-realizable.
\item By Corollary \ref{cor:MP-doesnot-sat-IP}, $\bflogic{IP}$ is not $F$-realizable.
\end{itemize}

(4)
It is known that these are Kleene realizable.
Here, consider $F:=\sii{\sf C}_{1/\om}$.
\begin{itemize}
\item By Theorem \ref{thm:MP-in-MP-sheaf}, $\bflogic{MP}$ is $F$-realizable.
\item By Theorem \ref{thm:MP-vs-CT0-realizability-2}, $\bflogic{CT}_0$ is $F$-realizable.
\item By Corollary \ref{cor:MP-doesnot-sat-IP}, $\bflogic{IP}$ is not $F$-realizable.
\end{itemize}

(5)
Consider $F:=\pii{\sf C}_{\geq n/n+1}$.
\begin{itemize}
\item By Theorem \ref{thm:LLPO-reducible-to-realizable}, $\bflogic{LLPO}_{n+1}$ is $F$-realizable.
\item Theorem \ref{thm:separation-MPlorn-WLEMn} implies $\sii{\sf C}_{n-1/n}\not\tgw\pii{\sf C}_{\geq n/n+1}$; thus, by Theorem \ref{thm:MPnlor-realizable-to-above}, $\bflogic{MP}_{n}^\lor$ is not $F$-realizable.
\item Since $n\geq 2$, we have $\pii{\sf C}_{\geq n/n+1}\tW{\sf C}_{n/n+1}\tW{\sf C}_{2/3}$; thus, by Theorem \ref{thm:WLEMthree-CTrealizable}, $\bflogic{CT}_0!$ is $F$-realizable.
\item Since the codomain of $\pii{\sf C}_{\geq n/n+1}$ is finite, by Theorem \ref{thm:finite-maj-realizable}, $\bflogic{CT}_0^{\rm maj}$ is $F$-realizable.
\item By Corollary \ref{cor:LLPOn-sat-IP}, $\bflogic{IP}$ is $F$-realizable.
\item Since $\pii{\sf C}_{\geq n/n+1}$ is a potted problem, Theorem \ref{thm:separation-WLEMn-nplusone} implies ${\sf C}_{\om-1/\om}\not\tgw\pii{\sf C}_{\geq n-1/n}$; thus, by Theorem \ref{thm:WLEMn-realizable-iff-reducible}, $\bflogic{WLEM}_\om$ is not $F$-realizable.
\end{itemize}

(6)
Consider $F:=\pii{\sf C}_{\geq 1/2}$.
\begin{itemize}
\item By Theorem \ref{thm:LLPO-reducible-to-realizable}, $\bflogic{LLPO}$ is $F$-realizable.
\item Theorem \ref{thm:MP-WLEM-separation} implies $\sii{\sf C}_{1/\om}\not\tgw\pii{\sf C}_{\geq1/2}$; thus, by Theorem \ref{thm:MP-realizable-to-reducible}, $\bflogic{MP}_{\rm PR}$ is not $F$-realizable.
\item By Theorem \ref{thm:MP-LLPO-models-CT0u}, $\bflogic{wCT}_0!$ is $F$-realizable.
\item By Theorem \ref{thm:LLPO-not-CT0unique}, $\bflogic{CT}_0!$ is not $F$-realizable.
\item Since the codomain of $\pii{\sf C}_{\geq 1/2}$ is finite, by Theorem \ref{thm:finite-maj-realizable}, $\bflogic{CT}_0^{\rm maj}$ is $F$-realizable.
\item By Corollary \ref{cor:LLPOn-sat-IP}, $\bflogic{IP}$ is $F$-realizable.
\item Since $\pii{\sf C}_{\geq 1/2}$ is a potted problem, Theorem \ref{thm:separation-WLEMn-nplusone} implies ${\sf C}_{\om-1/\om}\not\tgw\pii{\sf C}_{\geq 1/2}$; thus, by Theorem \ref{thm:WLEMn-realizable-iff-reducible}, $\bflogic{WLEM}_\om$ is not $F$-realizable.
\end{itemize}

(7)
Consider $F:=\sii{\sf C}_{1/\om}+\pii{\sf C}_{\geq n/n+1}$.
\begin{itemize}
\item By Theorem \ref{thm:MP-in-MP-sheaf}, $\bflogic{MP}$ is $F$-realizable.
\item By Theorem \ref{thm:LLPO-reducible-to-realizable}, $\bflogic{LLPO}_{n+1}$ is $F$-realizable.
\item By Proposition \ref{prop:separation-Eff-LLPO-n-nplusone} and Theorem \ref{thm:LLPOn-realizable-to-above}, $\bflogic{LLPO}_{n}$ is not $F$-realizable.
\item By Theorem \ref{thm:MP-LLPO-models-CT0u-2}, $\bflogic{CT}_0!$ is $F$-realizable.
\item Since $F$ is a potted problem, Theorem \ref{thm:separation-WLEMn-nplusone} implies ${\sf C}_{\om-1/\om}\not\tgw\sii{\sf C}_{1/\om}+\pii{\sf C}_{\geq n/n+1}$; thus, by Theorem \ref{thm:WLEMn-realizable-iff-reducible}, $\bflogic{WLEM}_\om$ is not $F$-realizable.
\end{itemize}

(8)
Consider $F:={\sf C}_{n/n+1}$.
\begin{itemize}
\item By Theorem \ref{thm:WLEMn-realizable-iff-reducible}, $\bflogic{WLEM}_{n+1}$ is $F$-realizable.
\item By Theorem \ref{thm:separation-MPlorn-WLEMn} and Theorem \ref{thm:MPnlor-realizable-to-above}, $\bflogic{MP}_n^\lor$ is not $F$-realizable.
\item By Theorem \ref{thm:WLEMthree-CTrealizable}, $\bflogic{CT}_0!$ is $F$-realizable.
\item Since the codomain of ${\sf C}_{n/n+1}$ is finite, by Theorem \ref{thm:finite-maj-realizable}, $\bflogic{CT}_0^{\rm maj}$ is $F$-realizable.
\item By Corollary \ref{cor:IP-wlem}, $\bflogic{IP}$ is $F$-realizable.
\end{itemize}

(9)
Consider $F:={\sf C}_{1/2}$.
\begin{itemize}
\item By Theorem \ref{thm:WLEMn-realizable-iff-reducible}, $\bflogic{WLEM}$ is $F$-realizable.
\item By Theorem \ref{thm:MP-WLEM-separation} and Theorem \ref{thm:MP-realizable-to-reducible}, $\bflogic{MP}_{\rm PR}$ is not $F$-realizable.
\item By Observation \ref{thm:WLEM-not-CT0unique}, $\bflogic{CT}_0!$ is not $F$-realizable.
\item Since the codomain of ${\sf C}_{1/2}$ is finite, by Theorem \ref{thm:finite-maj-realizable}, $\bflogic{CT}_0^{\rm maj}$ is $F$-realizable.
\item By Corollary \ref{cor:IP-wlem}, $\bflogic{IP}$ is $F$-realizable.
\end{itemize}

(10)
Consider $F:=\sii{\sf C}_{1/\om}+{\sf C}_{n/n+1}$.
\begin{itemize}
\item By Proposition \ref{prop:MP-in-MP-sheaf2} and $n\geq 2$, $\bflogic{MP}$ is $F$-realizable.
\item By Proposition \ref{prop:LLPOn-reducible-WLEMnplusone} and Proposition \ref{prop:LLPO-reducible-to-realizable2}, $\bflogic{LLPO}_{n}$ is $F$-realizable.
\item By Theorem \ref{thm:WLEMn-realizable-iff-reducible}, $\bflogic{WLEM}_{n+1}$ is $F$-realizable.
\item By Proposition \ref{prop:separation-EFF-LLPO-n-WLEM} and Theorem \ref{thm:LLPOn-realizable-to-above}, $\bflogic{LLPO}_{n-1}$ is not $F$-realizable.
\item By Theorem \ref{thm:separation-WLEMn-nplusone} and Theorem \ref{thm:WLEMn-realizable-iff-reducible}, $\bflogic{WLEM}_{n}$ is not $F$-realizable.
\item By Theorem \ref{thm:MP-LLPO-models-CT0u}, $\bflogic{CT}_0!$ is $F$-realizable.
\end{itemize}

(11)
Consider $F:=\sii{\sf C}_{1/\om}+\pii{\sf C}_{\geq 1/2}+{\sf C}_{n/n+1}$.
\begin{itemize}
\item By Proposition \ref{prop:MP-in-MP-sheaf2} and $n\geq 2$, $\bflogic{MP}$ is $F$-realizable.
\item By Proposition \ref{prop:LLPOn-reducible-WLEMnplusone} and $n\geq 2$, we have $F\tgw\sii{\sf C}_{1/\om}+{\sf C}_{2/3}$; thus, by Proposition \ref{prop:LLPO-reducible-to-realizable2}, $\bflogic{LLPO}$ is $F$-realizable.
\item By Theorem \ref{thm:WLEMn-realizable-iff-reducible}, $\bflogic{WLEM}_{n+1}$ is $F$-realizable.
\item By Theorem \ref{thm:separation-WLEMn-nplusone} and Theorem \ref{thm:WLEMn-realizable-iff-reducible}, $\bflogic{WLEM}_{n}$ is not $F$-realizable.
\item By Theorem \ref{thm:MP-LLPO-models-CT0u}, $\bflogic{CT}_0!$ is $F$-realizable.
\end{itemize}

(12)
Consider $F:=\sii{\sf C}_{1/\om}+{\sf C}_{1/2}$.
\begin{itemize}
\item By Observation \ref{obs:MP-in-MP-sheaf3}, $\bflogic{MP}_{\rm PR}$ is $F$-realizable.
\item By Theorem \ref{thm:WLEMn-realizable-iff-reducible}, $\bflogic{WLEM}$ is $F$-realizable.
\item By Theorem \ref{thm:separation-DML-WLEM} and Theorem \ref{thm:DML-reducible-iff-DNE-realizable}, $\bflogic{DML}_\N$ is not $F$-realizable.
In particular, $\bflogic{LEM}$ is not $F$-realizable (since $\bflogic{LEM}$ implies all axioms in classical logic).
\item Since $\bflogic{WLEM}$ implies $\bflogic{WLPO}$ (the $\Pi_1$ law of excluded middle), combined with $\bflogic{MP}$, this implies $\bflogic{LPO}$ (the $\Sigma_1$ law of excluded middle); see e.g.~\cite{Die18}.
Then, clearly, $\bflogic{CT}_0^{\rm maj}$ is not $F$-realizable; for instance, the halting stage function grows faster than any computable function.
That is, for a suitable decidable formula $D$, let $A(x,y)$ be the following formula:
\[\exists z<y.\ D(x,z)\lor\neg\exists zD(x,z).\]
\item By Proposition \ref{prop:finite-semi-maj-realizable}, $\bflogic{sCT}_0^{\rm maj}$ is $F$-realizable.
\end{itemize}
\end{proof}

\section{Future Works}


First, regarding the logical direction:
Hendtlass-Lubarsky's work \cite{HeLu16} is notable for separating the strengths of the hierarchy of Markov's principles while satisfying the {\em axiom of dependent choice} ${\sf DC}$.
The trade-off is that their constructions are quite complicated.
One of our next goals is to separate the strengths of the hierarchy of Markov's principles using simple realizability models while satisfying ${\sf DC}$.

Second, regarding the topos-theoretic direction: 
it is natural to ask whether the effective topos coincides with the modified realizability topos augmented by Markov's principle as an oracle, and whether the Lifschitz realizability topos coincides with the modified realizability topos augmented by the principle ${\sf MP}+{\sf LLPO}$ as an oracle.

\begin{question}
Is the $\sii{\sf C}_{1/\om}$-sheaf subtopos of the Grayson topos (or the modified realizability topos) equivalent to the effective topos?
\end{question}

\begin{question}
Is the $(\sii{\sf C}_{1/\om}+\pii{\sf C}_{\geq 1/2})$-sheaf subtopos of the Grayson topos (or the modified realizability topos) equivalent to the Lifshitz realizability topos?
\end{question}

Regarding the third direction:
As mentioned before, an oracle computability may be viewed as a free monad, and a LT-modality is a certain type of monad.
Meanwhile, there is existing research on the notion of {\bf monadic combinatory algebra} (Cohen et al.~\cite{CGKM}), which can be expected to relate to our work.
For instance, a relational combinatory algebra can be thought of as a combinatory algebra equipped with a multi-valued oracle.
As a specific example, the angelic computation model with ``flip'' may be related to the oracle $\mathsf{C}_{1/2}$.
Cohen et al.~\cite{CAT19} also present the demonic computation model with ``flip'' as an example that does not satisfy the axiom of countable choice ${\sf AC}_\om$, while, in our context, we also have a vast number of oracle realizability models that fail to satisfy ${\sf AC}_\om$; see \cite{Kih20} --- in fact, most of realizability models based on (multi-valued) oracles that arise naturally in computable analysis and reverse mathematics refute ${\sf AC}_\om$.

To what extent is a partial combinatory algebra (or its relative) endowed with a LT-modality related to a monadic combinatory algebra?
However, their work \cite{CGKM} considers monads on $\mathbf{Set}$, which is unsuitable for our setting.
Furthermore, their framework combines a {\bf monad} $M$ on sets with an {\bf $M$-modality} on propositions -- offering perhaps greater customizability than ours, but resulting in a more complicated structure.
A task for future work is to identify a simple, concrete framework that clarifies the relationship between our research and theirs.

\bibliographystyle{plain}
\bibliography{references-MRT}
\end{document}